\documentclass[a4paper,11pt]{amsart}

\usepackage[left=2.5cm,right=2.5cm,top=2.35cm,bottom=2.35cm]{geometry}
\usepackage[labelformat=simple]{subcaption}
\usepackage{amsmath,amssymb,amsthm,enumitem,tikz,hyperref,xcolor}
\usetikzlibrary{decorations.pathreplacing,decorations.pathmorphing,decorations.markings}

\theoremstyle{plain}
\newtheorem{thm}{Theorem}[section]
\newtheorem{prop}[thm]{Proposition}
\newtheorem{lem}[thm]{Lemma}
\newtheorem*{cm}{Claim}
\newtheorem*{obs}{Observation}
\theoremstyle{definition}
\newtheorem{defn}[thm]{Definition}
\newtheorem{ass}[thm]{Assumption}
\theoremstyle{remark}
\newtheorem{rmk}[thm]{Remark}

\newcommand{\XH}{X \cup \mathcal{H}}
\newcommand{\Cay}{\operatorname{Cay}}
\newcommand{\Stab}{\operatorname{Stab}}
\newcommand{\CayG}{\Cay(G,\XH)}
\newcommand{\acts}{\curvearrowright}
\newcommand{\N}{\mathbb{N}}
\newcommand{\Vtf}{\widetilde{V}_{\mathrm{free}}}
\newcommand{\Vtm}{\widetilde{V}_{\mathrm{med}}}
\newcommand{\Vti}{\widetilde{V}_{\mathrm{int}}}
\newcommand{\Vf}{V_{\mathrm{free}}}
\newcommand{\Vm}{V_{\mathrm{med}}}
\newcommand{\Vi}{V_{\mathrm{int}}}
\DeclareMathAlphabet{\mathsfit}{T1}{\sfdefault}{\mddefault}{\sldefault}
\newcommand{\pathXH}[1]{\mathsfit{#1}}
\newcommand{\wordXH}[1]{\mathcal{#1}}

\newenvironment{cmproof}[1][Proof of Claim]{%
	\begingroup\begin{proof}[#1]%
}{%
	\end{proof}\endgroup
}

\title{Helly groups, coarsely Helly groups, and relative hyperbolicity}
\newcommand{\wrocaddr}{Instytut Matematyczny, Uniwersytet Wroc{\l}awski, plac Grunwaldzki 2/4, 50-384 Wroc{\l}aw, Poland}

\author{Damian Osajda}
\address{\wrocaddr}
\address{Institute of Mathematics, Polish Academy of Sciences, \'Sniadeckich 8, 00-656 Warszawa, Poland}
\email{dosaj@math.uni.wroc.pl}
\thanks{The first author was partially supported by (Polish) Narodowe Centrum Nauki, UMO-2017/25/B/ST1/01335.}

\author{Motiejus Valiunas}
\address{\wrocaddr}
\email{valiunas@math.uni.wroc.pl}

\begin{document}

\begin{abstract}
A simplicial graph is said to be (\emph{coarsely}) \emph{Helly} 
if any collection of pairwise intersecting balls has non-empty (coarse) intersection. (\emph{Coarsely}) \emph{Helly groups} are groups acting geometrically on (coarsely) Helly graphs. Our main result is that finitely generated groups that are hyperbolic relative to (coarsely) Helly subgroups are themselves (coarsely) Helly. One important consequence is that various classical groups, including toral relatively hyperbolic groups, are equipped with a CAT(0)-like structure -- they act geometrically on spaces with convex geodesic bicombing. As a means of proving the main theorems we establish a result of independent interest concerning relatively hyperbolic groups: a `relatively hyperbolic' description of geodesics in a graph on which a relatively hyperbolic group acts geometrically. In the other direction, we show that for relatively hyperbolic (coarsely) Helly groups their parabolic subgroups are (coarsely) Helly as well. More generally, we show that `quasiconvex' subgroups of (coarsely) Helly groups are themselves (coarsely) Helly.
\end{abstract}
\keywords{Helly groups, coarsely Helly groups, relatively hyperbolic groups}
\subjclass[2010]{20F65, 20F67, 05E18}
\maketitle
\setcounter{tocdepth}{1}
\tableofcontents

\section{Introduction} \label{sec:intro}

We consider any (simplicial) graph as its vertex set equipped with the combinatorial metric. We say a graph $\Gamma$ is \emph{Helly} (respectively, \emph{coarsely Helly}) if given any collection of balls $\{ B_{\rho_i}(x_i) \mid i \in \mathcal{I} \}$ in $\Gamma$ such that $B_{\rho_i}(x_i) \cap B_{\rho_j}(x_j) \neq \varnothing$ for any $i,j \in \mathcal{I}$, we have $\bigcap_{i \in \mathcal{I}} B_{\rho_i}(x_i) \neq \varnothing$ (respectively, $\bigcap_{i \in \mathcal{I}} B_{\rho_i+\xi}(x_i) \neq \varnothing$ for some universal constant $\xi \geq 0$). 
We say a group is \emph{Helly} (respectively, \emph{coarsely Helly}) if it acts geometrically on a Helly (respectively, coarsely Helly) graph. See Definition~\ref{defn:helly} for more details.

Intuitively, a group $G$ is said to be hyperbolic relative to subgroups $H_1,\ldots,H_m \leq G$ if the subgroups $H_j$ are some (possibly non-hyperbolic) subgroups ``arranged in $G$ in a hyperbolic way''. The reader may refer to Section~\ref{ssec:relhyp} for precise definitions, notation and terminology on relatively hyperbolic groups.

Based on a result of Lang \cite{Lang2013} it was shown in \cite{ccgho} that all (Gromov) hyperbolic groups are Helly. Our main results are as follows.

\begin{thm}[Hellyness of Relatively Hyperbolic Groups] \label{thm:helly}
Let $G$ be a finitely generated group that is hyperbolic relative to a collection of Helly subgroups. Then $G$ is Helly.
\end{thm}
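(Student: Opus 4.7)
The plan is to construct an explicit graph $\Gamma$ on which $G$ acts geometrically by combining the Helly graphs of the parabolic subgroups with a hyperbolic skeleton, and then to verify the Helly property directly. For each peripheral subgroup $H_j$ fix a Helly graph $\Gamma_j$ on which $H_j$ acts geometrically. For each coset $gH_j$ take a copy $\Gamma_{gH_j}$ of $\Gamma_j$ (with $gH_jg^{-1}$ acting on it) and glue these copies equivariantly to the Cayley graph $\Cay(G,X)$ by identifying each $h \in gH_j$ with the corresponding vertex of $\Gamma_{gH_j}$. One will likely also need to add further edges coming from a Lang-type injective-hull thickening of $\CayG$, to ensure that the skeleton is exactly Helly rather than only coarsely so. The $G$-action on $\Gamma$ is then geometric by construction.

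The essential ingredient for verifying Hellyness of $\Gamma$ is the description of geodesics promised in the abstract: a geodesic in $\Gamma$ should decompose into segments that either lie entirely inside a single peripheral copy $\Gamma_{gH_j}$ or travel between peripheral copies in a way that shadows a quasi-geodesic of bounded ``peripheral deviation'' in the coned-off Cayley graph $\CayG$. This decomposition in turn gives a description of balls in $\Gamma$ as combinations of balls in the peripheral Helly graphs and balls in the hyperbolic skeleton, which is what makes any inductive or local-to-global argument possible.

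Given a pairwise-intersecting family $\{B_{\rho_i}(x_i)\}_{i \in \mathcal{I}}$ of balls in $\Gamma$, the strategy is as follows. First use Hellyness of the hyperbolic skeleton (available because Gromov-hyperbolic groups are Helly by \cite{ccgho}) to locate a ``global'' common center $c$ along the coned-off Cayley graph. Next, identify the finitely many peripheral copies $\Gamma_{gH_j}$ that contribute nontrivially near $c$, and inside each such copy use the exact Helly property of $\Gamma_j$ to solve the local common-intersection problem. Finally, reassemble these local centers into a single vertex of $\Gamma$ lying in the genuine common intersection $\bigcap_i B_{\rho_i}(x_i)$.

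The hard part will be the geodesic description itself and the bookkeeping needed to pass between balls in $\Gamma$, balls in $\CayG$, and balls in the peripheral graphs $\Gamma_j$ without losing exact pairwise intersection. One must show that pairwise intersection in $\Gamma$ forces pairwise intersection of the appropriate projections, and conversely that the local-to-global reassembly produces a genuine (not merely coarse) common point. Managing the interaction between the Gromov-hyperbolic behaviour of the skeleton and the arbitrary Helly geometry of the peripheral graphs, and in particular upgrading the coarse Hellyness of $\CayG$ to the exact Hellyness required here by choosing the right thickening, is where the bulk of the technical work should lie.
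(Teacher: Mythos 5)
Your sketch captures the right general picture—attach a copy of a Helly graph for each peripheral coset to a skeleton built from the Cayley graph, and exploit quasi-geodesics in the coned-off Cayley graph $\CayG$ to control the geometry—but the step you flag as ``the bulk of the technical work'' is in fact a genuine gap, and the paper solves it by a route you have not found.

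You propose to make the glued graph exactly Helly by adding a ``Lang-type injective-hull thickening'' of $\CayG$ to the skeleton, and then to verify the Helly property directly by producing an exact common point from a global center in $\CayG$ plus local exact centers in peripheral copies. Two problems. First, $\CayG$ is not locally finite (unless all $H_j$ are finite), so the Hellyness of hyperbolic groups from \cite{ccgho} does not apply to it, and it is not clear what ``a Lang-type thickening making the skeleton exactly Helly'' means in a way that keeps the relative-hyperbolicity geometry legible. Second, and more seriously, the hyperbolic part of any such argument inevitably produces slack: Osin's BCP estimates and the thin-triangle constants are intrinsically coarse, so the ``global center'' they give is only an approximate one, and there is no obvious way to reassemble approximate data into an exact vertex in the common intersection without further tools. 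This is exactly the obstruction you name in your last paragraph, and as stated there is no plan to overcome it.

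The paper's resolution is structurally different: it never attempts to make $\Gamma(N)$ exactly Helly. It constructs $\Gamma(N)$ from the barycentric subdivision of $\Cay(G,X)$, Rips complexes $\Gamma_{j,N}$ of the peripheral graphs, and connecting edges, proves Theorem~\ref{thm:g->qg} (geodesics in $\Gamma(N)$ derive to non-backtracking quasi-geodesics in $\CayG$), and then shows only two \emph{coarse} properties of $\Gamma(N)$: that it is coarsely Helly (Proposition~\ref{prop:helly-coarse}, via Lemmas~\ref{lem:chelly-zout} and~\ref{lem:chelly-zin}) and that it has stable intervals (Proposition~\ref{prop:helly-stint}). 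Exact Hellyness of $G$ then follows by invoking Theorem~\ref{thm:ccgho-helly} (Theorem~1.2 of \cite{ccgho}): a group acting geometrically on a coarsely Helly graph with stable intervals is Helly. The stable-intervals condition is what replaces your ``right thickening''; it is a tractable geodesic-stability property that can actually be verified using Theorem~\ref{thm:bcp}, Proposition~\ref{prop:BCPtriangles}, and the stable-interval property of the peripheral $\Gamma_j$ (which Helly graphs automatically have by \cite[Lemma~6.5]{ccgho}). Without this ingredient your proposal does not close.
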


As a step towards proving Theorem~\ref{thm:helly} we prove the following `coarse' version.

\begin{thm}[Coarse Hellyness of Relatively Hyperbolic Groups] \label{thm:coarsehelly}
	Let $G$ be a finitely generated group that is hyperbolic relative to a collection of coarsely Helly subgroups. Then $G$ is coarsely Helly.
\end{thm}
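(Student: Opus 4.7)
The plan is to build a graph $\widetilde{\Gamma}$ carrying a geometric $G$-action and to verify the coarse Helly condition directly on $\widetilde{\Gamma}$. For the construction, fix a finite generating set $X$ of $G$ and, for each parabolic $H_j$, a coarsely Helly graph $\Gamma_j$ on which $H_j$ acts geometrically. Form $\widetilde{\Gamma}$ by starting with $\Cay(G,X)$ and attaching, equivariantly at each coset $gH_j$, a copy of $\Gamma_j$ glued along a chosen $H_j$-orbit; this matches the tripartite vertex structure $\Vf,\Vm,\Vi$ introduced earlier in the paper. The resulting $G$-action is geometric and each attached copy embeds in $\widetilde{\Gamma}$ in a coarsely distance-preserving manner.

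The central tool is the relatively hyperbolic description of geodesics mentioned in the abstract and established earlier: a geodesic in $\widetilde{\Gamma}$ alternates between pieces travelling inside a single attached copy $\Gamma_j$ and pieces projecting in a controlled way onto the coned-off Cayley graph $\CayG$. Using this, I can compare $\widetilde{\Gamma}$-distances simultaneously with distances in the hyperbolic graph $\CayG$ and with the internal distances in the $\Gamma_j$, and I can control how any $\widetilde{\Gamma}$-ball meets each attached copy.

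Given a pairwise intersecting family $\{B_{\rho_i}(x_i)\}_{i\in\mathcal{I}}$ in $\widetilde{\Gamma}$, I project each centre $x_i$ to a vertex $\bar x_i$ of $\CayG$ and enlarge the radii by a uniform additive constant dictated by the geodesic description; the resulting family in $\CayG$ still pairwise intersects. Since $\CayG$ is (Gromov) hyperbolic, hence coarsely Helly, there is a point $y\in\CayG$ in the coarse intersection. I now split into two cases according to the type of $y$. If $y$ is a genuine group vertex, it lifts to a vertex of $\widetilde{\Gamma}$ lying within uniformly bounded $\widetilde{\Gamma}$-distance of every $x_i$. If $y$ is a cone vertex corresponding to a coset $gH_j$, then every original ball $B_{\rho_i}(x_i)$ must come uniformly close to the attached copy of $\Gamma_j$ at $gH_j$; I project each $x_i$ to this copy, appeal to the geodesic description together with bounded coset penetration to show that the projected balls in $\Gamma_j$ remain pairwise intersecting up to a uniform additive error, and then apply the coarse Helly property of $\Gamma_j$ to obtain a common point. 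Pulling back yields the required coarse centre in $\widetilde{\Gamma}$.

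The main obstacle is the cone-point case: I must show that the coarse error absorbed when projecting arbitrarily many balls into a single peripheral copy $\Gamma_j$ can be bounded by a constant depending only on the relatively hyperbolic data and the constants of the $\Gamma_j$, \emph{independently of} $|\mathcal{I}|$. This is precisely what the relatively hyperbolic description of geodesics is designed to supply, together with the standard bounded-coset-penetration type estimates; careful bookkeeping of how the radii inflate when passing between $\widetilde{\Gamma}$, $\CayG$, and $\Gamma_j$ is needed so that a single universal constant $\xi$ works for all pairwise intersecting families. Once this is done, coarse Hellyness of $\widetilde{\Gamma}$, and hence of $G$, follows.
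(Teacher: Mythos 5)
Your construction of $\widetilde{\Gamma}$ agrees with the paper's $\Gamma(N)$, and the appeal to the geodesic description (Theorem~\ref{thm:g->qg}) and bounded coset penetration are the right tools. However, the central step of your argument --- projecting the centres to $\CayG$, finding a coarse Helly centre $y$ there, and then lifting back --- does not work, and the problem is already present in the ``group vertex'' case, not just the cone case you flag as the main obstacle.

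The map $\Gamma(N) \to \CayG$ is coarsely Lipschitz, so the projected balls do pairwise intersect after an additive inflation. But the preimage in $\Gamma(N)$ of a bounded set in $\CayG$ has \emph{unbounded} diameter: a single $\mathcal H$-edge in $\CayG$ can correspond to an arbitrarily long internal path in a peripheral copy. Concretely, if $x_1$ is the identity and $x_2 \in H_j$ with $d_{\Gamma(N)}(x_1,x_2) = n$, their $\CayG$-distance is $1$, so the projected balls intersect trivially and $y = x_1$ is a legitimate coarse centre in $\CayG$; yet $d_{\Gamma(N)}(y,x_2) = n$ can greatly exceed $\rho_2 + C$. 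So ``$y$ is a genuine group vertex, hence it lifts to a vertex within uniformly bounded $\widetilde{\Gamma}$-distance of every $x_i$'' is false --- small $\CayG$-distance does not control $\Gamma(N)$-distance. The projection step discards exactly the information you need.

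The paper avoids a global projection. It fixes a basepoint $y\in\Gamma(N)$, chooses an index $I$ maximizing $d_{\Gamma(N)}(y,x_I)-\rho_I$, and puts the candidate centre at the point $z$ on a geodesic from $y$ to $x_I$ with $d(x_I,z)=\rho_I$. The dichotomy is then whether $z$ is near a free vertex (Lemma~\ref{lem:chelly-zout}) or buried inside some peripheral copy $\Gamma_0$ (Lemma~\ref{lem:chelly-zin}). In either case the verification runs through Theorem~\ref{thm:g->qg} (turning $\Gamma(N)$-geodesics into non-backtracking quasi-geodesics of $\CayG$) and Proposition~\ref{prop:BCPtriangles} (BCP for triangles), applied to the triangle with vertices $y$, $x_I$, $x_i$ for each $i$; the maximality of $I$ is what makes the estimates close. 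Only in the second case is the coarse Helly property of $\Gamma_j$ invoked, and then only for a family of balls \emph{inside the single copy} $\Gamma_0$ with centres at the entry/exit vertices of the geodesics $P_i$, $Q_i$, $R$. This ``extremal ball plus candidate point'' scheme is what replaces your projection-and-lift step, and you would need something like it to make your argument go through.

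Two minor points: the paper's $\CayG$ has no cone vertices (all its vertices are group elements; it is the Cayley graph with respect to $X\cup\mathcal H$, not Farb's coned-off graph); and even the reduction to $\CayG$-balls in your first step needs care, since radii in $\Gamma(N)$ and $\CayG$ scale incomparably (the paper never compares radii across the two graphs --- it only transports geodesics).
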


Helly graphs are classical objects that have been studied intensively within the metric and algorithmic graph theory for decades. They are also known as \emph{absolute retracts} (in the category of simplicial graphs with simplicial morphisms), and are universal, in the sense that every graph embeds isometrically into a Helly graph -- cf.\ e.g.\ the survey \cite{BanChe2008}. The study of groups acting on such graphs was initiated
recently in \cite{ccgho}, with non-positive aspects of flag completions of Helly graphs as a starting point.

Many (non-positive curvature)-like properties of Helly groups, including biautomaticity, the coarse Baum--Connes conjecture and the Farrell--Jones conjecture, were shown in \cite{ccgho}. Most of these properties, however, are known to hold for groups hyperbolic relative to subgroups satisfying these properties \cite{rebecchi,fukaya-oguni,bartels}. Nevertheless, Theorem~\ref{thm:helly} provides an alternative proof of these results for groups that are hyperbolic relative to Helly subgroups, for instance toral relatively hyperbolic groups.

However, our Theorem~\ref{thm:helly} provides new results for groups in question, e.g.\ equipping them with a fine local geometry, as follows. One remarkable feature of Helly groups proved in \cite{ccgho} is that they act geometrically on injective metric spaces -- injective hulls of corresponding Helly graphs -- and hence on \emph{spaces with convex geodesic bicombing}. The latter are geodesic spaces in which one has a chosen geodesic between any two points, and the family of such geodesics satisfies
some strong convexity properties reminiscent of CAT(0) geodesics. This allows one to obtain many CAT(0)-like results for groups acting geometrically on such spaces -- see e.g.\ \cite{descombes2016flats}. Theorem~\ref{thm:helly}
equips finitely generated groups hyperbolic relative to Helly groups with such a fine CAT(0)-like structure.
As an example, one refines this way the geometry of toral relatively hyperbolic groups -- a classical and widely studied class of groups hyperbolic relative to abelian subgroups (cf.\ e.g.\ \cite{DahGro2008}).
It is so because all finitely generated abelian groups and, more generally, all CAT(0) cubical groups are
Helly \cite{ccgho}. For further examples of Helly groups and group-theoretic constructions preserving Hellyness see \cite{ccgho,HO2019}.

The notion of coarsely Helly graphs has been introduced in \cite{ccgho}. The motivation was that -- as proved in \cite{ccgho} -- groups acting geometrically on coarsely Helly graphs satisfying an additional condition (of having stable intervals) are Helly -- see Theorem~\ref{thm:ccgho-helly}. This way the notion of coarse Hellyness may be seen as a means for
proving Hellyness, and this is exactly what is happening in this article -- see Section~\ref{sec:helly}.
On the other hand, the class of coarsely Helly groups seem to be of interest on its own.
Recently it has been shown in \cite{HHP2020} that mapping class groups of surfaces and, more generally, hierarchically hyperbolic groups act geometrically on coarsely injective spaces -- non-discrete analogues of coarsely Helly graphs. 

\medskip

In the direction converse to Theorems~\ref{thm:helly}~\&~\ref{thm:coarsehelly}, we prove the following.

\begin{thm}[Parabolic Subgroups of (Coarsely) Helly Groups] \label{thm:parabolics}
	Let $G$ be a group hyperbolic relative to $\{ H_1,\ldots,H_m\}$.
	If $G$ is Helly then $H_1,\ldots,H_m$ are Helly.
	If $G$ is coarsely Helly then $H_1,\ldots,H_m$ are coarsely Helly.	
\end{thm}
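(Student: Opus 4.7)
The plan is to work in two stages, mirroring the more general ``quasiconvex subgroup'' statement advertised in the abstract. First I would establish that, whenever $H \le G$ acts properly cocompactly on a \emph{quasiconvex} orbit $Hv_0$ in the (coarsely) Helly graph $\Gamma$ on which $G$ acts geometrically, the subgroup $H$ is itself (coarsely) Helly. Second, I would verify this quasiconvexity hypothesis when $H = H_j$ is a parabolic subgroup, using the structural description of geodesics in $\Gamma$ announced in the abstract.

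For the first stage, fix a basepoint $v_0 \in \Gamma$ and suppose $Hv_0$ is $R$-quasiconvex (every $\Gamma$-geodesic between orbit points lies in $N_R(Hv_0)$). I would take the induced subgraph $\Gamma_H$ on $N_R(Hv_0)$; the action $H \acts \Gamma_H$ is automatically proper and cocompact. To prove coarse Hellyness of $\Gamma_H$, take a pairwise intersecting family of balls $B_{\rho_i}(x_i)$ in $\Gamma_H$. Viewed in $\Gamma$ (enlarging radii by a bounded amount if needed) they still pairwise intersect, and coarse Hellyness of $\Gamma$ yields a common coarse centre $x \in \Gamma$. The essential ingredient is then a coarsely Lipschitz nearest-point projection $\pi \colon \Gamma \to N_R(Hv_0)$, whose existence should follow from quasiconvexity together with the Helly structure of $\Gamma$; then $\pi(x)$ provides the required common coarse intersection in $\Gamma_H$. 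For the genuinely Helly case I would invoke Theorem~\ref{thm:ccgho-helly}: it suffices to show that $\Gamma_H$ has stable intervals, which should be transferable from $\Gamma$ via the projection.

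For the second stage, I would appeal to the structural theorem on geodesics: for a parabolic subgroup $H_j$, any two orbit points $h_1 v_0, h_2 v_0$ lie in a common parabolic coset, so the ``coned-off'' part of the description of any $\Gamma$-geodesic between them should collapse, forcing the geodesic to stay within a uniform neighbourhood of $H_j v_0$. The main obstacle I expect is the upgrade from the coarsely Helly to the genuinely Helly conclusion: stable intervals is a rigid, non-coarse condition, and transferring it through a coarse projection requires tight control on how $\Gamma$-geodesics between orbit points compare to intrinsic geodesics of $\Gamma_H$. A secondary subtlety is that the quasiconvexity constants in the second stage need to be uniform across the finitely many conjugacy classes of parabolic subgroups, which is where the uniformity assertions in the structural theorem on geodesics become critical.
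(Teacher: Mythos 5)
Your reduction of Theorem~\ref{thm:parabolics} to a general statement about quasiconvex subgroups is exactly the paper's strategy: the paper derives it immediately from Theorem~\ref{thm:quasiconvex}, together with the cited result of Dru\c{t}u--Sapir \cite{drutu-sapir} that the peripheral subgroups of a relatively hyperbolic group are strongly quasiconvex. Your stage two (re-deriving that quasiconvexity from the geodesics-to-quasi-geodesics theorem) is therefore unnecessary and is not what the paper does. More importantly, your stage one departs from the paper in a way that introduces real gaps. You work with the induced subgraph $\Gamma_H$ of $\Gamma$ on $N_R(Hv_0)$, but $\Gamma_H$ need not be connected, and $(1,c)$-quasiconvexity only controls geodesics between \emph{orbit} points: a $\Gamma$-geodesic between two arbitrary vertices of $\Gamma_H$ can leave $N_R(Hv_0)$, so $\Gamma_H$ need not embed isometrically in $\Gamma$. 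The paper sidesteps both issues by passing to a Vietoris--Rips graph $\Gamma_k$ for large $k$ and taking $\Delta$ to be the full subgraph of $\Gamma_k$ spanned by $\bigcup_{w\in W} B_1(w;\Gamma_k)$; in the Helly case Lemma~\ref{lem:qconvex=>isometric} then shows $\Delta$ is isometrically embedded in $\Gamma_k$, using pseudo-modularity of $\Gamma$ in an essential way.

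The two load-bearing gaps are in how you would prove the two halves of the quasiconvex-subgroups statement. For the coarse Helly direction, the existence of a coarsely Lipschitz nearest-point projection onto $N_R(Hv_0)$ is not established and is not a consequence of (semi-strong) quasiconvexity in a Helly graph: this is a hyperbolic-space phenomenon, or, in the Morse setting, an additional fact that you would have to invoke and which does not apply to Theorem~\ref{thm:quasiconvex}\ref{it:qconvex-coarse}, where only $(1,c)$-quasiconvexity is assumed. The paper does not use a projection at all; Lemma~\ref{lem:qconvex=>chelly} instead shrinks the radius of one ball (to $\delta = \sup D$ in the notation there), which forces any coarse centre $y$ produced by coarse Hellyness of $\Gamma$ to lie on a $(1,2\xi)$-quasi-geodesic between two \emph{orbit} points, and hence in $\Delta$ by $(1,2\xi)$-quasiconvexity. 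That radius-shrinking trick is the key idea your sketch is missing. For the genuine Helly direction, you correctly identify the stable-intervals transfer as the main obstacle, and indeed I do not see how to push stable intervals through a coarse projection; the paper avoids it entirely: once $\Delta$ is shown to be coarsely Helly and isometrically embedded in the Helly graph $\Gamma_k$, one takes the Hellyfication $\Theta = \operatorname{Helly}(\Delta)$, which sits inside $\Gamma_k$ and is therefore locally finite, and checks that $H$ acts geometrically on $\Theta$ (coboundedness following from coarse Hellyness of $\Delta$ via \cite[Proposition~3.12]{ccgho}). Theorem~\ref{thm:ccgho-helly} and stable intervals are used in the paper's proof of Theorem~\ref{thm:helly}, not of Theorem~\ref{thm:quasiconvex}.
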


The parabolic subgroups $H_1,\ldots,H_m < G$ as above are known to be strongly quasiconvex \cite{drutu-sapir}. Roughly speaking, this means that if $G$ acts on a graph $\Gamma$ geometrically then for any $\lambda \geq 1$ and $c \geq 0$, all $(\lambda,c)$-quasi-geodesic paths with endpoints in an orbit $x \cdot H_i$ are bounded distance away from $x \cdot H_i$; see Definition~\ref{defn:quasiconvex}. Strongly quasiconvex subgroups (and subsets) are also known in the literature as Morse subgroups (and subsets), and have been extensively studied: see \cite{acgh,tran}, for instance. We also consider a weaker condition of being semi-strongly quasiconvex, where we replace $(\lambda,c)$-quasi-geodesics with $(1,c)$-quasi-geodesics.

Theorem~\ref{thm:parabolics} is then an immediate consequence of the following theorem concerning arbitrary (coarsely) Helly groups. We believe the result is of its own interest, providing further examples of such groups.

\begin{thm}[Quasiconvex Subgroups of (Coarsely) Helly Groups] \label{thm:quasiconvex}
	Let $\Gamma$ be a locally finite graph, let $G$ be a group acting on $\Gamma$ geometrically, and let $H \leq G$ be a subgroup. Then the following hold.
	\begin{enumerate}[label=\textup{(\roman*)}]
		\item \label{it:qconvex-helly} If $\Gamma$ is Helly and $H$ is strongly quasiconvex with respect to $\Gamma$, then $H$ is Helly.
		\item \label{it:qconvex-coarse} If $\Gamma$ is coarsely Helly and $H$ is semi-strongly quasiconvex with respect to $\Gamma$, then $H$ is coarsely Helly.
	\end{enumerate}
\end{thm}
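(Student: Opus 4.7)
The plan is to exhibit a (coarsely) Helly graph on which $H$ acts geometrically, namely the induced subgraph $\Delta \subseteq \Gamma$ on the vertex set $N_R(Y)$, where $Y := H \cdot x_0$ for a chosen basepoint $x_0 \in \Gamma$ and $R$ is a sufficiently large constant. I would fix $R$ at least as large as the Morse-type constant provided by (semi-)strong quasiconvexity, so that in case~(ii) every $(1,c)$-quasi-geodesic in $\Gamma$ with endpoints in $Y$ lies in $\Delta$, and in case~(i) every $(\lambda,c)$-quasi-geodesic does. With this choice, any two points of $Y$ are joined by a $\Gamma$-geodesic that is a path in $\Delta$, so $\Delta$ is connected; local finiteness of $\Gamma$ together with cocompactness of $G \acts \Gamma$ then make $H \acts \Delta$ proper and cocompact. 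The induced path metric $d_\Delta$ and the restriction $d_\Gamma|_\Delta$ satisfy $d_\Gamma|_\Delta \leq d_\Delta \leq d_\Gamma|_\Delta + 4R$, by routing any two vertices of $\Delta$ through nearest $Y$-points and using a $\Gamma$-geodesic between them.

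For part~(ii), given a pairwise intersecting family $\{B^\Delta_{\rho_i}(v_i)\}_{i \in I}$, after moving each centre to a nearest $Y$-point at the cost of inflating radii by $R$ I may assume $v_i \in Y$. The corresponding $\Gamma$-balls $\{B^\Gamma_{\rho_i}(v_i)\}$ pairwise intersect (because $d_\Gamma \leq d_\Delta$), so coarse Hellyness of $\Gamma$ produces a common point $w \in \Gamma$ with $d_\Gamma(w, v_i) \leq \rho_i + \xi$ for all $i$. If $w \in \Delta$, the metric comparison concludes; otherwise I would replace $w$ by some $v \in \Delta$ with $d_\Gamma(v, v_i) \leq \rho_i + \xi'$ for a universal $\xi'$. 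The pairwise bounds $d_\Gamma(v_i, v_j) \leq \rho_i + \rho_j + 2\xi$ force every pair of centres to be joined by a $\Gamma$-geodesic lying in $\Delta$; a coordinated choice of points along these geodesics --- obtained by iterating coarse Hellyness in $\Gamma$ with the $v_i$'s acting as $Y$-anchors --- should furnish the required $v$.

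For part~(i), I would bootstrap from (ii): the additional control afforded by strong quasiconvexity (over all $(\lambda,c)$-quasi-geodesics, not just $(1,c)$-ones) is precisely what is needed to verify that $\Delta$ has \emph{stable intervals}, since nearby vertices of $\Delta$ then have uniformly close families of connecting $\Gamma$-geodesics, all lying in $\Delta$. Theorem~\ref{thm:ccgho-helly} of \cite{ccgho}, which asserts that a coarsely Helly graph with stable intervals is Helly, then upgrades the coarse Hellyness of $\Delta$ to Hellyness.

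The main obstacle is the replacement step in part~(ii). A naive nearest-point projection $\pi \colon \Gamma \to Y$ is not coarsely $1$-Lipschitz in this setting: if $w$ lies far from $Y$, projecting gives only $d_\Gamma(\pi(w), v_i) \leq 2\rho_i + C$, a multiplicative loss incompatible with the coarse Helly conclusion one needs. Circumventing this requires genuinely exploiting the Helly structure of $\Gamma$ --- for instance, pairwise-intersecting intervals $I_\Gamma(v_i, v_j) \subseteq \Delta$ combined with an iterated Helly application --- rather than relying on purely metric features of $Y$.
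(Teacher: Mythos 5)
Your outline is reasonable and you have correctly located the crux: the coarse Helly point produced in the ambient graph need not lie near the orbit, and a nearest-point projection loses a multiplicative factor. What you have not supplied is the idea that actually closes this gap, so the proposal as written does not prove the theorem.

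The paper's trick, in your notation, is to \emph{shrink one ball}. Fix an index $j$, and let $\delta$ be the largest value of $d_\Gamma(v_i,v_j)-\rho_i$ over all $i$. If $\delta\leq 0$ then $v_j$ itself is the common point. Otherwise replace $B^\Gamma_{\rho_j}(v_j)$ by $B^\Gamma_{\delta}(v_j)$ — this tighter ball still meets every other ball, by the definition of $\delta$. Applying coarse Hellyness to the new family produces a point $y$, and the point is now that $\delta$ is \emph{achieved} by some $\ell$: combining $d_\Gamma(y,v_j)\leq\delta+\xi$, $d_\Gamma(y,v_\ell)\leq\rho_\ell+\xi$ and $d_\Gamma(v_j,v_\ell)=\rho_\ell+\delta$ yields $d_\Gamma(y,v_j)+d_\Gamma(y,v_\ell)-d_\Gamma(v_j,v_\ell)\leq 2\xi$. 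Thus $y$ lies on a $(1,2\xi)$-quasi-geodesic between two points of $Y$, which semi-strong quasiconvexity places in $\Delta$. This is the mechanism your vague ``iterated Helly application'' gestures at but does not pin down; without it the argument does not go through.

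Two further differences from the paper's route are worth flagging. First, rather than the induced subgraph of $\Gamma$ itself, the paper works inside a Vietoris--Rips rescaling $\Gamma_k$ (adjacency at $\Gamma$-distance $\leq k$) and takes $\Delta$ to be a subgraph of $\Gamma_k$; this collapses your additive $4R$ metric distortion to a bounded discrepancy of the form $\lceil d_\Gamma/k\rceil$ versus $d_\Delta$, which is essential for controlling the constants. Second, for part~(i) the paper does \emph{not} verify stable intervals and invoke Theorem~\ref{thm:ccgho-helly}. Instead it shows $\Delta$ embeds \emph{isometrically} in $\Gamma_k$ (this uses pseudo-modularity of $\Gamma$ and $(5,0)$-quasiconvexity — a specific, fixed parameter, not ``all $(\lambda,c)$''), takes the Hellyfication $\Theta=\operatorname{Helly}(\Delta)$, and observes that $\Theta$ sits inside the locally finite Helly graph $\Gamma_k$ and lies at bounded Hausdorff distance from $\Delta$ by the coarse Helly property of $\Delta$. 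Your stable-intervals heuristic is unverified and, because part~(i) in your plan bootstraps from part~(ii), inherits the gap above in any case.
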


We may use Theorem~\ref{thm:quasiconvex} to find new examples of groups that are not (coarsely) Helly. For instance, in \cite{hoda} Hoda characterised virtually nilpotent Helly groups, showing that they are all virtually abelian and act geometrically on $(\mathbb{R}^n,\|{-}\|_\infty)$ for some $n$. As a consequence, if $G$ is a group and $H < G$ is a strongly quasiconvex virtually nilpotent subgroup not satisfying these properties, then $G$ cannot be Helly.

\medskip

Our proof of Theorems~\ref{thm:helly}~\&~\ref{thm:coarsehelly} relies on the following result on relatively hyperbolic groups. We believe it is of independent interest from the point of view of the general theory of relative hyperbolicity. In what follows, we let $G$ be a finitely generated group hyperbolic relative to subgroups $H_1,\ldots,H_m$ and let $X$ be a finite generating set for $G$.
For $j \in \{1,\ldots,m\}$ and an integer $N \geq 1$, given a proper graph $\Gamma_j$ together with a geometric action $H_j \acts \Gamma_j$, we construct a (Vietoris--Rips) graph $\Gamma_{j,N}$ by adding edges to $\Gamma_j$ so that $v \sim w$ in $\Gamma_{j,N}$ if and only if $d_{\Gamma_j}(v,w) \leq N$. We then construct a graph $\Gamma(N)$ by, roughly speaking, taking a disjoint union of the barycentric subdivision of the Cayley graph $\Cay(G,X)$ and a copy of $\Gamma_{j,N}$ for each right coset of $H_j$ in $G$, and adding extra edges to make the graph connected. The graph $\Gamma(N)$ thus obtained comes equipped with a geometric action of $G$. The reader may refer to Section~\ref{ssec:GammaN} for precise definitions, and to Section~\ref{ssec:derived} for a construction of a `derived path' $\widehat{P} \subseteq \CayG$ given any path $P \subseteq \Gamma(N)$.

\begin{thm}[Geodesics to Quasi-geodesics] \label{thm:g->qg}
There exist constants $N \geq 1$, $\lambda \geq 1$, $c \geq 0$ and a finite collection $\Phi$ of non-geodesic paths in $\Gamma(N)$ with the following property. Let $P$ be a path in $\Gamma(N)$ having no parabolic shortenings and not containing any $G$-translate of a path in $\Phi$ as a subpath. Then the derived path $\widehat{P}$ in $\CayG$ is a $2$-local geodesic $(\lambda,c)$-quasi-geodesic that does not backtrack.
\end{thm}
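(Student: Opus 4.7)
The plan is to decompose the argument into the three assertions — 2-local geodesicity, the quasi-geodesic property, and non-backtracking — each of which I treat by combining a local combinatorial analysis of $P$ in $\Gamma(N)$ with the global $\delta$-hyperbolicity of $\CayG$ guaranteed by the relative hyperbolicity of $G$.

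First I would make the derived path concrete: $\widehat{P}$ is obtained from $P$ by replacing each maximal subpath of $P$ lying inside a single coset copy of $\Gamma_{j,N}$ by the corresponding $H_j$-edge in $\CayG$, and by pushing the `free' portions of $P$ forward to the corresponding sequence of $X$-edges. For the 2-local geodesicity of $\widehat{P}$, I argue by contradiction: if two consecutive edges $e_1 e_2$ of $\widehat{P}$ fail to form a geodesic, then in $P$ the preimage of $e_1 e_2$ admits a strict shortening. Either this shortening occurs because $e_1$ and $e_2$ represent successive excursions into the same coset (a parabolic shortening, excluded by hypothesis), or it is contained in a bounded neighbourhood of the meeting point of $e_1$ and $e_2$ in $\Gamma(N)$. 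Since $G \acts \Gamma(N)$ is cocompact, there are only finitely many $G$-orbits of such bounded configurations, and I collect a set of representatives into $\Phi$. The non-backtracking assertion is handled by the same mechanism: any re-entry of $\widehat{P}$ into a single coset lifts to a local configuration in $P$ that is either a parabolic shortening or a bounded configuration, so $\Phi$ can be enlarged to rule it out.

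For the global quasi-geodesic property, I invoke that $\CayG$ is $\delta$-hyperbolic, a setting in which a classical result asserts that every $k$-local geodesic with $k$ sufficiently large relative to $\delta$ is a $(\lambda,c)$-quasi-geodesic with explicit constants. To upgrade the 2-local conclusion to a $k$-local one I further enlarge $\Phi$: since $\widehat{P}$ failing to be $k$-locally geodesic pulls back to a bounded configuration in $\Gamma(N)$, and cocompactness again yields only finitely many $G$-orbits of such, $\Phi$ remains finite. The main obstacle is the precise coordination of the parameters. Enlarging $N$ absorbs more `small' in-coset detours into single $H_j$-edges — removing certain configurations from $\Phi$ — but also enlarges the combinatorial class of configurations that need to be ruled out. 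I expect to fix $N$ first in terms of $\delta$ and the geometries of the $\Gamma_j$, so that any shortening of $\widehat{P}$ passing through a single coset has diameter at most $N$ and therefore manifests as a single $H_j$-edge in $\CayG$, and only then list $\Phi$ relative to this $N$. Verifying that these choices can be made consistently — and that the forbidden configurations really capture every possible obstruction to the three conclusions — is the principal delicate step.
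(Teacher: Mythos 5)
Your high-level plan — establish $2$-local geodesicity, upgrade to a quasi-geodesic via a local-to-global theorem in the hyperbolic space $\CayG$, and use cocompactness to keep $\Phi$ finite — agrees with the paper's outline (Propositions~\ref{prop:lg->lg} and~\ref{prop:lg=qg}), but two steps as sketched do not go through.

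The more serious gap concerns non-backtracking. You claim that ``any re-entry of $\widehat{P}$ into a single coset lifts to a local configuration in $P$'' and can therefore be put into the finite forbidden set $\Phi$. This is false as stated: two connected $H_j$-components of $\widehat{P}$ can sit arbitrarily far apart along $\widehat{P}$, so the configuration in $\Gamma(N)$ that witnesses the backtrack is not bounded and does not fall into finitely many $G$-orbits. The paper avoids this trap by arguing in the opposite order: it first proves that $\widehat{P}$ is a $(\lambda,c)$-quasi-geodesic after forbidding only \emph{short} offending subwords, and then observes that a long backtracking subword of a $(\lambda,c)$-quasi-geodesic would have to have length $\leq \lambda+c \leq k$, i.e.\ would already be short — a contradiction. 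In other words, non-backtracking is a \emph{consequence} of the quasi-geodesic property together with excluding the short configurations; it cannot be obtained directly from local exclusions.

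The second gap is in the finiteness of $\Phi$. You appeal to cocompactness of $G \acts \Gamma(N)$ to conclude that bounded configurations form finitely many $G$-orbits. That works once you have a bound on the $\Gamma(N)$-length, but a subword of $\widehat{P}$ of length $\leq k$ can contain an $\mathcal{H}$-edge labelled by an a priori arbitrarily large element of $H_j$, whose preimage under Definition~\ref{defn:derived} is an arbitrarily long subpath of $P$. So cocompactness alone does not give finiteness. The paper's Proposition~\ref{prop:lg=qg} supplies the missing ingredient: Osin's polygon-closure estimate (Proposition~\ref{prop:osin-polygons}) shows that the isolated $\mathcal{H}$-components of any short non-backtracking cycle have bounded $\Omega$-length, hence bounded $X$-length, so that the collection $\widehat\Phi$ of forbidden \emph{words over $\XH$} is genuinely finite; only after this does the translation back to finitely many $G$-orbits of paths in $\Gamma(N)$ (and the choice of $N$ large enough so these words correspond to short paths, Assumption~\ref{ass:Nbigger}) become possible. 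Your sketch correctly anticipates the delicacy of fixing $N$ first, but without first identifying a finite list of dangerous $\mathcal{H}$-letters there is no way to carry out that choice, and $\delta$-hyperbolicity of $\CayG$ alone does not provide it.

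A smaller point: what you call a ``parabolic shortening'' — two successive excursions of $P$ into the same coset copy — is not what the paper's hypothesis excludes. The ``no parabolic shortenings'' condition only requires subpaths of $P$ lying inside a single copy of $\Gamma_{j,N}$ to be geodesic there; two excursions separated by a free vertex are instead ruled out by forbidding the short non-geodesic paths of length $\leq 5$ in $\mathcal{B}$. This does not affect the validity of your outline, since those configurations still end up in $\Phi$, but the terminology is misaligned with the definition you are given.
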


\medskip

We prove Theorem~\ref{thm:g->qg} in Section~\ref{sec:g->qg}. Theorem~\ref{thm:coarsehelly} follows immediately from Proposition~\ref{prop:helly-coarse}. The latter, together with Theorem~\ref{thm:helly} are proved in Section~\ref{sec:helly}. Theorem~\ref{thm:quasiconvex} (implying Theorem~\ref{thm:parabolics}) is proved in Section~\ref{sec:quasiconvex}.

\medskip

\noindent \textbf{Acknowledgements.} We thank Oleg Bogopolski for directing us towards Proposition~\ref{prop:osin-polygons}, and the anonymous referee for their valuable feedback.

\section{Preliminaries} \label{sec:prelim}

\subsection{Graphs and hyperbolicity} \label{ssec:graphs}

In our setting, a \emph{graph} $\Gamma$ is a set $V(\Gamma)$ of vertices together with a multiset $E(\Gamma)$ of edges $\{v,w\}$ for $v,w \in V(\Gamma)$; in particular, we allow loops and multiple edges in a graph. By a \emph{path} $P$ in a graph $\Gamma$ we mean a combinatorial path, i.e.\ a sequence of vertices $v_0,\ldots,v_n \in V(\Gamma)$ and edges $\{ v_i,v_{i+1} \} \in E(\Gamma)$. In this case, $|P| := n$ is said to be the \emph{length} of $P$, and we write $P_- := v_0$ and $P_+ := v_n$ for the starting and ending vertices of a path $P$, respectively. A path of length $0$ is said to be \emph{trivial}. We also write $\overline{P}$ for the path from $P_+$ to $P_-$ following the same edges as $P$ just in opposite order. A \emph{subpath} of $P$ is a path consisting of consecutive edges in $P$. Furthermore, given paths $P_0,\ldots,P_k$ in $\Gamma$ such that $(P_{i-1})_+ = (P_i)_-$ for $1 \leq i \leq k$, we write $P_0 P_1 \cdots P_k$ for the path obtained by concatenating the paths $P_i$.

We require all our graphs $\Gamma$ to be connected, i.e.\ we assume that for any $v,w \in V(\Gamma)$ there exists a path $P$ in $\Gamma$ such that $P_- = v$ and $P_+ = w$. In this case, we view a graph $\Gamma$ as a metric space with underlying set $V(\Gamma)$ and a metric
\[
d_\Gamma(v,w) = \min \{ |P| \mid P \text{ is a path in } \Gamma, P_-=v, P_+=w \}.
\]
Given some constants $\lambda \geq 1$ and $c \geq 0$, a path $P = (v_0,\ldots,v_n)$ in a graph $\Gamma$ is said to be a \emph{$(\lambda,c)$-quasi-geodesic} if $|i-j| \leq \lambda d_\Gamma(v_i,v_j) + c$ whenever $0 \leq i,j \leq n$ (equivalently, if $|Q| \leq \lambda d_\Gamma(Q_-,Q_+) + c$ for every subpath $Q \subseteq P$). A $(1,0)$-quasi-geodesic is said to be a \emph{geodesic}. Given some $k \geq 2$, we also say a path $P$ is a \emph{$k$-local geodesic} (respectively, a \emph{$k$-local $(\lambda,c)$-quasi-geodesic}) if every subpath $Q \subseteq P$ with $|Q| \leq k$ is a geodesic (respectively, a $(\lambda,c)$-quasi-geodesic).

The following definition of hyperbolicity, which we state for graphs, is usually stated for geodesic metric spaces. However, it is easy to see that a graph $\Gamma$ is hyperbolic in the sense of Definition \ref{defn:hypgraph} if and only if it is quasi-isometric to a hyperbolic geodesic metric space in the usual sense. Moreover, even though Lemma~\ref{lem:delta-slim} and Theorem~\ref{thm:qgclose} below are usually stated for geodesic metric spaces, they can be easily seen to apply to graphs (under our terminology) as well.

\begin{defn} \label{defn:hypgraph}
Let $\lambda \geq 1$ and $\delta,c \geq 0$, and let $\Gamma$ be a graph. $\Delta = PQR$ is said to be a \emph{geodesic} (respectively, \emph{$(\lambda,c)$-quasi-geodesic}) \emph{triangle} in $\Gamma$ if $P,Q,R \subseteq \Gamma$ are geodesic (respectively, $(\lambda,c)$-quasi-geodesic) paths with $R_+=P_-$, $P_+=Q_-$ and $Q_+=R_-$. A geodesic triangle $\Delta = PQR$ in $\Gamma$ is said to be \emph{$\delta$-thin} if given any two vertices $u \in R$ and $v \in P$ (respectively, $u \in P$ and $v \in Q$, $u \in Q$ and $v \in R$) such that $d_\Gamma(P_-,u) = d_\Gamma(P_-,v) \leq \frac{|R|+|P|-|Q|}{2}$ (respectively, $d_\Gamma(Q_-,u) = d_\Gamma(Q_-,v) \leq \frac{|P|+|Q|-|R|}{2}$, $d_\Gamma(R_-,u) = d_\Gamma(R_-,v) \leq \frac{|Q|+|R|-|P|}{2}$), we have $d_\Gamma(u,v) \leq \delta$. The graph $\Gamma$ is said to be \emph{$\delta$-hyperbolic} if all its geodesic triangles are $\delta$-thin; we say that $\Gamma$ is \emph{hyperbolic} if it is $\delta$-hyperbolic for some constant $\delta \geq 0$.
\end{defn}

The following result is very well-known. A triangle $\Delta$ satisfying the assumptions of Lemma~\ref{lem:delta-slim} is said to be \emph{$\delta$-slim}.

\begin{lem}[see {\cite[Proposition 2.21]{ghys-harpe}}] \label{lem:delta-slim}
Let $\Gamma$ be a graph and let $\delta \geq 0$. Suppose that for any geodesic triangle $\Delta = PQR$ in $\Gamma$ and for any vertex $u \in R$, there exists a vertex $v \in PQ$ such that $d_\Gamma(u,v) \leq \delta$. Then $\Gamma$ is hyperbolic.
\end{lem}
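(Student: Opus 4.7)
The plan is to promote the $\delta$-slim hypothesis to the thin-triangle condition of Definition~\ref{defn:hypgraph} with a larger constant $\delta' = O(\delta)$; explicitly, I aim for $\delta' = 4\delta$. Fix a geodesic triangle $\Delta = PQR$ with shared endpoints $x := R_+ = P_-$, $y := P_+ = Q_-$, $z := Q_+ = R_-$; write $t_x := \frac{|R|+|P|-|Q|}{2}$ for the Gromov product at $x$. Pick vertices $u \in R$, $v \in P$ with $d_\Gamma(x,u) = d_\Gamma(x,v) = t \leq t_x$; the goal is to bound $d_\Gamma(u,v)$ in terms of $\delta$ alone, and symmetric arguments then cover the other two pairs of sides.

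First I would apply the slim hypothesis to $u \in R$ to find $w \in P \cup Q$ with $d_\Gamma(u,w) \leq \delta$, and then establish a \emph{dichotomy}: if $t \leq t_x - \delta$, then $w$ must in fact lie on $P$. Indeed, supposing $w \in Q$ and setting $s := d_\Gamma(y, w)$, the inequalities $d_\Gamma(y,u) \geq |P|-t$ and $d_\Gamma(z,u) = |R|-t$ (from $P$ and $R$ being geodesics issuing from $x$) combine with $d_\Gamma(y,u) \leq s + \delta$ and $d_\Gamma(z,u) \leq (|Q|-s)+\delta$ to force $2t \geq |P|+|R|-|Q|-2\delta$, i.e.\ $t \geq t_x - \delta$, a contradiction.

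Once $w \in P$, the projection step is routine: since $P$ is a geodesic, $|d_\Gamma(x,w) - d_\Gamma(x,u)| \leq d_\Gamma(u,w) \leq \delta$, hence $d_\Gamma(v,w) \leq \delta$ (both $v,w \in P$ at comparable distances from $x$), giving $d_\Gamma(u,v) \leq 2\delta$. The narrow window $t \in (t_x - \delta, t_x]$ is absorbed by comparing $(u,v)$ with the pair $(u',v')$ at parameter $t_x - \delta$ and adding at most $2\delta$ via the triangle inequality; the corner case $t_x < \delta$ is immediate from $d_\Gamma(u,v) \leq 2t \leq 2\delta$. Repeating the argument with $P, Q, R$ cyclically permuted yields $4\delta$-thinness of every geodesic triangle, whence $\Gamma$ is $4\delta$-hyperbolic.

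The main obstacle is the dichotomy step — verifying that the slim-witness $w$ lands on $P$ rather than $Q$ throughout the generic range of $t$. Everything else is standard bookkeeping with the triangle inequality along geodesics; this is essentially the classical argument of Ghys--de la Harpe.
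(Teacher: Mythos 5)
Your argument is correct and reproduces the classical proof of Ghys--de la Harpe, which is exactly the reference the paper cites for this lemma (the paper itself gives no proof). Modulo minor bookkeeping at the boundary $t = t_x - \delta$ (your dichotomy argument gives $w \in P$ only for the \emph{strict} inequality $t < t_x - \delta$, so the narrow-window absorption needs a slightly adjusted split and may cost an extra additive constant beyond $4\delta$), the computation is sound and yields the $O(\delta)$-thin condition of Definition~\ref{defn:hypgraph}.
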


We also use the following well-known result on hyperbolic metric spaces.

\begin{thm}[see {\cite[Th\'eor\`eme 5.11]{ghys-harpe}}] \label{thm:qgclose}
For any $\delta,c \geq 0$ and $\lambda \geq 1$, there exists a constant $\zeta = \zeta(\delta,\lambda,c) \geq 0$ with the following property. Let $\Gamma$ be a $\delta$-hyperbolic graph, let $P \subseteq \Gamma$ be a $(\lambda,c)$-quasi-geodesic, and let $Q \subseteq \Gamma$ be a geodesic with $Q_- = P_-$ and $Q_+ = P_+$. Then the Hausdorff distance between $P$ and $Q$ is at most $\zeta$.
\end{thm}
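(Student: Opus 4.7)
The plan is to follow the classical Morse-lemma strategy, adapting the standard arguments for geodesic metric spaces to the combinatorial setting of graphs. The Hausdorff-distance bound splits into two asymmetric tasks: showing $P \subseteq N_{\zeta_0}(Q)$ and showing $Q \subseteq N_{\zeta_1}(P)$, with the former substantially harder. I would establish them in that order and set $\zeta := \max\{\zeta_0,\zeta_1\}$.

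For $P \subseteq N_{\zeta_0}(Q)$, the central ingredient is the exponential divergence of geodesics in a $\delta$-hyperbolic graph: any path whose interior stays at distance more than $K$ from a geodesic between its endpoints must have length at least exponential in $K - O(\delta)$. To exploit this, set $K := \max_{x \in P} d_\Gamma(x, Q)$, realized at some vertex $x_0 \in P$. Walk along $P$ from $x_0$ in both directions to the first vertices $x_-, x_+$ (or endpoints of $P$) satisfying $d_\Gamma(x_\pm, Q) \leq K/2$, and let $P_0 \subseteq P$ be the resulting subpath. Choosing nearest vertices $q_\pm \in Q$ to $x_\pm$ and concatenating a geodesic from $x_-$ to $q_-$, the subarc $\gamma \subseteq Q$ between $q_-$ and $q_+$, and a geodesic from $q_+$ to $x_+$ produces a path of length at most $|\gamma| + 2K$ between the endpoints of $P_0$. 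The exponential lower bound on $|P_0|$ in terms of $K$, combined with the quasi-geodesic upper bound $|P_0| \leq \lambda d_\Gamma(x_-, x_+) + c \leq \lambda(|\gamma| + 2K) + c$, forces $K \leq \zeta_0$ for some $\zeta_0 = \zeta_0(\delta,\lambda,c)$.

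For $Q \subseteq N_{\zeta_1}(P)$, I would combine the first direction with a coarse-Lipschitz property of nearest-point projection onto the geodesic $Q$. Fix some choice of projection $\pi : V(\Gamma) \to V(Q)$. Given the vertices $v_0,\ldots,v_n$ of $P$, $\delta$-slimness of geodesic triangles implies that $d_\Gamma(\pi(v_i), \pi(v_{i+1}))$ is bounded by a constant depending only on $\delta$. Since $\pi(v_0) = P_-$ and $\pi(v_n) = P_+$, for any $q \in Q$ between them some index $i$ must satisfy $d_\Gamma(\pi(v_i), q) \leq O(\delta)$ along $Q$. The triangle inequality together with $d_\Gamma(v_i, \pi(v_i)) \leq \zeta_0$ then yields the desired $\zeta_1 = \zeta_1(\delta,\lambda,c)$.

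The main obstacle is making the exponential-divergence estimate rigorous in the combinatorial setting. In a geodesic metric space it is typically derived by integrating a differential inequality along a rectifiable curve, but in a graph one has to argue combinatorially, propagating the growth across successive edges of $P_0$ using $\delta$-thin triangles formed from subpath endpoints and their projections on $Q$. Careful bookkeeping is needed to verify that the required slimness conditions apply at each step; once this is done, the remaining work is only routine tracking of the constants produced at each stage.
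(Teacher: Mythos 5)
The paper does not prove Theorem~\ref{thm:qgclose}; it is cited verbatim from Ghys--de la Harpe (Th\'eor\`eme 5.11), so there is no in-paper argument to compare against. The only question is whether your blind argument is sound, and there is a genuine gap in your first direction, $P \subseteq N_{\zeta_0}(Q)$.

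The exponential divergence estimate underlying the Morse lemma (Bridson--Haefliger III.H.1.6, or the corresponding lemma in Ghys--de la Harpe) bounds the distance from a point \emph{on a geodesic} to a rectifiable path with the same endpoints: for $x \in [p,q]$, one has $d(x,c) \leq \delta\log_2\ell(c)+1$. It therefore naturally controls $\sup_{q\in Q}d(q,P)$, and the classical proof bounds that quantity \emph{first}: one picks a maximizer $q_0\in Q$, walks a \emph{fixed} distance $2D_0$ along $Q$ to points $y,z$, and constructs a detour through $P$ missing $B_{D_0}(q_0)$; the lemma can be invoked at $q_0$ because $q_0$ lies on the geodesic $[y,z]$, and the detour's length is controlled because $d(y,z)\leq 4D_0$ by construction. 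Your version instead picks the maximizer $x_0$ on the quasi-geodesic $P$, where the estimate cannot be invoked: there is no geodesic through $x_0$ to project onto, and although $P_0$ stays at distance $>K/2$ from $Q$, nothing forces it away from the geodesic $\alpha$ joining $x_-$ to $x_+$, which may pass close to $x_0$ (indeed $\alpha$ stays within $2\delta + K/2$ of $Q$, while $x_0$ is only $K$ away, so no separation is guaranteed). Moreover, even if one granted an exponential lower bound on $|P_0|$, the inequality does not close: your upper bound $|P_0|\leq\lambda(|\gamma|+2K)+c$ involves $|\gamma|$, which is controlled only by $|\gamma|\leq K+d(x_-,x_+)\leq K+|P_0|$ because you chose $x_\pm$ as the \emph{first} vertices reaching distance $\leq K/2$ rather than vertices at a prescribed $O(K)$-distance from $x_0$; substituting gives $|P_0|\leq\lambda|P_0|+3\lambda K+c$, which is vacuous for $\lambda\geq 1$. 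The two directions should be proved in the opposite order: establish $Q\subseteq N_{D_1}(P)$ first via exponential divergence at a point of $Q$, then deduce $P\subseteq N_\zeta(Q)$ via the quasi-geodesic length bound and a connectivity argument along $Q$. Your second step has the right flavour, but it explicitly relies on $d(v_i,\pi(v_i))=d(v_i,Q)\leq\zeta_0$, which is the unestablished first direction.
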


A particular case of a graph is the Cayley graph $\Cay(G,Y)$ of a group $G$ with respect to a (not necessarily finite) generating set $Y$. Formally, we view $Y$ as an abstract set together with a map $\epsilon: Y \to G$ such that the image of $\epsilon$ generates $G$: in particular, this allows considering Cayley graphs with multiple edges. We assume that $Y$ is equipped with an involution $\iota: Y \to Y$ such that $\epsilon(\iota(y)) = \epsilon(y)^{-1}$ for all $y \in Y$. For simplicity, we also assume that $1 \notin \epsilon(Y)$, so that the Cayley graphs we consider do not contain loops. The Cayley graph $\Cay(G,Y)$ then has $G$ as its vertex set and edge $e$ from $g$ to $\epsilon(y)g$ for any $g \in G$ and $y \in Y$. We label such a directed edge $e$ by the letter $y$, and identify its inverse $\overline{e}$ with the edge from $\epsilon(y)g$ to $g$ labelled by $\iota(y)$.

For simplicity of notation, we write $d_Y(g,h)$ for $d_{\Cay(G,Y)}(g,h)$ whenever $g,h \in G$. Moreover, for a path $P \subseteq \Cay(G,Y)$ and a symmetric generating set $\Omega$ of $G$ not containing the identity, we write $|P|_\Omega$ for $d_\Omega(P_-,P_+)$; we furthermore allow for the possibility that $\Omega \subseteq G$ is an arbitrary symmetric subset of $G$ not containing the identity, in which case we set $|P|_\Omega = \infty$ whenever $(P_+)(P_-)^{-1}$ is not in the subgroup generated by $\Omega$.

\begin{rmk}
All of this formalism regarding the generating sets might seem unnecessary. However, it allows us to use different `generators' to represent the same element of the group $G$. This allows simplifications in our arguments when $G$ is hyperbolic relative to a collection of subgroups $\{ H_1,\ldots,H_m \}$ in the case when the subgroups $H_j$ intersect non-trivially.
\end{rmk}

We also label any path $P$ in $\Cay(G,Y)$ by a word $y_1 \cdots y_n$ over $Y$ if $P$ is the path from $g$ to $\epsilon(y_n) \cdots \epsilon(y_1) g$ (for some $g \in G$) following the edges labelled by $y_1,\ldots,y_n$, in this order. We say a word over $Y$ is \emph{geodesic} (respectively, \emph{$(\lambda,c)$-quasi-geodesic}) if it labels a geodesic (respectively, $(\lambda,c)$-quasi-geodesic) path $P \subseteq \Cay(G,Y)$; note that this property does not depend on the choice of $P$. We similarly define $k$-local geodesic and $k$-local $(\lambda,c)$-quasi-geodesic words over $Y$.

\subsection{Relatively hyperbolic groups} \label{ssec:relhyp}

Our approach to relatively hyperbolic groups follows the approach of D.~V.~Osin \cite{osin06}.

Let $G$ be a group and let $H_1,\ldots,H_m$ be a finite collection of distinct subgroups of $G$. Suppose $G$ is finitely generated, i.e.\ there exists a surjective group homomorphism $\hat\epsilon: F(X) \to G$ for a finite set $X$; without loss of generality, suppose that $\hat\epsilon|_X$ is injective, and that $\hat\epsilon(X)$ is symmetric and does not contain $1 \in G$. Thus, $\hat\epsilon$ extends to a surjective homomorphism $\epsilon: F = F(X) * (\ast_{i=1}^m \widetilde{H}_i) \to G$, such that $\epsilon$ maps each group $\widetilde{H}_i$ isomorphically onto $H_i$. We say that $G$ is \emph{finitely presented with respect to $\{ H_1,\ldots,H_m \}$} if $\ker \epsilon$ is the normal closure (in $F$) of a finite subset $\mathcal{R} \subseteq F$. We also write
\begin{equation} \label{eq:relpres}
G = \langle X, \{ H_1,\ldots,H_m \} \mid \mathcal{R} \rangle
\end{equation}
for a \emph{relative presentation} of $G$, which is said to be \emph{finite} if $X$ and $\mathcal{R}$ are finite.

Now suppose $G$ is finitely presented with respect to $\{H_1,\ldots,H_m\}$ with a finite relative presentation \eqref{eq:relpres}. Let $\mathcal{H} = \bigsqcup_{j=1}^m (\widetilde{H}_j \setminus \{1\})$. We say $f: \N \to \N$ is a \emph{relative isoperimetric function} of the presentation \eqref{eq:relpres} if for all $n \geq 1$ and all words $W$ over $\XH$ of length $n$ with $\epsilon(W) =_G 1$ we have
\[
W =_F \prod_{i=1}^k g_i^{-1} R_i^{\pm 1} g_i
\]
for some $k \leq f(n)$ and some elements $g_i \in F$, $R_i \in \mathcal{R}$. A minimal relative isoperimetric function (if it exists) is called a \emph{relative Dehn function}, and a function $f: \N \to \N$ is said to be \emph{linear} if there exist $a,b \in \N$ such that $f(n) \leq an+b$ for all $n$.

\begin{defn}[Osin {\cite[Definition 2.35]{osin06}}]
We say that the group $G$ is \emph{hyperbolic relative to $\{ H_1,\ldots,H_m \}$} if $G$ is finitely presented with respect to $\{ H_1,\ldots,H_m \}$, and the relative Dehn function of this presentation exists and is linear.
\end{defn}

Now consider the Cayley graph $\CayG$, defined in Section~\ref{ssec:graphs} for $Y = \XH$. In this case, we take the involution $\iota: \XH \to \XH$ such that $\iota(X) = X$ and $\iota(\widetilde{H}_j \setminus \{1\}) = \widetilde{H}_j \setminus \{1\}$ for each $j$: together with the condition that $\epsilon(\iota(y)) = \epsilon(y)^{-1}$ for each $y \in \XH$, this defines $\iota$ uniquely.

We say a path $\pathXH{P}$ in $\CayG$ is an \emph{$H_j$-path} if all edges of $\pathXH{P}$ are labelled by elements of $\widetilde{H}_j$; an $H_j$-path of length $1$ is called an \emph{$H_j$-edge} or an \emph{$\mathcal{H}$-edge}. A maximal $H_j$-subpath of a path $\pathXH{P}$ is said to be an \emph{$H_j$-component} (or simply a \emph{component}). Given two paths $\pathXH{P}$ and $\pathXH{Q}$ in $\CayG$, an $H_i$-component $\pathXH{P}'$ of $\pathXH{P}$ is said to be \emph{connected} to an $H_j$-component $\pathXH{Q}'$ of $\pathXH{Q}$ if $i = j$ and $\pathXH{P}'_- ( \pathXH{Q}'_- )^{-1} \in H_j$ (note that this defines an equivalence relation). A component $\pathXH{P}'$ of a path $\pathXH{P}$ is said to be \emph{isolated} if it is not connected to any other component of $\pathXH{P}$, and we say a path $\pathXH{P}$ in $\CayG$ \emph{does not backtrack} if all its components are isolated. We also say that a subword of a word $\wordXH{P}$ over $\XH$ is a \emph{component} of $\wordXH{P}$ if the corresponding subpath of a path $\pathXH{P}$ labelled by $\wordXH{P}$ is a component of $\pathXH{P}$, and that $\wordXH{P}$ \emph{does not backtrack} if $\pathXH{P}$ does not backtrack.

A vertex $v$ of a path $\pathXH{P}$ is said to be \emph{non-phase} if it belongs to the interior of some component of $\pathXH{P}$, and \emph{phase} otherwise. Note that a geodesic path does not backtrack and all its vertices are phase. Given $k \in \N$, two paths $\pathXH{P},\pathXH{Q}$ are said to be \emph{$k$-similar} if $d_X(\pathXH{P}_-,\pathXH{Q}_-) \leq k$ and $d_X(\pathXH{P}_+,\pathXH{Q}_+) \leq k$. For future reference, given a path $\pathXH{P} = \pathXH{P}_1 \pathXH{U} \pathXH{P}_2$ in $\CayG$ we define $\pathXH{V}$ to be a \emph{subpath of $\pathXH{P}$ preceding $\pathXH{U}$} if $\pathXH{V}$ is a subpath of $\pathXH{P}_1$.

In our proofs of Theorem~\ref{thm:g->qg} and Proposition~\ref{prop:BCPtriangles} below, we use the following results due to D.~V.~Osin. In all of these results, $G$ is a (fixed) finitely generated group that is hyperbolic relative to subgroups $\{ H_1,\ldots,H_m \}$, $X$ is a finite generating set for $G$, and $\mathcal{H}$ is as above.

\begin{thm}[Bounded Coset Penetration; Osin {\cite[Theorem 3.23]{osin06}}] \label{thm:bcp}
For any $\lambda \geq 1$, $c \geq 0$ and $k \in \N$, there exists a constant $\varepsilon = \varepsilon(\lambda,c,k) \in \N$ with the following property. Let $\pathXH{P}$ and $\pathXH{Q}$ be two $k$-similar $(\lambda,c)$-quasi-geodesics in $\CayG$ that do not backtrack. Then
\begin{enumerate}[label=\textup{(\roman*)}]
\item \label{it:bcp-phase} for any phase vertex $u$ of $\pathXH{Q}$, there exists a phase vertex $v$ of $\pathXH{P}$ such that $d_X(u,v) \leq \varepsilon$;
\item \label{it:bcp-conn} for any component $\pathXH{Q}'$ of $\pathXH{Q}$ with $d_X(\pathXH{Q}'_-,\pathXH{Q}'_+) > \varepsilon$, there exists a component of $\pathXH{P}$ connected to $\pathXH{Q}'$; and
\item \label{it:bcp-close} for any two connected components $\pathXH{P}',\pathXH{Q}'$ of $\pathXH{P},\pathXH{Q}$ (respectively), we have $d_X(\pathXH{P}'_-,\pathXH{Q}'_-) \leq \varepsilon$ and $d_X(\pathXH{P}'_+,\pathXH{Q}'_+) \leq \varepsilon$.
\end{enumerate}
\end{thm}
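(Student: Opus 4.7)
The plan is to reduce everything to an analysis of van Kampen diagrams over the finite relative presentation of $G$, exploiting the fact that its relative Dehn function is linear. First I would close the two quasi-geodesics into a quadrilateral. Let $\pathXH{R}_-$ and $\pathXH{R}_+$ be $X$-geodesics joining $\pathXH{P}_-$ to $\pathXH{Q}_-$ and $\pathXH{Q}_+$ to $\pathXH{P}_+$ respectively, each of length $\leq k$; then $\pathXH{C} = \pathXH{P}\, \pathXH{R}_+\, \overline{\pathXH{Q}}\, \pathXH{R}_-$ is a closed cycle in $\CayG$ of perimeter $\leq |\pathXH{P}| + |\pathXH{Q}| + 2k$. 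Since the label of $\pathXH{C}$ represents $1 \in G$, it bounds a van Kampen diagram $\Delta$ over the relative presentation $\langle X, \{H_1,\ldots,H_m\} \mid \mathcal{R}\rangle$, and linearity of the relative Dehn function yields an area bound $|\Delta| \leq A(|\pathXH{P}| + |\pathXH{Q}| + 2k) + B$ for constants $A,B$ depending only on the presentation.

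The main tool is the equivalence relation on $\mathcal{H}$-edges of $\Delta$ generated by the rule that two $H_j$-edges are equivalent if they bound a common $2$-cell; an equivalence class whose edges all lie in the interior of $\Delta$ is called an \emph{internal $\mathcal{H}$-component}, while a class touching the boundary is a \emph{boundary $\mathcal{H}$-component}. The crucial finiteness input from the relative presentation is that every relator in $\mathcal{R}$ has bounded $\XH$-length, so each $2$-cell contributes at most a bounded number of $\mathcal{H}$-edges to any single class; combined with the $O(|\pathXH{P}|+|\pathXH{Q}|+k)$ area bound, this limits how many relators can sit between two parts of the boundary.

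For conclusion \ref{it:bcp-phase}, I would take a phase vertex $u \in \pathXH{Q}$ and, using the hyperbolicity of $\CayG$ together with the quasi-geodesicity and non-backtracking of the boundary, locate a path inside $\Delta$ from $u$ to $\pathXH{P}\pathXH{R}_+\pathXH{R}_-$ that, after collapsing every $\mathcal{H}$-component it traverses to its (bounded-in-$X$) endpoints, has $X$-length controlled by the diameter of a bounded-area subdiagram. For \ref{it:bcp-conn}, if an $H_j$-component $\pathXH{Q}'$ of $\pathXH{Q}$ is not connected to any component of $\pathXH{P}$, then its boundary $\mathcal{H}$-component in $\Delta$ cannot exit through $\pathXH{P}$; non-backtracking forbids it from exiting through $\pathXH{Q}$ at a different place, and the two short sides $\pathXH{R}_\pm$ have $\XH$-length $\leq k$. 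Hence the component must be entirely absorbed by relators whose number is bounded by the area; tracking $X$-lengths around the absorbing $2$-cells yields a universal bound $\varepsilon$ on $d_X(\pathXH{Q}'_-,\pathXH{Q}'_+)$. Conclusion \ref{it:bcp-close} follows by applying the same bounded-area sub-diagram analysis to the region delimited by connected boundary components $\pathXH{P}'$ and $\pathXH{Q}'$, together with the pieces of $\pathXH{R}_\pm$ cut off by them.

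The hard part is the bookkeeping in \ref{it:bcp-conn}: one needs a uniform $\varepsilon$ that does not degrade as $|\pathXH{P}|, |\pathXH{Q}| \to \infty$, even though the total area of $\Delta$ grows linearly with these lengths. The essential point is to localise the argument to a sub-diagram whose area depends only on $\lambda$, $c$ and $k$ — this is where the combinatorial finiteness coming from the finite set of relators $\mathcal{R}$ and the non-backtracking hypothesis is genuinely used, since without it a single long component could be repeatedly split by a large cascade of relators while still reaching the boundary only in short pieces.
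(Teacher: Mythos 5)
The paper does not prove this theorem: it is imported verbatim from Osin's monograph as a black-box tool, so there is no ``paper's own proof'' to compare against. Evaluating your sketch on its own merits, it does identify the right objects (van Kampen diagrams over the relative presentation, $\mathcal{H}$-components of the diagram, the linear relative isoperimetric inequality), all of which genuinely appear in Osin's treatment, but the central difficulty is not resolved.

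The linear relative Dehn function gives a diagram whose area is bounded by $A(|\pathXH{P}|+|\pathXH{Q}|+2k)+B$, which grows with the lengths of the two sides, whereas the constant $\varepsilon$ must be uniform. You flag this explicitly at the end, but the proposed remedy --- ``localise the argument to a sub-diagram whose area depends only on $\lambda$, $c$ and $k$'' --- is precisely the nontrivial step, and no mechanism for it is given. The statement that makes it work is what the present paper records separately as Proposition~\ref{prop:osin-polygons}: in a closed $n$-gon in $\CayG$ whose non-isolated sides are $(\lambda,c)$-quasi-geodesics, the isolated components have total $\Omega$-length at most $Ln$, a bound depending on the \emph{number of sides} rather than the perimeter. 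That lemma is not a corollary of the raw area bound; it requires its own argument (contract the isolated component to a point and do diagram surgery, reusing linearity of the Dehn function on a subdiagram whose boundary length is controlled by $n$ and the quasi-geodesic constants alone). Once it is available, part~\ref{it:bcp-conn} follows by applying it to the (at most) hexagon formed by $\pathXH{P}$, $\pathXH{R}_\pm$, the two halves of $\pathXH{Q}$, and the candidate isolated component $\pathXH{Q}'$; a similar decomposition handles part~\ref{it:bcp-close}. As written, your sketch appeals only to the global area bound and to ``tracking $X$-lengths around the absorbing $2$-cells,'' which does not exclude a long chain of relators contributing a total $X$-displacement growing with $|\pathXH{P}|+|\pathXH{Q}|$.

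There is also a gap in the sketch of part~\ref{it:bcp-phase}: hyperbolicity of $\CayG$ shows the two quasi-geodesics are close in the $\XH$-metric, but an $\mathcal{H}$-edge has $\XH$-length $1$ while its $X$-length can be arbitrarily large, so closeness in $\CayG$ does not directly yield the required $X$-bound. One must additionally rule out the scenario in which the nearby vertex of $\pathXH{P}$ is separated from $u$ by a single very long $\mathcal{H}$-edge; this again reduces to the polygon lemma (or to part~\ref{it:bcp-close}), and needs to be made explicit.
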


\begin{thm}[Osin {\cite[Theorem 3.26]{osin06}}] \label{thm:triangles}
There exists a constant $\nu \in \N$ such that the following holds. Let $\Delta = \pathXH{PQR}$ be a geodesic triangle in $\CayG$. Then for any vertex $u$ of $\pathXH{P}$, there exists a vertex $v$ of $\pathXH{Q} \cup \pathXH{R}$ such that $d_X(u,v) \leq \nu$.
\end{thm}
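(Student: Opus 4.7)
The plan is to combine two facts about $\CayG$: it is hyperbolic in its combinatorial metric $d_{\XH}$, and its geodesics satisfy the Bounded Coset Penetration property of Theorem~\ref{thm:bcp}. Hyperbolicity will give $d_{\XH}$-closeness between the sides of $\Delta$, and BCP will upgrade this to $d_X$-closeness.

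The first step is to establish that $\CayG$ is $\delta_0$-hyperbolic as a graph, for some $\delta_0 \geq 0$. This follows from a classical argument: since $G$ admits a finite relative presentation with linear relative isoperimetric function, the standard proof that a linear Dehn function forces hyperbolicity of the Cayley graph carries over verbatim to the relative setting, treating each $\widetilde{H}_j \setminus \{1\}$ as a set of extra generators for $\CayG$. Applying Lemma~\ref{lem:delta-slim} to $\Delta = \pathXH{PQR}$ and any $u \in \pathXH{P}$ produces a vertex $w \in \pathXH{Q} \cup \pathXH{R}$ together with a $\CayG$-geodesic $\pathXH{\alpha}$ from $u$ to $w$ of length at most $\delta_0$.

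The main obstacle is to pass from the $d_{\XH}$-bound $|\pathXH{\alpha}| \leq \delta_0$ to a bound in the $d_X$ metric: a single $\mathcal{H}$-edge of $\pathXH{\alpha}$ can join vertices arbitrarily far apart in $d_X$. Let $\varepsilon_0 = \varepsilon(1,0,0)$ denote the BCP constant from Theorem~\ref{thm:bcp} for geodesics. I would argue by induction on the number of \emph{long} $\mathcal{H}$-edges in $\pathXH{\alpha}$, meaning those with endpoints at $d_X$-distance greater than $\varepsilon_0$. If there are none, then $d_X(u,w) \leq \delta_0 \varepsilon_0$ and we take $v = w$. Otherwise, splitting $\pathXH{P}$ at $u$ and $\pathXH{Q} \cup \pathXH{R}$ at $w$ yields a concatenation $\pathXH{P}_2 \pathXH{Q}_1$ of two geodesic subpaths of the triangle joining $u$ to $w$ through a corner of $\Delta$, which is a $(1, 2\delta_0)$-quasi-geodesic. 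Applying part~(ii) of Theorem~\ref{thm:bcp} to the pair $(\pathXH{\alpha}, \pathXH{P}_2 \pathXH{Q}_1)$ (after a backtracking-removal step if needed) forces each long $\mathcal{H}$-edge $e$ of $\pathXH{\alpha}$ to be connected to a component of either $\pathXH{P}_2$ or $\pathXH{Q}_1$.

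If $e$ is connected to a component of $\pathXH{Q}_1 \subseteq \pathXH{Q} \cup \pathXH{R}$, then part~(iii) of Theorem~\ref{thm:bcp} places an endpoint of $e$ within $d_X$-distance $\varepsilon_0$ of a vertex of $\pathXH{Q} \cup \pathXH{R}$, and applying the induction hypothesis to the prefix of $\pathXH{\alpha}$ from $u$ to that endpoint (which has strictly fewer long $\mathcal{H}$-edges) yields the result. If instead $e$ is connected to a component of $\pathXH{P}_2$, I would reroute $\pathXH{\alpha}$ across that component of $\pathXH{P}$ and the two short $X$-bridges supplied by part~(iii): the new path from $u$ to $w$ has strictly fewer long $\mathcal{H}$-edges, so the induction hypothesis applies. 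A choice of $\nu$ in terms of $\delta_0$ and $\varepsilon_0$ then follows. I anticipate the main technical difficulties to be the backtracking-removal needed before each BCP application, the choice of splitting corner depending on whether $w \in \pathXH{Q}$ or $w \in \pathXH{R}$, and the verification that the rerouting genuinely reduces the long-$\mathcal{H}$-edge count at each step.
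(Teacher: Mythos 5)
The paper does not contain its own proof of this statement---it is cited verbatim from Osin's monograph, and the accompanying remark only addresses independence of the finite generating set $X$. So your argument has to be judged on its own terms, and there is a genuine gap.

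The crucial false step is the claim that $\pathXH{P}_2\pathXH{Q}_1$, the concatenation running from $u$ through the corner $\pathXH{P}_+=\pathXH{Q}_-$ to $w$, is a $(1,2\delta_0)$-quasi-geodesic. Its length equals $d_{\XH}(u,\pathXH{P}_+)+d_{\XH}(\pathXH{P}_+,w)$, which is unbounded, while $d_{\XH}(u,w)\le\delta_0$; since the quasi-geodesic condition is tested on the entire path, $\pathXH{P}_2\pathXH{Q}_1$ is generically only a $2$-local geodesic and is not a $(\lambda,c)$-quasi-geodesic for any uniform $(\lambda,c)$. Consequently Theorem~\ref{thm:bcp} cannot be applied to the pair $(\pathXH{\alpha},\pathXH{P}_2\pathXH{Q}_1)$. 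What holds uniformly in this configuration is that $\overline{\pathXH{P}_2}\pathXH{\alpha}$ is a $(1,2\delta_0)$-quasi-geodesic with the same endpoints as the geodesic $\pathXH{Q}_1$ (or symmetrically $\pathXH{\alpha}\,\overline{\pathXH{Q}_1}$ compared with $\pathXH{P}_2$), but repairing the proof along those lines requires a different case analysis than the one you sketch. There is also a problem with the rerouting step: if $\pathXH{e}$ is connected to an $\mathcal{H}$-component $\pathXH{f}$ of $\pathXH{P}$, then by Theorem~\ref{thm:bcp}\ref{it:bcp-close} the endpoints of $\pathXH{f}$ lie within $X$-distance $\varepsilon_0$ of those of $\pathXH{e}$, so $\pathXH{f}$ is itself a long $\mathcal{H}$-edge (up to an additive $2\varepsilon_0$); replacing $\pathXH{e}$ by the two bridges and $\pathXH{f}$ therefore does \emph{not} decrease the count of long $\mathcal{H}$-edges, and the claimed induction need not terminate. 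Passing from bigons to genuine triangles is exactly the point where a polygonal penetration estimate is needed rather than an iterated bigon BCP; that is the role Proposition~\ref{prop:osin-polygons} plays in the paper's Proposition~\ref{prop:BCPtriangles}, which, however, cannot be used here since its proof already invokes Theorem~\ref{thm:triangles}. (A small additional point: Lemma~\ref{lem:delta-slim} asserts that slim triangles imply hyperbolicity, not the converse; the slimness you need follows from Definition~\ref{defn:hypgraph}, not from that lemma.)
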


\begin{prop}[Osin {\cite[Proposition 3.2]{osin07}}] \label{prop:osin-polygons}
For any $\lambda \geq 1$ and $c \geq 0$, there exists a finite subset $\Omega \subseteq G$ and a constant $L = L(\lambda,c) \in \N$ such that the following holds. Let $n \geq 1$, let $\pathXH{Q} = \pathXH{P}_1 \cdots \pathXH{P}_n$ be a closed path in $\CayG$, and suppose that there exists a subset $I \subseteq \{1,\ldots,n\}$ such that $\pathXH{P}_i$ is an isolated component of $\pathXH{Q}$ if $i \in I$ and a $(\lambda,c)$-quasi-geodesic otherwise. Then the $\Omega$-lengths of the $\pathXH{P}_i$ for $i \in I$ satisfy
\[
\sum_{i \in I} |\pathXH{P}_i|_\Omega \leq Ln.
\]
\end{prop}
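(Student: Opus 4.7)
The plan is to prove the proposition via a careful analysis of Van Kampen diagrams over the relative presentation \eqref{eq:relpres}, leveraging the linear relative isoperimetric function together with the combinatorial thinness afforded by Theorems~\ref{thm:bcp} and~\ref{thm:triangles}. First, I define the finite subset $\Omega \subseteq G$ as follows: for each relator $R \in \mathcal{R}$, decompose its cyclic word into maximal subwords over the alphabets $X$ and $\widetilde{H}_j \setminus \{1\}$ (for $j = 1,\ldots,m$), and let $\Omega$ consist of all elements $\epsilon(W) \in G$ arising as such maximal $\widetilde{H}_j$-subwords $W$, taken over all $R \in \mathcal{R}$ and all $j$. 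Since $\mathcal{R}$ is finite and each relator has bounded length, $\Omega$ is finite; by construction every $\widetilde{H}_j$-portion of any relator boundary corresponds to an element of $\Omega$.

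Next, I would straighten the quasi-geodesic sides: using Theorem~\ref{thm:qgclose} together with the BCP property (Theorem~\ref{thm:bcp}), replace each $\pathXH{P}_i$ for $i \notin I$ by a geodesic $\pathXH{P}_i'$ with the same endpoints and without backtracking, while arranging (via BCP applied to the $0$-similar pair $\pathXH{P}_i,\pathXH{P}_i'$) that no component of $\pathXH{P}_i'$ becomes connected to a distinguished isolated $\pathXH{P}_k$ ($k \in I$). The new closed path $\pathXH{Q}'$ has the same corners as $\pathXH{Q}$, so $|\pathXH{P}_i|_\Omega$ is unchanged for $i \in I$. I would then build a Van Kampen diagram $D$ for $\pathXH{Q}'$ over the relative presentation. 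The core algebraic observation is that, because each $\pathXH{P}_i$ ($i \in I$) is an isolated component, none of its edges can be cancelled against edges of any other side of $\pathXH{Q}'$; hence every edge of $\pathXH{P}_i$ must lie on the boundary of some $2$-cell of $D$, and the portion of that cell's boundary inside $\widetilde{H}_{j(i)}$ contributes an element of $\Omega$. Consequently $|\pathXH{P}_i|_\Omega$ is bounded by a constant times the number of $2$-cells of $D$ adjacent to $\pathXH{P}_i$.

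The main obstacle is that the naive linear isoperimetric bound yields at most $C \cdot |\pathXH{Q}'|$ $2$-cells, and $|\pathXH{Q}'|$ may vastly exceed $n$. To bypass this, I would triangulate the $n$-gon $\pathXH{Q}'$ into $n-2$ relative triangles by choosing diagonals that are geodesics in $\CayG$, invoke Theorem~\ref{thm:triangles} to obtain $\nu$-thin skeletons, and argue that within each geodesic triangle the total number of $2$-cells incident to isolated components of the original $\pathXH{Q}$ is bounded by a constant depending only on $\lambda$, $c$, $\nu$, and $\varepsilon(\lambda,c,0)$. Summing these per-triangle contributions over the $n-2$ triangles and absorbing constants into $L$ produces the desired bound $\sum_{i \in I}|\pathXH{P}_i|_\Omega \leq Ln$. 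The hard part I expect is twofold: first, verifying that a component of $\pathXH{Q}$ that is isolated in $\pathXH{Q}$ remains isolated relative to each triangle of the triangulation (so that the cell-counting argument applies locally); and second, ensuring that each individual $2$-cell contributes to the $\Omega$-length of at most one isolated $\pathXH{P}_i$, which again reduces to a BCP-style estimate on how parabolic pieces of relator boundaries can interact with distinct isolated components.
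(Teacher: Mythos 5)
The paper does not prove this proposition: it is cited verbatim from Osin~\cite[Proposition~3.2]{osin07}, so there is no internal proof to compare your attempt against. Judged on its own, your sketch assembles the right toolkit for such a result---the linear relative Dehn function, Van Kampen diagrams over the relative presentation, the BCP property, and a triangulation of the $n$-gon---but the two difficulties you explicitly flag at the end are genuine gaps, not routine verifications, and your text does not close either of them.

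The central gap is the triangulation step. A geodesic diagonal of $\pathXH{Q}$ has no reason to avoid the $\widetilde{H}_j$-coset containing a given isolated side $\pathXH{P}_i$; if a diagonal enters that coset, $\pathXH{P}_i$ ceases to be isolated in the triangle it bounds and the per-triangle argument does not apply to it. The straightening step has a parallel problem: replacing a quasi-geodesic side $\pathXH{P}_i$ ($i\notin I$) by a geodesic $\pathXH{P}_i'$ may introduce components of $d_X$-length at most $\varepsilon$, on which Theorem~\ref{thm:bcp}\ref{it:bcp-conn} is silent, and one such short component could be connected to an isolated $\pathXH{P}_k$, destroying its isolation in the new polygon. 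Furthermore, even granting isolatedness, the per-triangle bound on the number of $2$-cells incident to an isolated side is not a direct consequence of the linear isoperimetric inequality---that inequality bounds the area of a filling linearly in boundary length, not by a constant independent of it---so a separate counting argument is required. Note finally that you cannot appeal to anything like Proposition~\ref{prop:BCPtriangles}\ref{it:bcpt-conn} to control a triangle with an isolated side: that proposition is proved in this paper \emph{using} Proposition~\ref{prop:osin-polygons}, so any such appeal would be circular.
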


\begin{rmk}
Theorems~\ref{thm:bcp} and~\ref{thm:triangles} are stated in \cite{osin06} only for generating sets $X \subseteq G$ that satisfy a certain technical condition stated in the beginning of \cite[\S 3]{osin06}. However, it is easy to see that the statement of Theorem~\ref{thm:bcp} is independent of the choice of a finite symmetric generating set $X$ (up to possibly increasing the constant $\varepsilon$). Using Theorem~\ref{thm:bcp}\ref{it:bcp-phase}, we can also show that Theorem~\ref{thm:triangles} holds independently of the choice of a finite symmetric generating set $X$.
\end{rmk}

We end our introduction to relatively hyperbolic groups by proving the following result, which we use in our proof of Theorem~\ref{thm:coarsehelly}. This can be viewed as a version of Theorem~\ref{thm:bcp} stated for triangles instead of `bigons'.

\begin{prop}
\label{prop:BCPtriangles}
For any $\lambda \geq 1$ and $c \geq 0$, there exists a constant $\mu = \mu(\lambda,c)$ with the following property. Let $\Delta = \pathXH{PQR}$ be a non-backtracking $(\lambda,c)$-quasi-geodesic triangle in the Cayley graph $\CayG$ (i.e.\ $\pathXH{P},\pathXH{Q},\pathXH{R}$ are $(\lambda,c)$-quasi-geodesics in $\CayG$ that do not backtrack such that $\pathXH{P}_+ = \pathXH{Q}_-$, $\pathXH{Q}_+ = \pathXH{R}_-$ and $\pathXH{R}_+ = \pathXH{P}_-$). Then
\begin{enumerate}[label=\textup{(\roman*)}]
\item \label{it:bcpt-phase} for any phase vertex $u$ of $\pathXH{R}$, there exists a phase vertex $v$ of $\pathXH{P}$ or of $\pathXH{Q}$ such that $d_X(u,v) \leq \mu$;
\item \label{it:bcpt-conn} for any component $\pathXH{R}'$ of $\pathXH{R}$ with $d_X(\pathXH{R}'_-,\pathXH{R}'_+) > \mu$, there exists a component of $\pathXH{P}$ or of $\pathXH{Q}$ connected to $\pathXH{R}'$;
\item \label{it:bcpt-sim2} if a component $\pathXH{R}'$ of $\pathXH{R}$ is connected to a component $\pathXH{P}'$ of $\pathXH{P}$ but is not connected to any component of $\pathXH{Q}$, we have $d_X(\pathXH{R}'_+,\pathXH{P}'_-) \leq \mu$ and $d_X(\pathXH{P}'_+,\pathXH{R}'_-) \leq \mu$; and
\item \label{it:bcpt-sim3} if a component $\pathXH{R}'$ of $\pathXH{R}$ is connected to a component $\pathXH{P}'$ of $\pathXH{P}$ and a component $\pathXH{Q}'$ of $\pathXH{Q}$, then $d_X(\pathXH{R}'_+,\pathXH{P}'_-) \leq \mu$, $d_X(\pathXH{P}'_+,\pathXH{Q}'_-) \leq \mu$ and $d_X(\pathXH{Q}'_+,\pathXH{R}'_-) \leq \mu$.
\end{enumerate}
\end{prop}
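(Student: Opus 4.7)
The plan is to combine bounded coset penetration for bigons (Theorem \ref{thm:bcp}), the thin-triangle property for geodesic triangles in $\CayG$ (Theorem \ref{thm:triangles}), and Osin's polygonal isoperimetric inequality (Proposition \ref{prop:osin-polygons}). Let $\varepsilon := \varepsilon(\lambda,c,0)$, let $\nu$, $L := L(\lambda,c)$ and $\Omega \subseteq G$ be the constants provided by those results, and set $M := \max_{\omega \in \Omega} d_X(1,\omega) < \infty$, which is finite because $\Omega$ is. I will prove the proposition with
\[
\mu := \max\bigl(2\varepsilon + \nu,\; 5LM\bigr).
\]

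For part \ref{it:bcpt-phase} I would reduce to the geodesic case. Choose geodesics $\pathXH{P}^*, \pathXH{Q}^*, \pathXH{R}^*$ in $\CayG$ having the same endpoints as $\pathXH{P}, \pathXH{Q}, \pathXH{R}$ respectively; these are non-backtracking $(1,0)$-quasi-geodesics all of whose vertices are phase, and each is $0$-similar to the corresponding original side. Given a phase vertex $u$ of $\pathXH{R}$, Theorem \ref{thm:bcp}\ref{it:bcp-phase} produces a phase vertex $u^* \in \pathXH{R}^*$ with $d_X(u,u^*) \le \varepsilon$; Theorem \ref{thm:triangles} applied to the geodesic triangle $\pathXH{P}^* \pathXH{Q}^* \pathXH{R}^*$ yields a vertex (necessarily phase) $v^* \in \pathXH{P}^* \cup \pathXH{Q}^*$ with $d_X(u^*,v^*) \le \nu$; and a second application of Theorem \ref{thm:bcp}\ref{it:bcp-phase} transports $v^*$ to a phase vertex $v$ of $\pathXH{P}$ or $\pathXH{Q}$ within $\varepsilon$. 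The triangle inequality then gives $d_X(u,v) \le 2\varepsilon + \nu \le \mu$.

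For parts \ref{it:bcpt-conn}--\ref{it:bcpt-sim3} the strategy is uniform: decompose the sides at the components of interest, build a short closed subpath containing a distinguished isolated $\mathcal{H}$-component, apply Proposition \ref{prop:osin-polygons}, and convert the resulting bound on $|\cdot|_\Omega$ into a bound on $d_X$ via $d_X(e_-,e_+) \le M \cdot |e|_\Omega$. For \ref{it:bcpt-conn}, write $\pathXH{R} = \pathXH{R}_1 \pathXH{R}' \pathXH{R}_2$ and regard $\pathXH{P}\pathXH{Q}\pathXH{R}_1 \pathXH{R}' \pathXH{R}_2$ as a closed $5$-gon; if $\pathXH{R}'$ were connected to no component of $\pathXH{P}$ or $\pathXH{Q}$, then non-backtracking of $\pathXH{R}$ would make $\pathXH{R}'$ isolated in this $5$-gon, and Proposition \ref{prop:osin-polygons} would force $d_X(\pathXH{R}'_-,\pathXH{R}'_+) \le 5LM \le \mu$, a contradiction. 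For \ref{it:bcpt-sim3}, choose bridging $H_j$-edges (or trivial paths) $e_{PR}, e_{PQ}, e_{QR}$ joining the relevant pairs of endpoints and consider the three closed triangles $\pathXH{P}_1\, e_{PR}\, \pathXH{R}_2$, $\pathXH{P}_2\, \pathXH{Q}_1\, e_{PQ}$ and $\pathXH{Q}_2\, \pathXH{R}_1\, e_{QR}$; Proposition \ref{prop:osin-polygons} with $n=3$ then supplies the three required bounds $\le 3LM$. Part \ref{it:bcpt-sim2} is the same argument, except that one of the closed subpaths now traverses all of $\pathXH{Q}$ and so has four sides, yielding the weaker bound $4LM$.

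The only step I regard as a genuine obstacle rather than bookkeeping is the verification that the distinguished $\mathcal{H}$-component (either $\pathXH{R}'$ itself in \ref{it:bcpt-conn} or each bridging edge $e_{\bullet\bullet}$ in \ref{it:bcpt-sim2}--\ref{it:bcpt-sim3}) is genuinely isolated in its closed path. In each case this follows by a short transitivity argument on the equivalence relation of components: any connection between the distinguished edge and another component of the closed path would, using transitivity and the fact that $e_{\bullet\bullet}$ is connected to both $\pathXH{P}'$ and $\pathXH{R}'$ (or $\pathXH{Q}'$) by construction, force $\pathXH{P}'$, $\pathXH{Q}'$ or $\pathXH{R}'$ to be connected to another component of its own side, contradicting non-backtracking of that side; for the four-gon in \ref{it:bcpt-sim2} isolation additionally invokes the standing hypothesis that $\pathXH{R}'$ is not connected to any component of $\pathXH{Q}$.
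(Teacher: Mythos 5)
Your proposal is correct and follows essentially the same route as the paper: the same constant $\mu = \max(2\varepsilon+\nu, 5LM)$, the same reduction of part (i) to a geodesic triangle via two applications of Theorem~\ref{thm:bcp}\ref{it:bcp-phase} bracketing Theorem~\ref{thm:triangles}, and the same $5$-gon, $3$-gon/$4$-gon, and three-$3$-gon decompositions fed into Proposition~\ref{prop:osin-polygons} for parts (ii)--(iv), including the isolation check via non-backtracking. No substantive differences.
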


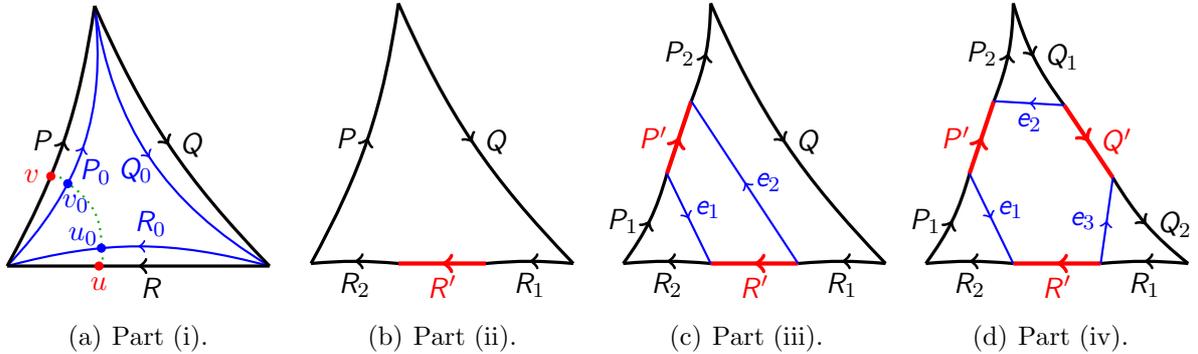
\begin{figure}[ht]

\begin{subfigure}[t]{0.25\textwidth}
\centering
\begin{tikzpicture}[very thick,scale=1.15,decoration={markings,mark=at position 0.5 with {\arrow{>}}}]
\draw [blue,thick,postaction=decorate] (3,0) node (c) {} to[bend right=15] node [pos=0.45,above] {$\pathXH{R}_0$} node[pos=0.65] (u0) {} (0,0) node (a) {};
\draw [blue,thick,postaction=decorate] (1,3) node (b) {} to[bend right=25] node [pos=0.55,left] {$\pathXH{Q}_0\!$} (c.center);
\draw [blue,thick,postaction=decorate] (a.center) to[bend right=25] node [pos=0.4,right] {$\!\pathXH{P}_0$} node[pos=0.35] (v0) {} (b.center);
\draw [postaction=decorate] (c.center) to node [pos=0.45,below] {$\pathXH{R}$} node[pos=0.65] (u) {} (a.center);
\draw [postaction=decorate] (b.center) to[bend right=10] node [midway,right] {$\pathXH{Q}$} (c.center);
\draw [postaction=decorate] (a.center) to[bend right=10] node [midway,left] {$\pathXH{P}$}  node[pos=0.35] (v) {} (b.center);
\draw [green!70!black,dotted,thick] (u.center) to[bend right] (u0.center) to[bend right] (v0.center) to[bend right] (v.center);
\fill [red] (u) circle (1.5pt) node [below] {$u$};
\fill [blue] (u0) circle (1.5pt) node [above left,yshift=-3pt,xshift=3pt] {$u_0$};
\fill [blue] (v0) circle (1.5pt) node [below,xshift=3pt] {$v_0$};
\fill [red] (v) circle (1.5pt) node [left] {$v$};
\end{tikzpicture}
\caption{Part~\ref{it:bcpt-phase}.}
\label{sfig:bcp3-phase}
\end{subfigure}%
\begin{subfigure}[t]{0.25\textwidth}
\centering
\begin{tikzpicture}[very thick,scale=1.15,decoration={markings,mark=at position 0.5 with {\arrow{>}}}]
\draw [postaction=decorate] (3,0) node (c) {} to[bend right=5] node [midway,below] {$\pathXH{R}_1$} (2,0) node (ca) {};
\draw [red,ultra thick,postaction=decorate] (ca.center) to node [midway,below] {$\pathXH{R}'$} (1,0) node (ac) {};
\draw [postaction=decorate] (ac.center) to[bend right=5] node [midway,below] {$\pathXH{R}_2$} (0,0) node (a) {};
\draw [postaction=decorate] (1,3) node (b) {} to[bend right=10] node [midway,right] {$\pathXH{Q}$} (c.center);
\draw [postaction=decorate] (a.center) to[bend right=10] node [midway,left] {$\pathXH{P}$} (b.center);
\end{tikzpicture}
\caption{Part~\ref{it:bcpt-conn}.}
\label{sfig:bcp3-conn}
\end{subfigure}%
\begin{subfigure}[t]{0.25\textwidth}
\centering
\begin{tikzpicture}[very thick,scale=1.15,decoration={markings,mark=at position 0.5 with {\arrow{>}}}]
\draw [white] (0,0) node (a) {} to[bend right=10] node [pos=0.35] (ab) {} node [pos=0.65] (ba) {} (1,3) node (b) {};
\draw [postaction=decorate] (3,0) node (c) {} to[bend right=5] node [midway,below] {$\pathXH{R}_1$} (2,0) node (ca) {};
\draw [red,ultra thick,postaction=decorate] (ca.center) to node [midway,below] {$\pathXH{R}'$} (1,0) node (ac) {};
\draw [postaction=decorate] (ac.center) to[bend right=5] node [midway,below] {$\pathXH{R}_2$} (a.center);
\draw [postaction=decorate] (a.center) to[bend right=10] node [midway,left] {$\pathXH{P}_1$} (ab.center);
\draw [red,ultra thick,postaction=decorate] (ab.center) to node [midway,left] {$\pathXH{P}'$} (ba.center);
\draw [postaction=decorate] (ba.center) to[bend right=10] node [midway,left] {$\pathXH{P}_2$} (b.center);
\draw [postaction=decorate] (b.center) to[bend right=10] node [midway,right] {$\pathXH{Q}$} (c.center);
\draw [blue,thick,postaction=decorate] (ab.center) to node [pos=0.4,right] {$\pathXH{e}_1$} (ac.center);
\draw [blue,thick,postaction=decorate] (ca.center) to node [pos=0.5,right] {$\pathXH{e}_2$} (ba.center);
\end{tikzpicture}
\caption{Part~\ref{it:bcpt-sim2}.}
\label{sfig:bcp3-sim2}
\end{subfigure}%
\begin{subfigure}[t]{0.25\textwidth}
\centering
\begin{tikzpicture}[very thick,scale=1.15,decoration={markings,mark=at position 0.5 with {\arrow{>}}}]
\draw [white] (0,0) node (a) {} to[bend right=10] node [pos=0.35] (ab) {} node [pos=0.65] (ba) {} (1,3) node (b) {};
\draw [white] (b.center) to[bend right=10] node [pos=0.35] (bc) {} node [pos=0.65] (cb) {} (3,0) node (c) {};
\draw [postaction=decorate] (c.center) to[bend right=5] node [midway,below] {$\pathXH{R}_1$} (2,0) node (ca) {};
\draw [red,ultra thick,postaction=decorate] (ca.center) to node [midway,below] {$\pathXH{R}'$} (1,0) node (ac) {};
\draw [postaction=decorate] (ac.center) to[bend right=5] node [midway,below] {$\pathXH{R}_2$} (a.center);
\draw [postaction=decorate] (a.center) to[bend right=10] node [midway,left] {$\pathXH{P}_1$} (ab.center);
\draw [red,ultra thick,postaction=decorate] (ab.center) to node [midway,left] {$\pathXH{P}'$} (ba.center);
\draw [postaction=decorate] (ba.center) to[bend right=10] node [midway,left] {$\pathXH{P}_2$} (b.center);
\draw [postaction=decorate] (b.center) to[bend right=10] node [midway,right] {$\pathXH{Q}_1$} (bc.center);
\draw [red,ultra thick,postaction=decorate] (bc.center) to node [midway,right] {$\pathXH{Q}'$} (cb.center);
\draw [postaction=decorate] (cb.center) to[bend right=10] node [midway,right] {$\pathXH{Q}_2$} (c.center);
\draw [blue,thick,postaction=decorate] (ab.center) to node [pos=0.4,right] {$\pathXH{e}_1$} (ac.center);
\draw [blue,thick,postaction=decorate] (bc.center) to node [pos=0.5,below] {$\pathXH{e}_2$} (ba.center);
\draw [blue,thick,postaction=decorate] (ca.center) to node [pos=0.5,left] {$\pathXH{e}_3$} (cb.center);
\end{tikzpicture}
\caption{Part~\ref{it:bcpt-sim3}.}
\label{sfig:bcp3-sim3}
\end{subfigure}

\caption{The proof of Proposition~\ref{prop:BCPtriangles}.}
\label{fig:bcp3}
\end{figure}

\begin{proof}
Let $\varepsilon = \varepsilon(\lambda,c,0) \in \N$ be as in Theorem~\ref{thm:bcp}, let $\nu \in \N$ be as in Theorem~\ref{thm:triangles}, and let $\Omega \subseteq G$ and $L = L(\lambda,c) \in \N$ be as in Proposition~\ref{prop:osin-polygons}. Since $|\Omega| < \infty$, we have $M := \sup_{\omega \in \Omega} |\omega|_X < \infty$; it follows that if $d_\Omega(g,h) \leq D$ for some $g,h \in G$ and $D \in \N$ then $d_X(g,h) \leq DM$. We set $\mu := \max \{ 2\varepsilon+\nu, 5ML \}$.

We now prove parts \ref{it:bcpt-phase}--\ref{it:bcpt-sim3} in the statement of the theorem.
\begin{enumerate}[label=(\roman*)]

\item Let $\pathXH{P}_0,\pathXH{Q}_0,\pathXH{R}_0 \subseteq \CayG$ be geodesics such that $(\pathXH{P}_0)_+ = \pathXH{P}_+ = \pathXH{Q}_- = (\pathXH{Q}_0)_-$, $(\pathXH{Q}_0)_+ = \pathXH{Q}_+ = \pathXH{R}_- = (\pathXH{R}_0)_-$ and $(\pathXH{R}_0)_+ = \pathXH{R}_+ = \pathXH{P}_- = (\pathXH{P}_0)_-$, so that $\Delta' = \pathXH{P}_0\pathXH{Q}_0\pathXH{R}_0$ is a geodesic triangle in $\CayG$; see Figure~\ref{sfig:bcp3-phase}. Thus the paths $\pathXH{P},\pathXH{Q},\pathXH{R},\pathXH{P}_0,\pathXH{Q}_0,\pathXH{R}_0$ are all $(\lambda,c)$-quasi-geodesics that do not backtrack.

Let $u \in \pathXH{R}$ be a phase vertex. By Theorem~\ref{thm:bcp}\ref{it:bcp-phase}, there is a phase vertex $u_0 \in \pathXH{R}_0$ such that $d_X(u,u_0) \leq \varepsilon$. By Theorem~\ref{thm:triangles}, there is a vertex $v_0$ of either $\pathXH{P}_0$ or $\pathXH{Q}_0$ such that $d_X(u_0,v_0) \leq \nu$; note that since $\pathXH{P}_0$ and $\pathXH{Q}_0$ are geodesics, $v_0$ is necessarily a phase vertex. Finally, by Theorem~\ref{thm:bcp}\ref{it:bcp-phase} again, there exists a phase vertex $v$ of $\pathXH{P}$ (if $v_0 \in \pathXH{P}_0$) or of $\pathXH{Q}$ (if $v_0 \in \pathXH{Q}_0$) such that $d_X(v_0,v) \leq \varepsilon$. We thus have
\[
d_X(u,v) \leq d_X(u,u_0) + d_X(u_0,v_0) + d_X(v_0,v) \leq 2\varepsilon+\nu \leq \mu,
\]
as required.

\item Suppose that $\pathXH{R}'$ is an isolated component of $\pathXH{PQR}$. We can then write $\pathXH{R} = \pathXH{R}_1 \pathXH{R}' \pathXH{R}_2$, so that $\pathXH{P}$, $\pathXH{Q}$, $\pathXH{R}_1$, $\pathXH{R}_2$ are all $(\lambda,c)$-quasi-geodesics, and $\pathXH{R}'$ is an isolated component of $\pathXH{P} \pathXH{Q} \pathXH{R}_1 \pathXH{R}' \pathXH{R}_2$: see Figure~\ref{sfig:bcp3-conn}. It follows from Proposition~\ref{prop:osin-polygons} that $|\pathXH{R}'|_\Omega \leq 5L$, and so $|\pathXH{R}'|_X \leq 5ML$. This contradicts the fact that $d_X(\pathXH{R}'_-,\pathXH{R}'_+) > \mu$.

Thus $\pathXH{R}'$ must be connected to some other component of $\pathXH{PQR}$. As $\pathXH{R}$ does not backtrack, it follows that $\pathXH{R}'$ is connected to a component of either $\pathXH{P}$ or $\pathXH{Q}$, as required.

\item Let $j \in \{ 1,\ldots,m \}$ be such that $\pathXH{P}'$ and $\pathXH{R}'$ are $H_j$-components. Thus there exist $H_j$-edges $\pathXH{e}_1$ and $\pathXH{e}_2$ such that $(\pathXH{e}_1)_- = \pathXH{P}'_-$, $(\pathXH{e}_1)_+ = \pathXH{R}'_+$, $(\pathXH{e}_2)_- = \pathXH{R}'_-$ and $(\pathXH{e}_2)_+ = \pathXH{P}'_+$; moreover, we may write $\pathXH{P} = \pathXH{P}_1 \pathXH{P}' \pathXH{P}_2$ and $\pathXH{R} = \pathXH{R}_1 \pathXH{R}' \pathXH{R}_2$: see Figure~\ref{sfig:bcp3-sim2}. Since $\pathXH{P}$ and $\pathXH{R}$ do not backtrack, and since $\pathXH{R}'$ (and therefore $\pathXH{e}_2$) is not connected to any component of $\pathXH{Q}$, it follows that $\pathXH{e}_1$ and $\pathXH{e}_2$ are isolated components of $\pathXH{R}_2 \pathXH{P}_1 \pathXH{e}_1$ and $\pathXH{P}_2 \pathXH{Q} \pathXH{R}_1 \pathXH{e}_2$, respectively. Moreover, $\pathXH{R}_2$, $\pathXH{P}_1$, $\pathXH{P}_2$, $\pathXH{Q}$ and $\pathXH{R}_1$ are all $(\lambda,c)$-quasi-geodesics.

It follows from Proposition~\ref{prop:osin-polygons} that $|\pathXH{e}_1|_\Omega \leq 3L$ and $|\pathXH{e}_2|_\Omega \leq 4L$; therefore, we have $|\pathXH{e}_1|_X \leq 3ML \leq \mu$ and $|\pathXH{e}_2|_X \leq 4ML \leq \mu$. Hence $d_X(\pathXH{P}'_-,\pathXH{R}'_+),d_X(\pathXH{R}'_-,\pathXH{P}'_+) \leq \mu$, as required.

\item Let $j \in \{ 1,\ldots,m \}$ be such that $\pathXH{P}'$, $\pathXH{Q}'$ and $\pathXH{R}'$ are $H_j$-components. Thus there exist $H_j$-edges $\pathXH{e}_1$, $\pathXH{e}_2$ and $\pathXH{e}_3$ such that $(\pathXH{e}_1)_- = \pathXH{P}'_-$, $(\pathXH{e}_1)_+ = \pathXH{R}'_+$, $(\pathXH{e}_2)_- = \pathXH{Q}'_-$, $(\pathXH{e}_2)_+ = \pathXH{P}'_+$, $(\pathXH{e}_3)_- = \pathXH{R}'_-$, $(\pathXH{e}_3)_+ = \pathXH{Q}'_+$; moreover, we may write $\pathXH{P} = \pathXH{P}_1 \pathXH{P}' \pathXH{P}_2$, $\pathXH{Q} = \pathXH{Q}_1 \pathXH{Q}' \pathXH{Q}_2$ and $\pathXH{R} = \pathXH{R}_1 \pathXH{R}' \pathXH{R}_2$: see Figure~\ref{sfig:bcp3-sim3}. Since $\pathXH{P}$, $\pathXH{Q}$ and $\pathXH{R}$ do not backtrack, it follows that $\pathXH{e}_1$, $\pathXH{e}_2$ and $\pathXH{e}_3$ are isolated components of $\pathXH{R}_2 \pathXH{P}_1 \pathXH{e}_1$, $\pathXH{P}_2 \pathXH{Q}_1 \pathXH{e}_2$ and $\pathXH{Q}_2 \pathXH{R}_1 \pathXH{e}_3$, respectively. Moreover, $\pathXH{R}_2$, $\pathXH{P}_1$, $\pathXH{P}_2$, $\pathXH{Q}_1$, $\pathXH{Q}_2$ and $\pathXH{R}_1$ are all $(\lambda,c)$-quasi-geodesics.

It follows from Proposition~\ref{prop:osin-polygons} that $|\pathXH{e}_i|_\Omega \leq 3L$, and so $|\pathXH{e}_i|_X \leq 3ML \leq \mu$, for each $i \in \{1,2,3\}$. Hence $d_X(\pathXH{P}'_-,\pathXH{R}'_+)$, $d_X(\pathXH{Q}'_-,\pathXH{P}'_+)$ and $d_X(\pathXH{R}'_-,\pathXH{Q}'_+)$ are all bounded above by $\mu$, as required. \qedhere

\end{enumerate}
\end{proof}

\subsection{The graph \texorpdfstring{$\Gamma(N)$}{Gamma(N)}} \label{ssec:GammaN}

We now construct an action of $G$ on a graph $\Gamma(N)$ given an action of $\widetilde{H}_j \leq G$ on a graph $\Gamma_j$ for each $j \in \{1,\ldots,m\}$, so that if the actions $\widetilde{H}_j \acts \Gamma_j$ are all geometric then so is $G \acts \Gamma(N)$. Roughly speaking, we take the barycentric subdivision of the Cayley graph $\Cay(G,X)$ together with a copy of a graph $\Gamma_{j,N}$ (obtained by adding extra edges to $\Gamma_j$) for each right coset of $H_j$ in $G$, and glue them together using `connecting edges' in a consistent way.

Thus, let $G$ be a group with a finite symmetric generating set $X$, let $H_1,\ldots,H_m \leq G$ be a collection of subgroups, and, for each $j$, let $\widetilde{H}_j$ be an isomorphic copy of $H_j$ acting on a simplicial graph $\Gamma_j$ by isometries, and fix a vertex $v_j \in \Gamma_j$. Let $F = F(X) \ast (\ast_{j=1}^m \widetilde{H}_j)$, let $\mathcal{H} = \bigsqcup_{j=1}^m (\widetilde{H}_j \setminus \{1\})$, let $\epsilon: F \to G$ be the canonical surjection, and let $\iota: X \to X$ be an involution such that $\epsilon(\iota(x)) = \epsilon(x)^{-1}$ for all $x \in X$. For each $j$, let $\pi_j: F \to \widetilde{H}_j$ be the canonical retraction, defined as the identity map on $\widetilde{H}_j$ and as the trivial map on $F(X)$ and on $\widetilde{H}_i$ for $i \neq j$.

We now construct a graph $\widetilde\Gamma(N)$ by taking a copy of the barycentric subdivision of the Cayley graph $\Cay(F(X),X)$ for each right coset of $F(X)$ in $F$ and a copy of $\Gamma_{j,N}$ for each right coset of $\widetilde{H}_j$ in $F$, and connecting them using auxiliary edges.

\begin{defn}
Let $N \geq 1$. We construct a simplicial graph $\widetilde\Gamma(N)$ as follows.
\begin{enumerate}[label=(\roman*)]
\item For each $j \in \{1,\ldots,m\}$, define a graph $\Gamma_{j,N}$ with vertices $V(\Gamma_{j,N})=V(\Gamma_j)$ and edges $\{v,w\}$ whenever $d_{\Gamma_j}(v,w) \leq N$.
\item The vertices of $\widetilde\Gamma(N)$ are $V(\widetilde\Gamma(N)) = \Vtf \sqcup \Vtm \sqcup \Vti$, where
\begin{enumerate}
\item $\Vtf = F$, the \emph{free vertices};
\item $\Vtm = (F \times X)/\sim$, where $(g,x) \sim (h,y)$ if and only if $(h,y) \in \{(g,x),(xg,\iota(x))\}$, the \emph{medial vertices}; and
\item $\Vti = \bigsqcup_{j=1}^m (\widetilde{H}_j \backslash F) \times V(\Gamma_j)$, where $\widetilde{H}_j \backslash F$ is the set of right cosets of $\widetilde{H}_j$ in $F$, the \emph{internal vertices}.
\end{enumerate}
\item The edges of $\widetilde\Gamma(N)$ are of three types:
\begin{enumerate}
\item the \emph{free edges}: $\left\{ g, [(g,x)] \right\}$ for vertices $g \in \Vtf$ and $[(g,x)] \in \Vtm$;
\item the \emph{connecting edges}: $\left\{ g, (\widetilde{H}_jg,v_j \cdot \pi_j(g)) \right\}$ for $g \in \Vtf$ and $(\widetilde{H}_jg,v_j \cdot \pi_j(g)) \in \Vti$; and
\item the \emph{internal edges}: $\left\{ (\widetilde{H}_jg,v), (\widetilde{H}_jg,w) \right\}$ for vertices $(\widetilde{H}_jg,v),(\widetilde{H}_jg,w) \in \Vti$ such that $\{v,w\} \in E(\Gamma_{j,N})$.
\end{enumerate}
\end{enumerate}
\end{defn}

We define a right action of $F$ on $V(\widetilde\Gamma(N))$ as follows: for $g \in F$,
\begin{align*}
h \cdot g &= hg && \text{for } h \in \Vf; \\
[(h,x)] \cdot g &= [(hg,x)] && \text{for } [(h,x)] \in \Vm; \text{ and } \\
(\widetilde{H}_jh,v) \cdot g &= (\widetilde{H}_jhg,v \cdot \pi_j(g)) && \text{for } (\widetilde{H}_jh,v) \in \Vi.
\end{align*}
It is easy to see that this is indeed a well-defined action, and that it sends edges of $\widetilde\Gamma(N)$ to edges, thus inducing an action of $F$ on $\widetilde\Gamma(N)$. It is also clear that this action preserves the types of vertices (free, medial or internal) and edges (free, connecting or internal): see Figure~\ref{fig:Gamma-mod-F}.

\begin{figure}[ht]
\begin{tikzpicture}[very thick,x=2cm,y=2cm]
\filldraw [red,dashed,fill=red!20,rounded corners=10] (-1.2,1) rectangle (-0.3,0.4);
\filldraw [red,dashed,fill=red!20,rounded corners=10] (1.2,1) rectangle (0.3,0.4);
\draw [blue] (0,0) -- ({{-0.8*cos(40)}},{{0.8*sin(40)}});
\fill [red] ({{-0.8*cos(40)}},{{0.8*sin(40)}}) circle (3pt);
\draw [blue] (0,0) -- ({{0.8*cos(40)}},{{0.8*sin(40)}});
\fill [red] ({{0.8*cos(40)}},{{0.8*sin(40)}}) circle (3pt);
\node [red] at (-0.75,0.75) {$\Gamma_{1,N}/\widetilde{H}_1$};
\node [red] at (0.75,0.75) {$\Gamma_{m,N}/\widetilde{H}_m$};
\draw [black!70] (0,0) to[out=-160,in=120] ({{-cos(30)}},{{-sin(30)}}) to[out=-60,in=-140] (0,0);
\fill [green!70!black] ({{-cos(30)}},{{-sin(30)}}) circle (3pt);
\draw [black!70] (0,0) to[out=-120,in=160] ({{-cos(70)}},{{-sin(70)}}) to[out=-20,in=-100] (0,0);
\fill [green!70!black] ({{-cos(70)}},{{-sin(70)}}) circle (3pt);
\draw [black!70] (0,0) to[out=-20,in=60] ({{cos(30)}},{{-sin(30)}}) to[out=-120,in=-40] (0,0);
\fill [green!70!black] ({{cos(30)}},{{-sin(30)}}) circle (3pt);
\fill [black!70] (0,0) circle (3pt);
\draw [loosely dotted] (0,-0.8) arc (-90:-50:0.8);
\draw [loosely dotted] ({{-0.8*cos(75)}},{{0.8*sin(75)}}) arc (105:75:0.8);
\end{tikzpicture}
\caption{The quotient graph $\widetilde\Gamma(N)/F$. Orbits of {\color{green!70!black}medial} vertices are shown in green, {\color{blue}connecting} edges in blue, {\color{black!70}free} vertices and edges in gray. Orbits of {\color{red}internal} vertices and edges are represented by the red regions.}
\label{fig:Gamma-mod-F}
\end{figure}
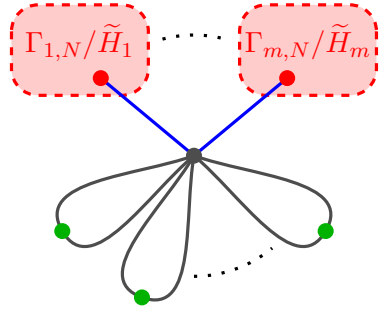

Finally, we define the graph $\Gamma(N)$ as the quotient $\Gamma(N) = \widetilde\Gamma(N)/\ker(\epsilon)$, so that we have an action $G \acts \Gamma(N)$ induced by $F \acts \widetilde\Gamma(N)$. The following result is straightforward.

\begin{lem}[Description of $\Gamma(N)$]
We have $V(\Gamma(N)) = \Vf \sqcup \Vm \sqcup \Vi$, where $\Vf = G$ are the free vertices, $\Vm = (G \times X)/\sim$ are the medial vertices, where $(g,x) \sim (h,y)$ if and only if $(h,y) \in \{(g,x),(\epsilon(x)g,\iota(x))\}$, and $\Vi = \bigsqcup_{j=1}^m (H_j \backslash G) \times V(\Gamma_j)$ are the internal vertices. The edges of $\Gamma(N)$ can be partitioned into
\[
\begin{aligned}
\text{free: } & \left\{ g, [(g,x)] \right\} && \text{for } g \in \Vf \text{ and } [(g,x)] \in \Vm, \\
\text{connecting: } & \left\{ g, (H_jg,u_{j,g}) \right\} && \text{for } g \in \Vf , (H_jg,u_{j,g}) \in \Vi \text{ and some } u_{j,g} \in V(\Gamma_j), \\
\text{internal: } & \left\{ (H_jg,v), (H_jg,w) \right\} && \text{for } (H_jg,v), (H_jg,w) \in \Vi \text{ with } \{v,w\} \in E(\Gamma_{j,N}). &\qed
\end{aligned}
\]
\end{lem}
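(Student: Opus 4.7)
The plan is to use that $\ker(\epsilon) \leq F$ acts on $\widetilde{\Gamma}(N)$ preserving the three types of vertices (free, medial, internal) and the three types of edges (free, connecting, internal). Hence the quotient $\Gamma(N) = \widetilde{\Gamma}(N)/\ker(\epsilon)$ inherits this partition, and the proof reduces to computing each piece.

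For free vertices this is trivial: $\Vtf = F$ with $\ker(\epsilon)$ acting by right multiplication gives $F/\ker(\epsilon) = G = \Vf$. For medial vertices, combining the defining relation $(g,x) \sim (xg,\iota(x))$ on $F \times X$ with the right $\ker(\epsilon)$-action on the first coordinate produces precisely $(g,x) \sim (\epsilon(x) g,\iota(x))$ on $G \times X$, since $\epsilon$ is a homomorphism. The main (minor) obstacle is the internal vertices: the naive map $(\widetilde{H}_j g, v) \mapsto (H_j \epsilon(g), v)$ fails to descend to $\ker(\epsilon)$-orbits because of the twist by $\pi_j$ in the formula $(\widetilde{H}_j g, v) \cdot h = (\widetilde{H}_j gh, v \cdot \pi_j(h))$. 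I would resolve this by fixing, for each coset $C \in H_j \backslash G$, a lift $\tilde{g}_C \in F$ with $\epsilon(\tilde{g}_C) \in C$, and showing that every $\ker(\epsilon)$-orbit of a type-$j$ internal vertex contains a unique representative of the form $(\widetilde{H}_j \tilde{g}_C, v)$, yielding the stated bijection with $(H_j \backslash G) \times V(\Gamma_j)$.

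For existence, given $(\widetilde{H}_j \tilde{g}, v)$ with $\epsilon(\tilde{g}) \in C = H_j \epsilon(\tilde{g}_C)$, write $\epsilon(\tilde{g}) = h_j \epsilon(\tilde{g}_C)$ with $h_j \in H_j$, lift $h_j$ to $\tilde{h}_j \in \widetilde{H}_j$ via $\epsilon|_{\widetilde{H}_j}^{-1}$, and set $\tilde{k} = \tilde{g}^{-1} \tilde{h}_j \tilde{g}_C \in \ker(\epsilon)$. Then $\tilde{g} \tilde{k} = \tilde{h}_j \tilde{g}_C$, so acting by $\tilde{k}$ sends $(\widetilde{H}_j \tilde{g}, v)$ to $(\widetilde{H}_j \tilde{g}_C, v \cdot \pi_j(\tilde{k}))$. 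For uniqueness, if $(\widetilde{H}_j \tilde{g}_C, v) \cdot h = (\widetilde{H}_j \tilde{g}_C, v')$ for some $h \in \ker(\epsilon)$, then $\tilde{g}_C h \tilde{g}_C^{-1} \in \widetilde{H}_j \cap \ker(\epsilon) = \{1\}$ (using that $\epsilon|_{\widetilde{H}_j}$ is injective), so $h = 1$ and hence $v = v'$.

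The description of edges then follows by applying the quotient to each of the three edge types in $\widetilde{\Gamma}(N)$. Free edges descend verbatim. Internal edges $\{(\widetilde{H}_j \tilde{g}, v), (\widetilde{H}_j \tilde{g}, w)\}$ with $\{v,w\} \in E(\Gamma_{j,N})$ are pushed to the chosen representative $\tilde{g}_C$ by an element that twists both endpoints via $\pi_j$ by the same element of $\widetilde{H}_j$; since this element acts on $\Gamma_j$ (hence on $\Gamma_{j,N}$) by graph automorphisms, the edge relation is preserved, giving edges of the claimed form $\{(H_j g, v), (H_j g, w)\}$ with $\{v,w\} \in E(\Gamma_{j,N})$. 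Connecting edges $\{\tilde{g}, (\widetilde{H}_j \tilde{g}, v_j \cdot \pi_j(\tilde{g}))\}$ descend to pairs $\{g, (H_j g, u_{j,g})\}$ where $u_{j,g} \in V(\Gamma_j)$ is the second coordinate one reads off from the orbit identification above, justifying the ``some $u_{j,g}$'' in the statement.
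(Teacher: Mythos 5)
Your proof is correct. The paper does not supply an argument (the lemma is declared ``straightforward'' and the proof is omitted), so the only question is whether the details you supply hold up --- and they do. You correctly identify the one place where the statement is not completely formal: the $F$-action twists the $V(\Gamma_j)$-coordinate of an internal vertex by $\pi_j$, so the naive map $(\widetilde{H}_j g, v) \mapsto (H_j\epsilon(g), v)$ is not constant on $\ker(\epsilon)$-orbits. Your resolution via a fixed transversal $\{\tilde{g}_C\}_{C\in H_j\backslash G}$ together with $\widetilde{H}_j \cap \ker(\epsilon) = \{1\}$ (which holds because $\epsilon|_{\widetilde{H}_j}$ is injective) is exactly what is needed, and it yields the bijection $\Vti/\ker(\epsilon) \cong \bigsqcup_j (H_j\backslash G) \times V(\Gamma_j)$. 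The edge computations are likewise sound: in particular, for internal edges, pushing to the chosen representative translates both endpoints by the same element $\pi_j(\tilde{k}) \in \widetilde{H}_j$, which acts on $\Gamma_j$ (hence on $\Gamma_{j,N}$) by graph automorphisms and so preserves the edge relation.
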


In particular, it follows that for any $j \in \{1,\ldots,m\}$ and any $H_jg \in H_j \backslash G$, the subset $\{ (H_jg,v) \mid v \in V(\Gamma_j) \} \subset V(\Gamma(N))$ spans a subgraph isomorphic to $\Gamma_{j,N}$. We will refer to this subgraph as the \emph{$g$-copy} (or just a \emph{copy}) of $\Gamma_{j,N}$, and we say that a path $P$ \emph{penetrates} a copy $\Gamma_0$ of $\Gamma_{j,N}$ if $P \cap \Gamma_0 \neq \varnothing$.

Given a path $P \subseteq \Gamma(N)$, we also say that $P$ has \emph{no parabolic shortenings} if every subpath of $P$ all of whose vertices are internal -- that is, a subpath contained in some copy $\Gamma_0 \subseteq \Gamma(N)$ of $\Gamma_{j,N}$ -- is a geodesic when viewed as a path in $\Gamma_0$. (This terminology is taken from \cite{antolin-ciobanu}, which in turn arises from the notion of parabolic subgroups: when $G$ is hyperbolic relative to $\{ H_1,\ldots,H_m \}$, a subgroup of $G$ is said to be \emph{parabolic} if it is conjugate to some $H_j$.)

\begin{lem}[Properties of $G \acts \Gamma(N)$] \label{lem:geomaction}
Suppose that, for each $j \in \{1,\ldots,m\}$, the graph $\Gamma_j$ is proper (as a metric space). Then the graphs $\widetilde\Gamma(N)$ and $\Gamma(N)$ are proper, and the following hold.
\begin{enumerate}[label=\textup{(\roman*)}]
\item \label{it:action-proper} If each action $\widetilde{H}_j \acts \Gamma_j$ is properly discontinuous, then so are the actions $F \acts \widetilde\Gamma(N)$ and $G \acts \Gamma(N)$.
\item \label{it:action-cocompact} If each action $\widetilde{H}_j \acts \Gamma_j$ is cocompact, then so are the actions $F \acts \widetilde\Gamma(N)$ and $G \acts \Gamma(N)$.
\end{enumerate}
\end{lem}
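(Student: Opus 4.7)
My plan is to reduce every assertion to a direct computation on $\widetilde\Gamma(N)$ by vertex type, and then transfer to $\Gamma(N)$ via the quotient map. The key preliminary observation I would establish is that $\ker\epsilon$ acts freely on $\widetilde\Gamma(N)$: on $\Vtf = F$ this is the regular right action, on $\Vtm$ a putative stabilizing $g$ would satisfy $g = g_0^{-1}xg_0$ with $\iota(x)=x$, forcing $\epsilon(x)^2=1$ and violating $\epsilon(x)\neq 1$, and on $(\widetilde H_j g_0,v) \in \Vti$ it would force the stabilizing element to lie in a conjugate of $\widetilde H_j \cap \ker\epsilon = \{1\}$ (using that $\epsilon|_{\widetilde H_j}$ is injective). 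Consequently $\widetilde\Gamma(N)\to \Gamma(N)$ is a covering, and any local property (such as local finiteness) or any action property preserved under free quotients (such as properly discontinuous or cocompact) transports automatically from $\widetilde\Gamma(N)$ to $\Gamma(N)$.

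For properness I would compute the degree at a vertex of each type. A free vertex has $|X|$ incident free edges and $m$ connecting edges. A medial vertex $[(g_0,x)]$ has exactly two incident free edges, namely to $g_0$ and to $xg_0$. An internal vertex $(\widetilde H_j g_0,v)$ has as many internal-edge neighbours as there are $w \in V(\Gamma_j)\setminus\{v\}$ with $d_{\Gamma_j}(v,w) \leq N$, a number bounded by $|B_{\Gamma_j}(v,N)|$ and hence finite by properness of $\Gamma_j$; and its connecting-edge neighbours are the free vertices $g'=hg_0$, $h \in \widetilde H_j$, with $v_j\cdot h = v\cdot \pi_j(g_0)^{-1}$, which (using $\pi_j(hg_0)=h\pi_j(g_0)$) form either the empty set or a single coset of $\Stab_{\widetilde H_j}(v_j)$. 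Finiteness of this stabilizer is automatic when $\widetilde H_j \acts \Gamma_j$ is properly discontinuous on the proper graph $\Gamma_j$, which is the operative setting in all applications.

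For part~\ref{it:action-proper}, properly discontinuous action on a locally finite graph is equivalent to every vertex having a finite stabilizer. I would enumerate the stabilizers on orbit representatives: the vertex $1 \in \Vtf$ has trivial stabilizer; the medial vertex $[(1,x)]$ has stabilizer of order at most $2$ (non-trivial only when $\iota(x)=x$); and the base internal vertex $(\widetilde H_j, v_j)$ has stabilizer precisely $\Stab_{\widetilde H_j}(v_j)$, which is finite by hypothesis. For part~\ref{it:action-cocompact} I would exhibit the finite fundamental domain depicted in Figure~\ref{fig:Gamma-mod-F}: one free-vertex orbit, one medial-vertex orbit per $\iota$-equivalence class in $X$ (finitely many since $X$ is finite), and one $j$-type internal orbit per $\widetilde H_j$-orbit on $V(\Gamma_j)$ (finitely many by cocompactness of $\widetilde H_j \acts \Gamma_j$). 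Combined with finite vertex degrees from the previous paragraph, this makes $\widetilde\Gamma(N)/F$ a finite graph.

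The only genuinely delicate step is the degree bookkeeping at internal vertices, where one must chase carefully through the definition of $\pi_j$ to identify both the connecting edges and the vertex stabilizers with cosets/stabilizers in $\widetilde H_j$. Once that calculation is set up correctly, every remaining verification is a routine case distinction across the three vertex types, and the passage from $\widetilde\Gamma(N)$ to $\Gamma(N)$ is immediate from the free $\ker\epsilon$-action.
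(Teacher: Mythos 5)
Your argument takes essentially the same route as the paper: a degree count by vertex type for properness, a stabilizer computation on orbit representatives for part~\ref{it:action-proper}, and an orbit count for part~\ref{it:action-cocompact} (you count vertex orbits and invoke local finiteness, while the paper counts edge orbits directly; both are fine). Your preliminary observation that $\ker\epsilon$ acts freely on $\widetilde\Gamma(N)$ is a clean conceptual framing of the transfer to $\Gamma(N)$. It is worth noting, though, that this observation is not actually needed for the transfer: degree bounds always descend to quotient graphs, $\Stab_G([v]) = \epsilon\bigl(\Stab_F(v)\bigr)$ holds regardless of whether $\ker\epsilon$ acts freely (since $v\cdot g \in v\cdot\ker\epsilon$ if and only if $g \in \Stab_F(v)\cdot\ker\epsilon$), and $\Gamma(N)/G \cong \widetilde\Gamma(N)/F$ is automatic. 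The paper simply uses these facts without the freeness detour.

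One small slip in your freeness argument for medial vertices: deriving $\epsilon(x)^2 = 1$ from $\iota(x) = x$ does \emph{not} by itself contradict $\epsilon(x) \neq 1$ (an order-two element satisfies both). The contradiction you want comes directly from $g = g_0^{-1}xg_0 \in \ker\epsilon$, which gives $\epsilon(x) = 1$ outright. Relatedly, the stabilizer of a medial vertex in $F$ is in fact always trivial rather than merely of order at most two, since $x$ has infinite order as a free generator of $F(X) \leq F$; your bound of $2$ still suffices for finiteness, but the paper records the sharper statement. The rest of your computation -- in particular the careful reduction of internal-vertex degree and stabilizer to a coset and stabilizer in $\widetilde H_j$ via $\pi_j$ -- matches the paper's proof.
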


\begin{proof}
To show that $\widetilde\Gamma(N)$ is proper, it is enough to show that each vertex of $\widetilde\Gamma(N)$ is incident to finitely many edges. But a free vertex of $\widetilde\Gamma(N)$ is incident to $|X|+m < \infty$ edges, a medial vertex is incident to two edges, and an $\widetilde{H}_j$-internal vertex $(\widetilde{H}_jg,u)$ is incident to at most $d_{j,N}(u) + |\Stab_{\widetilde{H}_j}(u)| < \infty$ edges, where $d_{j,N}(u)$ is the number of vertices $v \in \Gamma_j$ such that $d_{\Gamma_j}(u,v) \leq N$. Thus $\widetilde\Gamma(N)$ is proper as a metric space. As $\Gamma(N)$ is a quotient of $\widetilde\Gamma(N)$, it follows that $\Gamma(N)$ is proper as well.

We now prove \ref{it:action-proper} and \ref{it:action-cocompact}.

\begin{enumerate}[label=(\roman*)]

\item Since $\widetilde{H}_j \acts \Gamma_j$ is properly discontinuous, each vertex of $\Gamma_j$ has a finite $\widetilde{H}_j$-stabiliser. Now each free vertex and each medial vertex of $\widetilde\Gamma(N)$ has a trivial stabiliser. Moreover, it is easy to check that $\Stab_F(\widetilde{H}_jg,u) = g^{-1} \left[ \Stab_{\widetilde{H}_j}\left(u \cdot \pi_j(g)^{-1}\right) \right] g$ for any $\widetilde{H}_j$-internal vertex $(\widetilde{H}_jg,u)$, and so $|\Stab_F(\widetilde{H}_jg,u)| < \infty$. Thus vertices of $\widetilde\Gamma(N)$ have finite $F$-stabilisers and so the action of $F$ on $\widetilde\Gamma(N)$ is properly discontinuous. Since $G$-stabilisers of vertices in $\Gamma(N)$ are just images of $F$-stabilisers of vertices in $\widetilde\Gamma(N)$ under $\epsilon$, it follows that they are finite, and so $G \acts \Gamma(N)$ is properly discontinuous as well.

\item Since $\Gamma_j$ is proper and the action of $\widetilde{H}_j$ on $\Gamma_j$ is cocompact (for each $j$), the action of $\widetilde{H}_j$ on $\Gamma_{j,N}$ is cocompact as well. It is then easy to check that $\widetilde\Gamma(N)$ has $m < \infty$ orbits of connecting edges, $2 \cdot |X/{\sim}| < \infty$ orbits of free edges, and $\sum_{j=1}^m \left|E(\Gamma_{j,N}/\widetilde{H}_j)\right| < \infty$ orbits of internal edges (see also Figure~\ref{fig:Gamma-mod-F}). Thus the action of $F$ on $\widetilde\Gamma(N)$ is cocompact, and since $\widetilde\Gamma(N)/F \cong \Gamma(N)/G$ so is the action of $G$ on $\Gamma(N)$. \qedhere

\end{enumerate}
\end{proof}

\subsection{Derived paths} \label{ssec:derived}

Here we define a derived path $\widehat{P}$ in $\CayG$ corresponding to a given path $P$ in $\Gamma(N)$. Our definition is an adaptation of a construction of Y.~Antol\'in and L.~Ciobanu \cite[Construction 4.1]{antolin-ciobanu}.

\begin{defn} \label{defn:derived}
Let $N \geq 1$. For each vertex $v \in V(\Gamma(N))$, we define a path $Z_v \subseteq \Gamma(N)$ such that $(Z_v)_- = v$ and $(Z_v)_+$ is a free vertex, and such that $|Z_v|$ is as small as possible under these conditions. Given a path $P \subseteq \Gamma(N)$, we define the \emph{derived path} $\widehat{P} \subseteq \CayG$ in the following steps.
\begin{enumerate}[label=(\roman*)]

\item \label{it:derived-1} If $P$ has no free vertices, then set $n = 1$ and $P_1 = \overline{Z_{P_-}} P Z_{P_+}$, and proceed to step \ref{it:derived-3}. Otherwise, write $P = P' P_2 \cdots P_{n-1} P''$ in such a way that $P'_+ = (P_2)_-$, $(P_2)_- = (P_3)_+$, \dots, $(P_{n-1})_+ = P''_-$ are all free, no other vertices of $P$ are free, and $|P_i| \geq 1$ for $2 \leq i \leq n-1$.

\item \label{it:derived-2} If $|P'| \leq 3$, then set $P_1$ to be the trivial path with $(P_1)_- = (P_1)_+ = P'_+$; otherwise, set $P_1 = \overline{Z_{P_-}} P'$. Similarly, if $|P''| \leq 3$, then set $P_n$ to be the trivial path with $(P_n)_- = (P_n)_+ = P''_-$; otherwise, set $P_n = P'' Z_{P_+}$. If $|P'| \geq 4$ (respectively, $|P''| \geq 4$), then we call $P_1$ (respectively, $P_n$) an \emph{extended subpath} of $P$.

It now follows from the construction that for $1 \leq i \leq n$, the vertices $(P_i)_-$ and $(P_i)_+$ are the only free vertices of $P_i$.

\item \label{it:derived-3} Let $\widehat{P} = \widehat{P}_1 \widehat{P}_2 \cdots \widehat{P}_n$ be the path in $\CayG$ such that, for $1 \leq i \leq n$, $\widehat{P}_i$ is a path of length $\leq 1$ with $(\widehat{P}_i)_- = (P_i)_-$ and $(\widehat{P}_i)_+ = (P_i)_+$, as follows.
\begin{enumerate}[label=(\alph*)]
\item If $(P_i)_- = (P_i)_+$, then we set $\widehat{P}_i$ to be an empty path.
\item If $(P_i)_- \neq (P_i)_+$ and all non-endpoint vertices of $P_i$ are medial, then $|P_i| = 2$ and $(P_i)_+ = \epsilon(x) (P_i)_-$ for some $x \in X$; we then set $\widehat{P}_i$ to be an edge labelled by $x$.
\item Otherwise, all non-endpoint vertices of $P_i$ are $H_j$-internal for some $j$, and $(P_i)_+ = \epsilon(h) (P_i)_-$ for some $h \in \widetilde{H}_j$; we then set $\widehat{P}_i$ to be an $\widetilde{H}_j$-edge labelled by $h$.
\end{enumerate}

\end{enumerate}
\end{defn}

An example construction of $\widehat{P}$ is shown in Figure~\ref{fig:derived}.

\begin{figure}[ht]
\begin{tikzpicture}[thick,scale=1.25]
\draw [thin,decorate,decoration={brace,amplitude=8pt}] (0,0.7) -- (7.2,0.7) node [midway,above,yshift=6pt] {$P$};
\draw [blue,thin,decorate,decoration={brace,amplitude=4pt}] (-0.2,-0.08) -- (-0.2,0.5) node [midway,left,xshift=-2pt] {$\overline{Z_{P_-}}$};
\draw [blue] (0,-0.08) to[out=120,in=180] (0,0.46) node[below right] {$P_1$} to[out=0,in=117] (1,-0.08);
\draw [blue] (1,-0.08) to[out=63,in=180] (1.75,0.335) node[below] {$P_2$} to[out=0,in=117] (2.5,-0.08);
\draw [blue] (2.5,-0.08) to[out=63,in=180] (3,0.26);
\draw [blue] (4.5,0.26) to[out=0,in=117] (5,-0.08);
\draw [blue] (5,-0.08) to[out=63,in=180] (5.75,0.335) node[below] {$P_{n-1}$} to[out=0,in=117] (6.5,-0.08);
\draw (0,0.5) to[out=0,in=120] node[near end,above] {$\ P'$} (1,0) to[out=60,in=120] (2.5,0) to[out=60,in=180] (3,0.3);
\draw [loosely dotted] (3.2,0.2) -- (4.3,0.2);
\draw (4.5,0.3) to[out=0,in=120] (5,0) to[out=60,in=120] (6.5,0) to[out=60,in=180] node[near end,left] {$P''$} (7.2,0.4);
\foreach \x in {0,1,2.5,5,6.5} {
    \fill (\x,-0.06) circle (1.5pt);
    \fill [red] (\x,-0.8) circle (1.5pt);
}
\draw [->,decorate,decoration={snake,amplitude=1.5pt}] (3.75,0.05) -- (3.75,-0.65);
\draw [red] (0,-0.8) -- (3,-0.8) (4.5,-0.8) -- (6.5,-0.8);
\draw [red,loosely dotted] (3.2,-0.8) -- (4.3,-0.8);
\draw [red] (0.5,-0.8) node [below] {$\widehat{P}_1$} (1.75,-0.8) node [below] {$\widehat{P}_2$} (5.75,-0.8) node [below] {$\widehat{P}_{n-1}$};
\end{tikzpicture}
\caption{An example construction of a derived path $\widehat{P} \subseteq \CayG$ given a path $P \subseteq \Gamma(N)$. In this case, $|P'| \geq 4$, $|P''| \leq 3$, and the paths $P_n$ and $\widehat{P}_n$ are trivial.}
\label{fig:derived}
\end{figure}
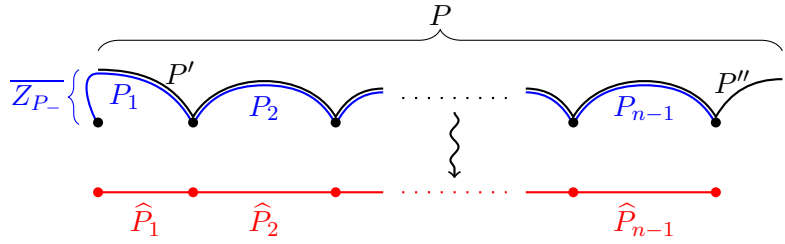

\section{Mapping geodesics to quasi-geodesics} \label{sec:g->qg}

In this section we prove Theorem \ref{thm:g->qg}. In order to do this, we proceed in two steps. We first show that the word labelling a $2$-local geodesic path in $\CayG$, that does not contain any of the finitely many `strongly non-geodesic' prohibited subwords, labels a $(\lambda,c)$-quasi-geodesic that does not backtrack (for some fixed $\lambda \geq 1$ and $c \geq 0$): see Proposition~\ref{prop:lg=qg}. We then show that for $N$ big enough, if $P$ is a $5$-local geodesic in $\Gamma(N)$ with no parabolic shortenings and with no subpaths $Q$ such that $\widehat{Q}$ is labelled by one of the aforementioned prohibited words, then $\widehat{P}$ satisfies the premise of Proposition~\ref{prop:lg=qg}.

\subsection{Local geodesics are quasi-geodesics} \label{ssec:lg=qg}

We start by analysing $2$-local geodesics in the graph $\CayG$. The following result is a strengthening of a result of Y.~Antol\'in and L.~Ciobanu \cite[Theorem~5.2]{antolin-ciobanu}, and the proof given in \cite{antolin-ciobanu} carries through to prove the following Proposition after several minor modifications. Here, we say a word $\wordXH{P}$ over $\XH$ \emph{vertex backtracks} if $\wordXH{P}$ contains a subword $\wordXH{Q}$ representing an element of $H_j$ (for some $j$) with $|\wordXH{Q}| \geq 2$. Clearly, a word that does not vertex backtrack does not backtrack either.

\begin{prop} \label{prop:lg=qg}
There exist constants $\lambda \geq 1$, $c \geq 0$, and a finite collection $\widehat\Phi$ of words over $\XH$ labelling paths $\pathXH{Q} \subseteq \CayG$ with $|\pathXH{Q}| > \frac{3}{2}d_{\XH}(\pathXH{Q}_-,\pathXH{Q}_+)$ such that the following holds. Let $\wordXH{P}$ be a $2$-local geodesic word in $\CayG$ that does not contain any element of $\widehat\Phi$ as a subword. Then $\wordXH{P}$ is a $(\lambda,c)$-quasi-geodesic word that does not vertex backtrack.
\end{prop}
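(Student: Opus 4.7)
The plan is to adapt the argument of \cite[Theorem~5.2]{antolin-ciobanu}, which already yields the $(\lambda,c)$-quasi-geodesic conclusion, and strengthen it to also exclude vertex backtracking. First I would fix the standard relatively hyperbolic constants: the BCP constant $\varepsilon = \varepsilon(1,0,0)$ from Theorem~\ref{thm:bcp}, the triangle constant $\nu$ from Theorem~\ref{thm:triangles}, and the polygon data $L = L(1,0)$, $\Omega \subseteq G$, $M := \max_{\omega \in \Omega}|\omega|_X$ from Proposition~\ref{prop:osin-polygons}. In terms of these I would choose $\lambda \geq 1$, $c \geq 0$ and a threshold $K \in \N$, and define $\widehat\Phi$ to be a (finite set of representatives of $G$-translation classes of) $\XH$-words of length at most $K$ that label paths $\pathXH{Q}$ satisfying $|\pathXH{Q}| > \frac{3}{2} d_{\XH}(\pathXH{Q}_-,\pathXH{Q}_+)$. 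Finiteness of $\widehat\Phi$ rests on the observation that within a bounded word, $2$-local geodesicity forces each $\mathcal{H}$-component to be a single edge and the set of possible path configurations, up to the $G$-action and up to replacing each $\mathcal{H}$-letter by any element representing the same vertex pair in $\CayG$, is bounded by finitely many parameters of bounded size.

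Having set up $\widehat\Phi$, I would establish the $(\lambda, c)$-quasi-geodesic conclusion by induction on $|\wordXH{P}|$, following \cite{antolin-ciobanu}. The base case $|\wordXH{P}| \leq K$ is immediate: since $\wordXH{P}\notin\widehat\Phi$, its labelled path $\pathXH{P}$ satisfies $|\pathXH{P}| \leq \frac{3}{2} d_{\XH}(\pathXH{P}_-,\pathXH{P}_+)$, so $\wordXH{P}$ is $(\frac{3}{2},0)$-quasi-geodesic. For the inductive step one compares $\pathXH{P}$ against a geodesic $\pathXH{P}_0$ in $\CayG$ with the same endpoints: the bigon $\pathXH{P}\overline{\pathXH{P}_0}$ decomposes via Proposition~\ref{prop:osin-polygons}, controlling the $\Omega$-length (and hence $X$-length) of the isolated components of $\pathXH{P}$. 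Combining this with BCP-bounds between phase vertices of $\pathXH{P}$ and $\pathXH{P}_0$, one shows that any long enough $\wordXH{P}$ contains a subword whose $3/2$-ratio witness would force it into $\widehat\Phi$, contradicting the hypothesis unless $\wordXH{P}$ already has ratio bounded by $\lambda$ (plus additive slack $c$).

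The new content is ruling out vertex backtracking. Suppose $\wordXH{P}$ contains a subword $\wordXH{Q}$ of length $\geq 2$ representing an element of some $H_j$; take $\wordXH{Q}$ minimal with this property. Then $d_{\XH}(\pathXH{Q}_-,\pathXH{Q}_+) = 1$ via a single $H_j$-edge $\pathXH{e}$, and the minimality of $\wordXH{Q}$ implies all proper subwords of $\wordXH{Q}$ do not vertex backtrack. By the quasi-geodesicity established above, $\pathXH{Q}$ is a $(\lambda,c)$-quasi-geodesic that does not backtrack. Applying Theorem~\ref{thm:bcp} to the $(\lambda,c)$-quasi-geodesic bigon $\pathXH{Q}\,\overline{\pathXH{e}}$, or alternatively Proposition~\ref{prop:osin-polygons} to bound $|\pathXH{e}|_\Omega$ and therefore $|\pathXH{Q}|$, yields $|\wordXH{Q}| \leq K_1$ for some constant $K_1$ depending only on the previously fixed constants. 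Enlarging $K$ to exceed $K_1$ if necessary, and noting that $|\pathXH{Q}|/d_{\XH}(\pathXH{Q}_-,\pathXH{Q}_+) \geq 2 > 3/2$, we see that $\wordXH{Q}$ is a subword of $\wordXH{P}$ which is $G$-translation-equivalent to some word in $\widehat\Phi$, contradicting the hypothesis on $\wordXH{P}$.

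The main obstacle I anticipate is the finiteness of $\widehat\Phi$: this requires a careful bookkeeping argument, since the alphabet $\XH$ is infinite whenever some $H_j$ is, and thus naively there are infinitely many $\XH$-words of any given length. Resolving this means choosing a single representative per $G$-orbit of labelled paths and, within each $\mathcal{H}$-edge, working modulo the choice of $\mathcal{H}$-letter representing a fixed element of $G$ — a compatibility that must be threaded through both the base case and the inductive propagation in Phase~1 so that "not containing an element of $\widehat\Phi$ as a subword" remains a well-posed, $G$-equivariant condition.
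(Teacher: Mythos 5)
Your general strategy is the right one: fix a finite set $\widehat\Phi$ of bad subwords, run an Antol\'in--Ciobanu-type argument to get quasi-geodesicity, and then use minimality to exclude vertex backtracking; your Phase~2 essentially matches the paper's first claim in its proof. However there are two genuine gaps, both of which the paper addresses by a single device you are missing: it first defines an auxiliary finite set $\widehat\Psi$ of \emph{non-backtracking} cyclic paths of length $<\tfrac{5}{3}k$ (where $k$ comes from the local-to-global property for quasi-geodesics in the hyperbolic graph $\CayG$), and then takes $\widehat\Phi$ to be those words labelling a path $\pathXH{Q}$ with $\pathXH{QP}\in\widehat\Psi$ and $|\pathXH{Q}|>\tfrac{3}{2}|\pathXH{P}|$.

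The first gap is the finiteness of your $\widehat\Phi$. You propose all short words with the stated ratio property, and attempt finiteness by quotienting by $G$-translation and by label choice; but neither does anything ($G$-translation preserves the label word, and each directed $\mathcal{H}$-edge of $\CayG$ carries a unique generator). Bounding the $\mathcal{H}$-letters in such a word requires applying Proposition~\ref{prop:osin-polygons} to the closing cycle, which one can only do once one knows that cycle does not backtrack --- this is exactly what the definition of $\widehat\Psi$ builds in and your $\widehat\Phi$ does not; without it you are forced into the very backtracking analysis you postpone to Phase~2. The second gap is a circularity between the two phases. Your Phase~1 bigon decomposition with Proposition~\ref{prop:osin-polygons} requires the $\mathcal{H}$-components of $\pathXH{P}$ to be isolated in the bigon, but for a $2$-local geodesic two distinct $\mathcal{H}$-components of $\pathXH{P}$ being connected is precisely vertex backtracking, which you only rule out in Phase~2 --- and Phase~2 in turn cites ``the quasi-geodesicity established above''. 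The paper's proof avoids this by establishing, for subwords of length $\leq k$, \emph{first} that vertex backtracking is impossible (using only $2$-local geodesicity, the $\widehat\Phi$-exclusion, and $\widehat\Psi$), and only \emph{then} running the $(4,3)$-quasi-geodesic bigon argument, whose case analysis explicitly invokes that $\pathXH{P}$ does not vertex backtrack; the local-to-global theorem together with the choice $k\geq\lambda+c$ then upgrades both conclusions from $k$-local subwords to the whole word. You should adopt both the $\widehat\Psi$-based definition and this ordering.
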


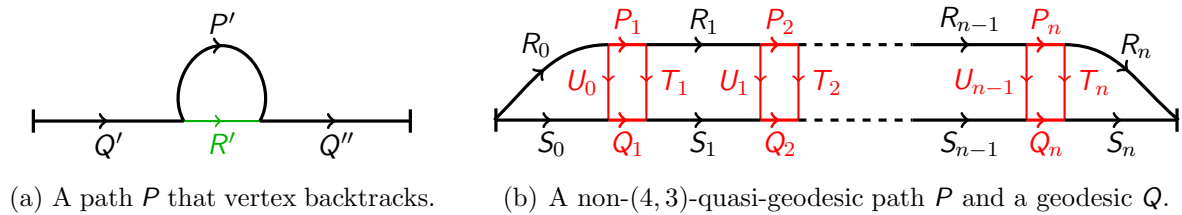
\begin{figure}[ht]
\begin{subfigure}[t]{0.35\textwidth}
\centering
\begin{tikzpicture}[very thick,decoration={markings,mark=at position 0.5 with {\arrow{>}}}]
\draw [green!70!black,thick,postaction=decorate] (2,0) to node[midway,below] {$\pathXH{R}'$} (3,0);
\draw [|-,postaction=decorate] (0,0) to node[midway,below] {$\pathXH{Q}'$} (2,0);
\draw [postaction=decorate] (2,0) to[out=120,in=180] (2.5,1) node [above] {$\pathXH{P}'$} to[out=0,in=60] (3,0);
\draw [-|,postaction=decorate] (3,0) to node[midway,below] {$\pathXH{Q}''$} (5,0);
\end{tikzpicture}
\caption{A path $\pathXH{P}$ that vertex backtracks.}
\label{sfig:lgqg-backtrack}
\end{subfigure}\hfill%
\begin{subfigure}[t]{0.64\textwidth}
\centering
\begin{tikzpicture}[very thick,decoration={markings,mark=at position 0.5 with {\arrow{>}}}]
\draw [red,postaction=decorate] (1.5,0) -- (2,0) node [midway,below] {$\pathXH{Q}_1$};
\draw [red,postaction=decorate] (3.5,0) -- (4,0) node [midway,below] {$\pathXH{Q}_2$};
\draw [red,postaction=decorate] (7,0) -- (7.5,0) node [midway,below] {$\pathXH{Q}_n$};
\draw [red,postaction=decorate] (1.5,1) -- (2,1) node [midway,above] {$\pathXH{P}_1$};
\draw [red,postaction=decorate] (3.5,1) -- (4,1) node [midway,above] {$\pathXH{P}_2$};
\draw [red,postaction=decorate] (7,1) -- (7.5,1) node [midway,above] {$\pathXH{P}_n$};
\draw [red,thick,postaction=decorate] (1.5,1) -- (1.5,0) node [midway,left] {$\pathXH{U}_0$};
\draw [red,thick,postaction=decorate] (2,1) -- (2,0) node [midway,right] {$\pathXH{T}_1$};
\draw [red,thick,postaction=decorate] (3.5,1) -- (3.5,0) node [midway,left] {$\pathXH{U}_1$};
\draw [red,thick,postaction=decorate] (4,1) -- (4,0) node [midway,right] {$\pathXH{T}_2$};
\draw [red,thick,postaction=decorate] (7,1) -- (7,0) node [midway,left] {$\pathXH{U}_{n-1}$};
\draw [red,thick,postaction=decorate] (7.5,1) -- (7.5,0) node [midway,right] {$\pathXH{T}_n$};
\draw [|-,postaction=decorate] (0,0) -- (1.5,0) node [midway,below] {$\pathXH{S}_0$};
\draw [postaction=decorate] (2,0) -- (3.5,0) node [midway,below] {$\pathXH{S}_1$};
\draw [dashed] (4,0) -- (5.5,0);
\draw [postaction=decorate] (5.5,0) -- (7,0) node [midway,below] {$\pathXH{S}_{n-1}$};
\draw [-|,postaction=decorate] (7.5,0) -- (9,0) node [midway,below] {$\pathXH{S}_n$};
\draw [postaction=decorate] (0,0) to[in=180] node [midway,above] {$\pathXH{R}_0\ \ $} (1.5,1);
\draw [postaction=decorate] (2,1) to node [midway,above] {$\pathXH{R}_1$} (3.5,1);
\draw [dashed] (4,1) -- (5.5,1);
\draw [postaction=decorate] (5.5,1) to node [midway,above] {$\pathXH{R}_{n-1}$} (7,1);
\draw [postaction=decorate] (7.5,1) to[out=0,in=140] node [midway,above] {$\ \ \pathXH{R}_n$} (9,0);
\end{tikzpicture}
\caption{A non-$(4,3)$-quasi-geodesic path $\pathXH{P}$ and a geodesic $\pathXH{Q}$.}
\label{sfig:lgqg-43qg}
\end{subfigure}
\caption{The proof of Proposition \ref{prop:lg=qg}.}
\label{fig:lgqg}
\end{figure}

\begin{proof}
We use the following `local to global' property of quasi-geodesics for a hyperbolic spaces \cite[Chapter 3, Theorem 1.4]{cdp}: given a hyperbolic metric space $Y$ and constants $\lambda' \geq 1$ and $c' \geq 0$, there exist constants $\lambda \geq 1$ and $c,k \geq 0$ such that every $k$-local $(\lambda',c')$-quasi-geodesic in $Y$ is a $(\lambda,c)$-quasi-geodesic. In particular, since $\CayG$ is hyperbolic (see Proposition~\ref{prop:BCPtriangles}\ref{it:bcpt-phase}), we may use this property for $\lambda' = 4$ and $c' = 3$: there exist constants $\lambda \geq 1$ and $c,k \geq 0$ such that every $k$-local $(4,3)$-quasi-geodesic in $\CayG$ is a $(\lambda,c)$-quasi-geodesic. We may further increase $k$ if necessary to assume that $k \geq \lambda+c$.

Now let $\widehat\Psi$ be the set of cyclic paths $\pathXH{Q}$ in $\CayG$ that do not backtrack with $|\pathXH{Q}| < \frac{5}{3}k$. By Proposition~\ref{prop:osin-polygons}, there exists a finite subset $\Omega \subseteq G$ and a constant $L \geq 0$ such that for any $\pathXH{Q} \in \widehat\Psi$ we have
\[
\sum_{i=1}^{n(\pathXH{Q})} \left| \pathXH{P}_{\pathXH{Q},i} \right|_\Omega < \frac{5}{3}kL,
\]
where $\pathXH{P}_{\pathXH{Q},1},\ldots,\pathXH{P}_{\pathXH{Q},n(\pathXH{Q})}$ is the set of words labelling the components of $\pathXH{Q}$. In particular, as $X$ and $\Omega$ are finite, it follows that the set of cyclic words labelling elements of $\widehat\Psi$ is finite.

We then define $\widehat\Phi$ to be the set of words labelling a path $\pathXH{Q}$ in $\CayG$ such that $\pathXH{QP} \in \widehat\Psi$ (in particular, $\pathXH{Q}_- = \pathXH{P}_+$ and $\pathXH{Q}_+ = \pathXH{P}_-$) and $|\pathXH{Q}| > \frac{3}{2}|\pathXH{P}|$ for some path $\pathXH{P}$ in $\CayG$. It is clear that $\widehat\Phi$ is finite, and that every word in $\widehat\Phi$ labels a path $\pathXH{Q}$ with $|\pathXH{Q}| > \frac{3}{2}d_{\XH}(\pathXH{Q}_-,\pathXH{Q}_+)$.

Now let $\wordXH{P}$ be a $2$-local geodesic word over $\XH$ that does not contain any element of $\widehat\Phi$ as a subword. We claim that $\wordXH{P}$ is a $k$-local $(4,3)$-quasi-geodesic that does not vertex backtrack. This will imply the result. To show that $\wordXH{P}$ is a $k$-local $(4,3)$-quasi-geodesic, we may assume, without loss of generality, that $|\wordXH{P}| \leq k$. Let $\pathXH{P} \subseteq \CayG$ be a path labelled by $\wordXH{P}$.

We first claim that $\wordXH{P}$ does not vertex backtrack. Indeed, if it did, then we would have $\pathXH{P} = \pathXH{Q}' \pathXH{P}' \pathXH{Q}''$ where $\pathXH{P}'$ labels an element of $H_j$ (for some $j$) and $|\pathXH{P}'| \geq 2$. Without loss of generality, suppose that $\pathXH{P}'$ is a minimal subpath with this property, and let $\pathXH{R}'$ be a geodesic path with $\pathXH{R}'_\pm = \pathXH{P}'_\pm$ (so that either $|\pathXH{R}'| = 1$ and $\pathXH{R}'$ is an $\widetilde{H}_j$-edge, or $|\pathXH{R}'| = 0$): see Figure~\ref{sfig:lgqg-backtrack}. As $\pathXH{P}$ is a $2$-local geodesic, we have $|\pathXH{P}'| \geq 3$; by minimality of $\pathXH{P}'$, no two components of $\pathXH{P}'$ are connected, and no component of $\pathXH{P}'$ is connected to the $H_j$-component $\pathXH{R}'$ (if $|\pathXH{R}'| = 1$). As $3 \leq |\pathXH{P}'| \leq |\pathXH{P}| \leq k$ and so $|\pathXH{R}'| \leq 1 < 2 \leq \frac{2}{3}k$, it follows that $|\pathXH{P}'\overline{\pathXH{R}'}| < \frac{5}{3}k$ and so $\pathXH{P}'\overline{\pathXH{R}'} \in \widehat\Psi$. But as $|\overline{\pathXH{R}'}| \leq 1$, we have $|\pathXH{P}'| \geq 3 > \frac{3}{2} \geq \frac{3}{2}|\overline{\pathXH{R}'}|$ and so $\pathXH{P}'$ is labelled by a word in $\widehat\Phi$, contradicting the choice of $\wordXH{P}$.

Thus, $\pathXH{P}$ is a $2$-local geodesic path of length $|\pathXH{P}| \leq k$ that does not vertex backtrack. Suppose for contradiction that $\pathXH{P}$ is not a $(4,3)$-quasi-geodesic. By passing to a subpath of $\pathXH{P}$ if necessary, we may assume that there exists a geodesic path $\pathXH{Q}$ in $\CayG$ such that $\pathXH{Q}_- = \pathXH{P}_-$, $\pathXH{Q}_+ = \pathXH{P}_+$, and such that $|\pathXH{P}| > 4|\pathXH{Q}|+3$. As $\pathXH{P}$ does not vertex backtrack and as $\pathXH{Q}$ is a geodesic, both $\pathXH{P}$ and $\pathXH{Q}$ do not backtrack. Also, as $\pathXH{P}$ and $\pathXH{Q}$ are $2$-local geodesics, all components of $\pathXH{P}$ and of $\pathXH{Q}$ are edges.

Now for some $n \geq 0$, we may write $\pathXH{P} = \pathXH{R}_0 \pathXH{P}_1 \pathXH{R}_1 \cdots \pathXH{P}_n \pathXH{R}_n$ and $\pathXH{Q} = \pathXH{S}_0 \pathXH{Q}_1 \pathXH{S}_1 \cdots \pathXH{Q}_n \pathXH{S}_n$ in such a way that, for each $i$, $\pathXH{P}_i$ is a component of $\pathXH{P}$ connected to a component $\pathXH{Q}_i$ of $\pathXH{Q}$, and no component of $\pathXH{R}_i$ is connected to a component of $\pathXH{S}_i$. Let $\pathXH{T}_i$ (respectively, $\pathXH{U}_i$) be the geodesic in $\CayG$ from $(\pathXH{R}_i)_-$ to $(\pathXH{S}_i)_-$ (respectively, from $(\pathXH{R}_i)_+$ to $(\pathXH{S}_i)_+$), so that either $\pathXH{T}_i$ (respectively, $\pathXH{U}_i$) has length $0$, or it is an $\mathcal{H}$-edge that is connected, as a component, to $\pathXH{P}_i$ and $\pathXH{Q}_i$ (respectively, $\pathXH{P}_{i+1}$ and $\pathXH{Q}_{i+1}$); see Figure~\ref{sfig:lgqg-43qg}.

If we had $|\pathXH{R}_i| \leq \frac{3}{2}(|\pathXH{S}_i|+2)$ for each $i$, then we would have
\[
|\pathXH{P}| = n+\sum_{i=0}^n |\pathXH{R}_i| \leq 4n+3+\frac{3}{2}\sum_{i=0}^n |\pathXH{S}_i| \leq 4\left( n+\sum_{i=0}^n |\pathXH{S}_i| \right)+3 = 4|\pathXH{Q}|+3,
\]
contradicting the assumption that $|\pathXH{P}| > 4|\pathXH{Q}|+3$. Thus, there exists some $i \in \{0,\ldots,n\}$, which we fix from now on, such that $|\pathXH{R}_i| > \frac{3}{2}(|\pathXH{S}_i|+2)$.

Now if $|\pathXH{T}_i| = 1$ (as opposed to $|\pathXH{T}_i|=0$), since $\pathXH{T}_i$ is connected to a component $\pathXH{P}_i$ of $\pathXH{P}$ and since $\pathXH{P}$ does not backtrack, it follows that $\pathXH{T}_i$ is not connected to any component of $\pathXH{R}_i$. Similarly, if $|\pathXH{T}_i| = 1$ then $\pathXH{T}_i$ is not connected to any component of $\pathXH{S}_i$, and if $|\pathXH{U}_i| = 1$ then $\pathXH{U}_i$ is not connected to any component of $\pathXH{R}_i$ or of $\pathXH{S}_i$. Since $\pathXH{P}$ and $\pathXH{Q}$ do not backtrack, no two components of $\pathXH{R}_i$ and no two components of $\pathXH{S}_i$ are connected. By construction, no component of $\pathXH{R}_i$ is connected to a component of $\pathXH{S}_i$, and if $\pathXH{T}_i$ was a component connected to a component $\pathXH{U}_i$ then we would have $|\pathXH{R}_i| \leq 1$ (as $\pathXH{P}$ does not vertex backtrack), contradicting the fact that $|\pathXH{R}_i| > \frac{3}{2}(|\pathXH{S}_i|+2) \geq 3$.

It hence follows that all components of the cyclic path $\pathXH{R}_i\pathXH{U}_i\overline{\pathXH{S}_i}\,\overline{\pathXH{T}_i}$ are isolated. Moreover, since $k \geq |\pathXH{P}| \geq |\pathXH{R}_i| > \frac{3}{2}(|\pathXH{S}_i|+2)$ we have $|\pathXH{R}_i\pathXH{U}_i\overline{\pathXH{S}_i}\,\overline{\pathXH{T}_i}| = |\pathXH{R}_i| + (|\pathXH{S}_i| + 2) < \frac{5}{3}k$. Thus $\pathXH{R}_i\pathXH{U}_i\overline{\pathXH{S}_i}\,\overline{\pathXH{T}_i} \in \widehat\Psi$, and as $|\pathXH{R}_i| > \frac{3}{2}(|\pathXH{S}_i|+2) \geq \frac{3}{2}|\pathXH{U}_i\overline{\pathXH{S}_i}\,\overline{\pathXH{T}_i}|$, the path $\pathXH{R}_i$ is labelled by a word in $\widehat\Phi$, contradicting the choice of $\wordXH{P}$.

Thus, if $\wordXH{P}$ is a $2$-local geodesic word in $\CayG$ that does not contain any element of $\widehat\Phi$ as a subword, then $\wordXH{P}$ is a $k$-local $(4,3)$-quasi-geodesic word and any subword of $\wordXH{P}$ of length $\leq k$ does not vertex backtrack. In particular, $\wordXH{P}$ is a $(\lambda,c)$-quasi-geodesic. If $\wordXH{P}$ did vertex backtrack then we would have $\wordXH{P} = \wordXH{Q}'\wordXH{P}'\wordXH{Q}''$ where $\wordXH{P}'$ represents an element of some $H_j$ and $|\wordXH{P}'| > k$ (cf Figure~\ref{sfig:lgqg-backtrack}); but $k$ was chosen so that $k \geq \lambda+c$, so this is impossible since $\wordXH{P}$ is a $(\lambda,c)$-quasi-geodesic. Hence $\wordXH{P}$ is a $(\lambda,c)$-quasi-geodesic word that does not vertex backtrack, as required.
\end{proof}

\subsection{Local geodesics map to local geodesics} \label{ssec:lg->lg}

Throughout the remainder of this section, we adopt the following assumption.

\begin{ass} \label{ass:Nbigger}
We assume that each $\Gamma_j$ is proper (as a metric space), and $\widetilde{H}_j$ acts on $\Gamma_j$ geometrically. We furthermore assume that $N \geq 1$ is chosen in such a way that the following properties hold.
\begin{enumerate}[label=(\roman*)]
\item \label{it:Nbigg-quot} The distance in $\Gamma_j / \widetilde{H}_j$ between $v_j \cdot \widetilde{H}_j$ and any other vertex is at most $N$.
\item \label{it:Nbigg-Phi} For each word $\wordXH{P} \in \widehat\Phi$, where $\widehat\Phi$ is the finite collection of words in Proposition~\ref{prop:lg=qg}, fix a geodesic word $\wordXH{U_P}$ over $\XH$ such that $\wordXH{U_P}$ and $\wordXH{P}$ represent the same element of $G$. Then for any $j$ and any $\widetilde{H}_j$-letter $h$ of $\wordXH{U_P}$, we have $d_{\Gamma_j}(v_j,v_j \cdot h) \leq N$.
\item \label{it:Nbigg-loop3} If $\wordXH{P}$ is a cyclic word over $\XH$ of length $|\wordXH{P}| \leq 3$ with all its components single letters that does not backtrack, and if $h$ is an $\widetilde{H}_j$-letter of $\wordXH{P}$, then $d_{\Gamma_j}(v_j,v_j \cdot h) \leq N$.
\end{enumerate}
\end{ass}

Note that, under the assumption that the actions $\widetilde{H}_j \acts \Gamma_j$ are all geometric, the points \ref{it:Nbigg-quot}--\ref{it:Nbigg-loop3} in Assumption~\ref{ass:Nbigger} will all be satisfied whenever $N \geq 1$ is large enough. Indeed, since $\widetilde{H}_j$ acts on $\Gamma_j$ geometrically (and so cocompactly), the graph $\Gamma_j / \widetilde{H}_j$ is finite, and so \ref{it:Nbigg-quot} is satisfied for $N$ large enough. Furthermore, as the collection $\widehat\Phi$ in Proposition~\ref{prop:lg=qg} is finite, \ref{it:Nbigg-Phi} is true when $N$ is large enough. Finally, it follows from Proposition~\ref{prop:osin-polygons} that there are finitely many cyclic words over $\XH$ of length $\leq 3$ that do not backtrack all of whose components are single letters, and so \ref{it:Nbigg-loop3} is satisfied for large enough $N$. In fact, one can see from the proof of Proposition~\ref{prop:lg=qg} that \ref{it:Nbigg-Phi} implies \ref{it:Nbigg-loop3} -- but both points are stated here for future reference.

In the following two lemmas, let $P$ be a path in $\Gamma(N)$ that has no parabolic shortenings and at least one free vertex, and let $P_1,\ldots,P_n \subseteq \Gamma(N)$ and $\widehat{P} = \widehat{P}_1 \cdots \widehat{P}_n \subseteq \CayG$ be as in Definition~\ref{defn:derived}.

\begin{figure}[ht]
\begin{subfigure}[t]{0.4\textwidth}
\centering
\begin{tikzpicture}[very thick,decoration={markings,mark=at position 0.5 with {\arrow{>}}}]
\filldraw [dashed,thick,red!50,fill=red!10] (1,1.15) ellipse (3 and 0.5);
\draw [blue,postaction=decorate] (0,0) to node[midway,right] {$e$} (0,1);
\draw [blue,postaction=decorate] (0,1) to node[midway,above] {$P_2'$} (2,1);
\draw [blue,postaction=decorate] (2,0) to node[midway,left] {$e'$} (2,1);
\fill (0,0) circle (2pt) node [left] {$g$};
\fill (0,1) circle (2pt) node [left] {$(H_jg,u)$};
\fill (2,0) circle (2pt) node [right] {$\epsilon(h)g$};
\fill (2,1) circle (2pt) node [right] {$(H_jg,w)$};
\end{tikzpicture}
\caption{Lemma~\ref{lem:comp3}: the path $P_2$.}
\label{sfig:lem3}
\end{subfigure}\hfill%
\begin{subfigure}[t]{0.55\textwidth}
\centering
\begin{tikzpicture}[very thick,decoration={markings,mark=at position 0.5 with {\arrow{>}}}]
\filldraw [dashed,thick,red!50,fill=red!10] (0.5,1.2) ellipse (2.5 and 0.75);
\draw [thick,postaction=decorate] (0,1) to node[midway,below] {$T$} (2,1);
\draw [green!60!black,postaction=decorate] (0,0) to node[pos=0.3,right] {$e_1$} (0,1);
\draw [green!60!black,postaction=decorate] (-1,1) to node[midway,below] {$T_1$} (0,1);
\draw [blue,postaction=decorate] (2,0) to node[pos=0.3,right] {$e_2$} (2,1);
\draw [blue,postaction=decorate] (-1,1) to[bend left] node[midway,above] {$T_2$} (2,1);
\fill (0,0) circle (2pt) (0,1) circle (2pt) (2,0) circle (2pt) (2,1) circle (2pt) (-1,1) circle (2pt) node [left] {$P_-$};
\end{tikzpicture}
\caption{Lemma~\ref{lem:comp4}: the paths $Z_{P_-}$ (green) and $P'$ (blue).}
\label{sfig:lem4}
\end{subfigure}
\caption{The proofs of Lemmas \ref{lem:comp3} and \ref{lem:comp4}. The red shaded area represents a copy of $\Gamma_{j,N}$ in $\Gamma(N)$.}
\label{fig:lem34}
\end{figure}
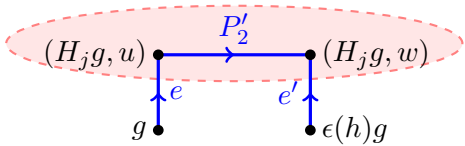
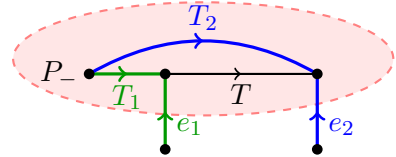

\begin{lem} \label{lem:comp3}
Suppose that $P$ is a closed path, that $P_- = P_+$ is a free vertex, and that $|\widehat{P}| \leq 3$. If, for some $i$, $\widehat{P}_i$ is an isolated component of $\widehat{P}$, then $|P_i| \leq 3$.
\end{lem}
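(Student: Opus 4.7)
The plan is to unpack the structure of $P_i$ using Definition~\ref{defn:derived}, reduce the inequality $|P_i| \leq 3$ to a bound on $d_{\Gamma_j}(v_j, v_j \cdot h)$ where $h$ is the $\widetilde{H}_j$-letter labelling $\widehat{P}_i$, and then extract this bound from Assumption~\ref{ass:Nbigger}\ref{it:Nbigg-loop3} by producing a suitable cyclic word over $\XH$.

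Since $\widehat{P}_i$ is an isolated component of $\widehat{P}$, it must be a single $\mathcal{H}$-edge, necessarily an $\widetilde{H}_j$-edge labelled by some $h \in \widetilde{H}_j \setminus \{1\}$; by Definition~\ref{defn:derived}\ref{it:derived-3}(c) every interior vertex of $P_i$ is then $H_j$-internal. Writing $g = (P_i)_-$, the path $P_i$ therefore consists of a connecting edge from $g$ into the $g$-copy $\Gamma_0$ of $\Gamma_{j,N}$, an entirely internal subpath in $\Gamma_0$, and a connecting edge from $\Gamma_0$ back to $(P_i)_+ = \epsilon(h) g$. Since $P$ has no parabolic shortenings, the middle subpath is a geodesic in $\Gamma_0$. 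Combining the explicit description of the connecting edges with $G$-equivariance (translating by $g^{-1}$ so that the two internal endpoints become $(H_j, v_j)$ and $(H_j, v_j \cdot h)$), the length of this middle geodesic equals $d_{\Gamma_{j,N}}(v_j, v_j \cdot h)$, giving
\[
|P_i| = 2 + d_{\Gamma_{j,N}}(v_j, v_j \cdot h).
\]
Hence the claim reduces to $d_{\Gamma_j}(v_j, v_j \cdot h) \leq N$, in view of the definition of $\Gamma_{j,N}$.

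To invoke Assumption~\ref{ass:Nbigger}\ref{it:Nbigg-loop3}, I would produce a cyclic word over $\XH$ of length at most $3$, with all components being single letters and not backtracking, that contains $h$ as an $\widetilde{H}_j$-letter. The natural candidate is the cyclic word $\wordXH{W}$ labelling $\widehat{P}$: it is closed (because $P_- = P_+$ is a free vertex, so $\widehat{P}_\pm = P_\pm$), has length $|\widehat{P}| \leq 3$, and contains $h$ in a single-letter isolated component. Since no edge adjacent to $\widehat{P}_i$ can lie in $\widetilde{H}_j$ (else it would merge with $\widehat{P}_i$ and contradict isolatedness), a finite case check on $\widehat{P}$ shows that the only configuration in which $\wordXH{W}$ itself fails the hypotheses is when $|\widehat{P}| = 3$ and the two non-$\widehat{P}_i$ edges lie in a common $\widetilde{H}_{j'}$ with $j' \neq j$, forming a single length-$2$ component; in that case I would replace this component by a single $\widetilde{H}_{j'}$-letter equal to its product, obtaining a non-backtracking cyclic word of length $2$ with single-letter components that still contains $h$. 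Applying Assumption~\ref{ass:Nbigger}\ref{it:Nbigg-loop3} then yields $d_{\Gamma_j}(v_j, v_j \cdot h) \leq N$, completing the proof. The most delicate step is the equivariance bookkeeping that identifies the two internal endpoints of $P_i$ with $v_j$ and $v_j \cdot h$ in a fixed copy of $\Gamma_{j,N}$; the case analysis of $\wordXH{W}$ is routine thanks to the short length bound.
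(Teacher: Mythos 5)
Your proof is correct and follows essentially the same route as the paper's: identify $\widehat{P}_i$ as an $\widetilde{H}_j$-edge labelled by some $h$, use the no-parabolic-shortenings hypothesis and $G$-equivariance to reduce $|P_i|$ to $2 + d_{\Gamma_{j,N}}(v_j, v_j \cdot h)$, and invoke Assumption~\ref{ass:Nbigger}\ref{it:Nbigg-loop3}. One genuine refinement in your write-up: the paper's proof invokes the Assumption directly without verifying that the cyclic word labelling $\widehat{P}$ has all components single letters and does not backtrack, whereas you note the one configuration (two non-$\widehat{P}_i$ edges in a common $\widetilde{H}_{j'}$, $j' \neq j$, forming a length-$2$ component) where this can fail, and handle it by collapsing that component to the single $\widetilde{H}_{j'}$-letter equal to its product, which is nontrivial since otherwise $h$ itself would be trivial. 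This case does actually arise in the third bullet of the proof of Proposition~\ref{prop:lg->lg}, so your extra care fills a small gap that the paper leaves implicit.
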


\begin{proof}
Note that since $P_-$ and $P_+$ are free vertices, it follows from the construction in Definition~\ref{defn:derived} that $P = P_2 \cdots P_{n-1}$, and that $P_1$ and $P_n$ are empty paths. Suppose that $\widehat{P}_i$ is an isolated component of $\widehat{P}$: without loss of generality, $i = 2$. Then the non-endpoint vertices of $P_2$ are either all $H_j$-internal (for some $j$) or all medial. In the latter case, we have $|P_2| = 2$ and we are done. Thus, without loss of generality, suppose all non-endpoint vertices of $P_2$ are $H_j$-internal, and so $\widehat{P}_2$ is labelled by an element $h \in \widetilde{H}_j$.

Now it follows from Assumption \ref{ass:Nbigger}\ref{it:Nbigg-loop3} that $d_{\Gamma_j}(v_j,v_j \cdot h) \leq N$. By construction, it also follows that $P_2 = eP_2'\overline{e'}$, where $e$ and $e'$ are the connecting edges with $e_- = g$, $e'_- = \epsilon(h)g$, $e_+ = (H_j,v_j) \cdot g = (H_jg,u)$ and $e'_+ = (H_j,v_j \cdot h) \cdot g = (H_jg,w)$ for some $g \in G$ and some $u,w \in V(\Gamma_j)$ with $d_{\Gamma_j}(u,w) = d_{\Gamma_j}(v_j,v_j \cdot h) \leq N$, and all vertices of $P_2'$ are $H_j$-internal; see Figure~\ref{sfig:lem3}. Since $P$ has no parabolic shortenings, it follows that $P_2'$ is a geodesic as a path in the $g$-copy of $\Gamma_{j,N}$, and so
\[
|P_2'| = d_{\Gamma_{j,N}}(u,w) = \left\lceil \frac{d_{\Gamma_j}(u,w)}{N} \right\rceil \leq 1.
\]
Thus $|P_2| = 2+|P_2'| \leq 3$, as required.
\end{proof}

\begin{lem} \label{lem:comp4}
If, for some $i \in \{1,n\}$, $P_i$ is an extended subpath of $P$, then $\widehat{P}_i$ is an $\widetilde{H}_j$-edge (for some $j$) and, for any path $Q$ in $\Gamma(N)$ with $Q_- = (P_i)_-$, $Q_+ = (P_i)_+$ and all other vertices of $Q$ being $H_j$-internal, we have $|Q| \geq 4$.
\end{lem}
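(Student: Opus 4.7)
The plan is to analyze $P_1 = \overline{Z_{P_-}}P'$ (the case $i=n$ is symmetric) and reduce both assertions to a single triangle-inequality estimate inside one copy of $\Gamma_{j,N}$. The two inputs I will use are that the no-parabolic-shortenings hypothesis makes the relevant subpath of $P'$ a geodesic in $\Gamma_{j,N}$, while the minimality of $Z_{P_-}$ combined with Assumption~\ref{ass:Nbigger}\ref{it:Nbigg-quot} forces the analogous subpath of $Z_{P_-}$ to be short.

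First I would pin down the vertex types. Since $|P'| \geq 4$, the initial vertex $P_-$ cannot be free (otherwise $P'$ is trivial). It cannot be medial either, because a medial vertex of $\Gamma(N)$ is adjacent only to free vertices, which would force the next vertex of $P'$ to be free and hence $|P'| \leq 1$. Thus $P_- = (H_jg, u)$ is $H_j$-internal for some $j$, some $g \in G$, and some $u \in V(\Gamma_j)$. The same reasoning, applied inductively, shows that every non-free vertex of $P'$ and of $Z_{P_-}$ is $H_j$-internal in the $g$-copy of $\Gamma_{j,N}$. Consequently all non-endpoint vertices of $P_1$ lie in this one copy. Since $|P_1| = |Z_{P_-}| + |P'| \geq 5$, case (b) of Definition~\ref{defn:derived}(iii) cannot apply, so it remains only to exclude case~(a), i.e.\ $(P_1)_- = (P_1)_+$.

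Write $P' = P''e'$ and $Z_{P_-} = P'''e$, where $e$ and $e'$ are the final connecting edges; then $P''$ terminates at $u_1 := (H_jg, u_{j,P'_+})$ and $P'''$ terminates at $u_0 := (H_jg, u_{j,(Z_{P_-})_+})$. Since $P$ has no parabolic shortenings, $P''$ is a geodesic in $\Gamma_{j,N}$, and so $d_{\Gamma_{j,N}}(P_-, u_1) = |P''| = |P'|-1 \geq 3$. On the other hand, by minimality of $|Z_{P_-}|$ the path $P'''$ realises the distance from $P_-$ to the set of $H_j$-internal vertices incident to a connecting edge, and Assumption~\ref{ass:Nbigger}\ref{it:Nbigg-quot} ensures this distance is at most $1$; so $d_{\Gamma_{j,N}}(P_-, u_0) \leq 1$. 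The triangle inequality yields $d_{\Gamma_{j,N}}(u_0, u_1) \geq 2$; in particular $u_0 \neq u_1$, whence $(P_1)_- = (Z_{P_-})_+ \neq P'_+ = (P_1)_+$, ruling out case~(a) and proving that $\widehat{P}_1$ is an $\widetilde{H}_j$-edge. The same inequality settles the second assertion: any admissible $Q$ must begin with the unique connecting edge from $(P_1)_-$ into the $g$-copy (landing at $u_0$), stay in this copy, and exit via the unique connecting edge from $u_1$ to $(P_1)_+$, so $|Q| \geq 2 + d_{\Gamma_{j,N}}(u_0, u_1) \geq 4$.

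The main delicacy I foresee is justifying that the path $Q$ in the second assertion truly cannot escape the $g$-copy of $\Gamma_{j,N}$. This will follow from the fact that in $\Gamma(N)$ each free vertex has exactly one $H_j$-internal neighbour (determined by the connecting-edge data) and each $H_j$-internal vertex has at most one free neighbour; combined with the hypothesis that intermediate vertices of $Q$ are $H_j$-internal, this traps $Q$ inside the single copy determined by $g$.
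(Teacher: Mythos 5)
Your proof takes essentially the same route as the paper's: decompose $P_1 = \overline{Z_{P_-}}P'$ (the paper writes $Z_{P_-}=T_1\overline{e_1}$, $P'=T_2\overline{e_2}$), use the no-parabolic-shortenings hypothesis to get $d_{\Gamma_{j,N}}(P_-,u_1)=|P'|-1\geq 3$, use Assumption~\ref{ass:Nbigger}\ref{it:Nbigg-quot} to get $d_{\Gamma_{j,N}}(P_-,u_0)\leq 1$, and conclude via the triangle inequality that $d_{\Gamma_{j,N}}(u_0,u_1)\geq 2$ and hence $|Q|\geq 4$. One small inaccuracy in your closing remark: an $H_j$-internal vertex of $\Gamma(N)$ may in fact have several free neighbours (one for each element of the relevant $\widetilde{H}_j$-vertex-stabiliser, which need not be trivial), but this claim is never actually used --- the trapping of $Q$ inside the single copy already follows from the facts that internal edges stay within one copy and that each \emph{free} vertex has a unique $H_j$-internal neighbour.
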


\begin{proof}
Suppose that $i = 1$, and so $P_1$ is an extended subpath of $P$: the case $i = n$ is similar. By the construction described in Definition~\ref{defn:derived}\ref{it:derived-2}, we have $P_1 = \overline{Z_{P_-}} P'$, where $P'$ is an initial subpath of $P$ with $|P'| \geq 4$. Here, $Z_{P_-}$ is a shortest path in $\Gamma(N)$ with $(Z_{P_-})_- = P_-$ and $(Z_{P_-})_+$ a free vertex, and so it follows from Assumption~\ref{ass:Nbigger}\ref{it:Nbigg-quot} that $|Z_{P_-}| \leq 2$.

More precisely, we know that $Z_{P_-} = T_1\overline{e_1}$ and $P' = T_2\overline{e_2}$, where $e_1$ and $e_2$ are connecting edges and all edges on the path $\overline{T_1}T_2$ are $H_j$-internal; see Figure~\ref{sfig:lem4}. Since $|T_2| - |T_1| \geq 3-1=2$ and since $P$ (and so its subpath $T_2$) has no parabolic shortenings, it follows that any path between $(T_1)_+$ and $(T_2)_+$ consisting only of internal edges has length $\geq 2$. But a path $Q$ satisfying the conditions stated must be of the form $Q = e_1T\,\overline{e_2}$, where $T$ is a path between $(T_1)_+$ and $(T_2)_+$ consisting only of internal edges, and so we have $|Q| = 2+|T| \geq 4$, as required.
\end{proof}

We now show that local geodesics in $\Gamma(N)$ with no parabolic shortenings are transformed to local geodesics in $\CayG$.

\begin{prop} \label{prop:lg->lg}
Let $P$ be a $5$-local geodesic in $\Gamma(N)$ which has no parabolic shortenings. Then the derived path $\widehat{P}$ is a $2$-local geodesic in $\CayG$.
\end{prop}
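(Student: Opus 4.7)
The plan is to proceed by contradiction. Assume $\widehat{P}$ is not a $2$-local geodesic: then some length-$2$ subpath $\widehat{Q} = \widehat{P}_i\widehat{P}_{i+1}$ of $\widehat{P}$ satisfies $d_{\XH}(\widehat{Q}_-,\widehat{Q}_+) \le 1$, so there is a closing path $\widehat{e}\subseteq\CayG$ of length $\le 1$ from $\widehat{Q}_+$ to $\widehat{Q}_-$. Let $\widehat{L}:=\widehat{Q}\widehat{e}$, a closed loop of length $\le 3$ in $\CayG$, and let $E\subseteq\Gamma(N)$ be the natural lift of $\widehat{e}$: trivial if $\widehat{e}$ is trivial, of length $2$ through a medial vertex if $\widehat{e}$ is an $X$-edge, or connecting-internal-connecting through the $\widehat{Q}_+$-copy of some $\Gamma_{j_0,N}$ if $\widehat{e}$ is an $\widetilde{H}_{j_0}$-edge. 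Form the closed loop $L := P_iP_{i+1}E$ in $\Gamma(N)$; at its free vertices $\widehat{Q}_-$, $(P_i)_+$ and $\widehat{Q}_+$ it decomposes (in the sense of Definition~\ref{defn:derived}) into the three non-trivial pieces $P_i, P_{i+1}, E$, with no extended subpaths.

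If $\widehat{L}$ does not backtrack, Lemma~\ref{lem:comp3} applies to give $|P_i|, |P_{i+1}|, |E| \le 3$. If either $P_i$ or $P_{i+1}$ is an extended subpath in $P$'s decomposition, Lemma~\ref{lem:comp4} forces its length to be at least $4$, a contradiction. Otherwise $|P_iP_{i+1}| \in \{4,5,6\}$; when $|P_iP_{i+1}| \le 5$ the subpath $P_iP_{i+1}$ of $P$ is a geodesic, so $|P_iP_{i+1}| = d_{\Gamma(N)}((P_i)_-, (P_{i+1})_+) \le |E| \le 3 < 4$, a contradiction; when $|P_iP_{i+1}| = 6$, writing $v_0,\ldots,v_6$ for its vertices, the length-$5$ subpath $v_0\cdots v_5$ is a geodesic so $d_{\Gamma(N)}(v_0,v_5) = 5$, and the triangle inequality $5 \le d_{\Gamma(N)}(v_0,v_6) + d_{\Gamma(N)}(v_6, v_5) \le |E| + 1 \le 4$ gives a contradiction.

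If $\widehat{L}$ backtracks, two of its three single-edge components are $\widetilde{H}_j$-edges for the same $j$ and are connected. If $\widehat{P}_i$ and $\widehat{P}_{i+1}$ are the connected pair, both $P_i$ and $P_{i+1}$ penetrate a common copy $\Gamma_0$ of $\Gamma_{j,N}$ (since $(P_i)_\pm$ and $(P_{i+1})_\pm$ lie in a single left $H_j$-coset), and $P_iP_{i+1}$ exits $\Gamma_0$ at $(P_i)_+$ via a connecting edge and immediately re-enters at the same internal vertex $u_{j,(P_i)_+}$, producing a length-$2$ closed subpath of $P$ that violates the $2$-local (hence $5$-local) geodesic property. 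In the remaining sub-case, where $\widehat{e}$ is connected to $\widehat{P}_i$ (the case of $\widehat{P}_{i+1}$ being symmetric), combine the two connected edges into a single $\widetilde{H}_j$-edge $\widehat{e}'$ from $(P_i)_+$ to $(P_{i+1})_+$; the $2$-letter cyclic loop $\widehat{P}_{i+1}\widehat{e}'^{-1}$ is non-backtracking (since $\widehat{P}_{i+1}$ is not an $\widetilde{H}_j$-edge, else we reduce to the first sub-case), so Assumption~\ref{ass:Nbigger}\ref{it:Nbigg-loop3} bounds its $\widetilde{H}_j$-letter and lifts $\widehat{e}'$ to a natural path $E'$ in $\Gamma(N)$ of length $\le 3$ through the shared copy of $\Gamma_{j,N}$. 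Applying Lemma~\ref{lem:comp3} to the closed loop $P_{i+1}\overline{E'}$ then bounds $|P_{i+1}| \le 3$, and comparing the shortcut $E$ (whose natural lift through the same shared copy has length $\le |P_i|+1$) against length-$\le 5$ subpaths of $P$ via the triangle inequality produces the desired contradiction.

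The main obstacle lies in this last sub-case: while Lemma~\ref{lem:comp3} bounds the length of the isolated piece $P_{i+1}$, bounding the length of the non-isolated piece $P_i$ (and thereby the total length $|P_iP_{i+1}|$) requires carefully combining the length bound on the shortcut through the shared copy of $\Gamma_{j,N}$ (which itself depends on $|P_i|$) with detailed triangle-inequality arguments on length-$5$ geodesic subpaths of $P$.
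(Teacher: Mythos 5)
Your high-level strategy -- forming a short closed loop $L = P_iP_{i+1}E$ in $\Gamma(N)$ whose derived path is $\widehat{P}_i\widehat{P}_{i+1}\overline{\pathXH{R}}$ and then applying Lemmas~\ref{lem:comp3}~and~\ref{lem:comp4} -- is exactly the paper's, but there are two genuine gaps.

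First, Lemma~\ref{lem:comp3} requires the closed path it is applied to to have no parabolic shortenings, and you never establish this for $L$ (nor for $P_{i+1}\overline{E'}$ in the last sub-case). If $P_i$ or $P_{i+1}$ is an extended subpath of $P$, say $P_1 = \overline{Z_{P_-}}P'$, then its interior portion $\overline{T_1}T_2$ (in the notation of Lemma~\ref{lem:comp4}) concatenates a piece $T_2 \subseteq P$, which is geodesic in its copy of $\Gamma_{j,N}$ because $P$ has no parabolic shortenings, with a piece $T_1 \subseteq Z_{P_-}$ that is not so controlled; the concatenation need not be geodesic, so $L$ can have a parabolic shortening. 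Your assertion that $L$ has ``no extended subpaths'' confuses $L$'s own decomposition with that of $P$, and your subsequent appeal to Lemma~\ref{lem:comp4} only kicks in once the conclusion of Lemma~\ref{lem:comp3} is already available -- a circularity. Likewise the ``connecting-internal-connecting'' lift $E$ of $\widehat{e}$ needs its internal piece explicitly chosen to be a geodesic. The paper resolves all of this by first replacing $P_i, P_{i+1}$ and the lift of $\pathXH{R}$ with modified paths $P_i', P_{i+1}', R'$ whose internal portions are taken to be geodesics, so that $P_i'P_{i+1}'\overline{R'}$ unconditionally has no parabolic shortenings; only afterwards is Lemma~\ref{lem:comp4} invoked to deduce a posteriori that $P_i' = P_i$ and $P_{i+1}' = P_{i+1}$.

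Second, your last sub-case -- $\widehat{e}$ connected to exactly one of $\widehat{P}_i$, $\widehat{P}_{i+1}$ -- is, by your own admission, unfinished: you bound $|P_{i+1}|$ but do not control $|P_i|$, and the ``detailed triangle-inequality arguments'' you allude to are exactly the missing content. The paper closes this case by a quite different route and never bounds $|P_{i+1}|$ at all. Writing $R' = e_1T_1\overline{e_3}$ and $P_{i+1}' = e_2T_2\overline{e_3}$, it first shows $|P_i'| \le 3$ so $P_i' = P_i$, then takes a geodesic $T$ in $\Gamma_0$ from $(T_1)_-$ to $(T_2)_-$, sets $R'' = e_1T\overline{e_2}$, applies Lemma~\ref{lem:comp3} to the closed path $P_i\overline{R''}$ to obtain $|R''| \le 3$ and hence $|T| \le 1$, and finally compares $e_1T$ (length $2$) against the genuine subpath $P_ie_2$ of $P$ (length $\le 4$), contradicting $4$-local geodesicity.
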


\begin{figure}[ht]
\begin{subfigure}[t]{0.21\textwidth}
\centering
\begin{tikzpicture}[very thick,decoration={markings,mark=at position 0.5 with {\arrow{>}}}]
\draw [blue,postaction=decorate] (0,0) to node[midway,below] {$R'$} (2,0);
\draw [red,postaction=decorate] (0,0) to[out=90,in=180] node[midway,left] {$P_i'$} (1,2);
\draw [red,postaction=decorate] (1,2) to[out=0,in=90] node[midway,right] {$P_{i+1}'$} (2,0);
\draw [red,thick,postaction=decorate] (0,0) to node[near start,right] {$\!P_i$} (1,2);
\draw [red,thick,postaction=decorate] (1,2) to node[midway,below] {$P_{i+1}\ \ \ $} (2,0);
\fill (0,0) circle (2pt) (2,0) circle (2pt) (1,2) circle (2pt);
\end{tikzpicture}
\caption{General setup.}
\label{sfig:lglg-general}
\end{subfigure}\hfill%
\begin{subfigure}[t]{0.38\textwidth}
\centering
\begin{tikzpicture}[very thick,decoration={markings,mark=at position 0.5 with {\arrow{>}}}]
\draw [white] (0,0) to[out=-45,in=-90] (1.5,0);
\filldraw [dashed,thick,red!50,fill=red!10] (1.5,1.7) ellipse (2 and 0.9);
\draw [red,postaction=decorate] (0,0) to node[pos=0.4,left] {$e_1$} (0,1.5);
\draw [red,postaction=decorate] (0,1.5) to node[midway,above] {$T_1$} (1.5,1.5);
\draw [red,postaction=decorate] (1.5,0) to node[pos=0.4,left] {$e_2$} (1.5,1.5);
\draw [red,postaction=decorate] (1.5,1.5) to node[midway,above] {$T_2$} (3,1.5);
\draw [red,postaction=decorate] (3,0) to node[pos=0.4,left] {$e_3$} (3,1.5);
\fill (0,0) circle (2pt) (1.5,0) circle (2pt) (3,0) circle (2pt) (0,1.5) circle (2pt) (1.5,1.5) circle (2pt) (3,1.5) circle (2pt);
\end{tikzpicture}
\caption{$\widehat{P}_i'$, $\widehat{P}_{i+1}'$ in the same component.}
\label{sfig:lglg-pipi}
\end{subfigure}\hfill%
\begin{subfigure}[t]{0.38\textwidth}
\centering
\begin{tikzpicture}[very thick,decoration={markings,mark=at position 0.5 with {\arrow{>}}}]
\filldraw [dashed,thick,red!50,fill=red!10] (1.5,1.7) ellipse (2 and 0.9);
\draw [blue,postaction=decorate] (0,0) to node[pos=0.4,left] {$e_1$} (0,1.5);
\draw [blue,postaction=decorate] (0,1.5) to[bend left] node[midway,above] {$T_1$} (3.03,1.5);
\draw [blue] (3.03,1.5) to (3.03,0);
\draw [red,postaction=decorate] (0,0) to[out=-45,in=-90] node [pos=0.4,above] {$P_i'$} (1.5,0);
\draw [red,postaction=decorate] (1.5,0) to node[pos=0.4,left] {$e_2$} (1.5,1.5);
\draw [red,postaction=decorate] (1.5,1.5) to node[midway,below] {$T_2$} (2.97,1.5);
\draw [red] (2.97,1.5) to (2.97,0);
\draw [red!50!blue,decorate] (3,0) to node[midway,right] {$e_3$} (3,1.5);
\draw [thick,postaction=decorate] (0,1.5) to node[midway,below] {$T$} (1.5,1.5);
\fill (0,0) circle (2pt) (1.5,0) circle (2pt) (3,0) circle (2pt) (0,1.5) circle (2pt) (1.5,1.5) circle (2pt) (3,1.5) circle (2pt);
\end{tikzpicture}
\caption{$\widehat{P}_{i+1}'$, $R'$ in the same component.}
\label{sfig:lglg-pir}
\end{subfigure}
\caption{The proof of Proposition~\ref{prop:lg->lg}.}
\label{fig:lglg}
\end{figure}
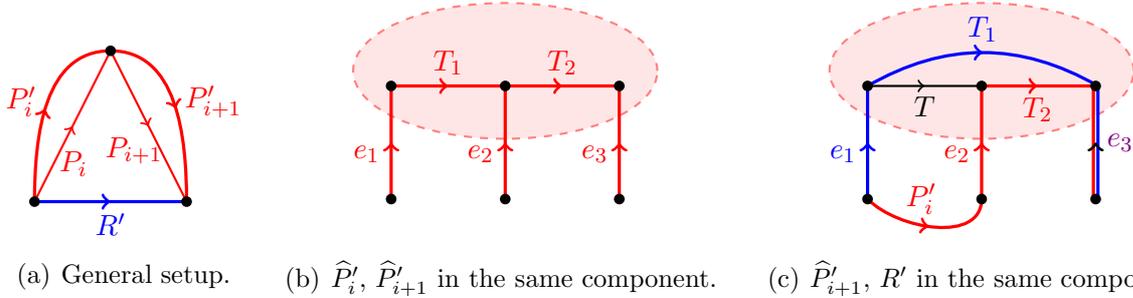

\begin{proof}
The conclusion is trivial if $|\widehat{P}| \leq 1$: we will thus assume that $|\widehat{P}| \geq 2$, and so $P$ has at least one free vertex. Let $P_1,\ldots,P_n \subseteq \Gamma(N)$ and $\widehat{P} = \widehat{P}_1 \cdots \widehat{P}_n \subseteq \CayG$ be as in Definition~\ref{defn:derived}. Note that by the construction we have $|\widehat{P}_i| \leq 1$ for $1 \leq i \leq n$.

Notice first that for $2 \leq i \leq n-1$, we have $(P_i)_- \neq (P_i)_+$. Indeed, by construction we have $|P_i| \geq 1$, and $(P_i)_-$ and $(P_i)_+$ are the only free vertices of $P_i$. If we had $(P_i)_- = (P_i)_+$, then, since $P$ has no parabolic shortenings, it would follow that $P_i = e \overline{e}$ for some connecting edge $e$, contradicting the fact that $P$ is $2$-local geodesic. Thus we have $(P_i)_- \neq (P_i)_+$, and so $|\widehat{P}_i| = 1$, for $2 \leq i \leq n-1$. It follows, in particular, that any subpath of $\widehat{P}$ of length $2$ is of the form $\widehat{P}_i \widehat{P}_{i+1}$ for some $i \in \{ 1,\ldots,n-1 \}$.

Now suppose for contradiction that $\widehat{P}$ is not a $2$-local geodesic, and so there exists an $i$ such that $|\widehat{P}_i| = |\widehat{P}_{i+1}| = 1$ and $d_{\XH}\left((\widehat{P}_i)_-,(\widehat{P}_{i+1})_+\right) \leq 1$. Let $\pathXH{R} \subseteq \CayG$ be a geodesic with $\pathXH{R}_- = (\widehat{P}_i)_-$ and $\pathXH{R}_+ = (\widehat{P}_{i+1})_+$, so that $|\pathXH{R}| \leq 1$ and $\pathXH{C} := \widehat{P}_i \widehat{P}_{i+1} \overline{\pathXH{R}}$ is a closed path in $\CayG$ of length $\leq 3$.

We now choose paths $P_i'$, $P_{i+1}'$ and $R'$ in $\Gamma(N)$ such that $P_i' P_{i+1}' \overline{R'}$ is a closed path in $\Gamma(N)$ that has no parabolic shortenings whose derived path is $\pathXH{C}$, as follows (see Figure~\ref{sfig:lglg-general}). For $i' \in \{i,i+1\}$ we take $P_{i'}' = P_{i'}$ if $P_{i'}$ is not an extended subpath of $P$. Otherwise, $P_{i'}$ is an extended subpath of $P$ of the form $e_1 T' \overline{e_2}$, where $e_1$ and $e_2$ are connecting edges and $T'$ is contained in a copy $\Gamma_0 \subseteq \Gamma(N)$ of $\Gamma_{j,N}$ for some $j \in \{1,\ldots,m\}$; we then take $P_{i'}' = e_1 T \overline{e_2}$, where $T$ is a geodesic in $\Gamma_0$ with $T_- = T'_-$ and $T_+ = T'_+$ (see Figure~\ref{sfig:lem3} for the case $i' = 1$). We take $R'$ to be a path such that $\widehat{R'} = \pathXH{R}$, so that $R'$ is a trivial path if $|\pathXH{R}| = 0$, and $R'$ consists of two free edges if $\pathXH{R}$ is labelled by an element of $X$; otherwise (if $\pathXH{R}$ is an $\widetilde{H}_j$-edge) $R' = e S \overline{e'}$ for some connecting edges $e$, $e'$ and a path $S \subseteq \Gamma_0$ for some copy $\Gamma_0$ of $\Gamma_{j,N}$ in $\Gamma(N)$ such that $S$ is geodesic in $\Gamma_0$.

In order to obtain a contradiction, we now study the components of $\pathXH{C}$.

\begin{description}

\item[If all components of $\pathXH{C}$ are isolated and of length $1$] In this case, the subpath $\pathXH{R}$ is either trivial (in which case $|R'| = 0$), or an edge labelled by an element of $X$ (in which case $|R'| = 2$), or an isolated component of $\pathXH{C}$ (in which case, by Lemma~\ref{lem:comp3} and since the path $P_i' P_{i+1}' R'$ has no parabolic shortenings, $|R'| \leq 3$). Thus $|R'| \leq 3$ in either case; similarly, $|\widehat{P}_i'|,|\widehat{P}_{i+1}'| \leq 3$. It then follows from Lemma~\ref{lem:comp4} that neither $P_i$ nor $P_{i+1}$ can be an extended subpath of $P$: thus, it follows from our construction that $P_i' = P_i$ and $P_{i+1}' = P_{i+1}$.

We therefore have $|P_i|,|P_{i+1}|,|R'| \leq 3$. But since $|\widehat{P}_i| = 1$, we have $(P_i)_- \neq (P_i)_+$, and as both of these vertices are free and no two free vertices of $\Gamma(N)$ are adjacent, it follows that $|P_i| \geq 2$; similarly, $|P_{i+1}| \geq 2$. We thus have $|R'| \leq 3 < 4 \leq |P_iP_{i+1}|$, and either $|P_iP_{i+1}| \leq 5$ or $|P_iP_{i+1}| = 6 \geq |R'|+3$. As $P_iP_{i+1}$ is a subpath of $P$ and as $R'_\pm = (P_iP_{i+1})_\pm$, this contradicts the fact that $P$ is a $5$-local geodesic.

\item[If $\widehat{P}_i$ and $\widehat{P}_{i+1}$ are in the same component of $\pathXH{C}$] In this case we have $P_i' = e_1 T_1 \overline{e_2}$ and $P_{i+1}' = e_2 T_2 \overline{e_3}$ where $e_1$, $e_2$ and $e_3$ are connecting edges, and $T_1,T_2 \subseteq \Gamma_0$ for some copy $\Gamma_0$ of $\Gamma_{j,N}$ in $\Gamma(N)$: see Figure~\ref{sfig:lglg-pipi}. By construction, we then have $P_i = e_1 T_1' \overline{e_2}$ and $P_{i+1} = e_2 T_2' \overline{e_3}$ for some paths $T_1',T_2' \subseteq \Gamma_0$, and so $\overline{e_2}e_2$ is a subpath of $P$. This contradicts the fact that $P$ is a $2$-local geodesic.

\item[Otherwise] It follows that either $\widehat{P_i}$ and $\pathXH{R}$ are connected components, or $\widehat{P}_{i+1}$ and $\pathXH{R}$ belong to the same component of $\pathXH{C}$ -- but not both. Without loss of generality, suppose that the latter is true. Then $R' = e_1 T_1 \overline{e_3}$ and $P_{i+1}' = e_2 T_2 \overline{e_3}$ where $e_1$, $e_2$ and $e_3$ are connecting edges, and $T_1,T_2 \subseteq \Gamma_0$ for some copy $\Gamma_0$ of $\Gamma_{j,N}$ in $\Gamma(N)$: see Figure~\ref{sfig:lglg-pir}. Moreover, $\pathXH{P}_i$ is either an edge labelled by an element of $X$ (in which case $|P_i'| = 2$) or an isolated component of $\pathXH{C}$ (in which case, by Lemma~\ref{lem:comp3}, $|P_i'| \leq 3$) -- therefore, $|P_i'| \leq 3$ in either case. In particular, by Lemma~\ref{lem:comp4} $P_i$ cannot be an extended subpath of $P$, and so $P_i' = P_i$.

Now let $T \subseteq \Gamma_0$ be a geodesic in $\Gamma_0$ with $T_- = (T_1)_-$ and $T_+ = (T_2)_-$, and let $R'' = e_1 T \overline{e_2}$. Then $C' := P_i \overline{R''}$ is a closed path in $\Gamma(N)$ with no parabolic shortenings, and $\overline{\widehat{R''}}$ is an isolated component of $\widehat{C'}$: therefore, by Lemma \ref{lem:comp3}, we have $|R''| \leq 3$, and so $|T| \leq 1$. Now consider the paths $e_1T$ and $P_ie_2$: we have $(e_1T)_\pm = (P_ie_2)_\pm$, and, by construction, $P_ie_2$ is a subpath of $P$. We have $|P_ie_2| = |P_i|+1 \leq 4$; on the other hand, as $(P_i)_-$, $(P_i)_+$ are distinct free vertices of $\Gamma(N)$, we have $|P_i| \geq 2$ and so $|e_1T| = 2 < 3 \leq |P_ie_2|$. This contradicts the fact that $P$ is a $4$-local geodesic. \qedhere

\end{description}
\end{proof}

Finally, we show that a word that belongs to the set $\widehat\Phi$ defined in Proposition~\ref{prop:lg=qg} cannot label the derived path of a geodesic in $\Gamma(N)$.

\begin{lem} \label{lem:Plong}
Let $P \subseteq \Gamma(N)$ be a path such that $\widehat{P}$ is labelled by a word in $\widehat\Phi$. Then $P$ is not a geodesic.
\end{lem}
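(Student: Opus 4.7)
The plan is to argue by contradiction: assume $P$ is a geodesic in $\Gamma(N)$ and exhibit an alternative path between $P_-$ and $P_+$ of strictly smaller length. Let $\wordXH{P}\in\widehat\Phi$ be the word labelling $\widehat{P}$, and let $\wordXH{U_P}$ be the geodesic word over $\XH$ fixed in Assumption~\ref{ass:Nbigger}\ref{it:Nbigg-Phi} representing the same element of $G$. The defining property of $\widehat\Phi$ (from the construction in the proof of Proposition~\ref{prop:lg=qg}) gives $|\widehat{P}|=|\wordXH{P}|>\tfrac{3}{2}|\wordXH{U_P}|$, and since both sides are integers this sharpens to $2|\widehat{P}|\geq 3|\wordXH{U_P}|+1$.

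The first step is to lift $\wordXH{U_P}$ to a path $Q\subseteq\Gamma(N)$ from $\widehat{P}_-$ to $\widehat{P}_+$ by traversing it letter by letter: each $X$-letter is realised by the length-$2$ detour through the appropriate medial vertex, and each $\widetilde{H}_j$-letter $h$ is realised by two connecting edges together with one internal edge, the latter being available in $\Gamma_{j,N}$ precisely because $d_{\Gamma_j}(v_j,v_j\cdot h)\leq N$ by Assumption~\ref{ass:Nbigger}\ref{it:Nbigg-Phi}. This yields $|Q|\leq 3|\wordXH{U_P}|$.

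Next I dispose of the degenerate case in which $P$ has no free vertex: Definition~\ref{defn:derived}\ref{it:derived-1} forces $|\widehat{P}|\leq 1$, and combined with $\wordXH{P}\in\widehat\Phi$ this would require a single generator of $\XH$ to represent $1\in G$, which is ruled out by $1\notin\epsilon(X)$ and the injectivity of $\epsilon|_{\widetilde{H}_j}$. In the remaining case, write $P=P'P_2\cdots P_{n-1}P''$ as in Definition~\ref{defn:derived}, put $a=1$ if $|P'|\geq 4$ and $a=0$ otherwise, and likewise $b$ for $P''$; then $|\widehat{P}|=a+(n-2)+b$. I assemble $P_{\mathrm{alt}}$ as a prefix from $P_-$ to $\widehat{P}_-$, followed by $Q$, followed by a suffix from $\widehat{P}_+$ to $P_+$. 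The prefix is taken to be $P'$ itself when $a=0$ (length $|P'|$) and $Z_{P_-}$ when $a=1$ (of length $\leq 2$ by Assumption~\ref{ass:Nbigger}\ref{it:Nbigg-quot}, which is $\leq|P'|-2$ since $|P'|\geq 4$); in either case the prefix has length at most $|P'|-2a$, and symmetrically the suffix has length at most $|P''|-2b$.

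Finally, using $|P_i|\geq 2$ for every $2\leq i\leq n-1$ (two distinct free vertices of $\Gamma(N)$ are at distance $\geq 2$), direct accounting yields
\[
|P|-|P_{\mathrm{alt}}| \ \geq\ \bigl(|P'|+2(n-2)+|P''|\bigr)-\bigl((|P'|-2a)+3|\wordXH{U_P}|+(|P''|-2b)\bigr) \ =\ 2|\widehat{P}|-3|\wordXH{U_P}| \ \geq\ 1,
\]
contradicting the assumption that $P$ is a geodesic. The main obstacle is the length accounting in the extended-subpath regimes $a=1$ or $b=1$: the crude bound $|P|\geq 2|\widehat{P}|$ paired with $|P_{\mathrm{alt}}|\leq 3|\wordXH{U_P}|+4$ just fails, and the fix is to replace $P'$ by $Z_{P_-}$ in the prefix (and symmetrically for the suffix), which saves exactly the $2a+2b$ needed. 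This is precisely what Assumption~\ref{ass:Nbigger}\ref{it:Nbigg-quot} together with the definition of ``extended subpath'' in Definition~\ref{defn:derived}\ref{it:derived-2} permits.
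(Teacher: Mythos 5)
Your proposal matches the paper's proof essentially line by line: lift the geodesic word $\wordXH{U_P}$ from Assumption~\ref{ass:Nbigger}\ref{it:Nbigg-Phi} to a path in $\Gamma(N)$ using length-$2$ medial detours and length-$\leq 3$ internal detours, splice it between the prefix $Z_{P_-}$ or $P'$ and suffix $Z_{P_+}$ or $P''$, and compare lengths using $|P_i|\geq 2$ and the $\widehat\Phi$-defining inequality $|\widehat{P}|>\tfrac32|\wordXH{U_P}|$. The only cosmetic slip is the claim ``$|\widehat{P}|=a+(n-2)+b$'' — in general this is only ``$\leq$'' (some $\widehat{P}_i$ could a priori be empty), but that inequality goes the right way in your chain, and the paper itself uses the $\leq$ version, so the argument is unaffected.
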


\begin{figure}[ht]
\begin{tikzpicture}[very thick]
\draw [red] (0.03,0.06) to[out=115,in=-150] node [midway,left] {$P'$} (0.5,1) to[out=30,in=180] node [midway,below] {$P_2$} (2.5,1.5) to (3,1.5);
\draw [red,dotted] (3,1.5) -- (5.5,1.5);
\draw [red] (5.5,1.5) to (6,1.5) to[out=0,in=115] node [pos=0.6,left] {$P_{n-1}\!$} (7,0.06) to[out=-65,in=-115] node [midway,above] {$P''$} (7.97,0.06);
\draw (0.03,0.06) to[out=-65,in=-115] node[midway,above] {$Z_{P_-}$} (1.5,0.06);
\draw [blue] (0,0) to[out=-60,in=-120] node[midway,below] {$W'$} (1.53,0) to node[midway,below] {$U_1$} (3,0) to (3.5,0);
\draw [blue,dotted] (3.5,0) -- (5,0);
\draw [blue] (5,0) to (5.5,0) to node[midway,below] {$U_k$} (6.97,0) to[out=-60,in=-120] node [midway,below] {$W''$} (8,0);
\foreach \x in {1.5,3,5.5,7} \fill (\x,0) circle (2pt);
\fill (0.03,0.03) circle (3pt);
\fill (7.97,0.03) circle (3pt);
\fill (0.5,1) circle (2pt);
\fill (2.5,1.5) circle (2pt);
\fill (6,1.5) circle (2pt);
\end{tikzpicture}
\caption{An example of the paths $U$ (blue) and $P$ (red) in the proof of Lemma~\ref{lem:Plong}. In this case, $P_1$ is an extended subpath of $P$, and $P_n$ is trivial.}
\label{fig:Plong}
\end{figure}
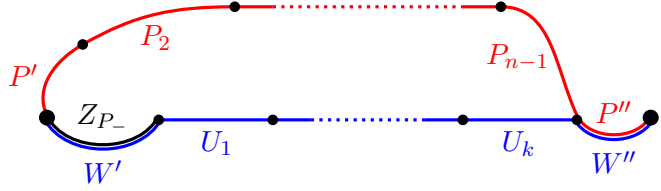

\begin{proof}
Note that, as $\widehat\Phi$ only consists of non-geodesic words, we have $|\widehat{P}| \geq 2$, and so $P$ contains at least one free vertex. Let $P = P' P_2 \cdots P_{n-1} P''$, $P_1$ and $P_n$ be as in Definition~\ref{defn:derived}, and let $\wordXH{P} \in \widehat\Phi$ be the word labelling $\widehat{P}$. Let $\wordXH{U_P}$ be the word chosen in Assumption~\ref{ass:Nbigger}\ref{it:Nbigg-Phi}, and let $\pathXH{U}_{\wordXH{P}} \subseteq \CayG$ be a path labelled by $\wordXH{U_P}$ with $(\pathXH{U}_{\wordXH{P}})_- = \widehat{P}_-$ and $(\pathXH{U}_{\wordXH{P}})_+ = \widehat{P}_+$.

Now write $\pathXH{U}_{\wordXH{P}} = \pathXH{e}_1 \cdots \pathXH{e}_k$, where each $\pathXH{e}_i$ is an edge in $\CayG$. We define a path $U \subseteq \Gamma(N)$ as $U = W' U_1 \cdots U_k W''$, where (see Figure~\ref{fig:Plong}):
\begin{enumerate}[label=(\roman*)]
\item $W' = Z_{P_-}$ if $P_1$ is an extended subpath of $P$, and $W' = P'$ otherwise (i.e.\ if $P_1$ is trivial);
\item for $1 \leq i \leq k$, $U_i$ is a path with no parabolic shortenings and with $(U_i)_-$ and $(U_i)_+$ free such that $\widehat{U}_i = \pathXH{e}_i$: that is, $U_i$ consists of two free edges if $\pathXH{e}_i$ is labelled by some $x \in X$, and all non-endpoint vertices of $U_i$ are $H_j$-internal if $\pathXH{e}_i$ is labelled by some $h \in \widetilde{H}_j$;
\item $W'' = \overline{Z_{P_+}}$ if $P_n$ is an extended subpath of $P$, and $W'' = P''$ otherwise.
\end{enumerate}
It follows from the construction that $U_\pm = P_\pm$ and that $\widehat{U} = \pathXH{U}_{\wordXH{P}}$. We aim to show that $|U| < |P|$.

Note first that for $1 \leq i \leq k$, the edge $\pathXH{e}_i$ is labelled either by an element of $X$ (in which case $|U_i| = 2$) or by some element $h \in \widetilde{H}_j$ (in which case by Assumption~\ref{ass:Nbigger}\ref{it:Nbigg-Phi} we have $d_{\Gamma_j}(v_j,v_j \cdot h) \leq N$ and hence, as $U_i$ has no parabolic shortenings, $|U_i| \leq 3$). Therefore, $|U_i| \leq 3$ for each $i$. We thus have
\begin{equation} \label{eq:Ushort}
|U| = |W'|+|W''|+\sum_{i=1}^k |U_i| \leq |W'|+|W''|+3k.
\end{equation}

On the other hand, the path $P_1$ is either an extended subpath of $P$ (in which case, by the construction in Definition~\ref{defn:derived}, we have $|P'| \geq 4$, $|\widehat{P}_1| = 1$, and $|W'| = |Z_{P_-}| \leq 2$: the latter follows from Assumption~\ref{ass:Nbigger}\ref{it:Nbigg-quot}), or trivial (in which case we have $|P'| = |W'|$ and $|\widehat{P}_1| = 0$). Therefore, we have $|P'| \geq |W'| + 2|\widehat{P}_1|$ in either case; similarly, we have $|P''| \geq |W''| + 2|\widehat{P}_n|$. Moreover, since by construction we have $|P_i| \geq 1$ for $2 \leq i \leq n-1$ and since $(P_i)_-$ and $(P_i)_+$ are free, we actually have $|P_i| \geq 2$ for $2 \leq i \leq n-1$. Therefore,
\begin{equation} \label{eq:Plong}
\begin{aligned}
|P| &= |P'| + |P''| + \sum_{i=2}^{n-1} |P_i| \geq \left( |W'|+2|\widehat{P}_1| \right) + \left( |W''|+2|\widehat{P}_n| \right) + 2(n-2)
\\ &= |W'|+|W''| + 2 \left( |\widehat{P}_1| + (n-2) + |\widehat{P}_n| \right) \geq |W'|+|W''| + 2|\widehat{P}|,
\end{aligned}
\end{equation}
where the last inequality follows since $|\widehat{P}_i| \leq 1$ for $2 \leq i \leq n-1$.

Finally, note that since $\widehat{P}$ is labelled by an element of $\widehat\Phi$, it follows from Proposition~\ref{prop:lg=qg} that $2|\widehat{P}| > 3d_{\XH}(\widehat{P}_-,\widehat{P}_+) = 3|\pathXH{U}_{\wordXH{P}}| = 3k$. This implies, together with \eqref{eq:Ushort} and \eqref{eq:Plong}, that
\[
|P| \geq |W'|+|W''|+2|\widehat{P}| > |W'|+|W''|+3k \geq |U|.
\]
Thus $P$ is not a geodesic, as required.
\end{proof}

We now use Propositions~\ref{prop:lg=qg}~\&~\ref{prop:lg->lg} together with Lemma~\ref{lem:Plong} to prove Theorem~\ref{thm:g->qg}. In particular, the set $\Phi$ in Theorem~\ref{thm:g->qg} is constructed using Proposition~\ref{prop:lg=qg}, and shown to only contain non-geodesic words using Lemma~\ref{lem:Plong}, after which the result follows from Propositions~\ref{prop:lg=qg}~\&~\ref{prop:lg->lg}.

\begin{proof}[Proof of Theorem~\ref{thm:g->qg}]
Let $\lambda \geq 0$, $c \geq 0$ and $\widehat\Phi$ be the constants and the set of words over $\XH$ given in Proposition~\ref{prop:lg=qg}, and let $N \geq 1$ be a constant such that Assumption~\ref{ass:Nbigger} is true. Let $\Phi_0 = \mathcal{A} \cup \mathcal{B}$, where
\begin{enumerate}[label=(\roman*)]
\item $\mathcal{A}$ is the set of paths $P \subseteq \Gamma(N)$ with no parabolic shortenings such that $\widehat{P}$ is labelled by a word in $\widehat\Phi$, and
\item $\mathcal{B}$ is the set of non-geodesic paths $P \subseteq \Gamma(N)$ of length $|P| \leq 5$.
\end{enumerate}

It is clear that both $\mathcal{A}$ and $\mathcal{B}$ are $G$-invariant. Moreover, note that, by Lemma~\ref{lem:geomaction}, the graph $\Gamma(N)$ is proper (as a metric space) and the action $G \acts \Gamma(N)$ is geometric. Now it is easy to see from Definition~\ref{defn:derived} that given a word $\wordXH{Q}$ over $\XH$ there is a bound on the lengths of paths $Q \subseteq \Gamma(N)$ without parabolic shortenings such that $\widehat{Q}$ is labelled by $\wordXH{Q}$. As $\widehat{\Phi}$ is finite, this implies that $\mathcal{A}$ is a union of finitely many $G$-orbits. Similarly, we can see that $\mathcal{B}$ consists of finitely many $G$-orbits, and so the orbit space $\Phi_0 / G$ is finite. We set $\Phi$ to be a (finite) set of paths in $\Gamma(N)$ consisting of one representative for each $G$-orbit in $\Phi_0$. By Lemma~\ref{lem:Plong}, none of the paths in $\Phi_0$ (and so in $\Phi$) are geodesic.

Now let $P \subseteq \Gamma(N)$ be a path in $\Gamma(N)$ that has no parabolic shortenings and does not contain any $G$-translate of a path in $\Phi$ (i.e.\ any path in $\Phi_0$) as a subpath. It then follows that $P$ is $5$-local geodesic, and that $\widehat{P}$ does not have a subpath labelled by an element of $\widehat\Phi$ (as, if it did, any such subpath could be expressed as $\widehat{Q}$ for a subpath $Q \subseteq P$). By Proposition~\ref{prop:lg->lg}, $\widehat{P}$ is a $2$-local geodesic, and thus, by Proposition~\ref{prop:lg=qg}, it is a $(\lambda,c)$-quasi-geodesic that does not vertex backtrack (and so does not backtrack). This establishes the result.
\end{proof}

\section{Hellyness} \label{sec:helly}

In this section, we prove Proposition~\ref{prop:helly-coarse}, implying immediately Theorem~\ref{thm:coarsehelly} and, consequently, we prove Theorem~\ref{thm:helly}. We first recall a few definitions, following 
\cite{ccgho}.

\begin{defn} \label{defn:helly}
Let $\Gamma$ be a graph.
\begin{enumerate}[label=(\roman*)]
\item \label{it:helly-stint} Given $\beta \geq 1$, we say $\Gamma$ has \emph{$\beta$-stable intervals} (or simply \emph{stable intervals}) if for any geodesic path $P \subseteq \Gamma$, any vertex $w \in P$ and any $u \in V(\Gamma)$ with $d_\Gamma(u,P_-) = 1$, there exists a geodesic $Q \subseteq \Gamma$ and a vertex $v \in Q$ such that $Q_- = u$, $Q_+ = P_+$ and $d_\Gamma(w,v) \leq \beta$.
\item Given $\rho \geq 0$ and $w \in V(\Gamma)$, the \emph{ball} $B_\rho(w) = B_\rho(w;\Gamma)$ is the set of all $u \in V(\Gamma)$ such that $d_\Gamma(u,w) \leq \rho$. Given a constant $\xi \geq 0$ and a collection $\mathcal{B} = \{ B_{\rho_i}(w_i) \mid i \in \mathcal{I} \}$ of balls in $\Gamma$, we say $\mathcal{B}$ satisfies the \emph{$\xi$-coarse Helly property} if $\bigcap_{i \in \mathcal{I}} B_{\rho_i+\xi}(w_i) \neq \varnothing$. We say $\Gamma$ is \emph{coarsely Helly} (respectively, \emph{Helly}) if there exists a constant $\xi \geq 0$ such that every collection of pairwise intersecting balls in $\Gamma$ satisfies the $\xi$-coarse Helly property (respectively, the $0$-coarse Helly property).
\item A group $G$ is said to be \emph{Helly} (respectively, \emph{coarsely Helly}) if it acts \emph{geometrically}, that is, properly discontinuously and cocompactly, by graph automorphisms on a Helly (respectively, coarsely Helly) graph.
\end{enumerate}
\end{defn}

The merit of these definitions is supported by the following result.

\begin{thm}[{\cite[Theorem~1.2]{ccgho}}] \label{thm:ccgho-helly}
If $G$ is a group acting geometrically on a coarsely Helly graph that has stable intervals, then $G$ is Helly.
\end{thm}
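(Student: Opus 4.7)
The goal is to construct a Helly graph $\Gamma^\ast$ carrying a geometric $G$-action. A natural candidate is a bounded Rips-type thickening of $\Gamma$: set $V(\Gamma^\ast) = V(\Gamma)$ and put $\{u,v\} \in E(\Gamma^\ast)$ precisely when $0 < d_\Gamma(u,v) \leq K$, for a constant $K = K(\xi,\beta)$ to be chosen. Local finiteness of $\Gamma$ (which follows from the geometric $G$-action on $\Gamma$) passes to $\Gamma^\ast$, and the $G$-action on $\Gamma$ induces a proper and cocompact simplicial action on $\Gamma^\ast$ by the same reasoning.

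The core task is verifying Hellyness of $\Gamma^\ast$ for $K$ chosen suitably large in terms of $\xi$ and $\beta$. Given a pairwise intersecting family $\{B_{\rho_i}^\ast(x_i)\}_{i \in \mathcal{I}}$ of balls in $\Gamma^\ast$, the inclusions $B_\rho^\Gamma(x) \subseteq B_\rho^\ast(x) \subseteq B_{K\rho}^\Gamma(x)$ translate pairwise $\Gamma^\ast$-intersection into pairwise $\Gamma$-intersection of the enlarged balls $B_{K\rho_i}^\Gamma(x_i)$. The coarse Helly property of $\Gamma$ then furnishes a common vertex $z$ with $d_\Gamma(z,x_i) \leq K\rho_i + \xi$ for all $i$; for infinite $\mathcal{I}$ I would first reduce to finite subcollections by a compactness/K\"onig-style argument exploiting local finiteness of $\Gamma^\ast$, and then extract a common vertex as an accumulation point. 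The vertex $z$ is only an approximate common $\Gamma^\ast$-centre, off by up to $\lceil \xi/K \rceil$.

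To upgrade $z$ into a genuine common $\Gamma^\ast$-point, I would invoke stable intervals: for each $i$, a $\Gamma$-geodesic from $z$ to $x_i$ can be tracked, within distance $\beta$, by a geodesic starting from any vertex adjacent to $z$. The plan is to walk $z$ a bounded number of steps along carefully chosen geodesics, obtaining a vertex $z^\ast$ with $d_\Gamma(z^\ast,x_i) \leq K\rho_i$ for every $i$ simultaneously, i.e.\ $d_{\Gamma^\ast}(z^\ast,x_i) \leq \rho_i$. Choosing $K$ on the order of $\beta(\xi+1)$ should ensure that the coarse error can be absorbed in a single $\Gamma^\ast$-edge after these corrections.

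The main obstacle I anticipate is carrying out this refinement \emph{simultaneously} across the whole family: stable intervals supply per-geodesic control, whereas Hellyness demands joint control. I would attempt an induction on $|\mathcal{I}|$ (after the compactness reduction), maintaining an invariant of the shape \emph{``the current centre lies within distance $K\rho_i + \xi_k$ of each $x_i$''} with $\xi_k$ decreasing at each step until $\xi_k/K \leq 1$. Should this direct strategy prove intractable, a fallback is a tight-span / injective-hull construction: build a discrete hull of $\Gamma$, use coarse Helly to show it is Helly, and use stable intervals to show that the induced $G$-action on its $1$-skeleton remains cocompact and locally finite.
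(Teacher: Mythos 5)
This statement is not proved in the paper at all: it is quoted as a black box, cited from~\cite[Theorem~1.2]{ccgho}, and then invoked at the end of Section~4 to conclude the proof of Theorem~\ref{thm:helly}. So there is no internal proof to compare against; what follows is an assessment of your proposal on its own terms.

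The direct Rips-thickening strategy falls short in a way that the present paper already makes precise. With $\Gamma^\ast = \Gamma_K$, a ball $B^\ast_\rho(x)$ is exactly $B^\Gamma_{K\rho}(x)$, so pairwise intersection in $\Gamma^\ast$ gives pairwise intersection of $\{B^\Gamma_{K\rho_i}(x_i)\}$, and the coarse Helly property of $\Gamma$ yields a vertex $z$ with $d_\Gamma(z,x_i) \leq K\rho_i + \xi$, hence $d_{\Gamma^\ast}(z,x_i) \leq \rho_i + \lceil \xi/K \rceil$. For $K \geq \xi$ this is $\rho_i + 1$, not $\rho_i$: the Rips complex is $1$-coarsely Helly, not Helly. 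This is precisely the content of Lemma~\ref{lem:GammajN-chelly} in this paper (applied with $N = K$). Your stopping criterion ``$\xi_k/K \leq 1$'' and your phrase ``absorbed in a single $\Gamma^\ast$-edge'' both land exactly on this defect-$1$ barrier; to finish you would need the defect to be $0$, and no choice of $K$ does that by ceiling arithmetic alone.

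The ``walk'' you propose to close the remaining gap of one is the genuine missing idea, and the simultaneity obstruction you flag yourself is fatal as stated. Stable intervals control how geodesics deform when an endpoint is moved one step; they do not tell you \emph{which} direction to move $z$, and a unit step that decreases $d_\Gamma(z,x_i)$ can increase $d_\Gamma(z,x_j)$ for a different index $j$ that is also in deficit. The weak positivity of the Gromov product $(x_i\,|\,x_j)_z$ forced by pairwise intersection does not produce an actual shared initial segment of geodesics in a non-hyperbolic graph, so there is no canonical ``good direction'' to walk. An induction on the size of the family does not improve this, because removing one ball and re-centring can undo progress on the others.

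Your fallback is the right route and is essentially what~\cite{ccgho} does: form the Hellyfication $\operatorname{Helly}(\Gamma)$, which is Helly by construction; use coarse Hellyness to show it lies within bounded Hausdorff distance of $\Gamma$ (this is~\cite[Proposition~3.12]{ccgho}, also used in Section~5 here), so that the induced $G$-action is cobounded; and use the stable-interval hypothesis to control the hull itself so that the resulting graph admits a genuine geometric action. Note the division of labour is different from what you wrote: coboundedness is supplied by coarse Helly, not by stable intervals, and stable intervals are needed for the finer structural control of the Hellyfication. If you pursue the proof, you should follow~\cite{ccgho} rather than attempt to make the Rips complex itself Helly.
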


We now start our proof of Theorem~\ref{thm:helly}. Throughout the remainder of this section, we adopt the following terminology.

Let $G$ be a finitely generated group (with a finite generating set $X$, say), and suppose that $G$ is hyperbolic relative to a collection of its subgroups. As $G$ is finitely generated, such a collection is finite (see \cite[Corollary~2.48]{osin06}): $H_1,\ldots,H_m$, say. Let $\widetilde{H}_1,\ldots,\widetilde{H}_m$ be isomorphic copies of $H_1,\ldots,H_m$ (respectively), $\epsilon: F(X) * (*_{j=1}^m \widetilde{H}_j) \to G$ the canonical surjection, and $\mathcal{H} = \bigsqcup_{j=1}^m (\widetilde{H}_j \setminus \{1\})$, as in Section~\ref{ssec:relhyp}.

Furthermore, let $\Gamma_1,\ldots,\Gamma_m$ be proper graphs such that $\widetilde{H}_j$ acts on $\Gamma_j$ geometrically (for each $j$); let $v_j \in V(\Gamma_j)$ be (fixed, but arbitrarily chosen) basepoints. We fix constants $\lambda,N \geq 1$ and $c \geq 0$ given by Theorem~\ref{thm:g->qg}, and construct the graphs $\Gamma_{j,N}$ (respectively, $\Gamma(N)$) with geometric actions of $\widetilde{H}_j$ (respectively, $G$) as in Section~\ref{ssec:GammaN}.

The idea of our proof of Theorem~\ref{thm:helly} is to first use Theorem~\ref{thm:g->qg} to transform geodesics in $\Gamma(N)$ into $(\lambda,c)$-quasi-geodesics in $\CayG$, and then use Theorem~\ref{thm:bcp} or Proposition~\ref{prop:BCPtriangles} to show that if each $\Gamma_j$ is coarsely Helly (respectively, has stable intervals), then $\Gamma(N)$ is coarsely Helly (respectively, has stable intervals) as well. Theorem~\ref{thm:helly} will then follow from Theorem~\ref{thm:ccgho-helly}.

\subsection{Stable intervals} \label{ssec:stable-intervals}

Here we show that if each $\Gamma_j$ has stable intervals then so does $\Gamma(N)$. We start by taking a geodesic $P \subseteq \Gamma(N)$ and vertices $w \in P$, $u \in V(\Gamma(N))$ as in Definition~\ref{defn:helly}\ref{it:helly-stint}, and choosing any geodesic $R \subseteq \Gamma(N)$ with $R_- = u$ and $R_+ = P_+$. Then, by Theorem~\ref{thm:g->qg}, the derived paths $\widehat{P}$ and $\widehat{R}$ are quasi-geodesics that do not backtrack. If $w$ is `close' to a free vertex of $P$, then we can use Theorem~\ref{thm:bcp} to show that $w$ is also `close' to a vertex of $R$. Otherwise, $w$ is in a copy $\Gamma_0$ of some $\Gamma_{j,N}$, and $S := R \cap \Gamma_0$ is a non-empty subpath by Theorem~\ref{thm:bcp}; we may then use the stable interval property in $\Gamma_0$ to replace the geodesic subpath $S \subseteq \Gamma_0$ of $R$ by another geodesic subpath containing a vertex that is `close' to $w$ in $\Gamma_0$.

We first show how we can pass the stable interval property from $\Gamma_j$ to $\Gamma_{j,N}$, as follows.

\begin{lem} \label{lem:GammajN-stint}
Let $\beta \geq 1$. If $\Gamma_j$ has $\beta$-stable intervals, then $\Gamma_{j,N}$ has $\left( 3\beta+1 \right)$-stable intervals.
\end{lem}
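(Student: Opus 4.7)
The plan is to reduce the stable intervals property for $\Gamma_{j,N}$ to that of $\Gamma_j$ by a lifting-and-subsampling argument. The key observation is that distances in $\Gamma_{j,N}$ and $\Gamma_j$ are related by $d_{\Gamma_{j,N}}(u,v) = \lceil d_{\Gamma_j}(u,v)/N \rceil$, so any neighbour of $a$ in $\Gamma_{j,N}$ lies within $\Gamma_j$-distance at most $N$ from $a$.

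Given a geodesic $P' = (x_0,\ldots,x_n)$ in $\Gamma_{j,N}$ from $a = x_0$ to $b = x_n$, a vertex $w' = x_i$, and $u'$ with $d_{\Gamma_{j,N}}(u',a) = 1$, I would first fix a $\Gamma_j$-geodesic $\sigma = (a = u_0, u_1, \ldots, u_k = u')$ of length $k \leq N$, together with a $\Gamma_j$-geodesic $P$ from $a$ to $b$. Applying $\beta$-stable intervals in $\Gamma_j$ iteratively one step at a time along $\sigma$ produces $\Gamma_j$-geodesics $P = R_0, R_1, \ldots, R_k =: R$, with $R_s$ going from $u_s$ to $b$, such that each vertex $w \in P$ has a tracking vertex $w^{\ast} \in R$ with $d_{\Gamma_j}(w, w^{\ast}) \leq k\beta \leq N\beta$, that is, $d_{\Gamma_{j,N}}(w, w^{\ast}) \leq \beta$. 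I would then subsample $R$ at $\Gamma_j$-positions $0, N, 2N, \ldots$ (together with the endpoint $b$) to obtain a geodesic $Q'$ in $\Gamma_{j,N}$ from $u'$ to $b$; each vertex of $R$ lies within $\Gamma_j$-distance $\lfloor N/2 \rfloor$, hence within $\Gamma_{j,N}$-distance $1$, of some vertex of $Q'$. Combining, every $w \in P$ is within $\Gamma_{j,N}$-distance $\beta + 1$ of a vertex of $Q'$.

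The remaining task is to relate $x_i$ -- a vertex of the \emph{arbitrary} $\Gamma_{j,N}$-geodesic $P'$ -- to a vertex $w \in P$, at $\Gamma_{j,N}$-distance at most $2\beta$, which when combined with the previous step gives the required bound $d_{\Gamma_{j,N}}(x_i, v') \leq 2\beta + \beta + 1 = 3\beta + 1$. The plan for this step is to invoke once more the $\beta$-stable intervals of $\Gamma_j$, this time via short detours in $\Gamma_j$ near the endpoints $a$ and $b$, in order to control the discrepancy between two $\Gamma_{j,N}$-geodesics sharing the same endpoints.

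The main obstacle is precisely this last estimate: two $\Gamma_{j,N}$-geodesics sharing endpoints may a priori be spread far apart in $\Gamma_{j,N}$, and so one must crucially exploit the fact that $P'$ lifts to a $\Gamma_j$-path of defect at most $N - 1$ from a true $\Gamma_j$-geodesic $a$-to-$b$ (since $|\widetilde{P'}| \leq nN$ while $d_{\Gamma_j}(a,b) \geq (n-1)N+1$). The plan is to use this lifted near-geodesic together with iterated stable intervals of $\Gamma_j$ to transport $x_i$ to a tracking vertex on $P$, thereby closing the estimate.
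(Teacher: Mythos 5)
Your plan is essentially the paper's approach: lift $\Gamma_{j,N}$-geodesics to near-geodesics in $\Gamma_j$, absorb the defect and the change of endpoint via iterated applications of $\beta$-stable intervals, and then subsample back to $\Gamma_{j,N}$. The technical heart of the argument -- which your plan gestures at with ``iterated stable intervals \ldots to transport $x_i$ to a tracking vertex'' but never spells out -- is exactly a lemma the paper isolates and proves by induction: if a $\Gamma_j$-path of length $d_{\Gamma_j}(P_-,P_+)+k$ contains a vertex $v$, then some $\Gamma_j$-geodesic with the same endpoints contains a vertex within $\beta k$ of $v$. This is not a mere iteration of Definition~\ref{defn:helly}\ref{it:helly-stint} (that definition only moves one endpoint by a unit step), so you would need to formulate and prove this near-geodesic version before your outline closes up.

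There is also a logical ordering defect in your write-up. You fix a $\Gamma_j$-geodesic $P$ from $a$ to $b$ and build $R$ from it \emph{before} confronting ``the main obstacle'' of transporting $x_i$ from $\widetilde{P'}$ to $P$. But the near-geodesic lemma delivers \emph{some} $\Gamma_j$-geodesic $Q_0$ between $a$ and $b$ with a tracking vertex for $x_i$; nothing lets you force $Q_0 = P$, and $\beta$-stable intervals says nothing about two geodesics between the same pair of points. The fix is to reverse the order: first apply the lemma to $\widetilde{P'}$ to obtain $Q_0$ and $w_0 \in Q_0$, then iterate stable intervals along $\sigma$ \emph{starting from $Q_0$}. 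With that done, your two-stage bound gives roughly $\beta + \beta + 1 = 2\beta+1$, comfortably within $3\beta+1$. The paper avoids the two-stage bookkeeping altogether by concatenating the prefix $Z_{P_-}$ from $u$ to $a$ with the lift of $P$ into a \emph{single} $\Gamma_j$-path from $u$ to $b$, checking once that its defect is at most $3N$, and applying the near-geodesic lemma a single time -- which is both cleaner and what directly produces the stated constant $3\beta+1$.
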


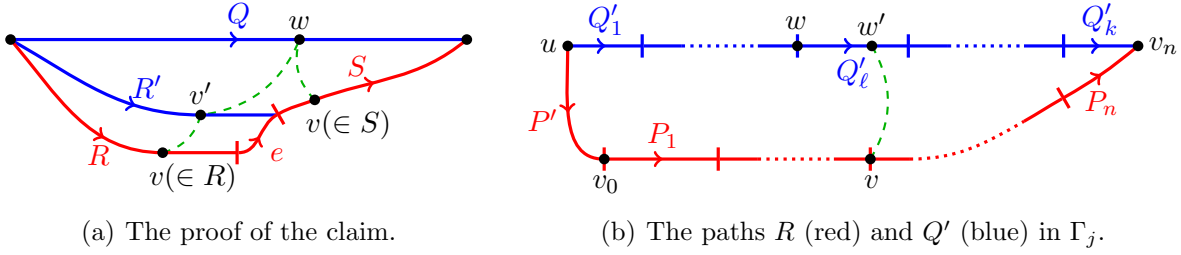
\begin{figure}[ht]
\begin{subfigure}[t]{0.42\textwidth}
\centering
\begin{tikzpicture}[thick,decoration={markings,mark=at position 0.5 with {\arrow{>}}}]
\draw [green!70!black,dashed,thick] (2,-1.5) to[bend right] (2.5,-1) to[bend right] (3.8,0);
\draw [green!70!black,dashed,thick] (4,-0.8) to[bend left] (3.8,0);
\draw [blue,very thick,postaction=decorate] (0,0) -- (6,0) node [midway,above] {$Q$};
\draw [blue,very thick,postaction=decorate] (0,0) to[out=-30,in=180] node [near end,above] {$R'$} (2.5,-1) -- (3.5,-1);
\draw [red,very thick,-|,postaction=decorate] (0,0) to[out=-50,in=180] node [pos=0.65,below] {$R$} (2,-1.5) -- (3,-1.5);
\draw [red,very thick,postaction=decorate] (3,-1.5) to[out=0,in=-150] node [midway,below right] {$e$} (3.5,-1);
\draw [red,very thick,|-,postaction=decorate] (3.5,-1) to[out=30,in=-160] (4,-0.8) to[out=20,in=-140] node [near start,above] {$S$} (6,0);
\fill (0,0) circle (2pt);
\fill (6,0) circle (2pt);
\fill (3.8,0) circle (2pt) node [above] {$w$};
\fill (2.5,-1) circle (2pt) node [above] {$v'$};
\fill (2,-1.5) circle (2pt) node [below right] {$\!\!\!\!\!v(\in R)$};
\fill (4,-0.8) circle (2pt) node [below right] {$\!\!\!\!v(\in S)$};
\end{tikzpicture}
\caption{The proof of the claim.}
\label{sfig:Gamma-stint-cm}
\end{subfigure}\hfill%
\begin{subfigure}[t]{0.57\textwidth}
\centering
\begin{tikzpicture}[thick,decoration={markings,mark=at position 0.5 with {\arrow{>}}}]
\draw [green!70!black,dashed,thick] (3.98,-1.5) to[bend right] (4,0);
\draw [blue,very thick,-|,postaction=decorate] (0,0) -- (1,0) node [midway,above] {$Q'_1$};
\draw [blue,very thick] (1,0) -- (1.5,0);
\draw [blue,very thick,dotted] (1.5,0) -- (2.5,0);
\draw [blue,very thick] (2.5,0) -- (3,0);
\draw [blue,very thick,|-|,postaction=decorate] (3,0) node[above,yshift=2pt,black] {$w$} -- (4.5,0) node [midway,below] {$Q'_\ell$};
\draw [blue,very thick] (4.5,0) -- (5,0);
\draw [blue,very thick,dotted] (5,0) -- (6,0);
\draw [blue,very thick] (6,0) -- (6.5,0);
\draw [blue,very thick,|-,postaction=decorate] (6.5,0) -- (7.5,0) node [midway,above] {$Q'_k$};
\draw [red,very thick,-|,postaction=decorate] (0,0) to[out=-90,in=180] node [left] {$P'$} (0.5,-1.5) node [below,yshift=-2pt,black] {$v_0$};
\draw [red,very thick,-|,postaction=decorate] (0.5,-1.5) -- (2,-1.5) node [midway,above] {$P_1$};
\draw [red,very thick] (2,-1.5) -- (2.5,-1.5);
\draw [red,very thick,dotted] (2.5,-1.5) -- (3.5,-1.5);
\draw [red,very thick,-|] (3.5,-1.5) -- (4,-1.5) node [below,yshift=-2pt,black] {$v$};
\draw [red,very thick] (4,-1.5) -- (4.5,-1.5);
\draw [red,very thick,dotted] (4.5,-1.5) to[out=0,in=-150] (6,-1);
\draw [red,very thick] (6,-1) to[out=30,in=-150] (6.5,-0.7);
\draw [red,very thick,|-,postaction=decorate] (6.5,-0.7) to[out=30,in=-140] node [midway,below,yshift=-2pt] {$P_n$} (7.5,0);
\fill (0,0) circle (2pt) node [left] {$u$};
\fill (4,0) circle (2pt) node [above] {$w'$};
\fill (7.5,0) circle (2pt) node [right] {$v_n$};
\fill (3.02,0) circle (2pt);
\fill (0.48,-1.5) circle (2pt);
\fill (3.98,-1.5) circle (2pt);
\end{tikzpicture}
\caption{The paths $R$ (red) and $Q'$ (blue) in $\Gamma_j$.}
\label{sfig:Gamma-stint-j}
\end{subfigure}
\caption{The proof of Lemma~\ref{lem:GammajN-stint}.}
\label{fig:Gamma-stint}
\end{figure}

\begin{proof}
We first prove the following Claim.

\begin{cm}
Let $k \geq 0$. If $P \subseteq \Gamma_j$ is a path of length $d_{\Gamma_j}(P_-,P_+)+k$ and $v \in P$ is a vertex, then there exists a geodesic $Q \subseteq \Gamma_j$ with $Q_\pm = P_\pm$ and a vertex $w \in Q$ such that $d_{\Gamma_j}(v,w) \leq \beta k$.
\end{cm}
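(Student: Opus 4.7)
The plan is to prove the claim by induction on the length $n := |P|$. The base case $n = 0$ is trivial. For the inductive step, if $k = 0$ then $P$ is already a geodesic and we simply take $Q = P$ with $w = v$; and if $v \in \{P_-, P_+\}$ we take $Q$ to be any geodesic from $P_-$ to $P_+$ with $w = v$. So we may assume $v = v_i$ for some $1 \leq i \leq n - 1$, where $P = (v_0, v_1, \ldots, v_n)$. The idea is to strip off the first edge of $P$ and apply the inductive hypothesis to the subpath $P' = (v_1, \ldots, v_n)$, which has length $n - 1$ and still contains $v$.

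The case analysis will be driven by how the first edge of $P$ interacts with the distance to $v_n$: specifically, by whether $d_{\Gamma_j}(v_1, v_n)$ equals $d_{\Gamma_j}(v_0, v_n) - 1$, $d_{\Gamma_j}(v_0, v_n)$, or $d_{\Gamma_j}(v_0, v_n) + 1$. In the latter two cases the defect of $P'$ drops to $k-1$ or $k-2$ respectively, so by the inductive hypothesis there is a geodesic $Q'$ from $v_1$ to $v_n$ together with $w' \in Q'$ satisfying $d_{\Gamma_j}(v, w') \leq \beta(k-1)$. I will then invoke the $\beta$-stable intervals hypothesis of $\Gamma_j$ applied to $Q'$, the vertex $u := v_0$ (which satisfies $d_{\Gamma_j}(u, Q'_-) = 1$), and the distinguished vertex $w' \in Q'$; this produces a geodesic $Q$ from $v_0$ to $v_n$ together with $w \in Q$ having $d_{\Gamma_j}(w, w') \leq \beta$, and the triangle inequality then yields the desired bound $d_{\Gamma_j}(v, w) \leq \beta k$.

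The main obstacle is the first case, where $d_{\Gamma_j}(v_1, v_n) = d_{\Gamma_j}(v_0, v_n) - 1$ and the defect of $P'$ is still $k$, so that the inductive hypothesis only gives $d_{\Gamma_j}(v, w') \leq \beta k$ with no slack left over for another $\beta$ correction via stable intervals. The resolution is that in this case stable intervals is not needed at all: since $v_1$ lies on a geodesic from $v_0$ to $v_n$, the path $Q$ obtained by prepending the edge $\{v_0, v_1\}$ to $Q'$ has length $1 + d_{\Gamma_j}(v_1, v_n) = d_{\Gamma_j}(v_0, v_n)$ and is therefore automatically a geodesic, with $w := w' \in Q$ realising the bound $d_{\Gamma_j}(v, w) \leq \beta k$ directly. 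This is precisely why the induction must be organised by the path length $n$ rather than by the defect $k$: in the ``progress'' case the defect does not decrease, but the length does, so the outer induction still closes.
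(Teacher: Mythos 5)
Your proof is correct, and it takes a genuinely different route from the paper's. The paper inducts on the defect $k$: it writes $P = ReS$ where $R$ is the longest geodesic prefix of $P$ (so that $Re$ fails to be geodesic), replaces $Re$ by a geodesic $R'$ joining the same endpoints --- invoking the $\beta$-stable interval property applied to $\overline{R}$ when the marked vertex $v$ lies in $R$ --- and then observes that $R'S$ has defect at most $k-1$, which closes the induction in one step. You instead induct on the length $|P|$ and peel off a single edge at the start, branching on how $d_{\Gamma_j}(v_1,v_n)$ compares to $d_{\Gamma_j}(v_0,v_n)$. In the branches where the defect of $P' = (v_1,\ldots,v_n)$ drops, you apply the inductive hypothesis and then one stable-interval correction, spending exactly the $\beta$ of slack that was freed up; in the ``progress'' branch where the defect stays at $k$, you correctly note that no correction is needed, since prepending the edge $\{v_0,v_1\}$ to a geodesic from $v_1$ to $v_n$ already yields a geodesic from $v_0$ to $v_n$ containing $w'$. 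Both proofs are sound and yield the same constant $\beta k$; the difference is where one cuts. The paper cuts at the first breaking point, forcing the defect to drop at every step so the induction runs on $k$; you cut at every edge, so in the progress branch the defect stalls and you must recurse on length instead, as you rightly identify. The trade-off between the two decompositions is essentially organisational.
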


\begin{cmproof}
By induction on $k$. The base case, $k = 0$, is trivial.

Suppose that the claim holds for $k \leq r-1$, for some $r \geq 1$. Let $P \subseteq \Gamma_j$ be a path of length $d_{\Gamma_j}(P_-,P_+)+r$, and let $v \in P$ be a vertex. Let $R \subseteq P$ be the longest geodesic subpath of $P$ with $R_- = P_-$. Thus $Re \subseteq P$ is a non-geodesic subpath for an edge $e \subseteq P$. Let $S \subseteq P$ be the subpath such that $P = ReS$; see Figure~\ref{sfig:Gamma-stint-cm}.

Suppose first that $v \notin R$. Then we have $v \in S$, and so $v \in R'S$, where $R' \subseteq \Gamma_j$ is any geodesic with $R'_\pm = (Re)_\pm$. Moreover, we have
\begin{equation} \label{eq:RpS}
|R'S| = |R'| + |S| \leq |Re|-1 + |S| = |P|-1 = d_{\Gamma_j}((R'S)_-,(R'S)_+) + (r-1).
\end{equation}
Thus, by the inductive hypothesis, there exists a geodesic $Q \subseteq \Gamma_j$ with $Q_\pm = (R'S)_\pm = P_\pm$ and a vertex $w \in Q$ with $d_{\Gamma_j}(v,w) \leq \beta(r-1) < \beta r$, as claimed.

Suppose now that $v \in R$. Since $\Gamma_j$ has $\beta$-stable intervals, there exists a geodesic $R' \subseteq \Gamma_j$ with $R'_\pm = (Re)_\pm$ and a vertex $v' \subseteq R'$ such that $d_{\Gamma_j}(v,v') \leq \beta$. By \eqref{eq:RpS}, we can apply the inductive hypothesis to $R'S$; in particular, since $v' \in R'S$, there exists a geodesic $Q \subseteq \Gamma_j$ with $Q_\pm = (R'S)_\pm = P_\pm$ and a vertex $w \in Q$ with $d_{\Gamma_j}(v',w) \leq \beta(r-1)$. It then follows that $d_{\Gamma_j}(v,w) \leq d_{\Gamma_j}(v,v') + d_{\Gamma_j}(v',w) \leq \beta r$, as claimed.
\end{cmproof}

Now let $P \subseteq \Gamma_{j,N}$ be a geodesic, let $u \in V(\Gamma_{j,N})$ be a vertex with $d_{\Gamma_{j,N}}(u,P_-) = 1$, and let $v \in P$ be any vertex. Let $n = |P|$ and let $P_- = v_0,\ldots,v_n = P_+$ be the vertices of $P$, so that $d_{\Gamma_{j,N}}(v_{i-1},v_i) = 1$ for each $i$. By construction, it then follows that $u,v_0,\ldots,v_n$ are in $V(\Gamma_j)$, and that there exist paths $P',P_1,\ldots,P_n \subseteq \Gamma_j$ such that we have $(P_i)_- = v_{i-1}$, $(P_i)_+ = v_i$ and $|P_i| \leq N$ for each $i$, as well as $P'_- = u$, $P'_+ = v_0$ and $|P'| \leq N$.

Consider the path $R = P'P_1\cdots P_n \subseteq \Gamma_j$. By construction, $v \in R$, and $|R| = |P'| + \sum_{i=1}^n |P_i| \leq (n+1)N$. On the other hand, since $P \subseteq \Gamma_{j,N}$ is a geodesic, we have $n = |P| = d_{\Gamma_{j,N}}(v_0,v_n) = \left\lceil d_{\Gamma_j}(v_0,v_n)/N \right\rceil$; therefore, $d_{\Gamma_j}(v_0,v_n) > (n-1)N$. It thus follows that
\[
d_{\Gamma_j}(R_-,R_+) \geq d_{\Gamma_j}(v_0,v_n) - d_{\Gamma_j}(u,v_0) > (n-1)N - N = (n-2)N \geq |R|-3N.
\]
Therefore, by the Claim, there exists a geodesic $Q' \subseteq \Gamma_j$ with $Q'_\pm = R_\pm$ and a vertex $w' \in Q'$ such that $d_{\Gamma_j}(v,w') \leq 3\beta N$; see Figure~\ref{sfig:Gamma-stint-j}.

Finally, we can write $Q' = Q'_1 \cdots Q'_k$, where $k = \lceil |Q'|/N \rceil$, with $|Q'_i| \leq N$ for each $i$. By construction, there exist edges $e_1,\ldots,e_k \subseteq \Gamma_{j,N}$ with $(e_i)_\pm = (Q'_i)_\pm$ for each $i$. We have $|e_1 \cdots e_k| = k = \left\lceil d_{\Gamma_j}(Q'_-,Q'_+) / N \right\rceil$ since $Q' \subseteq \Gamma_j$ is a geodesic, and so $Q := e_1 \cdots e_k \subseteq \Gamma_{j,N}$ is a geodesic. Moreover, since $w' \in Q'$ we have $w' \in Q'_\ell$ for some $\ell$, and so $d_{\Gamma_j}(w',w) \leq |Q'_\ell| \leq N$, where $w = (Q'_\ell)_-$; note that $w$ is a vertex of $Q$. Thus $Q$ is a geodesic in $\Gamma_{j,N}$ with $Q_- = u$ and $Q_+ = P_+$, and $w \in Q$ is a vertex such that
\[
d_{\Gamma_j}(v,w) \leq d_{\Gamma_j}(v,w') + d_{\Gamma_j}(w',w) \leq 3\beta N + N,
\]
and so $d_{\Gamma_{j,N}}(v,w) = \left\lceil d_{\Gamma_j}(v,w)/N \right\rceil \leq 3\beta+1$. This proves that $\Gamma_{j,N}$ has $(3\beta+1)$-stable intervals, as required.
\end{proof}

\begin{prop} \label{prop:helly-stint}
If each $\Gamma_j$ has stable intervals, then so does $\Gamma(N)$.
\end{prop}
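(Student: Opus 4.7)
Let $\beta \geq 1$ be such that every $\Gamma_j$ has $\beta$-stable intervals, so by Lemma~\ref{lem:GammajN-stint} every $\Gamma_{j,N}$ has $\beta'$-stable intervals with $\beta' := 3\beta+1$. My plan is to show that $\Gamma(N)$ has $\beta''$-stable intervals for some $\beta''$ depending on $\beta'$, the constants of Theorem~\ref{thm:g->qg}, and the BCP constant of Theorem~\ref{thm:bcp}. Given a geodesic $P \subseteq \Gamma(N)$, a vertex $w \in P$ and a vertex $u$ with $d_{\Gamma(N)}(u,P_-)=1$, I pick any geodesic $R \subseteq \Gamma(N)$ with $R_- = u$ and $R_+ = P_+$. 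Because every path in $\Phi$ is non-geodesic, no $G$-translate of a path in $\Phi$ can appear as a subpath of $P$ or $R$, and neither $P$ nor $R$ admits a parabolic shortening. Theorem~\ref{thm:g->qg} therefore makes the derived paths $\widehat{P}$ and $\widehat{R}$ non-backtracking $(\lambda,c)$-quasi-geodesics in $\CayG$. An inspection of Definition~\ref{defn:derived} together with Assumption~\ref{ass:Nbigger}\ref{it:Nbigg-quot} shows that the endpoints $\widehat{P}_\pm$ and $\widehat{R}_\pm$ sit at uniformly bounded $\Gamma(N)$-distance from $P_-,P_+,u,P_+$; since free vertices close in $\Gamma(N)$ are close in $\Cay(G,X)$ (each $X$-step is realised by a length-$2$ path through a medial vertex, and two free vertices joined via an internal vertex differ by an element of the finite stabiliser of $v_j$ in $\widetilde{H}_j$), $\widehat{P}$ and $\widehat{R}$ are $k$-similar for a uniform $k$. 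Let $\varepsilon = \varepsilon(\lambda,c,k)$ denote the constant of Theorem~\ref{thm:bcp}.

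The argument now splits by the position of $w$. \textbf{Case 1:} $w$ lies within $\Gamma(N)$-distance $D$ of some free vertex $w_0 \in P$, where $D$ is a constant fixed below. Then $w_0$ is a phase vertex of $\widehat{P}$, and Theorem~\ref{thm:bcp}\ref{it:bcp-phase} produces a phase vertex $v_0 \in \widehat{R}$ with $d_X(w_0,v_0) \leq \varepsilon$; this translates into a vertex $v$ of $R$ at bounded $\Gamma(N)$-distance from $w$, and I take $Q := R$. \textbf{Case 2:} $w$ is farther than $D$ from every free vertex of $P$. Since medial vertices are adjacent to free ones, $w$ is then an internal vertex of some copy $\Gamma_0 \subseteq \Gamma(N)$ of $\Gamma_{j,N}$, and the maximal internal subpath $T := P \cap \Gamma_0$ containing $w$ has endpoints $\widetilde{a},\widetilde{b}$ joined by connecting edges to free vertices $a,b \in P$ lying in a common right coset of $H_j$. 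Choosing $D > \varepsilon$, the geodesicity of $P$ forces $|T| > 2(D-1)$, and since any Cayley-graph shortcut between $a$ and $b$ in $\Gamma(N)$ has length at most $2\,d_X(a,b)$, one obtains $d_X(a,b) \geq (|T|+2)/2 > \varepsilon$. Theorem~\ref{thm:bcp}\ref{it:bcp-conn} now yields a component of $\widehat{R}$ connected to the $\widetilde{H}_j$-edge of $\widehat{P}$ corresponding to $T$, which means $R$ also penetrates $\Gamma_0$ via a maximal internal subpath $S := R \cap \Gamma_0$ with endpoints $\widetilde{a}',\widetilde{b}'$ adjacent to free vertices $a',b' \in R$ in the same $H_j$-coset as $a,b$.

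From Theorem~\ref{thm:bcp}\ref{it:bcp-close} we have $d_X(a,a'),d_X(b,b') \leq \varepsilon$. Because parabolic subgroups of relatively hyperbolic groups are quasi-isometrically embedded in $\Cay(G,X)$, and because $\widetilde{H}_j \acts \Gamma_j$ is geometric (so that the orbit map $\widetilde{H}_j \to V(\Gamma_j)$ is a quasi-isometry by \v{S}varc--Milnor), this converts into a uniform bound $d_{\Gamma_{j,N}}(\widetilde{a},\widetilde{a}'), d_{\Gamma_{j,N}}(\widetilde{b},\widetilde{b}') \leq \varepsilon'$. Since $P,R$ have no parabolic shortenings, both $T$ and $S$ are geodesics in $\Gamma_0 \cong \Gamma_{j,N}$; starting from $T$ and iterating the $\beta'$-stable-intervals property of $\Gamma_{j,N}$ at most $\varepsilon'$ times at each end yields a geodesic $S'' \subseteq \Gamma_0$ from $\widetilde{a}'$ to $\widetilde{b}'$ with $|S''|=|S|$ and containing a vertex $v$ with $d_{\Gamma_{j,N}}(v,w) \leq 2\varepsilon'\beta'$. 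Replacing $S$ by $S''$ inside $R$ then produces the desired geodesic $Q$ from $u$ to $P_+$ with $d_{\Gamma(N)}(v,w) \leq d_{\Gamma_{j,N}}(v,w) \leq 2\varepsilon'\beta'$. The main obstacle is the translation step: converting the $\Cay(G,X)$-closeness of $(a,a')$ and $(b,b')$ given by BCP into $\Gamma_{j,N}$-closeness of the adjacent internal vertices $(\widetilde{a},\widetilde{a}')$ and $(\widetilde{b},\widetilde{b}')$, which genuinely requires the quasi-isometric embedding of parabolic subgroups together with the geometricity of $\widetilde{H}_j \acts \Gamma_j$; a lesser but not wholly automatic point is choosing $D$ large enough to guarantee $d_X(a,b)>\varepsilon$ in Case~2.
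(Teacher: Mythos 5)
Your proof follows essentially the same strategy as the paper's: pass stable intervals from $\Gamma_j$ to $\Gamma_{j,N}$ via Lemma~\ref{lem:GammajN-stint}, derive quasi-geodesics $\widehat{P},\widehat{R}$ in $\CayG$ from geodesics $P,R$ in $\Gamma(N)$, split according to whether $w$ lies near a free vertex of $P$, and in the far case use Theorem~\ref{thm:bcp} to carry the $\Gamma_0$-penetration of $P$ over to $R$ and then iterate stable intervals inside $\Gamma_0$. The one place you reach for heavier tools than the paper is the translation step you flag as the ``main obstacle'': the constant you call $\varepsilon'$ (the paper's $\overline\varepsilon$) is obtained there by a pure finiteness argument --- since $X$ is finite and $\epsilon|_{\widetilde{H}_j}$ is injective, only finitely many $h\in\widetilde{H}_j$ satisfy $d_X(1,\epsilon(h))\leq\varepsilon$, so one simply takes the maximum of $d_{\Gamma_{j,N}}(v_j,v_j\cdot h)$ over that finite set --- and likewise for $k$-similarity; quasi-isometric embeddedness of parabolics and \v{S}varc--Milnor are correct but unnecessary, so ``genuinely requires'' is an overstatement, and your parenthetical stabiliser justification of $k$-similarity only covers the case of two free vertices adjacent to the \emph{same} internal vertex rather than the distance-$\leq 5$ case the paper actually needs. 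Finally, you gloss over the bookkeeping when $P_1$ or $P_n$ is an extended subpath, so that your $a$ or $b$ need not lie on $P$ and $d_{\Gamma(N)}(a,b)=|T|+2$ can fail by a bounded amount; the paper handles this via the auxiliary subpath $P_i''$ and Assumption~\ref{ass:Nbigger}\ref{it:Nbigg-quot}, and this detail should be addressed explicitly even though it only perturbs constants.
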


\begin{figure}[ht]
\begin{subfigure}[t]{0.98\textwidth}
\centering
\begin{tikzpicture}[very thick,decoration={markings,mark=at position 0.5 with {\arrow{>}}}]
\begin{scope}[xshift=0]
\draw [black!70] (0,0) to[out=90,in=-100] (0.1,1) to[out=80,in=-120] (0.5,2);
\fill [green!70!black] (0.1,1) circle (2.5pt) node [right] {$R_- = [(P_-,x)]$};
\fill [black!70] (0,0) circle (2.5pt) node (x1) {} node [below] {$P_-=\widehat{P}_-$} (0.5,2) circle (2.5pt) node (x2) {} node [above] {$\epsilon(x)P_-$};
\draw (-1,1.2) node [circle,inner sep=0pt] (y) {$\widehat{R}_-$};
\draw [dotted,->,thick] (y) to[bend left] (x2);
\draw [dotted,->,thick] (y) to[bend right] (x1);
\end{scope}
\begin{scope}[xshift=4.5cm]
\filldraw [red!50,fill=red!10,dashed,thick] (1,1.7) ellipse (1.5 and 0.8) node [above] {$\Gamma_0$};
\draw [blue,postaction=decorate] (0,0) to node[midway,left] {$e_P$} (0,1.5);
\draw [red] (0,1.5) to[bend right] (2,1.5);
\draw [blue,postaction=decorate] (2.5,0) to[bend right] node[midway,right] {$e_R$} (2,1.5);
\fill [red] (0,1.5) circle (2.5pt) node [above] {$R_-$} (2,1.5) circle (1.5pt);
\fill [black!70] (0,0) circle (2.5pt) node [below] {$P_-=\widehat{P}_-$} (2.5,0) circle (2.5pt) node [below] {$\widehat{R}_-$};
\end{scope}
\begin{scope}[xshift=10cm]
\filldraw [red!50,fill=red!10,dashed,thick] (1,1.7) ellipse (1.7 and 0.8) node [above] {$\Gamma_0$};
\draw [blue,postaction=decorate] (-0.5,0) to[bend left=20] node[midway,left] {$e_P$} (-0.2,1.5);
\draw [red] (-0.2,1.5) to[bend right] (0.7,1.5) to (1.3,1.5) to[bend right] (2.2,1.5);
\draw [blue,postaction=decorate] (2.5,0) to[bend right=20] node[midway,right] {$e_R$} (2.2,1.5);
\fill [red] (-0.2,1.5) circle (1.5pt) (0.7,1.5) circle (2.5pt) node [below] {$P_-\ $} (1.3,1.5) circle (2.5pt) node [below] {$\ \ R_-$} (2.2,1.5) circle (1.5pt);
\fill [black!70] (-0.5,0) circle (2.5pt) node [below] {$\widehat{P}_-$} (2.5,0) circle (2.5pt) node [below] {$\widehat{R}_-$};
\end{scope}
\end{tikzpicture}
\caption{Bounding $d_X(\widehat{P}_-,\widehat{R}_-)$: $P_-$ free, $R_-$ medial (left); $P_-$ free, $R_- \in \Gamma_0$ (centre); $P_-,R_- \in \Gamma_0$ (right).}
\label{sfig:stint-dpmrm}
\end{subfigure}

\begin{subfigure}[t]{0.5\textwidth}
\centering
\begin{tikzpicture}[very thick,decoration={markings,mark=at position 0.5 with {\arrow{>}}}]
\filldraw [red!50,fill=red!10,dashed,thick] (0,0) ellipse (3.5 and 1.5) node [left] {$\Gamma_0$};
\draw [blue,postaction=decorate] (-3,-2) to [bend left=20] node [midway,left] {$e$} (-2.5,-0.5);
\draw [blue,postaction=decorate] (3,-2) to [bend right=20] node [midway,right] {$e'$} (2.5,-0.5);
\draw [blue,postaction=decorate] (-3,2) to [bend right=20] node [midway,left] {$f$} (-2.5,0.5);
\draw [blue,postaction=decorate] (3,2) to [bend left=20] node [midway,right] {$f'$} (2.5,0.5);
\draw [red,postaction=decorate] (-2.5,-0.5) to [bend right] node [midway,below] {$e_0$} (-1.5,-0.5);
\draw [red,-|,postaction=decorate] (-1.5,-0.5) to [out=-15,in=-175] node [midway,below] {$P_i''$} (0.5,-0.8) node [below,xshift=5pt] {$w$};
\draw [red] (0.5,-0.8) to[out=5,in=-165] (1.5,-0.5);
\draw [red,postaction=decorate] (1.5,-0.5) to [bend right] node [midway,below,yshift=3pt] {$e_0'$} (2.5,-0.5);
\draw [red,postaction=decorate] (-2.5,0.5) to [bend left=10] node [midway,above] {$R_{i'}'$} (2.5,0.5);
\draw [red,dashed] (-2.5,-0.5) to (-2.5,0.5) (2.5,-0.5) to (2.5,0.5);
\fill [red] (-2.5,-0.5) circle (2.5pt) (-1.5,-0.5) circle (2.5pt) (-2.5,0.5) circle (2.5pt) (2.5,-0.5) circle (2.5pt) (1.5,-0.5) circle (2.5pt) (2.5,0.5) circle (2.5pt);
\fill [black!70] (-3,-2) circle (2.5pt) node [right] {$g$} (3,-2) circle (2.5pt) node [left] {$hg$} (-3,2) circle (2.5pt) node [right] {$h_0g$} (3,2) circle (2.5pt) node [left] {$h_1g$};
\end{tikzpicture}
\caption{The paths $P_i$ and $R_{i'}$.}
\label{sfig:stint-wfar}
\end{subfigure} \hfill
\begin{subfigure}[t]{0.4\textwidth}
\centering
\begin{tikzpicture}[very thick,decoration={markings,mark=at position 0.5 with {\arrow{>}}}]
\filldraw [red!50,fill=red!10,dashed,thick] (0,0) ellipse (3 and 2);
\draw [red,postaction=decorate] (-2,-1) to [out=-15,in=-175] node [midway,below] {$P_i''$} (0.5,-1.3);
\draw [red] (0.5,-1.3) node [below] {$w$} to [out=5,in=-165] (2,-1);
\draw [red,thick] (-2,-0.5) to [out=-25,in=175] (0.55,-1.1) to [out=-5,in=-175] (2,-1);
\draw [red,thick] (-2,0.5) to [out=-40,in=165] (0.65,-0.85) to [out=-15,in=175] (2,-1);
\draw [red,thick] (-2,1) to [out=-45,in=160] (0.7,-0.7) to [out=-20,in=165] (2,-1);
\draw [red,thick] (-2,1) to [out=-40,in=165] (0.8,-0.3) to [out=-15,in=175] (2,-0.5);
\draw [red,postaction=decorate] (-2,1) to [out=-15,in=-175] node [pos=0.7,below] {$S'$} (1,0.8) node [above] {$v$} to [out=5,in=-165] (2,1);
\draw [red,postaction=decorate] (-2,1) to [out=15,in=165] node [midway,above] {$S$} (2,1);
\draw [red,postaction=decorate] (-2,-1) -- (-2,1) node [midway,left] {$T$};
\draw [red,postaction=decorate] (2,-1) -- (2,1) node [midway,right] {$T'$};
\draw [dotted] (-1.6,-0.55) -- (-1.6,0.1) (1.6,-0.3) -- (1.6,0.8);
\fill [red] (-2,1) circle (2.5pt) (-2,-1) circle (2.5pt) (2,1) circle (2.5pt) (2,-1) circle (2.5pt);
\fill [red] (0.5,-1.3) circle (1.5pt) (0.55,-1.1) circle (1.5pt) (0.65,-0.85) circle (1.5pt) (0.7,-0.7) circle (1.5pt) (0.8,-0.3) circle (1.5pt) (1,0.8) circle (1.5pt);
\draw [red,densely dotted,thick] (0.5,-1.3) -- (0.55,-1.1) (0.65,-0.85) -- (0.7,-0.7) -- (0.8,-0.3);
\draw [red,dotted,thick] (0.55,-1.1) -- (0.65,-0.85) (0.8,-0.3) -- (1,0.8);
\end{tikzpicture}
\caption{`Moving' along $T$ and $T'$.}
\label{sfig:stint-ttp}
\end{subfigure}

\caption{The proof of Proposition~\ref{prop:helly-stint}. Colours are the same as in Figure~\ref{fig:Gamma-mod-F}.}
\label{fig:helly-stint}
\end{figure}
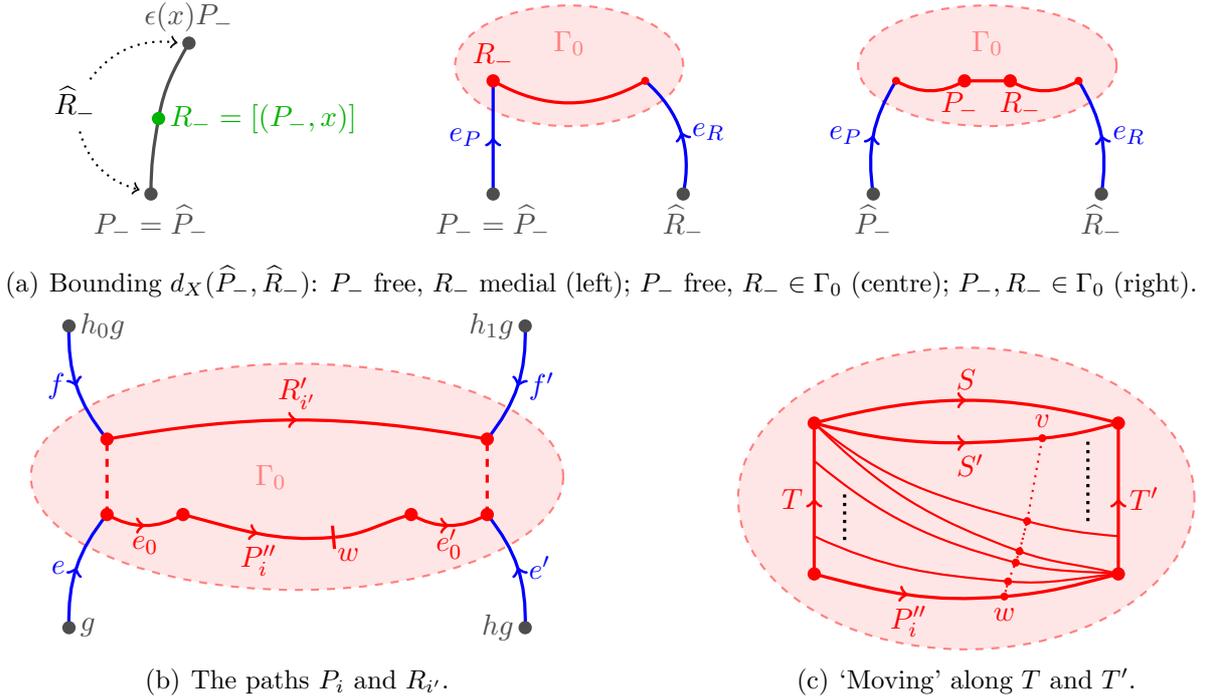

\begin{proof}
Since each $\Gamma_{j,N}$ is locally finite and the action $\widetilde{H}_j \acts \Gamma_{j,N}$ is properly discontinuous, there are finitely many elements $h \in \widetilde{H}_j$ satisfying $d_{\Gamma_{j,N}}(v_j,v_j \cdot h) \leq 5$. We may therefore choose a constant $k \geq 1$ such that $d_X(1,h) \leq k$ whenever $j \in \{1,\ldots,m \}$ and $h \in \widetilde{H}_j$ are such that $d_{\Gamma_{j,N}}(v_j,v_j \cdot h) \leq 5$. Let $\varepsilon = \varepsilon(\lambda,c,k) \geq 0$ be the constant given by Theorem~\ref{thm:bcp}. Furthermore, since $X$ is finite and, for each $j$, the homomorphism $\epsilon|_{\widetilde{H}_j}$ is injective, there are finitely many elements $h \in \widetilde{H}_j$ such that $d_X(1,\epsilon(h)) \leq \varepsilon$. We may thus choose a constant $\overline\varepsilon \geq 0$ such that we have $d_{\Gamma_{j,N}}(v_j,v_j \cdot h) \leq \overline\varepsilon$ whenever $j \in \{1,\ldots,m \}$ and $h \in \widetilde{H}_j$ are such that $d_X(1,\epsilon(h)) \leq \varepsilon$.

Let $\beta_0 \geq 1$ be such that $\Gamma_1,\ldots,\Gamma_m$ all have $\beta_0$-stable intervals. We set
\[
\beta := \max \left\{ \frac{\overline\varepsilon}{2}+2\varepsilon+5, (3\beta_0+1)(2\overline\varepsilon+4) \right\}.
\]
We aim to show that $\Gamma(N)$ has $\beta$-stable intervals. In particular, let $P \subseteq \Gamma(N)$ be a geodesic, let $w \in P$ be a vertex, and let $x \in V(\Gamma(N))$ be such that $d_{\Gamma(N)}(P_-,x) = 1$. We will find a geodesic $Q \subseteq \Gamma(N)$ with $Q_- = x$ and $Q_+ = P_+$ and a vertex $v \in Q$ such that $d_{\Gamma(N)}(w,v) \leq \beta$.

Let $P = P' P_2 \cdots P_{n-1} P''$ and $P_1,P_n \subseteq \Gamma(N)$ be as in Definition~\ref{defn:derived}. Let $R \subseteq \Gamma(N)$ be an arbitrary geodesic with $R_- = x$ and $R_+ = P_+$, and, similarly to the case of $P$, let $R = R' R_2 \cdots R_{\ell-1} R''$ and $R_1,R_\ell \subseteq \Gamma(N)$ be as in Definition~\ref{defn:derived}. 

Note first that since $P$ and $R$ are geodesics, by Theorem~\ref{thm:g->qg} the derived paths $\widehat{P}$ and $\widehat{R}$ are $2$-local geodesic $(\lambda,c)$-quasi-geodesics that do not backtrack. We furthermore claim that $\widehat{P}$ and $\widehat{R}$ are $k$-similar: that is, $d_X(\widehat{P}_-,\widehat{R}_-) \leq k$ and $d_X(\widehat{P}_+,\widehat{R}_+) \leq k$. Indeed, we have $d_X(\widehat{P}_-,\widehat{R}_-) \leq k$ (see Figure~\ref{sfig:stint-dpmrm}):
\begin{enumerate}[label=(\roman*)]
\item If either $P_-$ or $R_-$ is a medial vertex, then the other one is free, and $d_X(\widehat{P}_-,\widehat{R}_-) \leq 1$ by the construction in Definition~\ref{defn:derived}.
\item Otherwise, one of $P_-$ and $R_-$ is $H_j$-internal (for some $j$) -- and so belongs to some copy $\Gamma_0$ of $\Gamma_{j,N}$ in $\Gamma(N)$ -- and the other one is either in $\Gamma_0$ as well or free. Then, by construction, there exist connecting edges $e_P$ and $e_R$ in $\Gamma(N)$ such that $(e_P)_- = \widehat{P}_-$, $(e_R)_- = \widehat{R}_-$ and $(e_P)_+,(e_R)_+ \in \Gamma_0$. Moreover, if $P_-$ is $H_j$-internal, then by construction $d_{\Gamma_0}((e_P)_+,P_-) \leq 2$, whereas if $P_-$ is free then $(e_P)_+ = R_-$; similarly for $R_-$. It follows that $d_{\Gamma_0}((e_P)_+,(e_R)_+) \leq 5$. Therefore, $d_X(\widehat{P}_-,\widehat{R}_-) \leq k$, as required.
\end{enumerate}
A similar argument shows that $d_X(\widehat{P}_+,\widehat{R}_+) \leq k$. We may thus apply Theorem~\ref{thm:bcp} to the paths $\widehat{P}$ and $\widehat{R}$.

We argue in two parts, based on the minimal distance between $w$ and a vertex of $\widehat{P}$.

\begin{description}

\item[If $d_{\Gamma(N)}(w,w') \leq \frac{\overline\varepsilon}{2}+3$ for some vertex $w'$ of $\widehat{P}$] Since $\widehat{P}$ is a $2$-local geodesic, all vertices of $\widehat{P}$ are phase -- in particular, $w' \in \widehat{P}$ is phase. It then follows by Theorem~\ref{thm:bcp}\ref{it:bcp-phase} that $d_X(w',v') \leq \varepsilon$ for some phase vertex $v'$ of $\widehat{R}$. In particular, there is a path in $\Gamma(N)$ from $w'$ to $v'$ consisting of $\leq 2\varepsilon$ free edges, and so we have $d_{\Gamma(N)}(w',v') \leq 2\varepsilon$. By construction, every vertex of $\widehat{R}$ is distance $\leq 2$ away (in $\Gamma(N)$) from a vertex of $R$, and so $d_{\Gamma(N)}(v',v) \leq 2$ for some $v \in R$. Therefore,
\[
d_{\Gamma(N)}(w,v) \leq d_{\Gamma(N)}(w,w')+d_{\Gamma(N)}(w',v')+d_{\Gamma(N)}(v',v) \leq \left( \frac{\overline\varepsilon}{2}+3 \right) + 2\varepsilon + 2 \leq \beta,
\]
as required.

\item[Otherwise] Note that, by construction, any vertex of $P$ that is not a vertex of $P_1 \cdots P_n$ must be distance $\leq 3$ away from either $\widehat{P}_-$ or $\widehat{P}_+$. Since $d_{\Gamma(N)}(w,\widehat{P}_\pm) > \frac{\overline\varepsilon}{2}+3 \geq 3$, it follows that $w$ is a vertex of $P_1 \cdots P_n$, and so $w \in P_i$ for some $i$. By our construction (see Definition~\ref{defn:derived}), $(P_i)_-$ and $(P_i)_+$ are vertices of $\widehat{P}$, and either $P_i$ is a path of length $\leq 2$ or all non-endpoint vertices of $P_i$ are $H_j$-internal for some (fixed) $j$. But $|P_i| \geq d_{\Gamma(N)}((P_i)_-,w)+d_{\Gamma(N)}(w,(P_i)_+) > 2(\frac{\overline\varepsilon}{2}+3) > 4$, and so we must have $P_i = e P_i' \overline{e'}$, where $e,e'$ are connecting edges and $P_i'$ is a path with $|P_i'| > 2$ lying in the $g$-copy $\Gamma_0$ of $\Gamma_{j,N}$ for some $g \in G$. In particular, without loss of generality we have $e = \{ g, (H_jg,u) \}$ and $e' = \{ hg,(H_jg,u') \}$ for some $h \in H_j$ and $u,u' \in V(\Gamma_j)$.

We now claim that $d_{\Gamma(N)}((P_i')_-,(P_i')_+) > \overline\varepsilon$. Indeed, let $e_0$ and $e_0'$ be the first and the last edges of $P_i'$, so that $P_i' = e_0 P_i'' e_0'$ for some path $P_i'' \subseteq \Gamma_0$; see Figure~\ref{sfig:stint-wfar}. By Definition~\ref{defn:derived}, $P_i''$ is a subpath of $P$ and so a geodesic in $\Gamma(N)$. Moreover, since $d_{\Gamma(N)}(w,(P_i)_\pm) > \frac{\overline\varepsilon}{2}+3 > 1$ we have $w \neq (P_i')_\pm$, and so $w \in P_i''$; furthermore, $d_{\Gamma(N)}(w,(P_i'')_\pm) > \frac{\overline\varepsilon}{2}+1$, implying that
\begin{align*}
d_{\Gamma(N)}\left((P_i')_-,(P_i')_+\right) &\geq d_{\Gamma(N)}\left((P_i'')_-,(P_i'')_+\right)-d_{\Gamma(N)}\left((P_i')_-,(P_i'')_-\right)-d_{\Gamma(N)}\left((P_i')_+,(P_i'')_+\right) \\ &= |P_i''|-2 = d_{\Gamma(N)}\left((P_i'')_-,w\right) + d_{\Gamma(N)}\left(w,(P_i'')_+\right) - 2 \\ &> 2\left( \frac{\overline\varepsilon}{2}+1 \right) - 2 = \overline\varepsilon,
\end{align*}
as claimed.

Since $(P_i')_- = (H_jg,u)$ and $(P_i')_+ = (H_jg,u')$, it follows that $d_{\Gamma_{j,N}}(u,u') > \overline\varepsilon$. Therefore, by the choice of $\overline\varepsilon$, it follows that $d_X\left( (\widehat{P}_i)_-,(\widehat{P}_i)_+ \right) = d_X(g,hg) = d_X(1,h) > \varepsilon$. Therefore, by Theorem~\ref{thm:bcp}\ref{it:bcp-conn}, $\widehat{P}_i$ is connected to a component of $\widehat{R}$, that is, to $\widehat{R}_{i'}$ for some $i' \in \{ 1,\ldots,\ell \}$. Furthermore, Theorem~\ref{thm:bcp}\ref{it:bcp-close} implies that $d_X\left( (\widehat{P}_i)_-,(\widehat{R}_{i'})_- \right) \leq \varepsilon$ and $d_X\left( (\widehat{P}_i)_+,(\widehat{R}_{i'})_+ \right) \leq \varepsilon$. By the construction (Definition~\ref{defn:derived}), we have $R_{i'} = f R_{i'}' \overline{f'}$, where $f = \{ h_0g, (H_jg,t) \}$ and $f' = \{ h_1g, (H_jg,t') \}$ are connecting edges (here $h_0,h_1 \in H_j$ and $t,t' \in V(\Gamma_j)$), and $R_{i'}' \subseteq \Gamma_0$; see Figure~\ref{sfig:stint-wfar}. It then follows that $d_X(1,h_0) = d_X(g,h_0g) \leq \varepsilon$ and so, by the choice of $\overline\varepsilon$, we have $d_{\Gamma_{j,N}}(u,t) \leq \overline\varepsilon$; consequently, $d_{\Gamma_0}\left((P_i')_-,(R_{i'}')_-\right) = d_{\Gamma_{j,N}}(u,t) \leq \overline\varepsilon$. Similarly, we have $d_{\Gamma_0}\left((P_i')_+,(R_{i'}')_+\right) \leq \overline\varepsilon$.

Now by the construction, as the vertex $(R_{i'}')_-$ belongs to $\Gamma_0$, it is distance $\leq 1$ away from some vertex $s \in \Gamma_0 \cap R$; similarly, $d_{\Gamma_0}\left((R_{i'}')_+,s'\right) \leq 1$ for some $s' \in \Gamma_0 \cap R$. Furthermore, there exists subpath $S$ of $R$ such that $S \subseteq \Gamma_0$, $S_- = s$ and $S_+ = s'$. Note that we have
\begin{align*}
d_{\Gamma_0}\left( (P_i'')_-,S_- \right) &\leq d_{\Gamma_0}\left( (P_i'')_-,(P_i')_- \right) + d_{\Gamma_0}\left( (P_i')_-,(R_{i'}')_- \right) + d_{\Gamma_0}\left( (R_{i'}')_-,S_- \right) \\ &\leq 1 + \overline\varepsilon + 1 = \overline\varepsilon+2,
\end{align*}
and similarly $d_{\Gamma_0}\left( (P_i'')_+,S_+ \right) \leq \overline\varepsilon+2$. Hence there exist paths $T,T' \subseteq \Gamma_0$ such that $T_- = (P_i'')_-$, $T_+ = S_-$, $T'_- = (P_i'')_+$, $T'_+ = S_+$, and $|T|,|T'| \leq \overline\varepsilon+2$.

Finally, note that the graph $\Gamma_0$ is isomorphic to $\Gamma_{j,N}$. By Lemma~\ref{lem:GammajN-stint} and the choice of $\beta_0$, it follows that $\Gamma_0$ has $(3\beta_0+1)$-stable intervals. Since $P_i'' \subseteq \Gamma_0$ is a geodesic and $w \in P_i''$, we may use this fact $|T|+|T'|$ times, `moving' the endpoints of a geodesic joining a vertex of $T$ to a vertex of $T'$ along the paths $T$ and $T'$ (see Figure~\ref{sfig:stint-ttp}), to find a geodesic $S' \subseteq \Gamma_0$ and a vertex $v \in S'$ such that $S'_- = S_-$, $S'_+ = S_+$ and
\[
d_{\Gamma_0}(w,v) \leq (3\beta_0+1)(|T|+|T'|) \leq (3\beta_0+1)(2\overline\varepsilon+4) \leq \beta.
\]
Since $S \subseteq \Gamma_0$ is a geodesic in $\Gamma(N)$ (and so in $\Gamma_0$), it follows that $|S'| = |S|$. If we express the geodesic $R \subseteq \Gamma(N)$ as $R = S_0 S S_1$ for some paths $S_0,S_1 \subseteq \Gamma(N)$, it then follows that the path $Q = S_0 S' S_1 \subseteq \Gamma(N)$ is a geodesic as well, and we have $Q_- = R_- = w'$ and $Q_+ = R_+ = P_+$. Since $v \in Q$, this establishes the result. \qedhere

\end{description}

\end{proof}

\subsection{The coarse Helly property} \label{ssec:coarse-helly}

The remainder of this section is dedicated to a proof of the following result which immediately implies Theorem~\ref{thm:coarsehelly}. Using it, at the end of the subsection we prove Theorem~\ref{thm:helly}.

\begin{prop} \label{prop:helly-coarse}
If each $\Gamma_j$ is coarsely Helly, then so is $\Gamma(N)$.
\end{prop}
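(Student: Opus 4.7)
The plan is to reduce coarse Hellyness of $\Gamma(N)$ to two auxiliary ingredients: $(a)$ each graph $\Gamma_{j,N}$ is coarsely Helly, with constant depending only on the coarse Helly constant of $\Gamma_j$ and on $N$ -- this follows by a Vietoris--Rips-style argument analogous to Lemma~\ref{lem:GammajN-stint}, using $d_{\Gamma_{j,N}}(u,v) = \lceil d_{\Gamma_j}(u,v)/N \rceil$ to transfer balls and pairwise intersection between $\Gamma_j$ and $\Gamma_{j,N}$; and $(b)$ the Cayley graph $\CayG$ is coarsely Helly, by the standard fact that $\delta$-hyperbolic graphs are $O(\delta)$-coarsely Helly (and $\CayG$ is hyperbolic, cf.\ Proposition~\ref{prop:BCPtriangles}\ref{it:bcpt-phase}).

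Given a pairwise intersecting family $\mathcal{B} = \{B_{\rho_i}(x_i)\}_{i \in \mathcal{I}}$ in $\Gamma(N)$, I would first normalise by replacing each $x_i$ by a nearest free vertex $g_i \in G \subseteq V(\Gamma(N))$ (using the paths $Z_{x_i}$ from Definition~\ref{defn:derived}, at a cost of at most $2$ in the radii). For each pair $(i,j)$ I would fix a point $y_{ij} \in B_{\rho_i}(x_i) \cap B_{\rho_j}(x_j)$ and a geodesic $P_{ij} \subseteq \Gamma(N)$ from $g_i$ to $g_j$ passing near $y_{ij}$. By Theorem~\ref{thm:g->qg} (after removing any parabolic shortenings and any $G$-translates of forbidden subpaths from $\Phi$, which only shortens the path), the derived $\widehat{P}_{ij} \subseteq \CayG$ is a non-backtracking $(\lambda,c)$-quasi-geodesic, and its length is controlled by an affine function of $|P_{ij}| \le \rho_i+\rho_j$. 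Hence the enlarged family $\{B_{\widehat\rho_i}(g_i)\}$ in $\CayG$ is pairwise intersecting for suitable $\widehat\rho_i$, and coarse Hellyness of $\CayG$ produces a vertex $y \in G$ with $d_{\XH}(y,g_i) \leq \widehat\rho_i+\xi_0$ for every $i$ and a universal $\xi_0$.

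The main obstacle is that small $\XH$-distance does not directly translate into small $\Gamma(N)$-distance, because geodesics in $\Gamma(N)$ from $y$ to $g_i$ may traverse long excursions inside copies of peripheral graphs. To deal with this, for each coset $H_jg$ such that the derived quasi-geodesic $\widehat{Q}_i$ of a $\Gamma(N)$-geodesic from $y$ to $g_i$ contains a long $H_j$-component associated to the $g$-copy $\Gamma_0 \cong \Gamma_{j,N}$, I would project the ``engaged'' centres into $\Gamma_0$ via the entry and exit vertices of the corresponding connecting edges. Applying Proposition~\ref{prop:BCPtriangles}\ref{it:bcpt-sim2}--\ref{it:bcpt-sim3} to the non-backtracking quasi-geodesic triangles formed by $\widehat{P}_{ij}$, $\widehat{Q}_i$ and $\widehat{Q}_j$ guarantees that these projections are coherent, so that the projected balls form a pairwise intersecting family inside $\Gamma_0$ with radii perturbed by a universal additive constant; coarse Hellyness of $\Gamma_0$ then provides a local correction inside $\Gamma_0$. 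Combining the ``hyperbolic'' candidate $y$ with the local corrections from each engaged copy of $\Gamma_{j,N}$ -- and reducing infinite families to finite ones using properness of $\Gamma(N)$ (Lemma~\ref{lem:geomaction}) -- produces a universal $\xi$ and a vertex of $\Gamma(N)$ within $\Gamma(N)$-distance $\rho_i+\xi$ of every $x_i$. The technical crux of the argument is thus the coherence supplied by the BCP-for-triangles, Proposition~\ref{prop:BCPtriangles}, which is precisely the reason that statement was established in Section~\ref{ssec:relhyp}.
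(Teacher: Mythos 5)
Your proposal shares some ingredients with the paper's proof (the reduction to $\Gamma_{j,N}$ via a Vietoris--Rips argument, which matches the paper's Lemma~\ref{lem:GammajN-chelly}; normalising centres to free vertices; the use of Theorem~\ref{thm:g->qg} and Proposition~\ref{prop:BCPtriangles}), but the global structure of the argument is different and it contains a genuine gap in the ``combining'' step.

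The paper does not use coarse Hellyness of $\CayG$ at all. Instead it fixes an \emph{arbitrary} free vertex $y$, then selects a single index $I \in \mathcal{I}$ maximising $d_{\Gamma(N)}(y,x_I)-\rho_I$, and considers the critical vertex $z$ on a $\Gamma(N)$-geodesic from $y$ to $x_I$ with $d_{\Gamma(N)}(x_I,z)=\rho_I$. This minimax choice ensures there is at most one ``critical'' peripheral copy to correct in, and it is exactly the reason that, in Lemma~\ref{lem:chelly-zin}, the projected balls $\{B_{\sigma'}(r';\Gamma_0)\}\cup\{B_{\tau_i+\frac{5\overline\mu}{2}}(p_i';\Gamma_0)\}$ can be shown to pairwise intersect with only additively perturbed radii (the extremality of $I$ is invoked repeatedly, e.g.\ to prove $\tau_i+2\overline\mu\geq 0$). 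Your proposal has no analogue of this extremal choice, and it is unclear why a family of ``projected balls'' inside a given peripheral copy would pairwise intersect, or with which radii.

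The deeper problem is the conversion step after applying coarse Hellyness of $\CayG$. A bound $d_{\XH}(y,g_i)\leq\widehat\rho_i+\xi_0$ gives essentially no control on $d_{\Gamma(N)}(y,g_i)$, because a single $\mathcal{H}$-edge in $\CayG$ can correspond to an arbitrarily long excursion inside a copy of $\Gamma_{j,N}$; put differently, $\CayG$ and $\Gamma(N)$ are not quasi-isometric, since $\XH$ is typically infinite. You acknowledge this, but the proposed remedy -- projecting ``engaged'' centres into each copy $\Gamma_0$ through which some $\widehat{Q}_i$ makes a long excursion and then ``combining the hyperbolic candidate $y$ with the local corrections from each engaged copy'' -- does not produce a single vertex of $\Gamma(N)$: different indices $i$ can have their geodesics from $y$ making long excursions into \emph{different} peripheral copies, and a correction inside one copy moves $y$ away from the others. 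There is no mechanism in your outline that identifies a single copy in which the correction suffices for \emph{all} $i$ simultaneously, and no argument that the needed radius perturbations stay bounded by a universal additive constant rather than growing multiplicatively in the $\rho_i$. (A multiplicative bound such as $\widehat\rho_i\approx\lambda\rho_i$ is useless for a coarse Helly constant, which must be additive.) The paper's minimax device is precisely what resolves both of these issues, and it is the missing idea.

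As a secondary remark, the paper never needs to ``reduce infinite families to finite ones'': the supremum $\max\mathcal{D}$ used in its proof is automatically attained because $\mathcal{D}\subseteq\mathbb{Z}$ is shown to be bounded above, so no properness/compactness argument is required for the family $\mathcal{I}$.
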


In order to prove the Proposition, we start by taking a collection $\{ B_{\rho_i}(x_i;\Gamma(N))\mid i \in \mathcal{I} \}$ of pairwise intersecting balls, picking a basepoint $y \in \Gamma(N)$, and choosing an index $I \in \mathcal{I}$ maximising $d_{\Gamma(N)}(x_I,y)-\rho_I$. We consider geodesic triangles with vertices $y$, $x_I$ and $x_i$ (where $i \in \mathcal{I} \setminus \{I\}$): by Theorem~\ref{thm:g->qg}, the edges of such triangles correspond to quasi-geodesics in $\CayG$ that do not backtrack. The strategy of the proof depends on whether the point $z$, located on a geodesic $R \subseteq \Gamma(N)$ from $y$ to $x_I$ and such that $d_{\Gamma(N)}(x_I,z) = \rho_I$, is `close' to a free vertex of $R$. If it is, we may use Proposition~\ref{prop:BCPtriangles} and maximality of $d_{\Gamma(N)}(x_I,z)-\rho_I$ to find a universal upper bound for the numbers $d_{\Gamma(N)}(x_i,z)-\rho_i$: see Lemma~\ref{lem:chelly-zout}. Otherwise, $z$ is in a copy $\Gamma_0$ of some $\Gamma_{j,N}$, and we complete the proof by using Proposition~\ref{prop:BCPtriangles} together with the coarse Helly property in $\Gamma_0$ for a collection of balls centered at vertices where geodesics from $x_i$ to $x_I$ `enter' $\Gamma_0$ (where $i \in \mathcal{I} \setminus \{I\}$), along with a ball centered at the vertex where $R$ `leaves' $\Gamma_0$: see Lemma~\ref{lem:chelly-zin}.

We start by making the following elementary observation.

\begin{lem} \label{lem:GammajN-chelly}
Let $\xi \geq 0$. If $\Gamma_j$ is $\xi$-coarsely Helly, then $\Gamma_{j,N}$ is $\lceil \xi/N \rceil$-coarsely Helly.
\end{lem}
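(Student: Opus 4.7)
The plan is to exploit the fact that, as vertex sets, balls in $\Gamma_{j,N}$ and balls in $\Gamma_j$ are essentially the same, with only a rescaling of radii. Concretely, the construction of $\Gamma_{j,N}$ by adding an edge whenever $d_{\Gamma_j}(v,w) \leq N$ gives the distance formula $d_{\Gamma_{j,N}}(v,w) = \lceil d_{\Gamma_j}(v,w)/N \rceil$, which I would verify as a one-line preliminary observation. From this it follows immediately that $B_\rho(x;\Gamma_{j,N}) = B_{\rho N}(x;\Gamma_j)$ as subsets of $V(\Gamma_j) = V(\Gamma_{j,N})$, for any $\rho \geq 0$ and $x \in V(\Gamma_j)$.

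Given a pairwise intersecting family $\{ B_{\rho_i}(x_i;\Gamma_{j,N}) \mid i \in \mathcal{I} \}$ of balls in $\Gamma_{j,N}$, the key identity translates the pairwise intersection hypothesis into the statement that the family $\{ B_{\rho_i N}(x_i;\Gamma_j) \mid i \in \mathcal{I} \}$ of balls in $\Gamma_j$ is pairwise intersecting. Applying $\xi$-coarse Hellyness of $\Gamma_j$, I obtain a vertex $v \in \bigcap_{i \in \mathcal{I}} B_{\rho_i N + \xi}(x_i;\Gamma_j)$.

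Finally, for each $i \in \mathcal{I}$ the bound $d_{\Gamma_j}(v,x_i) \leq \rho_i N + \xi$ yields
\[
d_{\Gamma_{j,N}}(v,x_i) = \left\lceil \frac{d_{\Gamma_j}(v,x_i)}{N} \right\rceil \leq \left\lceil \frac{\rho_i N + \xi}{N} \right\rceil = \rho_i + \left\lceil \frac{\xi}{N} \right\rceil,
\]
where the last equality uses that $\rho_i$ is an integer (which it is, since distances in the graph $\Gamma_{j,N}$ are integers whenever the balls are nonempty, or else one may assume $\rho_i \in \mathbb{N}$ without loss). Hence $v \in \bigcap_{i \in \mathcal{I}} B_{\rho_i + \lceil \xi/N \rceil}(x_i;\Gamma_{j,N})$, establishing the claim.

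There is no real obstacle here: the lemma is essentially a bookkeeping exercise exploiting that $\Gamma_{j,N}$ is the graph underlying the Vietoris--Rips type rescaling of $\Gamma_j$. The only point that requires the briefest care is the passage from $\rho_i N + \xi$ to $\rho_i + \lceil \xi/N \rceil$ under the ceiling function, which is immediate from integrality of $\rho_i$.
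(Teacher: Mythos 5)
Your proof is correct and follows exactly the paper's argument: both use the identity $B_\rho(w;\Gamma_{j,N}) = B_{\rho N}(w;\Gamma_j)$ to transfer the pairwise-intersecting family to $\Gamma_j$, invoke the $\xi$-coarse Helly property there, and translate the resulting point back via the distance formula $d_{\Gamma_{j,N}} = \lceil d_{\Gamma_j}/N \rceil$. Your explicit remark that $\rho_i$ may be taken to be an integer (so that $\lceil \rho_i + \xi/N \rceil = \rho_i + \lceil \xi/N \rceil$) is a small point the paper leaves implicit, but the argument is otherwise identical.
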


\begin{proof}
Note that $B_\rho(w;\Gamma_{j,N}) = B_{\rho N}(w;\Gamma_j)$ for each $w \in V(\Gamma_j)$ and $\rho \geq 0$. Thus, for any collection $\{ B_{\rho_i}(w_i;\Gamma_{j,N}) \mid i \in \mathcal{I} \}$ of pairwise intersecting balls in $\Gamma_{j,N}$, the collection $\{ B_{\rho_i N}(w_i;\Gamma_j) \mid i \in \mathcal{I} \}$ is a collection of pairwise intersecting balls in $\Gamma_j$. By the $\xi$-coarse Helly property, it follows that there exists $v \in V(\Gamma_j)$ with $d_{\Gamma_j}(v,w_i) \leq \rho_i N + \xi$ for each $i \in \mathcal{I}$. We thus have $d_{\Gamma_{j,N}}(v,w_i) \leq \left\lceil (\rho_i N + \xi) / N \right\rceil = \rho_i + \lceil \xi/N \rceil$ for each $i \in \mathcal{I}$, proving the $\lceil \xi/N \rceil$-coarse Helly property for the collection of balls in $\Gamma_{j,N}$.
\end{proof}

Now as before, let $\lambda \geq 1$ and $c \geq 0$ be constants such that for any geodesic $P \subseteq \Gamma(N)$, the path $\widehat{P} \subseteq \CayG$ is a $2$-local geodesic $(\lambda,c)$-quasi-geodesic that does not backtrack: such constants exist by Theorem~\ref{thm:g->qg}. Let $\mu = \mu(\lambda,c)$ be the constant given by Proposition~\ref{prop:BCPtriangles}. Since $X$ is finite and $\epsilon|_{\widetilde{H}_j}$ is injective for each $j$, there are finitely many elements $h \in \mathcal{H}$ such that $|\epsilon(h)|_X \leq \mu$. We may therefore choose a constant $\overline\mu \geq 0$ such that $d_{\Gamma_{j,N}}(v_j,v_j \cdot h) \leq \overline\mu$ whenever $h \in \widetilde{H}_j$ satisfies $|\epsilon(h)|_X \leq \mu$.

For the remainder of this section, we assume that $\Gamma_1,\ldots,\Gamma_m$ are coarsely Helly. Let $\xi_0 \geq 0$ be such that $\Gamma_1,\ldots,\Gamma_m$ are all $\xi_0$-coarsely Helly, and set
\begin{equation} \label{eq:defxi}
\xi = \max \left\{ \frac{\overline\mu}{2}+4\mu+5, \frac{5\overline\mu}{2}+\left\lceil \frac{\xi_0}{N} \right\rceil+4 \right\}.
\end{equation}
We aim to show that every collection of pairwise intersecting balls in $\Gamma(N)$ satisfies the $\xi$-coarse Helly property. This will establish Proposition~\ref{prop:helly-coarse}.

Thus, let $\mathcal{B}' = \left\{ B_{\rho_i'}(x_i';\Gamma(N)) \:\middle|\: i \in \mathcal{I} \right\}$ be a collection of pairwise intersecting balls in $\Gamma(N)$. By the choice of $N$ (see Assumption~\ref{ass:Nbigger}\ref{it:Nbigg-quot}), any vertex of $\Gamma(N)$ is distance $\leq 2$ away from a free vertex, and so for each $i \in \mathcal{I}$ we may pick a free vertex $x_i \in V(\Gamma(N))$ such that $d_{\Gamma(N)}(x_i,x_i') \leq 2$. By letting $\rho_i = \rho_i'+2$ (for all $i \in \mathcal{I}$), we see that the collection $\mathcal{B} = \left\{ B_{\rho_i}(x_i;\Gamma(N)) \:\middle|\: i \in \mathcal{I} \right\}$ of balls in $\Gamma(N)$ satisfies $B_{\rho_i}(x_i) \supseteq B_{\rho_i'}(x_i')$ for all $i \in \mathcal{I}$, and hence, as the balls in $\mathcal{B}'$ have pairwise non-empty intersections, so do the balls in $\mathcal{B}$. Moreover, since $B_{\rho_i+\xi-4}(x_i) = B_{\rho_i'+\xi-2}(x_i) \subseteq B_{\rho_i'+\xi}(x_i')$ for each $i$, in order to show that $\mathcal{B}'$ satisfies the $\xi$-coarse Helly property, it is enough to show that $\mathcal{B}$ satisfies the $(\xi-4)$-coarse Helly property. We will show that the latter is indeed the case.

Let $y \in V(\Gamma(N))$ be an arbitrary free vertex ($y = 1_G$, say), and consider the set $\mathcal{D} = \{ d_{\Gamma(N)}(y,x_i)-\rho_i \mid i \in \mathcal{I} \} \subseteq \mathbb{Z}$. Note that $\mathcal{D}$ is bounded from above: indeed, for any (fixed) $j \in \mathcal{I}$ and any $i \in \mathcal{I}$ we have
\[
d_{\Gamma(N)}(y,x_i) \leq d_{\Gamma(N)}(y,x_j)+d_{\Gamma(N)}(x_j,x_i) \leq d_{\Gamma(N)}(y,x_j)+\rho_j+\rho_i
\]
since $B_{\rho_j}(x_j) \cap B_{\rho_i}(x_i) \neq \varnothing$, and hence $D \leq d_{\Gamma(N)}(y,x_j)+\rho_j$ for any $D \in \mathcal{D}$. We may therefore fix an index $I \in \mathcal{I}$ such that $d_{\Gamma(N)}(y,x_I)-\rho_I = \max \mathcal{D}$.

We pick a geodesic $R \subseteq \Gamma(N)$ and, for each $i \in \mathcal{I}$, geodesics $P_i,Q_i \subseteq \Gamma(N)$, such that $(P_i)_+ = (Q_i)_- = x_i$, $(Q_i)_+ = R_- = y$ and $R_+ = (P_i)_- = x_I$, so that $P_iQ_iR$ is a geodesic triangle in $\Gamma(N)$ with vertices $x_I$, $x_i$ and $y$; note that we do not exclude the `degenerate' cases when $x_i \in \{y,x_I\}$. If $|R| < \rho_I$, it then follows that $D < 0$ for all $D \in \mathcal{D}$ and so $y \in \bigcap_{B \in \mathcal{B}} B$, proving the Helly property (and so the $(\xi-4)$-coarse Helly property) for $\mathcal{B}$. We may therefore without loss of generality assume that $|R| \geq \rho_I$. Let $z \in R$ be the vertex such that $d_{\Gamma(N)}(x_I,z) = \rho_I$; see Figure~\ref{fig:coarse-general}.

\begin{figure}[ht]
\begin{tikzpicture}[decoration={markings,mark=at position 0.5 with {\arrow{>}}}]
\draw [very thick,postaction=decorate] (0,0) to node [midway,below] {$R$} (-8,0);
\draw [very thick,postaction=decorate] (-6,3) to[bend right=20] node [midway,above] {$Q_i$} node [near start] (vr) {} (0,0);
\draw [very thick,postaction=decorate] (-8,0) to[bend right] node [midway,left] {$P_i$} node [near end] (vq) {} (-6,3);
\draw [decorate,decoration={brace,amplitude=5pt}] (-5.5,-0.1) -- (-8,-0.1) node [midway,below,yshift=-3pt] {$\rho_I$};
\draw [green!70!black,thick,dashed] (-5,0) to[bend left=10] (vr.center);
\draw [green!70!black,thick,dashed] (-5,0) to[bend right=10] (vq.center);
\fill (0,0) circle (2pt) node [right] {$y$};
\fill (-5.5,0) circle (2pt) node [above] {$z$};
\fill (-8,0) circle (2pt) node [left] {$x_I$};
\fill (-6,3) circle (2pt) node [left] {$x_i$};
\fill [red] (-5,0) circle (2pt) node [below] {$z'$};
\fill [blue] (vq) circle (2pt) node [left] {$v_i(\in P_i)$};
\fill [blue] (vr) circle (2pt) node [right,yshift=7pt,xshift=-5pt] {$v_i(\in Q_i)$};
\end{tikzpicture}
\caption{The proof of Proposition~\ref{prop:helly-coarse}: general setup and Lemma~\ref{lem:chelly-zout}.}
\label{fig:coarse-general}
\end{figure}
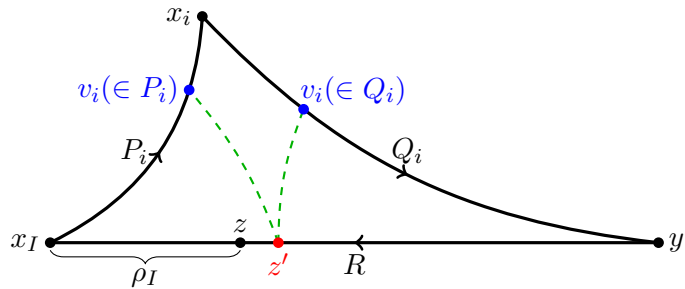

Our proof of the $(\xi-4)$-coarse Helly property for $\mathcal{B}$ splits into two cases, based on whether or not $z$ is `close' to a vertex of $\widehat{R}$.

\begin{lem} \label{lem:chelly-zout}
If $d_{\Gamma(N)}(z,z') \leq \frac{\overline\mu}{2}+1$ for some vertex $z' \in \widehat{R}$, then $\mathcal{B}$ satisfies the $\left( \frac{\overline\mu}{2}+4\mu+1 \right)$-coarse Helly property.
\end{lem}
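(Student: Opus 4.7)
My plan is to show that the vertex $z'$ itself witnesses the coarse intersection, i.e.\ $d_{\Gamma(N)}(z',x_i)\leq\rho_i+\overline\mu/2+4\mu+1$ for every $i\in\mathcal{I}$. First, since $R$, $P_i$, $Q_i$ are geodesics in $\Gamma(N)$, Theorem~\ref{thm:g->qg} applies and makes $\widehat R$, $\widehat{P_i}$, $\widehat{Q_i}$ into $2$-local-geodesic $(\lambda,c)$-quasi-geodesics in $\CayG$ that do not backtrack. Because the endpoints $x_I$, $x_i$, $y$ are all free vertices, the extended-subpath case of Definition~\ref{defn:derived} is avoided at the six endpoints of the three derived paths; this has two consequences I will exploit. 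First, the three paths concatenate into a closed non-backtracking $(\lambda,c)$-quasi-geodesic triangle $\widehat{P_i}\widehat{Q_i}\widehat{R}$ in $\CayG$ with corners $x_I$, $x_i$, $y$. Second, every vertex of $\widehat{P_i}$ (resp.\ $\widehat{Q_i}$) is a free vertex of $\Gamma(N)$ that already lies on the underlying geodesic $P_i$ (resp.\ $Q_i$). Finally, the argument in the proof of Proposition~\ref{prop:lg=qg} shows that the $2$-local-geodesic property forces every component of each of the three paths to be a single edge, so every vertex of $\widehat R$ --- in particular $z'$ --- is phase.

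With this in place, I would apply Proposition~\ref{prop:BCPtriangles}\ref{it:bcpt-phase} to $z'\in\widehat R$: for each $i$ this produces a (phase) vertex $v_i'$ on $\widehat{P_i}$ or $\widehat{Q_i}$ with $d_X(z',v_i')\leq \mu$. Since each edge of $\Cay(G,X)$ lifts to a length-$2$ path in $\Gamma(N)$ through a medial vertex, $d_{\Gamma(N)}(z',v_i')\leq 2\mu$; and by the previous paragraph $v_i'$ lies on the corresponding geodesic $P_i$ or $Q_i$, so distances split additively along that geodesic.

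The main computation then splits into two symmetric cases. If $v_i'\in P_i$, I would combine $d_{\Gamma(N)}(x_I,v_i')\geq d_{\Gamma(N)}(x_I,z')-2\mu\geq\rho_I-\overline\mu/2-1-2\mu$ (using $d_{\Gamma(N)}(x_I,z)=\rho_I$ and the hypothesis $d_{\Gamma(N)}(z,z')\leq\overline\mu/2+1$) with $|P_i|=d_{\Gamma(N)}(x_I,x_i)\leq\rho_I+\rho_i$ (from $B_{\rho_I}(x_I)\cap B_{\rho_i}(x_i)\neq\varnothing$) and the geodesic split $d_{\Gamma(N)}(x_I,v_i')+d_{\Gamma(N)}(v_i',x_i)=|P_i|$, to obtain $d_{\Gamma(N)}(v_i',x_i)\leq\rho_i+\overline\mu/2+2\mu+1$ and hence $d_{\Gamma(N)}(z',x_i)\leq\rho_i+\overline\mu/2+4\mu+1$. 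If instead $v_i'\in Q_i$, the key input is the maximality of $I$: the inequality $d_{\Gamma(N)}(y,x_i)-\rho_i\leq d_{\Gamma(N)}(y,x_I)-\rho_I=d_{\Gamma(N)}(y,z)$ gives $|Q_i|\leq d_{\Gamma(N)}(y,z)+\rho_i$. Combined with $d_{\Gamma(N)}(y,v_i')\geq d_{\Gamma(N)}(y,z)-\overline\mu/2-1-2\mu$ and the geodesic split on $Q_i$, this yields the same bound. For $i=I$ the triangle inequality gives $d_{\Gamma(N)}(z',x_I)\leq\overline\mu/2+1+\rho_I$ directly, well below the target.

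The one delicate ingredient I expect is the identification of vertices of derived paths with vertices on the underlying geodesics (so that additive splits of geodesic distances are legal); this is precisely what the freeness of the endpoints $x_I, x_i, y$ buys via Definition~\ref{defn:derived}. Beyond that, the proof is a routine triangle-inequality calculation driven by Proposition~\ref{prop:BCPtriangles} and the maximality of $I$.
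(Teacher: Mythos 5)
Your proof is correct and follows essentially the same route as the paper: pick $z'$ as the common witness, apply Proposition~\ref{prop:BCPtriangles}\ref{it:bcpt-phase} to the non-backtracking quasi-geodesic triangle $\widehat{P_i}\widehat{Q_i}\widehat{R}$ to get a free vertex $v_i'$ on $P_i$ or $Q_i$ within $X$-distance $\mu$ of $z'$, and then split additively along the geodesic --- using $d_{\Gamma(N)}(x_i,x_I)\leq\rho_i+\rho_I$ in the $P_i$ case and the maximality of $I$ in the $Q_i$ case. The constants and all intermediate bounds match the paper's.
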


\begin{proof}
Fix $i \in \mathcal{I}$, and let $P = P_i$ and $Q = Q_i$. By Theorem~\ref{thm:g->qg}, since $P,Q,R \subseteq \Gamma(N)$ are geodesics, $\widehat{P},\widehat{Q},\widehat{R} \subseteq \CayG$ are $2$-local geodesic $(\lambda,c)$-quasi-geodesics that do not backtrack; in particular, every vertex of $\widehat{R}$ is phase. Moreover, as the endpoints of $P$, $Q$ and $R$ are free vertices, we have $\widehat{P}_\pm = P_\pm$, $\widehat{Q}_\pm = Q_\pm$ and $\widehat{R}_\pm = R_\pm$, and so $\widehat{P}\widehat{Q}\widehat{R}$ is a triangle in $\CayG$.

It then follows from Proposition~\ref{prop:BCPtriangles}\ref{it:bcpt-phase} that there exists a vertex $v = v_i$ of $\widehat{P}$ or of $\widehat{Q}$ such that $d_X(z',v) \leq \mu$. Since vertices of $\widehat{P}$ (respectively, $\widehat{Q}$) are precisely the free vertices of $P$ (respectively, $Q$), it follows that either $v \in P$ or $v \in Q$; see Figure~\ref{fig:coarse-general}. We claim that $d_{\Gamma(N)}(x_i,v) \leq \rho_i+d_{\Gamma(N)}(z,v)$.

If $v \in P$, then note that $d_{\Gamma(N)}(x_I,v) \geq d_{\Gamma(N)}(x_I,z)-d_{\Gamma(N)}(z,v) = \rho_I - d_{\Gamma(N)}(z,v)$. Since $B_{\rho_I}(x_I) \cap B_{\rho_i}(x_i) \neq \varnothing$, it follows that $d_{\Gamma(N)}(x_i,x_I) \leq \rho_i+\rho_I$, and so, as $P$ is a geodesic,
\[
d_{\Gamma(N)}(x_i,v) = d_{\Gamma(N)}(x_i,x_I) - d_{\Gamma(N)}(x_I,v) \leq (\rho_i+\rho_I) - (\rho_I-d_{\Gamma(N)}(z,v)) = \rho_i + d_{\Gamma(N)}(z,v),
\]
as claimed.

On the other hand, if $v \in Q$, then note that $d_{\Gamma(N)}(y,z) = d_{\Gamma(N)}(y,x_I)-\rho_I \geq d_{\Gamma(N)}(y,x_i)-\rho_i$ by the choice of $I$. It follows that
\[
d_{\Gamma(N)}(y,v) \geq d_{\Gamma(N)}(y,z)-d_{\Gamma(N)}(z,v) \geq d_{\Gamma(N)}(y,x_i) - (\rho_i+d_{\Gamma(N)}(z,v)),
\]
and so, as $Q$ is a geodesic, we have $d_{\Gamma(N)}(x_i,v) = d_{\Gamma(N)}(y,x_i)-d_{\Gamma(N)}(y,v) \leq \rho_i+d_{\Gamma(N)}(z,v)$, as claimed.

Therefore, we have $d_{\Gamma(N)}(x_i,v) \leq \rho_i+d_{\Gamma(N)}(z,v)$ in either case, and hence
\begin{equation} \label{eq:dxizp}
d_{\Gamma(N)}(x_i,z') \leq d_{\Gamma(N)}(x_i,v) + d_{\Gamma(N)}(z',v) \leq \rho_i+d_{\Gamma(N)}(z,v) + d_{\Gamma(N)}(z',v).
\end{equation}
Now since $d_X(z',v) \leq \mu$, there is a path in $\Gamma(N)$ from $z'$ to $v$ consisting of $\leq 2\mu$ free edges, and so $d_{\Gamma(N)}(z',v) \leq 2\mu$. It follows that $d_{\Gamma(N)}(z,v) \leq d_{\Gamma(N)}(z,z') + d_{\Gamma(N)}(z',v) \leq \frac{\overline\mu}{2}+1+2\mu$, and so \eqref{eq:dxizp} implies that $d_{\Gamma(N)}(x_i,z') \leq \rho_i + \frac{\overline\mu}{2} + 4\mu + 1$.

Thus the intersection $\bigcap_{i \in \mathcal{I}} B_{\rho_i + \frac{\overline\mu}{2} + 4\mu + 1}(x_i;\Gamma(N))$ contains the vertex $z'$ and so is non-empty. Therefore, $\mathcal{B}$ satisfies the $\left( \frac{\overline\mu}{2}+4\mu+1 \right)$-coarse Helly property, as required.
\end{proof}

\begin{lem} \label{lem:chelly-zin}
If $d_{\Gamma(N)}(z,z') > \frac{\overline\mu}{2}+1$ for all $z' \in \widehat{R}$, then $\mathcal{B}$ satisfies the $\left( \frac{5\overline\mu}{2} + \left\lceil \frac{\xi_0}{N} \right\rceil \right)$-coarse Helly property.
\end{lem}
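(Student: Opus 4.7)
A vertex of $R$ that lies strictly between two consecutive free vertices of $R$ (i.e.\ strictly inside one of the subpaths $R_2,\dots,R_{n-1}$ from Definition~\ref{defn:derived}) and at distance $>3$ from every free vertex of $R$ must sit strictly inside the $\Gamma_0$-part of an extended subpath of $R$. Since every free vertex of $R$ is a vertex of $\widehat{R}$, the hypothesis $d_{\Gamma(N)}(z,z')>\overline\mu/2+1>3$ for every $z'\in\widehat{R}$ then forces $z$ to lie in the interior of some $R_\star$ (with $2\leq\star\leq n-1$) of the form $R_\star = e_R^- T_R \overline{e_R^+}$, where $e_R^\pm$ are connecting edges adjoining free vertices $g_R^\pm$ and $T_R$ is a geodesic (since $R$ has no parabolic shortenings) of length $|T_R|>\overline\mu$ inside some copy $\Gamma_0$ of $\Gamma_{j,N}$. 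Setting $u_R^\pm := (T_R)_\pm$, the $\widetilde{H}_j$-edge $\pathXH{e}_R$ of $\widehat{R}$ corresponding to $R_\star$ then satisfies $d_X((\pathXH{e}_R)_-,(\pathXH{e}_R)_+)>\mu$ by the defining property of $\overline\mu$.

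By Theorem~\ref{thm:g->qg}, the triangle $\widehat{P_i}\widehat{Q_i}\widehat{R}$ in $\CayG$ is a non-backtracking $(\lambda,c)$-quasi-geodesic triangle, so Proposition~\ref{prop:BCPtriangles}\ref{it:bcpt-conn} applied to the long component $\pathXH{e}_R$ supplies, for each $i\in\mathcal{I}\setminus\{I\}$, a component of $\widehat{P_i}$ or $\widehat{Q_i}$ connected to $\pathXH{e}_R$. Unpacking this produces a geodesic subpath $T_i$ of $P_i$ or $Q_i$ sitting inside the same copy $\Gamma_0$, and parts \ref{it:bcpt-sim2}--\ref{it:bcpt-sim3} of Proposition~\ref{prop:BCPtriangles}, combined with the defining property of $\overline\mu$, force the two endpoints of $T_i$ to lie within $\Gamma_0$-distance $\overline\mu$ of $u_R^\pm$ (in an orientation that depends on whether $T_i\subseteq P_i$ or $T_i\subseteq Q_i$).

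The plan is now to reduce the $(\xi-4)$-coarse Helly conclusion for $\mathcal{B}$ to the coarse Helly property inside $\Gamma_0$. We assemble a family $\{C_i\mid i\in\mathcal{I}\}$ of balls in $\Gamma_0$: take $C_I = B_{r_I}(z;\Gamma_0)$, and for $i\neq I$ take $C_i = B_{r_i}(y_i;\Gamma_0)$, where $y_i$ is a carefully chosen vertex of $T_i$ (e.g.\ the endpoint of $T_i$ within $\Gamma_0$-distance $\overline\mu$ of $u_R^+$). The radii $r_i$ are chosen so that any $v\in\Gamma_0$ with $d_{\Gamma_0}(v,z)\leq r_I+\lceil\xi_0/N\rceil$ (for $i=I$) or $d_{\Gamma_0}(v,y_i)\leq r_i+\lceil\xi_0/N\rceil$ (for $i\neq I$) satisfies $d_{\Gamma(N)}(v,x_i)\leq\rho_i+\xi-4$; this uses the triangle inequality in $\Gamma(N)$ together with the identity $d_{\Gamma(N)}(z,x_I)=\rho_I$ in the $I$-case and, in the other cases, the fact that $y_i$ lies on the geodesic $P_i$ or $Q_i$ so that $d_{\Gamma(N)}(y_i,x_i)$ can be read off from a suffix of that geodesic. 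Lemma~\ref{lem:GammajN-chelly} makes $\Gamma_0$ into a $\lceil\xi_0/N\rceil$-coarsely Helly graph, so once the $C_i$ are shown to be non-empty and pairwise intersecting, the required common vertex $v$ is produced and the conclusion follows.

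The main obstacle is the verification of non-emptiness and pairwise intersection of the $C_i$, which demands a delicate case analysis depending on whether $T_i$ lies on $P_i$ or on $Q_i$ and on which of parts \ref{it:bcpt-sim2}--\ref{it:bcpt-sim3} of Proposition~\ref{prop:BCPtriangles} applies. The key ingredients will be the pairwise hypothesis $d_{\Gamma(N)}(x_i,x_j)\leq\rho_i+\rho_j$ inherited from $\mathcal{B}$, the maximality of $I$ (which bounds $|Q_i|=d_{\Gamma(N)}(y,x_i)$ by $|R|-\rho_I+\rho_i$ and thereby controls the $y_i$ arising from $Q$-subpaths), and the $\overline\mu$-bounds between corresponding endpoints of $T_R$ and $T_i$ in $\Gamma_0$ from Proposition~\ref{prop:BCPtriangles}. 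The specific constant $\xi=5\overline\mu/2+\lceil\xi_0/N\rceil+4$ is tailored precisely to the worst case where two centres $y_i,y_j$ (for $i,j\neq I$) both lie within $\overline\mu$ of $u_R^+$, yielding $d_{\Gamma_0}(y_i,y_j)\leq 2\overline\mu$, while the additional $\overline\mu/2$ absorbs the slack coming from the hypothesis $d_{\Gamma(N)}(z,z')>\overline\mu/2+1$ that separates $z$ from the free vertices of $R$.
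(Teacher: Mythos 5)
Your high-level strategy matches the paper's: locate $z$ inside a copy $\Gamma_0$ of some $\Gamma_{j,N}$, apply Theorem~\ref{thm:g->qg} and Proposition~\ref{prop:BCPtriangles} to show every $P_i$ or $Q_i$ penetrates $\Gamma_0$, assemble a family of balls in $\Gamma_0$, and invoke Lemma~\ref{lem:GammajN-chelly}. However, there is a genuine gap in the choice of ball centers, and you have explicitly deferred the verification that is the bulk of the work.

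The specific choice you propose for $y_i$ --- the endpoint of $T_i$ closest (within $\overline\mu$) to $u_R^+ = r'$ --- does not work. Take $T_i = P_i \cap \Gamma_0$; that endpoint is $p_i$, the vertex where $P_i$ \emph{enters} $\Gamma_0$ coming from $x_I$. Since $P_i$ is a geodesic from $x_I$ to $x_i$, one has $d_{\Gamma(N)}(p_i,x_i) \leq \rho_i + \sigma' + \overline\mu$ (where $\sigma' = d_{\Gamma(N)}(z,r')$, which is uncontrolled). Your ball centered at $p_i$ must have radius $\gtrsim \sigma' + \overline\mu$ to intersect $C_I$, but then membership in $C_i$ only gives $d_{\Gamma(N)}(v,x_i) \lesssim \rho_i + 2\sigma' + 2\overline\mu$, which blows up with $\sigma'$. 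What actually works is centering at $p_i'$, the \emph{exit} vertex of $P_i$ --- and $p_i'$ is \emph{not} simply near $r$ or $r'$; by parts~\ref{it:bcpt-sim2}--\ref{it:bcpt-sim3} of Proposition~\ref{prop:BCPtriangles} it is near $r$ when $i \in \mathcal{J}_P\setminus\mathcal{J}_Q$ but near $q_i$ (an a priori arbitrary intermediate point) when $i \in \mathcal{J}_P\cap\mathcal{J}_Q$. The identity you need, $d_{\Gamma(N)}(p_i',x_i)=\rho_i-\tau_i$, only reads off a suffix of $P_i$ in this configuration. In addition, the paper does \emph{not} put a ball for every $i\in\mathcal{I}$: it uses balls only for $i\in\mathcal{J}_P$ (centered at $p_i'$) together with one ball centered at $r'$, and deduces the bound for $i\in\mathcal{J}_Q\setminus\mathcal{J}_P$ afterwards from the chain $x_i\to q_i\to r'\to\overline z$ using maximality of $I$. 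Trying to attach a ball to every $i$ as you propose leads to centers living on $Q_i$-subpaths whose distance to $x_i$ cannot be expressed as a suffix length of a geodesic from $x_I$, and the required bound from maximality of $I$ does not fold into the Helly step as cleanly as you suggest. Since you leave both the intersection verification and the final distance deduction unchecked, and the sample center you offer is the wrong one, the argument as written does not close.
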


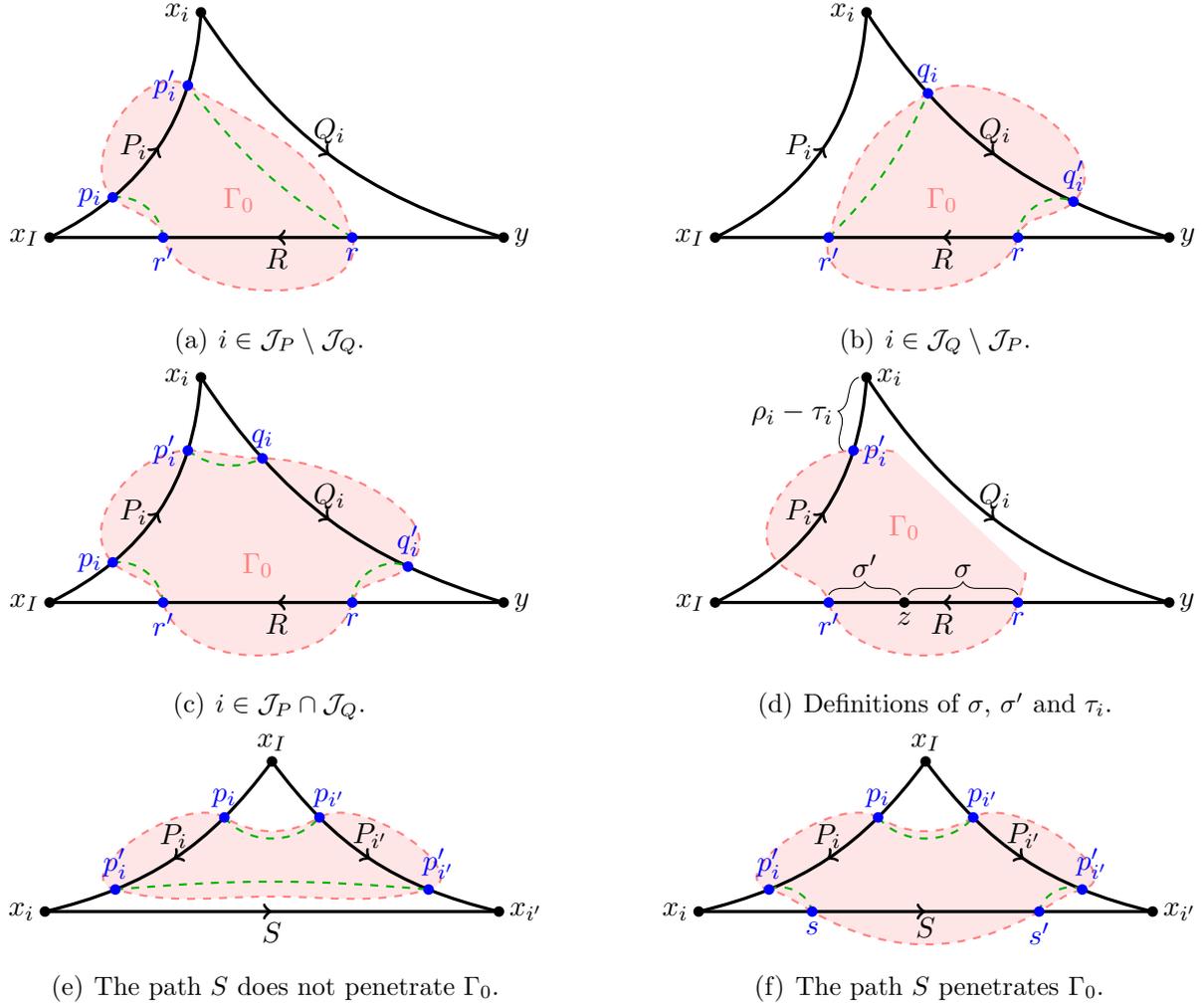
\begin{figure}[ht]
\begin{subfigure}[t]{0.45\textwidth}
\centering
\begin{tikzpicture}[thick,decoration={markings,mark=at position 0.5 with {\arrow{>}}}]
\draw [white] (-6,0) to[bend right] node [near start] (pi) {} node [near end] (pip) {} (-4,3);
\filldraw [dashed,red!50,fill=red!10] (-2,0) to [out=-75,in=-75] (-4.5,0) to [out=105,in=-45] (pi.center) to [out=135,in=150] (pip.center) to [out=-30,in=105] cycle;
\draw [very thick,postaction=decorate] (0,0) to node [midway,below] {$R$} (-6,0);
\draw [very thick,postaction=decorate] (-4,3) to[bend right=20] node [midway,above] {$Q_i$} (0,0);
\draw [very thick,postaction=decorate] (-6,0) to[bend right] node [midway,left] {$P_i$} (-4,3);
\fill (0,0) circle (2pt) node [right] {$y$};
\fill (-6,0) circle (2pt) node [left] {$x_I$};
\fill (-4,3) circle (2pt) node [left] {$x_i$};
\draw [green!70!black,dashed] (-4.5,0) to[bend right=45] (pi.center);
\draw [green!70!black,dashed] (-2,0) to[bend left=10] (pip.center);
\fill [blue] (-2,0) circle (2pt) node [below] {$r$};
\fill [blue] (-4.5,0) circle (2pt) node [below] {$r'$};
\fill [blue] (pi) circle (2pt) node [left] {$p_i$};
\fill [blue] (pip) circle (2pt) node [left] {$p_i'$};
\node [red!50] at (-3.5,0.5) {$\Gamma_0$};
\end{tikzpicture}
\caption{$i \in \mathcal{J}_P \setminus \mathcal{J}_Q$.}
\label{sfig:czin-jp}
\end{subfigure}\hfill%
\begin{subfigure}[t]{0.45\textwidth}
\centering
\begin{tikzpicture}[thick,decoration={markings,mark=at position 0.5 with {\arrow{>}}}]
\draw [white] (-4,3) to[bend right=20] node [near start] (qi) {} node [near end] (qip) {} (0,0);
\filldraw [dashed,red!50,fill=red!10] (-2,0) to [out=-105,in=-105] (-4.5,0) to [out=75,in=-150] (qi.center) to [out=30,in=45] (qip.center) to [out=-135,in=75] cycle;
\draw [very thick,postaction=decorate] (0,0) to node [midway,below] {$R$} (-6,0);
\draw [very thick,postaction=decorate] (-4,3) to[bend right=20] node [midway,above] {$Q_i$} (0,0);
\draw [very thick,postaction=decorate] (-6,0) to[bend right] node [midway,left] {$P_i$} (-4,3);
\fill (0,0) circle (2pt) node [right] {$y$};
\fill (-6,0) circle (2pt) node [left] {$x_I$};
\fill (-4,3) circle (2pt) node [left] {$x_i$};
\draw [green!70!black,dashed] (-4.5,0) to[bend right=10] (qi.center);
\draw [green!70!black,dashed] (-2,0) to[bend left=60] (qip.center);
\fill [blue] (-2,0) circle (2pt) node [below] {$r$};
\fill [blue] (-4.5,0) circle (2pt) node [below] {$r'$};
\fill [blue] (qi) circle (2pt) node [above] {$q_i$};
\fill [blue] (qip) circle (2pt) node [above] {$q_i'$};
\node [red!50] at (-3,0.5) {$\Gamma_0$};
\end{tikzpicture}
\caption{$i \in \mathcal{J}_Q \setminus \mathcal{J}_P$.}
\label{sfig:czin-jq}
\end{subfigure}

\begin{subfigure}[t]{0.45\textwidth}
\centering
\begin{tikzpicture}[thick,decoration={markings,mark=at position 0.5 with {\arrow{>}}}]
\draw [white] (-6,0) to[bend right] node [near start] (pi) {} node [near end] (pip) {} (-4,3);
\draw [white] (-4,3) to[bend right=20] node [near start] (qi) {} node [near end] (qip) {} (0,0);
\filldraw [dashed,red!50,fill=red!10] (-2,0) to [out=-105,in=-75] (-4.5,0) to [out=105,in=-45] (pi.center) to [out=135,in=180] (pip.center) to [out=0,in=180] (qi.center) to [out=0,in=45] (qip.center) to [out=-135,in=75] cycle;
\draw [very thick,postaction=decorate] (0,0) to node [midway,below] {$R$} (-6,0);
\draw [very thick,postaction=decorate] (-4,3) to[bend right=20] node [midway,above] {$Q_i$} (0,0);
\draw [very thick,postaction=decorate] (-6,0) to[bend right] node [midway,left] {$P_i$} (-4,3);
\fill (0,0) circle (2pt) node [right] {$y$};
\fill (-6,0) circle (2pt) node [left] {$x_I$};
\fill (-4,3) circle (2pt) node [left] {$x_i$};
\draw [green!70!black,dashed] (-4.5,0) to[bend right=45] (pi.center);
\draw [green!70!black,dashed] (-2,0) to[bend left=60] (qip.center);
\draw [green!70!black,dashed] (pip.center) to[bend right=30] (qi.center);
\fill [blue] (-2,0) circle (2pt) node [below] {$r$};
\fill [blue] (-4.5,0) circle (2pt) node [below] {$r'$};
\fill [blue] (pi) circle (2pt) node [left] {$p_i$};
\fill [blue] (pip) circle (2pt) node [left] {$p_i'$};
\fill [blue] (qi) circle (2pt) node [above] {$q_i$};
\fill [blue] (qip) circle (2pt) node [above] {$q_i'$};
\node [red!50] at (-3.25,0.5) {$\Gamma_0$};
\end{tikzpicture}
\caption{$i \in \mathcal{J}_P \cap \mathcal{J}_Q$.}
\label{sfig:czin-jpq}
\end{subfigure}\hfill%
\begin{subfigure}[t]{0.45\textwidth}
\centering
\begin{tikzpicture}[decoration={markings,mark=at position 0.5 with {\arrow{>}}}]
\draw [white] (-6,0) to[bend right] node [near start] (pi) {} node [near end] (pip) {} (-4,3);
\filldraw [thick,dashed,red!50,fill=red!10] (-1.9,0.4) to [out=-90,in=75] (-2,0) to [out=-105,in=-75] (-4.5,0) to [out=105,in=-45] (pi.center) to [out=135,in=180] (pip.center) to [out=0,in=165] (-3.6,2);
\draw [very thick,postaction=decorate] (0,0) to node [midway,below] {$R$} (-6,0);
\draw [very thick,postaction=decorate] (-4,3) to[bend right=20] node [midway,above] {$Q_i$} (0,0);
\draw [very thick,postaction=decorate] (-6,0) to[bend right] node [midway,left] {$P_i$} (-4,3);
\fill (0,0) circle (2pt) node [right] {$y$};
\fill (-6,0) circle (2pt) node [left] {$x_I$};
\fill (-4,3) circle (2pt) node [right] {$x_i$};
\fill (-3.5,0) circle (2pt) node [below] {$z$};
\fill [blue] (-2,0) circle (2pt) node [below] {$r$};
\fill [blue] (-4.5,0) circle (2pt) node [below] {$r'$};
\fill [blue] (pip) circle (2pt) node [right] {$p_i'$};
\draw [decorate,decoration={brace,amplitude=5pt},yshift=0.1cm] (-4.5,0) -- (-3.55,0) node [midway,above,yshift=3pt] {$\sigma'$};
\draw [decorate,decoration={brace,amplitude=5pt},yshift=0.1cm] (-3.45,0) -- (-2,0) node [midway,above,yshift=3pt] {$\sigma$};
\draw [decorate,decoration={brace,amplitude=5pt}] (pip.west) -- (-4.1,3) node [midway,left,xshift=-3pt] {$\rho_i-\tau_i$};
\node [red!50] at (-3.5,1) {$\Gamma_0$};
\end{tikzpicture}
\caption{Definitions of $\sigma$, $\sigma'$ and $\tau_i$.}
\label{sfig:czin-dist}
\end{subfigure}

\begin{subfigure}[t]{0.46\textwidth}
\centering
\begin{tikzpicture}[thick,decoration={markings,mark=at position 0.5 with {\arrow{>}}}]
\draw [white,thick] (1.5,0) to [out=-30,in=-150] (4.5,0);
\draw [white] (3,2) to[bend left=20] node [near start] (pi) {} node [near end] (pip) {} (0,0);
\draw [white] (3,2) to[bend right=20] node [near start] (ppi) {} node [near end] (ppip) {} (6,0);
\filldraw [dashed,red!50,fill=red!10] (pi.center) to [out=150,in=165] (pip.center) to [out=-15,in=180] (3,0.2) to[out=0,in=-165] (ppip.center) to [out=15,in=30] (ppi.center) to [out=-150,in=-30] cycle;
\draw [very thick,postaction=decorate] (0,0) to node [midway,below] {$S$} (6,0);
\draw [very thick,postaction=decorate] (3,2) to[bend left=20] node [midway,above] {$P_i$} (0,0);
\draw [very thick,postaction=decorate] (3,2) to[bend right=20] node [midway,above] {$P_{i'}$} (6,0);
\draw [green!70!black,dashed] (pi.center) to[bend right=50] (ppi.center);
\draw [green!70!black,dashed] (pip.center) to[bend left=5] (ppip.center);
\fill (0,0) circle (2pt) node [left] {$x_i$};
\fill (6,0) circle (2pt) node [right] {$x_{i'}$};
\fill (3,2) circle (2pt) node [above] {$x_I$};
\fill [blue] (pi) circle (2pt) node [above] {$p_i$};
\fill [blue] (pip) circle (2pt) node [above] {$p_i'$};
\fill [blue] (ppi) circle (2pt) node [above] {$\ \ p_{i'}$};
\fill [blue] (ppip) circle (2pt) node [above] {$\ \ p_{i'}'$};
\end{tikzpicture}
\caption{The path $S$ does not penetrate $\Gamma_0$.}
\label{sfig:czin-Snpen}
\end{subfigure}\hfill%
\begin{subfigure}[t]{0.46\textwidth}
\centering
\begin{tikzpicture}[thick,decoration={markings,mark=at position 0.5 with {\arrow{>}}}]
\draw [white] (3,2) to[bend left=20] node [near start] (pi) {} node [near end] (pip) {} (0,0);
\draw [white] (3,2) to[bend right=20] node [near start] (ppi) {} node [near end] (ppip) {} (6,0);
\filldraw [dashed,red!50,fill=red!10] (pi.center) to [out=150,in=165] (pip.center) to [out=-15,in=150] (1.5,0) to [out=-30,in=-150] (4.5,0) to[out=30,in=-165] (ppip.center) to [out=15,in=30] (ppi.center) to [out=-150,in=-30] cycle;
\draw [very thick,postaction=decorate] (0,0) to node [midway,below,yshift=2pt] {$S$} (6,0);
\draw [very thick,postaction=decorate] (3,2) to[bend left=20] node [midway,above] {$P_i$} (0,0);
\draw [very thick,postaction=decorate] (3,2) to[bend right=20] node [midway,above] {$P_{i'}$} (6,0);
\draw [green!70!black,dashed] (pi.center) to[bend right=50] (ppi.center);
\draw [green!70!black,dashed] (pip.center) to[bend left=50] (1.5,0);
\draw [green!70!black,dashed] (ppip.center) to[bend right=50] (4.5,0);
\fill (0,0) circle (2pt) node [left] {$x_i$};
\fill (6,0) circle (2pt) node [right] {$x_{i'}$};
\fill (3,2) circle (2pt) node [above] {$x_I$};
\fill [blue] (pi) circle (2pt) node [above] {$p_i$};
\fill [blue] (pip) circle (2pt) node [above] {$p_i'$};
\fill [blue] (ppi) circle (2pt) node [above] {$\ \ p_{i'}$};
\fill [blue] (ppip) circle (2pt) node [above] {$\ \ p_{i'}'$};
\fill [blue] (1.5,0) circle (2pt) node [below] {$s$};
\fill [blue] (4.5,0) circle (2pt) node [below] {$s'$};
\end{tikzpicture}
\caption{The path $S$ penetrates $\Gamma_0$.}
\label{sfig:czin-Spen}
\end{subfigure}
\caption{The proof of Lemma~\ref{lem:chelly-zin}.}
\label{fig:coarse-zin}
\end{figure}

\begin{proof}
Since by construction the vertices of $\widehat{R}$ are precisely the free vertices of $R$, it follows that the vertex $z \in R$ is distance $> \frac{\overline\mu}{2}+1 \geq 1$ away from any free vertex of $R$. It follows that $z$ is $H_j$-internal (for some $j$), and so $z \in \Gamma_0$, where $\Gamma_0$ is the $g$-copy of $\Gamma_{j,N}$ in $\Gamma(N)$ for some $g \in G$. In particular, $R$ penetrates $\Gamma_0$.

Throughout the proof, we adopt the following terminology. Suppose $S \subseteq \Gamma(N)$ is a geodesic that penetrates $\Gamma_0$ such that $S_-$ and $S_+$ are free vertices. By Theorem~\ref{thm:g->qg}, $\widehat{S} \subseteq \CayG$ is then a $2$-local geodesic that does not backtrack. It then follows that $S = S_0 e_0 S' \overline{e_1} S_1$, where $S' = S \cap \Gamma_0$, and $e_0 = \{ h_0g, (H_jg,u_0) \}$, $e_1 = \{ h_1g, (H_jg,u_1) \}$ are connecting edges for some $h_0,h_1 \in H_j$ and $u_0,u_1 \in V(\Gamma_j)$. In this case, we say $S$ \emph{enters} (respectively, \emph{leaves}) $\Gamma_0$ at the vertex $S'_-$ (respectively, $S'_+$). The path $\widehat{S} \subseteq \CayG$ then has an edge $\pathXH{e} \subseteq \widehat{S}$ with $\pathXH{e}_- = h_0g$ and $\pathXH{e}_+ = h_1g$ such that $\pathXH{e}$ is an $H_j$-component of $\widehat{S}$; we say that $\pathXH{e}$ is the $H_j$-component of $\widehat{S}$ \emph{associated} to $\Gamma_0$.

The following observation follows from the choice of $\overline\mu$.
\begin{obs}
Let $S \subseteq \Gamma(N)$ be a geodesic that penetrates $\Gamma_0$ with $S_\pm$ free, let $u_0$ and $u_1$ be the vertices at which $S$ enters and leaves $\Gamma_0$, respectively, and let $\pathXH{e}$ be the $H_j$-component of $\widehat{S}$ associated to $\Gamma_0$. If $d_X(\pathXH{e}_-,\pathXH{e}_+) \leq \mu$, then $d_{\Gamma_0}(u_0,u_1) \leq \overline\mu$ (and therefore $d_{\Gamma(N)}(u_0,u_1) \leq \overline\mu$).
\end{obs}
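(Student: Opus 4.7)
The statement bounds a distance in $\Gamma_0$ between two internal vertices in terms of the $X$-length of a single $H_j$-edge $\pathXH{e}$. My plan is to proceed in two steps: first translate the hypothesis on $|\pathXH{e}|_X$ into a statement about the labelling $\widetilde H_j$-letter and invoke the defining property of $\overline\mu$; then compute $d_{\Gamma_0}(u_0,u_1)$ by lifting the connecting edges $e_0,e_1$ to $\widetilde\Gamma(N)$, where the second coordinates of the internal vertices are given by an explicit formula.

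For the first step, since $\pathXH{e}$ is a single $\widetilde H_j$-edge, it is labelled by some $\tilde h\in\widetilde H_j\setminus\{1\}$ with $\epsilon(\tilde h)=h_1 h_0^{-1}$. The right action of $G$ on itself is by isometries of $\Cay(G,X)$ (this is the only action by left translations that respects the edge convention $\{g,\epsilon(y)g\}$), so
\[
|\epsilon(\tilde h)|_X=d_X(1,\epsilon(\tilde h))=d_X(h_0g,\epsilon(\tilde h)h_0g)=d_X(\pathXH{e}_-,\pathXH{e}_+)\le\mu,
\]
and the choice of $\overline\mu$ immediately yields $d_{\Gamma_{j,N}}(v_j,v_j\cdot\tilde h)\le\overline\mu$.

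For the second step, I would fix lifts $\tilde g\in F$ of $g$ and $\tilde h_0\in\widetilde H_j$ of $h_0$, so that $\tilde h\tilde h_0\tilde g$ is a lift of $h_1g$. From the explicit formula for connecting edges in $\widetilde\Gamma(N)$, the connecting edges at $\tilde h_0\tilde g$ and at $\tilde h\tilde h_0\tilde g$ terminate at the internal vertices $(\widetilde H_j\tilde g,\,v_j\cdot\tilde h_0\pi_j(\tilde g))$ and $(\widetilde H_j\tilde g,\,v_j\cdot\tilde h\tilde h_0\pi_j(\tilde g))$ respectively, both lying in the single $\widetilde H_j\tilde g$-copy of $\Gamma_{j,N}$ inside $\widetilde\Gamma(N)$. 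Since $\widetilde H_j$ acts on $\Gamma_j$ by isometries, right multiplication by $\tilde h_0\pi_j(\tilde g)$ is an isometry of $\Gamma_{j,N}$, so the distance between these two vertices equals $d_{\Gamma_{j,N}}(v_j,v_j\cdot\tilde h)\le\overline\mu$. Projecting down to $\Gamma(N)$ sends these two vertices to $u_0$ and $u_1$ and the copy onto $\Gamma_0$, and I would argue this projection is an isometry when restricted to the copy — using that any $k\in\ker\epsilon$ with $\widetilde H_j\tilde gk=\widetilde H_j\tilde g$ must lie in $\tilde g^{-1}\widetilde H_j\tilde g\cap\ker\epsilon$, which is trivial by injectivity of $\epsilon|_{\widetilde H_j}$. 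This gives $d_{\Gamma_0}(u_0,u_1)\le\overline\mu$, and the parenthetical $d_{\Gamma(N)}(u_0,u_1)\le\overline\mu$ then follows because every edge of $\Gamma_0$ is an edge of $\Gamma(N)$.

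The only step requiring any real care is the verification that the quotient map $\widetilde\Gamma(N)\to\Gamma(N)$ is injective on a single copy of $\Gamma_{j,N}$ — equivalently, that the projected internal vertices really are $u_0$ and $u_1$ rather than some other preimages. As sketched above this reduces to the injectivity of $\epsilon|_{\widetilde H_j}$; everything else in the argument is routine bookkeeping with the construction of Section~\ref{ssec:GammaN}.
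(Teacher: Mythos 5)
Your proof is correct, and since the paper dispatches the Observation with the single phrase ``follows from the choice of $\overline\mu$'', what you have written is precisely the intended argument spelled out in full: translate $d_X(\pathXH{e}_-,\pathXH{e}_+)$ into $|\epsilon(\tilde h)|_X$ via right-invariance of $d_X$, apply the defining property of $\overline\mu$, and then transport the bound $d_{\Gamma_{j,N}}(v_j,v_j\cdot\tilde h)\le\overline\mu$ to $d_{\Gamma_0}(u_0,u_1)$ by reading off the internal endpoints of the connecting edges in $\widetilde\Gamma(N)$ and using that $\widetilde H_j$ acts on $\Gamma_{j,N}$ by isometries. The one blemish is the parenthetical ``this is the only action by left translations that respects the edge convention'': you mean the \emph{right}-translation action, which is a graph automorphism of $\Cay(G,X)$ precisely because edges are of the form $\{g,\epsilon(y)g\}$; as written the sentence contradicts itself, though the surrounding computation $d_X(h_0g,h_1g)=d_X(1,h_1h_0^{-1})$ is of course correct. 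Also worth noting: the injectivity of the quotient map on a single copy of $\Gamma_{j,N}$, which you carefully re-derive from the injectivity of $\epsilon|_{\widetilde H_j}$, is already asserted in the paper immediately after the Description of $\Gamma(N)$ lemma (``spans a subgraph isomorphic to $\Gamma_{j,N}$''), so you could simply cite it rather than reprove it -- but the reproof is sound.
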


Let $r$ and $r'$ be the vertices of $\Gamma(N)$ at which $R$ enters and leaves $\Gamma_0$, respectively. Since $r$ and $r'$ are adjacent to free vertices of $R$, it follows from the premise that $d_{\Gamma(N)}(z,r),d_{\Gamma(N)}(z,r') > \frac{\overline\mu}{2}$. Therefore, as $R$ is a geodesic, we have
\begin{equation} \label{eq:dwr}
d_{\Gamma(N)}(r,r') = d_{\Gamma(N)}(r,z) + d_{\Gamma(N)}(z,r') > \overline\mu.
\end{equation}

Now for each $i \in \mathcal{I}$, it follows from Theorem~\ref{thm:g->qg} that $\widehat{P_i}\widehat{Q_i}\widehat{R}$ is a non-backtracking $(\lambda,c)$-quasi-geodesic triangle in $\CayG$. By \eqref{eq:dwr} and the Observation, if $\pathXH{e}$ is the $H_j$-component of $\widehat{R}$ associated to $\Gamma_0$ then $d_X(\pathXH{e}_-,\pathXH{e}_+) > \mu$. Therefore, it follows from Proposition~\ref{prop:BCPtriangles}\ref{it:bcpt-conn} that $\pathXH{e}$ is connected to an $H_j$-component of either $\widehat{P_i}$ or $\widehat{Q_i}$, and by construction such a component must be associated to $\Gamma_0$. In particular, either $P_i$ or $Q_i$ (or both) must penetrate $\Gamma_0$.

Let $\mathcal{J}_P \subseteq \mathcal{I}$ (respectively, $\mathcal{J}_Q \subseteq \mathcal{I}$) be the set of all $i \in \mathcal{I}$ such that $P_i$ (respectively, $Q_i$) penetrates $\Gamma_0$, so that $\mathcal{I} = \mathcal{J}_P \cup \mathcal{J}_Q$. For each $i \in \mathcal{J}_P$, let $p_i$ and $p_i'$ be the vertices of $\Gamma(N)$ at which $P_i$ enters and leaves $\Gamma_0$, respectively; similarly, for $i \in \mathcal{J}_Q$, let $q_i$ and $q_i'$ be the vertices of $\Gamma(N)$ at which $Q_i$ enters and leaves $\Gamma_0$, respectively. By parts \ref{it:bcpt-sim2} and \ref{it:bcpt-sim3} of Proposition~\ref{prop:BCPtriangles} and the choice of $\overline\mu$ (cf the Observation above), it follows that
\begin{enumerate}[label=(\alph*)]
\item \label{it:jp} If $i \in \mathcal{J}_P \setminus \mathcal{J}_Q$, then $d_{\Gamma_0}(r',p_i) \leq \overline\mu$ and $d_{\Gamma_0}(p_i',r) \leq \overline\mu$.
\item \label{it:jq} If $i \in \mathcal{J}_Q \setminus \mathcal{J}_P$, then $d_{\Gamma_0}(r',q_i) \leq \overline\mu$ and $d_{\Gamma_0}(q_i',r) \leq \overline\mu$.
\item \label{it:jpq} If $i \in \mathcal{J}_P \cap \mathcal{J}_Q$, then $d_{\Gamma_0}(r',p_i) \leq \overline\mu$, $d_{\Gamma_0}(p_i',q_i) \leq \overline\mu$ and $d_{\Gamma_0}(q_i',r) \leq \overline\mu$.
\end{enumerate}
This is depicted in parts \subref{sfig:czin-jp}, \subref{sfig:czin-jq} and \subref{sfig:czin-jpq} of Figure~\ref{fig:coarse-zin}.

Now let $\sigma = d_{\Gamma(N)}(z,r)$ and $\sigma' = d_{\Gamma(N)}(z,r')$. For each $i \in \mathcal{J}_P$, let also $\tau_i = \rho_i - d_{\Gamma(N)}(x_i,p_i')$. Consider the following collection of balls in $\Gamma_0$:
\[
\overline{\mathcal{B}} = \{ B_{\sigma'}(r';\Gamma_0) \} \cup \{ B_{\tau_i+\frac{5\overline\mu}{2}}(p_i';\Gamma_0) \mid i \in \mathcal{J}_P \}.
\]
We claim that $\tau_i+2\overline\mu \geq 0$ for each $i \in \mathcal{J}_P$ and that the balls in $\overline{\mathcal{B}}$ have pairwise non-empty intersections.

\begin{description}

\item[$\tau_i + 2\overline\mu \geq 0$ for each $i \in \mathcal{J}_P$] Suppose first that $i \in \mathcal{J}_P \setminus \mathcal{J}_Q$; see Figure~\ref{sfig:czin-jp}. By the point \ref{it:jp} above we then have $d_{\Gamma(N)}(p_i',r) \leq \overline\mu$, and so
\[
d_{\Gamma(N)}(x_I,p_i') \geq d_{\Gamma(N)}(x_I,r) - d_{\Gamma(N)}(p_i',r) \geq \rho_I+\sigma-\overline\mu.
\]
Since $d_{\Gamma(N)}(x_i,x_I) \leq \rho_i+\rho_I$ and since $P_i$ is a geodesic, we thus have
\begin{align*}
\rho_i-\tau_i &= d_{\Gamma(N)}(x_i,p_i') = d_{\Gamma(N)}(x_i,x_I) - d_{\Gamma(N)}(x_I,p_i') \\ &\leq (\rho_i+\rho_I) - (\rho_I+\sigma-\overline\mu) = \rho_i-(\sigma-\overline\mu).
\end{align*}
Therefore, $\tau_i+2\overline\mu \geq \sigma+\overline\mu \geq 0$, as claimed.

Suppose now that $i \in \mathcal{J}_P \cap \mathcal{J}_Q$; see Figure~\ref{sfig:czin-jpq}. It then follows from the point \ref{it:jpq} above that $d_{\Gamma(N)}(p_i',q_i) \leq \overline\mu$ and $d_{\Gamma(N)}(r,q_i') \leq \overline\mu$. Therefore, we have
\begin{equation} \label{eq:dxiqi}
d_{\Gamma(N)}(x_i,q_i) \geq d_{\Gamma(N)}(x_i,p_i') - d_{\Gamma(N)}(p_i',q_i) \geq \rho_i-\tau_i-\overline\mu
\end{equation}
and
\begin{align*}
d_{\Gamma(N)}(y,q_i') &\geq d_{\Gamma(N)}(y,x_I) - d_{\Gamma(N)}(x_I,z) - d_{\Gamma(N)}(z,r) - d_{\Gamma(N)}(r,q_i') \\ &\geq d_{\Gamma(N)}(y,x_I) - \rho_I - \sigma - \overline\mu \geq d_{\Gamma(N)}(y,x_i) - \rho_i - \sigma - \overline\mu,
\end{align*}
where the last inequality follows from the choice of $I$. As $Q_i$ is a geodesic, this implies that
\[
d_{\Gamma(N)}(x_i,q_i') = d_{\Gamma(N)}(y,x_i) - d_{\Gamma(N)}(y,q_i') \leq \rho_i+\sigma+\overline\mu,
\]
and combining this with \eqref{eq:dxiqi} gives
\begin{equation} \label{eq:dqiqi}
d_{\Gamma(N)}(q_i,q_i') = d_{\Gamma(N)}(x_i,q_i') - d_{\Gamma(N)}(x_i,q_i) \leq (\rho_i+\sigma+\overline\mu) - (\rho_i-\tau_i-\overline\mu) = \sigma+\tau_i+2\overline\mu.
\end{equation}

Suppose for contradiction that $\tau_i + 2\overline\mu < 0$. We then get
\[
d_{\Gamma(N)}(x_i,p_i') > \rho_i+2\overline\mu
\]
by the definition of $\tau_i$. Moreover, \eqref{eq:dqiqi} implies that $d_{\Gamma(N)}(q_i,q_i') < \sigma$, and so
\begin{align*}
d_{\Gamma(N)}(x_I,p_i') &\geq d_{\Gamma(N)}(x_I,r) - d_{\Gamma(N)}(r,q_i') - d_{\Gamma(N)}(q_i',q_i) - d_{\Gamma(N)}(q_i,p_i') \\ &> (\rho_I+\sigma) - \overline\mu - \sigma - \overline\mu = \rho_I-2\overline\mu.
\end{align*}
Therefore, as $P_i$ is a geodesic, we get
\[
d_{\Gamma(N)}(x_i,x_I) = d_{\Gamma(N)}(x_i,p_i') + d_{\Gamma(N)}(x_I,p_i') > \rho_i+\rho_I,
\]
contradicting the fact that $B_{\rho_i}(x_i) \cap B_{\rho_I}(x_I) \neq \varnothing$. Thus we must have $\tau_i+2\overline\mu \geq 0$, as claimed.

\item[$B_{\sigma'}(r';\Gamma_0) \cap B_{\tau_i+\frac{5\overline\mu}{2}}(p_i';\Gamma_0) \neq \varnothing$ for each $i \in \mathcal{J}_P$] As shown above, $\tau_i + 2\overline\mu \geq 0$, and so both $\sigma'$ and $\tau_i+\frac{5\overline\mu}{2}$ are non-negative. It therefore suffices to show that $d_{\Gamma_0}(r',p_i') \leq \sigma' + \tau_i + \frac{5\overline\mu}{2}$.

It follows from the points \ref{it:jp} and \ref{it:jpq} above that $d_{\Gamma(N)}(r',p_i) \leq d_{\Gamma_0}(r',p_i) \leq \overline\mu$, and hence
\[
d_{\Gamma(N)}(x_I,p_i) \geq d_{\Gamma(N)}(x_I,z) - d_{\Gamma(N)}(z,r') - d_{\Gamma(N)}(r',p_i) \geq \rho_I-\sigma'-\overline\mu.
\]
Since $P_i$ is a geodesic and since $d_{\Gamma(N)}(x_I,x_i) \leq \rho_I+\rho_i$, we thus have
\begin{align*}
d_{\Gamma_0}(p_i,p_i') &= d_{\Gamma(N)}(p_i,p_i') = d_{\Gamma(N)}(x_I,x_i) - d_{\Gamma(N)}(x_I,p_i) - d_{\Gamma(N)}(x_i,p_i') \\ &\leq (\rho_I+\rho_i) - (\rho_I-\sigma'-\overline\mu) - (\rho_i-\tau_i) = \sigma'+\tau_i+\overline\mu.
\end{align*}
Therefore,
\[
d_{\Gamma_0}(r',p_i') \leq d_{\Gamma_0}(r',p_i)+d_{\Gamma_0}(p_i,p_i') \leq \overline\mu + (\sigma'+\tau_i+\overline\mu) \leq \sigma' + \tau_i + \frac{5\overline\mu}{2},
\]
as required.

\item[$B_{\tau_i+\frac{5\overline\mu}{2}}(p_i';\Gamma_0) \cap B_{\tau_{i'}+\frac{5\overline\mu}{2}}(p_{i'}';\Gamma_0) \neq \varnothing$ for all $i,i' \in \mathcal{J}_P$] As shown above, $\tau_i,\tau_{i'} \geq -2\overline\mu$, and so both $\tau_i+\frac{5\overline\mu}{2}$ and $\tau_{i'}+\frac{5\overline\mu}{2}$ are non-negative. It is thus enough to show that we have $d_{\Gamma_0}(p_i',p_{i'}') \leq \tau_i + \tau_{i'} + 5\overline\mu$. Let $S \subseteq \Gamma(N)$ be a geodesic with $S_- = x_i$ and $S_+ = x_{i'}$. We will apply Proposition~\ref{prop:BCPtriangles} to the triangle $\widehat{P_i} \widehat{S} \overline{\widehat{P_{i'}}} \subseteq \CayG$, which is a non-backtracking $(\lambda,c)$-quasi-geodesic triangle by Theorem~\ref{thm:g->qg}.

Suppose first that $S$ does not penetrate $\Gamma_0$; see Figure~\ref{sfig:czin-Snpen}. It then follows from Proposition~\ref{prop:BCPtriangles}\ref{it:bcpt-sim2} (cf the point \ref{it:jp} above) that $d_{\Gamma_0}(p_i',p_{i'}') \leq \overline\mu$. In particular, since $\tau_i,\tau_{i'} \geq -2\overline\mu$ we have
\[
d_{\Gamma_0}(p_i',p_{i'}') \leq \overline\mu \leq \overline\mu + (\tau_i+2\overline\mu) + (\tau_{i'}+2\overline\mu) = \tau_i + \tau_{i'} + 5\overline\mu,
\]
as required.

Suppose now that $S$ penetrates $\Gamma_0$, and let $s$ and $s'$ be the vertices at which $S$ enters and leaves $\Gamma_0$, respectively; see Figure~\ref{sfig:czin-Spen}. It follows from Proposition~\ref{prop:BCPtriangles}\ref{it:bcpt-sim3} (cf the point \ref{it:jpq} above) that $d_{\Gamma(N)}(s,p_i') \leq d_{\Gamma_0}(s,p_i') \leq \overline\mu$ and $d_{\Gamma(N)}(s',p_{i'}') \leq d_{\Gamma_0}(s',p_{i'}') \leq \overline\mu$. In particular, we have
\[
d_{\Gamma(N)}(x_i,s) \geq d_{\Gamma(N)}(x_i,p_i') - d_{\Gamma(N)}(s,p_i') \geq \rho_i-\tau_i-\overline\mu,
\]
and similarly $d_{\Gamma(N)}(x_{i'},s') \geq \rho_{i'}-\tau_{i'}-\overline\mu$. Since $S$ is a geodesic and since $d_{\Gamma(N)}(x_i,x_{i'}) \leq \rho_i+\rho_{i'}$, it follows that
\begin{align*}
d_{\Gamma_0}(s,s') &= d_{\Gamma(N)}(s,s') = d_{\Gamma(N)}(x_i,x_{i'}) - d_{\Gamma(N)}(x_i,s) - d_{\Gamma(N)}(x_{i'},s') \\ &\leq (\rho_i+\rho_{i'}) - (\rho_i-\tau_i-\overline\mu) - (\rho_{i'}-\tau_{i'}-\overline\mu) = \tau_i+\tau_{i'}+2\overline\mu.
\end{align*}
Therefore,
\begin{align*}
d_{\Gamma_0}(p_i',p_{i'}') &\leq d_{\Gamma_0}(p_i',s) + d_{\Gamma_0}(s,s') + d_{\Gamma_0}(s',p_{i'}') \\ &\leq \overline\mu + (\tau_i+\tau_{i'}+2\overline\mu) + \overline\mu \leq \tau_i+\tau_{i'}+5\overline\mu,
\end{align*}
as required.

\end{description}

We have thus shown that $\overline{\mathcal{B}}$ is a collection of pairwise intersecting balls in $\Gamma_0$. But since $\Gamma_0$ is isomorphic to $\Gamma_{j,N}$, it follows from Lemma~\ref{lem:GammajN-chelly} and the choice of $\xi_0$ that $\Gamma_0$ is $\lceil \xi_0/N \rceil$-coarsely Helly. Therefore, there exists a vertex $\overline{z} \in V(\Gamma_0)$ such that $d_{\Gamma_0}(r',\overline{z}) \leq \sigma'+\lceil \xi_0/N \rceil$ and $d_{\Gamma_0}(p_i',\overline{z}) \leq \tau_i+\frac{5\overline\mu}{2}+\lceil \xi_0/N \rceil$ for all $i \in \mathcal{J}_P$.
We claim that $d_{\Gamma(N)}(x_i,\overline{z}) \leq \rho_i+\frac{5\overline\mu}{2}+\left\lceil \frac{\xi_0}{N} \right\rceil$ for all $i \in \mathcal{I}$: this will establish the $\left( \frac{5\overline\mu}{2} + \left\lceil \frac{\xi_0}{N} \right\rceil \right)$-coarse Helly property for $\mathcal{B}$.

Suppose first that $i \in \mathcal{J}_P$. We then have
\[
d_{\Gamma(N)}(x_i,\overline{z}) \leq d_{\Gamma(N)}(x_i,p_i') + d_{\Gamma(N)}(p_i',\overline{z}) \leq (\rho_i-\tau_i) + \left( \tau_i+\frac{5\overline\mu}{2} + \left\lceil \frac{\xi_0}{N} \right\rceil \right) = \rho_i+\frac{5\overline\mu}{2}+\left\lceil \frac{\xi_0}{N} \right\rceil,
\]
as claimed.

Suppose now that $i \notin \mathcal{J}_P$; since $\mathcal{I} = \mathcal{J}_P \cup \mathcal{J}_Q$, it follows that $i \in \mathcal{J}_Q$ (see Figure~\ref{sfig:czin-jq}). By the point \ref{it:jq} above, we have $d_{\Gamma(N)}(r',q_i) \leq \overline\mu$. Therefore,
\begin{align*}
d_{\Gamma(N)}(y,q_i) &\geq d_{\Gamma(N)}(y,x_I) - d_{\Gamma(N)}(x_I,r') - d_{\Gamma(N)}(r',q_i) \geq d_{\Gamma(N)}(y,x_I) - \rho_I + \sigma' - \overline\mu \\ &\geq d_{\Gamma(N)}(y,x_i) - \rho_i + \sigma' - \overline\mu,
\end{align*} 
where the last inequality follows by the choice of $I$. As $Q_i$ is a geodesic, this implies that
\[
d_{\Gamma(N)}(x_i,q_i) = d_{\Gamma(N)}(y,x_i)-d_{\Gamma(N)}(y,q_i) \leq \rho_i-\sigma'+\overline\mu,
\]
and hence
\begin{align*}
d_{\Gamma(N)}(x_i,\overline{z}) &\leq d_{\Gamma(N)}(x_i,q_i) + d_{\Gamma(N)}(q_i,r') + d_{\Gamma(N)}(r',\overline{z}) \leq (\rho_i-\sigma'+\overline\mu) + \overline\mu + \left( \sigma'+\left\lceil \frac{\xi_0}{N} \right\rceil \right) \\ &\leq \rho_i + \frac{5\overline\mu}{2} + \left\lceil \frac{\xi_0}{N} \right\rceil,
\end{align*}
as claimed.

We thus have $d_{\Gamma(N)}(x_i,\overline{z}) \leq \rho_i+\frac{5\overline\mu}{2}+\left\lceil \frac{\xi_0}{N} \right\rceil$ for all $i \in \mathcal{I}$. Hence $\bigcap_{i \in \mathcal{I}} B_{\rho_i+\frac{5\overline\mu}{2}+\left\lceil \frac{\xi_0}{N} \right\rceil}(x_i;\Gamma(N))$ contains $\overline{z}$ and so is non-empty, establishing the $\left( \frac{5\overline\mu}{2}+\left\lceil \frac{\xi_0}{N} \right\rceil \right)$-coarse Helly property for $\mathcal{B}$.
\end{proof}

Finally, we deduce the conclusions of Proposition~\ref{prop:helly-coarse} and Theorem~\ref{thm:helly}.

\begin{proof}[Proof of Proposition~\ref{prop:helly-coarse}]
By the choice of $\xi$ in \eqref{eq:defxi}, it follows from Lemmas \ref{lem:chelly-zout} and \ref{lem:chelly-zin} that $\mathcal{B}$ satisfies the $(\xi-4)$-coarse Helly property, and so $\mathcal{B}'$ satisfies the $\xi$-coarse Helly property. As $\mathcal{B}'$ was an arbitrary collection of balls in $\Gamma(N)$, the conclusion follows.
\end{proof}

\begin{proof}[Proof of Theorem~\ref{thm:helly}]
Suppose that the graphs $\Gamma_1,\ldots,\Gamma_m$ as above are Helly. Then they are clearly coarsely Helly; moreover, by \cite[Lemma 6.5]{ccgho}, each $\Gamma_j$ has $1$-stable intervals. Thus, the Theorem follows immediately from Propositions \ref{prop:helly-stint} and \ref{prop:helly-coarse} together with Theorem~\ref{thm:ccgho-helly}.
\end{proof}

\section{Quasiconvex subgroups of Helly groups}
\label{sec:quasiconvex}

In this section, we prove Theorem~\ref{thm:quasiconvex} from the Introduction, that is, we show that if a subgroup $H$ of a (coarsely) Helly group $G$ is, in a certain sense, quasiconvex in $G$, then $H$ is (coarsely) Helly.

\begin{defn} \label{defn:quasiconvex}
Let $\Gamma$ be a graph.
\begin{enumerate}[label=(\roman*)]
\item Given $\lambda \geq 1$ and $c \geq 0$, we say a subset $W \subseteq V(\Gamma)$ is \emph{$(\lambda,c)$-quasiconvex} if there exists $k = k(\lambda,c) \geq 0$ such that every $(\lambda,c)$-quasigeodesic in $\Gamma$ with endpoints in $W$ is in the $k$-neighbourhood of $W$.
\item Let $G$ be a group acting on $\Gamma$ geometrically. We say a subgroup $H \leq G$ is \emph{strongly quasiconvex} (respectively, \emph{semi-strongly quasiconvex}) \emph{with respect to $\Gamma$} if some $H$-orbit in $\Gamma$ is $(\lambda,c)$-quasiconvex (respectively, $(1,c)$-quasiconvex) for any $\lambda \geq 1$ and $c \geq 0$.
\end{enumerate}
\end{defn}

Throughout this section, we adopt the following terminology. Given a graph $\Gamma$ and $k \geq 1$, we construct a (Vietoris--Rips) graph $\Gamma_k$ with $V(\Gamma_k) = V(\Gamma)$, such that $v,w \in V(\Gamma)$ are adjacent in $\Gamma_k$ if and only if $d_\Gamma(v,w) \leq k$. Thus $\Gamma_1 = \Gamma$.

Since the collection of balls in $\Gamma_k$ is just a subcollection of the collection of balls in $\Gamma$ (for any $k \geq 1$), the following result is immediate.

\begin{lem} \label{lem:Gammak-Helly}
If a graph $\Gamma$ is Helly, then so is $\Gamma_k$ for any $k \geq 1$. \qed
\end{lem}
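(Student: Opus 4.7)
The plan is to observe that every ball in $\Gamma_k$ is literally a ball in $\Gamma$, so the Helly property transfers essentially for free.

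First I would verify that $d_{\Gamma_k}(v,w) = \lceil d_\Gamma(v,w)/k \rceil$ for all $v,w \in V(\Gamma) = V(\Gamma_k)$. Indeed, any $\Gamma_k$-path from $v$ to $w$ of length $n$ is a sequence of vertices with consecutive ones at $\Gamma$-distance $\leq k$, so it can be concatenated into a $\Gamma$-path of length $\leq nk$; conversely, a $\Gamma$-geodesic of length $d_\Gamma(v,w)$ can be subdivided into $\lceil d_\Gamma(v,w)/k \rceil$ blocks of $\Gamma$-length $\leq k$, each of which corresponds to a single $\Gamma_k$-edge. As a consequence, for any $w \in V(\Gamma)$ and $\rho \geq 0$,
\[
B_\rho(w;\Gamma_k) = \{ v : d_\Gamma(v,w) \leq \rho k \} = B_{\rho k}(w;\Gamma).
\]

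Now let $\{ B_{\rho_i}(w_i;\Gamma_k) \mid i \in \mathcal{I}\}$ be a pairwise intersecting collection of balls in $\Gamma_k$. By the identification above, $\{ B_{\rho_i k}(w_i;\Gamma) \mid i \in \mathcal{I}\}$ is a pairwise intersecting collection of balls in $\Gamma$. Since $\Gamma$ is Helly, there exists $v \in V(\Gamma)$ with $d_\Gamma(v,w_i) \leq \rho_i k$ for every $i \in \mathcal{I}$; equivalently, $d_{\Gamma_k}(v,w_i) \leq \rho_i$ for every $i$, so $v \in \bigcap_{i \in \mathcal{I}} B_{\rho_i}(w_i;\Gamma_k)$. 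This establishes the Helly property for $\Gamma_k$.

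There is no real obstacle here: the whole argument boils down to the ball identification $B_\rho(w;\Gamma_k) = B_{\rho k}(w;\Gamma)$, after which the Helly property of $\Gamma$ is applied verbatim.
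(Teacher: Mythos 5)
Your proof is correct and uses the same key observation as the paper: every ball in $\Gamma_k$ is a ball in $\Gamma$, namely $B_\rho(w;\Gamma_k) = B_{\rho k}(w;\Gamma)$, so a pairwise intersecting family of $\Gamma_k$-balls is a pairwise intersecting family of $\Gamma$-balls and inherits the nonempty total intersection. The paper dispatches this in a single sentence; you have merely spelled out the details (via $d_{\Gamma_k}(v,w) = \lceil d_\Gamma(v,w)/k\rceil$), so there is nothing to add.
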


In order to prove Theorem~\ref{thm:quasiconvex}, we let $G$ act on a coarsely Helly graph $\Gamma$ geometrically, and consider the full subgraph $\Delta \subseteq \Gamma_k$ spanned by some $H$-orbit of vertices and their neighbours. We show in Lemma~\ref{lem:qconvex=>chelly} that (for $k$ large enough) the graph $\Delta$ is coarsely Helly. Since the $H$-action on $\Delta$ can be shown to be geometric, it follows that $H$ is coarsely Helly.

Moreover, if $\Gamma$ is Helly, then we consider the \emph{Hellyfication} $\Theta = \operatorname{Helly}(\Delta)$ of $\Delta$ (see \cite{ccgho}) -- the `smallest' Helly graph containing $\Delta$ as an isometrically embedded subgraph. We show in Lemma~\ref{lem:qconvex=>isometric} that (for $k$ large enough) $\Delta$ is an isometric subgraph of $\Gamma_k$, implying that $\Theta$ is a subgraph of $\Gamma_k$ as well (and so $\Theta$ is locally finite), whereas the fact that $\Delta$ is coarsely Helly implies that the induced action of $H$ on $\Theta$ is cobounded, and consequently geometric. Thus $H$ is Helly.

Thus, our proof of Theorem~\ref{thm:quasiconvex} is based on the following two Lemmas.

\begin{lem} \label{lem:qconvex=>chelly}
Let $\xi \geq 0$, let $\Gamma$ be a $\xi$-coarsely Helly graph, and $W \subseteq V(\Gamma)$ a $(1,2\xi)$-quasiconvex subset. Let $k \geq 1$ be such that every $(1,2\xi)$-quasigeodesic in $\Gamma$ with endpoints in $W$ is contained in the $k$-neighbourhood of $W$. Then the full subgraph $\Delta$ of $\Gamma_k$ spanned by $\bigcup_{w \in W} B_1(w;\Gamma_k)$ is $(3+\lceil\xi/k\rceil)$-coarsely Helly.
\end{lem}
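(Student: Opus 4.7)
The plan is to transfer the pairwise intersection from $\Delta$ into $\Gamma$, invoke the $\xi$-coarse Helly property of $\Gamma$ to produce a centre there, use the $(1,2\xi)$-quasiconvexity of $W$ to replace that centre by one lying in $W \subseteq V(\Delta)$, and finally convert the resulting $\Gamma$-distance bound into a $\Delta$-distance bound. Throughout, write $\xi' := \lceil \xi/k \rceil$.

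Starting with a collection $\mathcal{B} = \{B_{\rho_i}(x_i;\Delta) : i \in \mathcal{I}\}$ of pairwise intersecting balls in $\Delta$, I would first choose for each $i$ a vertex $w_i \in W$ with $d_\Gamma(x_i,w_i) \leq k$, which exists by the definition of $V(\Delta)$. Since every edge of $\Delta$ joins vertices at $\Gamma$-distance at most $k$, any $u \in B_{\rho_i}(x_i;\Delta)$ satisfies $d_\Gamma(u,x_i) \leq k\rho_i$ and therefore $d_\Gamma(u,w_i) \leq k(\rho_i+1)$. Consequently the $\Gamma$-balls $\{B_{k(\rho_i+1)}(w_i;\Gamma) : i \in \mathcal{I}\}$ are pairwise intersecting, and the $\xi$-coarse Helly property of $\Gamma$ yields a vertex $y \in V(\Gamma)$ with $d_\Gamma(y,w_i) \leq k(\rho_i+1)+\xi$ for every $i$.

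The main step, and the hardest part of the argument, will be to promote $y$ to a vertex $\tilde{y} \in W$ satisfying $d_\Gamma(\tilde{y},w_i) \leq k(\rho_i+2)+\xi$ for every $i$. The idea is to exploit $(1,2\xi)$-quasiconvexity as follows: if one can arrange that the concatenation of $\Gamma$-geodesics $w_i \to y$ and $y \to w_j$ is a $(1,2\xi)$-quasi-geodesic (it already has endpoints in $W$), then the hypothesis forces this concatenation, and in particular $y$ itself, into the $k$-neighbourhood of $W$; one can then take $\tilde{y} \in W$ at $\Gamma$-distance at most $k$ from $y$, paying a single extra additive $k$ in each bound. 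The critical estimate required is the Gromov-product inequality $d_\Gamma(u,y)+d_\Gamma(y,v) \leq d_\Gamma(u,v)+2\xi$ for $u$ on the first geodesic and $v$ on the second; it does not hold automatically, and I expect the real work to lie in upgrading the raw coarse Helly centre $y$ to a better centre, e.g.\ by applying coarse Helly to appropriate triples of balls (around $w_i$, $w_j$, and $y$, as in a coarse-median construction) so that the improved centre lies within $\xi$ of a $\Gamma$-geodesic between each pair $w_i$, $w_j$.

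Once $\tilde{y}$ has been produced, the final step is routine. For each $i$, a $\Gamma$-geodesic $Q_i$ from $\tilde{y} \in W$ to $w_i \in W$ is a $(1,0)$-quasi-geodesic, hence a fortiori a $(1,2\xi)$-quasi-geodesic, so by the quasiconvexity hypothesis $Q_i$ lies in the $k$-neighbourhood of $W$, i.e.\ in $V(\Delta)$. Sampling $Q_i$ every $k$ steps (together with the endpoints) gives a path in $\Delta$ of length at most $\lceil d_\Gamma(\tilde{y},w_i)/k \rceil \leq \rho_i + 2 + \xi'$, and adjoining the single $\Delta$-edge $\{w_i,x_i\}$ gives $d_\Delta(\tilde{y},x_i) \leq \rho_i + 3 + \xi'$ for every $i \in \mathcal{I}$. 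Since $\tilde{y} \in W \subseteq V(\Delta)$, this verifies the $(3+\xi')$-coarse Helly property for $\mathcal{B}$.
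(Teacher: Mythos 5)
You correctly identify the overall strategy — transfer the balls to $\Gamma$ via basepoints $w_i \in W$, apply the $\xi$-coarse Helly property there, improve the centre $y$ to land it in $V(\Delta)$ using $(1,2\xi)$-quasiconvexity, then convert the $\Gamma$-bound to a $\Delta$-bound. You also correctly pinpoint the heart of the argument: getting $y$ onto a $(1,2\xi)$-quasigeodesic with endpoints in $W$, i.e.\ verifying the Gromov-product inequality $d_\Gamma(w_i,y)+d_\Gamma(y,w_j)\le d_\Gamma(w_i,w_j)+2\xi$ for \emph{some} pair, and you correctly observe this does not hold automatically for the raw coarse-Helly centre. The final conversion step matches the paper. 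But the sketch you offer to close the gap — applying coarse Helly to a triple of balls around $w_i$, $w_j$, $y$ in a coarse-median fashion — does not obviously work. In that triple the radius of the ball around $y$ would have to be the Gromov product $(w_i|w_j)_y$, which is not a priori bounded (no hyperbolicity is assumed here), so the resulting median could be far from $y$ and you would lose control of its distance to the remaining centres $w_\ell$, $\ell\ne i,j$.

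The paper's actual device is different and sharper. Fix one index $j$, set $\delta:=\sup_i\bigl[d_\Gamma(x_i',x_j')-(\rho_i+1)k\bigr]$ (where $x_i'\in W$ plays the role of your $w_i$), dispose of the case $\delta\le 0$ directly, and otherwise \emph{shrink} the ball at $x_j'$ to radius $\delta$ \emph{before} applying coarse Helly. The shrunk family is still pairwise intersecting by the definition of $\delta$, so coarse Helly produces $y$ with $d_\Gamma(y,x_j')\le\delta+\xi$ and $d_\Gamma(y,x_i')\le(\rho_i+1)k+\xi$. Choosing $\ell$ achieving the supremum makes the balls at $x_j'$ and $x_\ell'$ tangent in the shrunk family, so the Gromov-product inequality holds automatically for the pair $(x_j',x_\ell')$:
\[
d_\Gamma(y,x_j')+d_\Gamma(y,x_\ell')-d_\Gamma(x_j',x_\ell') \le (\delta+\xi)+[(\rho_\ell+1)k+\xi]-[(\rho_\ell+1)k+\delta]=2\xi.
\]
This single pair is all that is needed to place $y$ on a $(1,2\xi)$-quasigeodesic with endpoints in $W$, hence in $V(\Delta)$; and because $\delta\le(\rho_j+1)k$ the same radius bound $d_\Gamma(y,x_i')\le(\rho_i+1)k+\xi$ holds for $i=j$ too. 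In short: your plan and final accounting are right, but the mechanism you propose for the crucial middle step does not close the gap, and the paper's ``shrink one ball to radius $\delta$'' trick — which you would need to supply — is the missing idea.
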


\begin{proof}
Let $\mathcal{B} = \{ B_{\rho_i}(x_i;\Delta) \mid i \in \mathcal{I} \}$ be a collection of pairwise intersecting balls in $\Delta$. By the construction of $\Delta$, for each $i \in \mathcal{I}$ there exists $x_i' \in W$ such that $d_\Delta(x_i,x_i') \leq 1$, and in particular $B_{\rho_i}(x_i;\Delta) \subseteq B_{\rho_i+1}(x_i';\Delta) \subseteq B_{\rho_i+1}(x_i';\Gamma_k) = B_{(\rho_i+1)k}(x_i';\Gamma)$. It follows that $\mathcal{B}' = \{ B_{(\rho_i+1)k}(x_i';\Gamma) \mid i \in \mathcal{I} \}$ is a collection of pairwise intersecting balls.

Now fix any $j \in \mathcal{I}$, and consider $D = \{ d_\Gamma(x_i',x_j')-(\rho_i+1)k \mid i \in \mathcal{I} \} \subseteq \mathbb{Z}$; let $\delta = \sup D$. Since the balls in $\mathcal{B}'$ have pairwise non-empty intersections, we have $\delta \leq (\rho_j+1)k$ (and in particular $\delta = \max D$). If $\delta \leq 0$, it then follows that $d_\Gamma(x_i',x_j') \leq (\rho_i+1)k$ for all $i \in \mathcal{I}$; therefore, since $x_i',x_j' \in W$ and in particular any geodesic in $\Gamma$ between $x_i'$ and $x_j'$ is a path in $\Delta$, $d_\Delta(x_i',x_j') = \lceil d_\Gamma(x_i',x_j')/k \rceil \leq \rho_i+1$ for all $i \in \mathcal{I}$. Hence, $x_j' \in \bigcap_{i \in \mathcal{I}} B_{\rho_i+2}(x_i;\Delta)$, implying that $\mathcal{B}$ satisfies the $2$-coarse Helly property. Thus, we may assume that $\delta > 0$.

By the choice of $\delta$, we know that $\mathcal{B}'' = \{ B_{(\rho_i+1)k}(x_i';\Gamma) \mid i \in \mathcal{I} \setminus \{j\} \} \cup \{ B_\delta(x_j';\Gamma) \}$ is a collection of pairwise intersecting balls. It follows by the $\xi$-coarse Helly property that there exists $y \in V(\Gamma)$ such that $d_\Gamma(y,x_i') \leq (\rho_i+1)k+\xi$ for all $i \in \mathcal{I} \setminus \{j\}$ and $d_\Gamma(y,x_j') \leq \delta+\xi$.

Moreover, by the choice of $\delta$, there exists $\ell \in \mathcal{I}$ such that $d_\Gamma(x_j',x_\ell') = (\rho_\ell+1)k+\delta$. Therefore,
\begin{align*}
&d_\Gamma(y,x_j')+d_\Gamma(y,x_\ell')-d_\Gamma(x_j',x_\ell') \\ &\qquad \leq (\delta+\xi) + [(\rho_\ell+1)k+\xi] - [(\rho_\ell+1)k+\delta] = 2\xi,
\end{align*}
and so $y$ lies on a $(1,2\xi)$-quasigeodesic in $\Gamma$ from $x_j'$ to $x_\ell'$; thus, $y \in V(\Delta)$, implying that $d_\Gamma(y,y') \leq k$ for some $y' \in W$.

By the choice of $\Delta$, it follows that all geodesics in $\Gamma$ between $y'$ and $x_i'$ are paths in $\Delta$, and so $d_\Delta(y',x_i') = \lceil d_\Gamma(y',x_i')/k \rceil$ for each $i \in \mathcal{I}$. Moreover, since $\delta \leq (\rho_j+1)k$, we have $d_\Gamma(y,x_i') \leq (\rho_i+1)k+\xi$ for each $i \in \mathcal{I}$, and so
\[
d_\Delta(y',x_i') \leq \left\lceil \frac{d_\Gamma(y',y)+d_\Gamma(y,x_i')}{k} \right\rceil \leq \left\lceil \frac{k+[(\rho_i+1)k+\xi]}{k} \right\rceil = \rho_i+2+\left\lceil\frac{\xi}{k}\right\rceil
\]
for all $i \in \mathcal{I}$. It follows that
\[
d_\Delta(y',x_i) \leq d_\Delta(y',x_i') + d_\Delta(x_i',x_i) \leq \rho_i+3+\left\lceil\frac{\xi}{k}\right\rceil,
\]
which proves the $(3+\lceil\xi/k\rceil)$-coarse Helly property for $\mathcal{B}$.
\end{proof}

In the next Lemma, we say a graph $\Gamma$ is \emph{pseudo-modular} if for every triple $w_1,w_2,w_3 \in V(\Gamma)$ there exist geodesics $P_1,P_2,P_3,e_{1,2},e_{2,3},e_{3,1}$ in $\Gamma$, with $|e_{1,2}| = |e_{2,3}| = |e_{3,1}| \leq 1$, such that $(P_i)_- = w_i$, $(P_i)_+ = (e_{i,i+1})_- = (e_{i-1,i})_+$, and such that $P_ie_{i,i+1}\overline{P_{i+1}}$ is a geodesic in $\Gamma$ for $i \in \{1,2,3\}$ (with indices taken modulo $3$). It is well-known (see \cite[Proposition 4]{bandelt-mulder}) that a connected graph $\Gamma$ is pseudo-modular if and only if every triple $\{ B_1',B_2',B_3' \}$ of pairwise intersecting balls in $\Gamma$ has non-empty intersection (that is, satisfies the $0$-coarse Helly property). In particular, every Helly graph is pseudo-modular.

\begin{lem} \label{lem:qconvex=>isometric}
Let $\Gamma$ be a pseudo-modular graph, and $W \subseteq V(\Gamma)$ a $(5,0)$-quasiconvex subset. Let $k \geq 1$ be such that every $(5,0)$-quasigeodesic in $\Gamma$ with endpoints in $W$ is contained in the $k$-neighbourhood of $W$. Then the full subgraph $\Delta$ of $\Gamma_k$ spanned by $\bigcup_{w \in W} B_1(w;\Gamma_k)$ is isometrically embedded in $\Gamma_k$.
\end{lem}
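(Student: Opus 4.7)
The aim is to show $d_\Delta(u,v) \le d_{\Gamma_k}(u,v)$ for all $u, v \in V(\Delta)$, since the reverse inequality is automatic from $\Delta \subseteq \Gamma_k$. As a preliminary step I would dispose of the case $u, v \in W$: any $\Gamma$-geodesic $\gamma$ from $u$ to $v$ is trivially a $(1,0)$-quasi-geodesic, hence a $(5,0)$-quasi-geodesic with endpoints in $W$, so by the choice of $k$ it lies in the $k$-neighbourhood of $W$, namely $V(\Delta)$. Partitioning $\gamma$ into $\lceil d_\Gamma(u,v)/k \rceil = d_{\Gamma_k}(u,v)$ consecutive arcs of length at most $k$ gives a chain of $V(\Delta)$-vertices with consecutive pairs at $\Gamma$-distance $\le k$, hence joined by edges of $\Gamma_k$, and therefore by edges of $\Delta$ because $\Delta$ is a full subgraph. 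This produces a $\Delta$-path of the required length.

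For the general case I would proceed by induction on $d := d_{\Gamma_k}(u,v)$, the base cases $d \le 1$ being immediate because $\Delta$ is a full subgraph of $\Gamma_k$. For the inductive step $d \ge 2$, the goal is to produce a vertex $m \in V(\Delta)$ with $d_\Gamma(u,m) \le k$ and $d_\Gamma(m,v) \le (d-1)k$: then $m$ is adjacent to $u$ in $\Delta$, $d_{\Gamma_k}(m,v) \le d-1$, and by the inductive hypothesis $d_\Delta(m,v) \le d-1$, yielding $d_\Delta(u,v) \le d$. To locate $m$ I would fix $u' \in W$ with $d_\Gamma(u,u') \le k$ and invoke pseudo-modularity of $\Gamma$ — equivalently, the $0$-coarse Helly property for triples of balls — applied to $B_k(u)$, $B_{(d-1)k}(v)$ and $B_k(u')$. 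The pairwise intersections $B_k(u) \cap B_{(d-1)k}(v) \ne \varnothing$ and $B_k(u) \cap B_k(u') \ne \varnothing$ are automatic from $d_\Gamma(u,v) \le dk$ and $d_\Gamma(u,u') \le k$, and whenever the third pairwise intersection $B_{(d-1)k}(v) \cap B_k(u') \ne \varnothing$ also holds — that is, $d_\Gamma(v,u') \le dk$ — pseudo-modularity delivers $m$ with $d_\Gamma(m,u') \le k$, so $m \in V(\Delta)$.

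The main obstacle is the exceptional case $d_\Gamma(v,u') \in (dk,(d+1)k]$, where the three-ball Helly step fails because the ball centred at $u'$ is not close enough to $v$. To handle it I would exploit the preliminary step: fix $v' \in W$ within $k$ of $v$, so a $\Gamma$-geodesic $\gamma_0$ from $u'$ to $v'$ is entirely contained in $V(\Delta)$. Pseudo-modularity applied to the triple $(u, u', v')$ then places a vertex $p$ of $\gamma_0$ at bounded $\Gamma$-distance from $u$, and replacing $u'$ by a $W$-point $w$ close to $p$ (but further along $\gamma_0$ towards $v'$) arranges $d_\Gamma(u,w)$ and $d_\Gamma(v,w)$ both to fall within the range needed for a further three-ball Helly step; this second pseudo-modular step produces the required $m \in V(\Delta)$. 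The key role of the quasiconvexity constant $5$ (rather than $1$) is that the various concatenations of $\Gamma$-geodesics arising in this projection construction are $(5,0)$-quasi-geodesics with endpoints in $W$, and hence by the hypothesis on $k$ remain inside $V(\Delta)$, giving the control needed to conclude the induction.
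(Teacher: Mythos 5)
Your approach differs from the paper's: the paper argues by contradiction with a minimal counterexample pair and a pseudo-modular ``tripod'' picture, whereas you proceed by induction on $d_{\Gamma_k}(u,v)$ and use the three-ball ($0$-coarse Helly) formulation of pseudo-modularity directly. Your preliminary step for $u,v\in W$ is correct, and your ``main case'' of the inductive step is also correct: if $d_\Gamma(v,u')\le dk$, then pseudo-modularity applied to $B_k(u)$, $B_{(d-1)k}(v)$, $B_k(u')$ produces a vertex $m$ with $d_\Gamma(u,m)\le k$, $d_\Gamma(m,v)\le (d-1)k$, $d_\Gamma(m,u')\le k$, hence $m\in V(\Delta)$, and the induction closes cleanly.

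The genuine gap is the exceptional case, which (after symmetrizing, i.e.\ trying the induction from $v$'s side too) is $d_\Gamma(v,u')>dk$ and $d_\Gamma(u,v')>dk$. Your sketch — locate a vertex $p$ on a $\Gamma$-geodesic $\gamma_0$ from $u'$ to $v'$ close to $u$, then replace $u'$ by a $W$-point $w$ near $p$ ``but further along $\gamma_0$'' — faces a concrete tension that the sketch does not resolve: for the pairwise intersection $B_k(u)\cap B_k(w)\ne\varnothing$ you need $d_\Gamma(u,w)\le 2k$, which forces $w$ to stay near $p$ (indeed near $u$), but for $B_{(d-1)k}(v)\cap B_k(w)\ne\varnothing$ you need $d_\Gamma(v,w)\le dk$, which pushes $w$ away from $u$ towards $v'$; nothing in the sketch shows these two windows overlap. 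Applying pseudo-modularity to $(u,u',v')$ does produce a center vertex on a $u'$-$v'$ geodesic at distance $\le k$ from $u$, but the nearby $W$-point can still be far from $v$ (the Gromov products in the exceptional regime do not give the bound you need). Note also that ``places a vertex $p$ of $\gamma_0$'' is imprecise: pseudo-modularity gives a vertex on one specific pseudo-median geodesic, not on an arbitrary pre-chosen $\gamma_0$ (though this is repairable, since you can take $\gamma_0$ to be that geodesic). Finally, your appeal to the constant $5$ is only gestured at: in the paper it is exactly the estimate $|Q|\ge k/2$ (established via the minimality of the counterexample) that makes the concatenation $P_v'e_2Qf_2P_w'$ a $(5,0)$-quasi-geodesic; your inductive set-up provides no analogue of this estimate, and without it the full strength of $(5,0)$-quasiconvexity is never actually invoked. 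The exceptional case is where essentially all the difficulty of the lemma is concentrated, and the proposal does not close it.
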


\begin{figure}[ht]
\begin{tikzpicture}[very thick,decoration={markings,mark=at position 0.6 with {\arrow{>}}}]
\draw [postaction=decorate] (0,0) -- (1.5,1.5) node [midway,right] {$P_v'$};
\draw [postaction=decorate] (0,3.5) -- (1.5,2) node [midway,left] {$P_v$};
\draw [postaction=decorate] (1.5,1.5) -- (1.5,2) node [midway,left] {$e_1$};
\draw [postaction=decorate] (1.5,1.5) -- (2,1.75) node [midway,below] {$e_2$};
\draw [postaction=decorate] (1.5,2) -- (2,1.75) node [midway,above] {$e_3$};
\draw [postaction=decorate] (2,1.75) -- (4,1.75) node [midway,below] {$Q$};
\draw [postaction=decorate] (4.5,1.5) -- (6,0) node [midway,left] {$P_w'$};
\draw [postaction=decorate] (4.5,2) -- (6,3.5) node [midway,right] {$P_w$};
\draw [postaction=decorate] (4.5,1.5) -- (4.5,2) node [midway,right] {$f_1$};
\draw [postaction=decorate] (4,1.75) -- (4.5,1.5) node [midway,below] {$f_2$};
\draw [postaction=decorate] (4,1.75) -- (4.5,2) node [midway,above] {$f_3$};
\fill (0,0) circle (2pt) node [below] {$v'$};
\fill (0,3.5) circle (2pt) node [above] {$v$};
\fill (6,0) circle (2pt) node [below] {$w'$};
\fill (6,3.5) circle (2pt) node [above] {$w$};
\end{tikzpicture}
\caption{The proof of Lemma~\ref{lem:qconvex=>isometric}.}
\label{fig:isometric}
\end{figure}
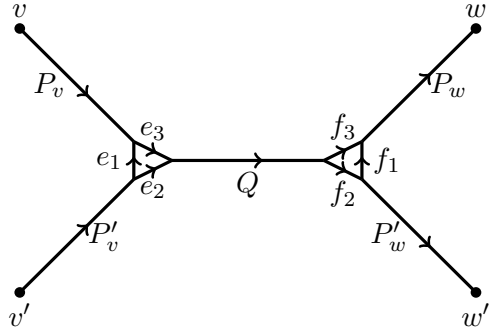

\begin{proof}
Suppose for contradiction that $\Delta$ is not isometrically embedded in $\Gamma_k$. Thus, there exist $v,w \in V(\Delta)$ such that $d_\Delta(v,w) > d_{\Gamma_k}(v,w)$; without loss of generality, assume that $v$ and $w$ are chosen in such a way that $d_{\Gamma_k}(v,w)$ is as small as possible. By the definition of $\Delta$, there exist $v',w' \in W$ such that $d_\Gamma(v,v') \leq k$ and $d_\Gamma(w,w') \leq k$.

Since $\Gamma$ is pseudo-modular, there exist geodesics $P_v'e_1\overline{P_v}$, $P_v'e_2Q'$ and $P_ve_3Q'$ in $\Gamma$ from $v'$ to $v$, from $v'$ to $w$ and from $v$ to $w$, respectively, such that $|e_1| = |e_2| = |e_3| \leq 1$. Similarly, there exist geodesics $\overline{P_w'}f_1P_w$, $Qf_2P_w'$ and $Qf_3P_w$ in $\Gamma$ from $w'$ to $w$, from $Q'_-$ to $w'$ and from $Q'_-$ to $w$, respectively, such that $|f_1| = |f_2| = |f_3| \leq 1$: see Figure~\ref{fig:isometric}. Note that $Q'$ is a geodesic in $\Gamma$ with the same endpoints as the geodesic $Qf_3P_w$: therefore, since $P_v'e_2Q'$ and $P_ve_3Q'$ are geodesics in $\Gamma$, so are $P_v'e_2Qf_3P_w$ and $P_ve_3Qf_3P_w$. In particular, the paths $P_v'e_1\overline{P_v}$, $\overline{P_w'}f_1P_w$, $P_v'e_2Q$, $Qf_2P_w'$ and $P_ve_3Qf_3P_w$ are all geodesics in $\Gamma$.

Let $p = d_\Gamma(v,w) - k \left[ d_{\Gamma_k}(v,w)-1 \right]$, so that $1 \leq p \leq k$. We claim that $|P_ve_3| < |P_v'e_2| - k + p$. Indeed, let $u \in P_ve_3Qf_3P_w$ be the vertex with $d_\Gamma(v,u) = p$; it follows from the choice of $p$ and the fact that $P_ve_3Qf_3P_w$ is a geodesic in $\Gamma$ that $u$ lies on a geodesic in $\Gamma_k$ between $v$ and $w$. Since $u$ is adjacent to $v$ in $\Gamma_k$ and since $\Delta$ is a full subgraph, it follows by the minimality of $d_{\Gamma_k}(v,w)$ that $u \notin V(\Delta)$; therefore, $d_\Gamma(v',u) > k$. In particular, since $d_\Gamma(v',v) \leq k$ and since $P_v' e_1 \overline{P_v}$ is a geodesic in $\Gamma$, this implies that $u \notin P_v$ and so $u \in Qf_3P_w$. We thus have
\[
k < d_\Gamma(v',u) \leq |P_v'e_2|+d_\Gamma(Q_-,u) = |P_v'e_2| + (p-|P_ve_3|),
\]
and so $|P_ve_3| < |P_v'e_2| - k + p$, as claimed. Similarly, $|f_3P_w| < |f_2P_w'| - k + p$.

We now claim that $|Q| \geq \frac{k}{2}$. Indeed, since $p \leq k$ and $|e_2| \leq 1$, the previous paragraph implies that $|P_ve_3| \leq |P_v'|$, and hence
\[
2|P_ve_3| \leq |P_ve_3| + |P_v'| = |P_v| + |e_1| + |P_v'| = d_\Gamma(v',v) \leq k
\]
since $P_v'e_1\overline{P_v}$ is a geodesic in $\Gamma$; similarly, $2|f_3P_w| \leq k$. Therefore, if $d_\Gamma(v,w) \geq \frac{3k}{2}$ then we have
\[
|Q| = d_\Gamma(v,w) - |P_ve_3| - |f_3P_w| \geq \frac{3k}{2} - \frac{k}{2} - \frac{k}{2} = \frac{k}{2},
\]
as claimed. On the other hand, suppose that $d_\Gamma(v,w) < \frac{3k}{2}$. It then follows that $d_{\Gamma_k}(v,w) = 2$ (as $v$ and $w$ cannot be adjacent in $\Gamma_k$), and so $p < \frac{k}{2}$ and $d_\Gamma(v,w) = k+p$. Then, again by the previous paragraph, we have
\begin{equation} \label{eq:lenQk2}
\begin{aligned}
|Q| &= d_\Gamma(v,w) - |P_ve_3| - |f_3P_w| \\ &> (k+p) - (|P_v'e_2|-k+p) - (|f_2P_w'|-k+p) \\ &= 3k-p-|P_v'e_2|-|f_2P_w'|.
\end{aligned}
\end{equation}
Furthermore, we have $|P_v'e_2| = |P_v'|+|e_1| \leq |P_v'e_1\overline{P_v}| = d_\Gamma(v',v) \leq k$ since $P_v'e_1\overline{P_v}$ is a geodesic in $\Gamma$, and similarly $|f_2P_w'| \leq k$. Therefore, \eqref{eq:lenQk2} implies that $|Q| > k-p$. As $p < \frac{k}{2}$, it follows that $|Q| \geq \frac{k}{2}$, as claimed.

Finally, we claim that $P_v'e_2Qf_2P_w'$ is a $(5,0)$-quasigeodesic in $\Gamma$. Indeed, let $R \subseteq P_v'e_2Qf_2P_w'$ be a subpath: we thus claim that $|R| \leq 5d_\Gamma(R_-,R_+)$. Since $P_v'e_2Q$ and $Qf_2P_w'$ are geodesics in $\Gamma$, we may assume, without loss of generality, that $R_+ \in f_2P_w'$ and $R_- \in P_v'e_2$. Moreover, assume that $d_\Gamma(R_+,Q_+) \leq d_\Gamma(R_-,Q_-)$: the other case is analogous. We then have
\begin{align*}
d_\Gamma(R_-,R_+) &\geq d_\Gamma(R_-,Q_+)-d_\Gamma(R_+,Q_+) \\ &= d_\Gamma(R_-,Q_-) + |Q| - d_\Gamma(R_+,Q_+) \geq |Q|
\end{align*}
since $P_v'e_2Q$ is a geodesic in $\Gamma$. On the other hand, since $P_v'e_1\overline{P_v}$ is a geodesic in $\Gamma$ of length $\leq k$, we have $|P_v'e_2| = |P_v'| + |e_1| \leq k$, and similarly $|f_2P_w'| \leq k$. It follows that $|R| \leq |P_v'e_2Qf_2P_w'| = |P_v'e_2| + |Q| + |f_2P_w'| \leq |Q|+2k$, and so, since $|Q| \geq \frac{k}{2}$, we have $|R| \leq |Q|+2k \leq 5|Q| \leq 5d_\Gamma(R_-,R_+)$, as claimed.

But now, by the choice of $k$, it follows that $P_v'e_2Qf_2P_w'$ is in the $k$-neighbourhood of $W$ in $\Gamma$, and so, in particular, all the vertices of $Q$ belong to $\Delta$. Since $P_v'e_1\overline{P_v}$ and $\overline{P_w'}f_1P_w$ are geodesics in $\Gamma$ of length $\leq k$, it also follows that all vertices of $P_v$ and of $P_w$ belong to $\Delta$. Therefore, all the vertices of $P_ve_3Qf_3P_w$ belong to $\Delta$. But as $P_ve_3Qf_3P_w$ is a geodesic in $\Gamma$, there exists a geodesic in $\Gamma_k$ from $v$ to $w$ all of whose vertices are also vertices of $P_ve_3Qf_3P_w$, and so of $\Delta$. This contradicts the choice of $v$ and $w$.

Therefore, $\Delta$ must be an isometrically embedded subgraph of $\Gamma_k$, as required.
\end{proof}

\begin{proof}[Proof of Theorem~\ref{thm:quasiconvex}]
We first make the following elementary observation.

\begin{cm}
Let $\Theta$ be a locally finite graph, and $H$ a group acting on $\Theta$. Suppose that there exists $x \in V(\Theta)$ such that $|\Stab_H(x)| < \infty$ and the $H$-orbit $x \cdot H$ is finite Hausdorff distance away from $V(\Theta)$. Then the $H$-action on $\Theta$ is geometric.
\end{cm}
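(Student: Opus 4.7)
The plan is to establish the two defining conditions of geometric actions separately: cocompactness and proper discontinuity.

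For cocompactness, I would fix $D \geq 0$ such that every vertex of $\Theta$ lies within distance $D$ of some point of $x \cdot H$; such $D$ exists by the finite Hausdorff distance hypothesis. Since $\Theta$ is locally finite, the ball $B := B_D(x;\Theta)$ is finite. Because $H$ acts by graph automorphisms (hence sends balls to balls of the same radius), $\bigcup_{h \in H} B \cdot h = V(\Theta)$, so the $H$-action has finitely many orbits on $V(\Theta)$, giving cocompactness.

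For proper discontinuity, since $\Theta$ is locally finite and the action is by graph automorphisms, it suffices to show that $\Stab_H(v)$ is finite for every vertex $v \in V(\Theta)$. Fix $v$ and choose $h_0 \in H$ with $d_\Theta(v, x \cdot h_0) \leq D$, so that $x \cdot h_0 \in B_D(v;\Theta)$. The subgroup $\Stab_H(v)$ preserves the finite set $B_D(v;\Theta)$, inducing a homomorphism $\varphi \colon \Stab_H(v) \to \operatorname{Sym}(B_D(v;\Theta))$ with finite image. The kernel of $\varphi$ fixes $x \cdot h_0$ in particular, hence is contained in $\Stab_H(x \cdot h_0) = h_0^{-1} \Stab_H(x) h_0$, which is finite by assumption. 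Thus $\Stab_H(v)$ is an extension of a finite group by a finite group, and so is itself finite.

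Since nothing here is difficult, I do not anticipate any serious obstacle; the only mild technical point is the standard observation that, for an action by graph automorphisms of a locally finite graph, finiteness of every vertex stabiliser is equivalent to proper discontinuity (each compact subset of the graph meets only finitely many vertices, each contributing a finite stabiliser).
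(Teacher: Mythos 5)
Your proof is correct and follows essentially the same route as the paper's: cocompactness via finitely many orbits under the bounded-Hausdorff-distance hypothesis (the paper counts edge orbits directly, you count vertex orbits, which for a locally finite graph is equivalent), and finiteness of $\Stab_H(v)$ by constraining it through the finite ball $B_D(v;\Theta)$ together with $|\Stab_H(x)| < \infty$. The only cosmetic difference is that you package the stabiliser bound as the homomorphism $\Stab_H(v) \to \operatorname{Sym}(B_D(v;\Theta))$ with finite kernel, whereas the paper states the same counting argument directly as $|\Stab_H(y')| \leq |B_k(y';\Theta)| \cdot |\Stab_H(x)|$.
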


\begin{cmproof}
Since $\Theta$ is locally finite, it is enough to show that the $H$-action is cocompact and $|\Stab_H(y)| < \infty$ for all $y \in V(\Theta)$.

Let $k < \infty$ be the Hausdorff distance from $x \cdot H$ to $V(\Theta)$. If $e \subseteq \Theta$ is an edge, then there exists $h \in H$ such that $d_\Theta(e'_-,x) \leq k$, and so $d_\Theta(e'_\pm,x) \leq k+1$, where $e' = e \cdot h$. But since $\Theta$ is locally finite, there exist only finitely many edges $e' \subseteq \Theta$ with $d_\Theta(e'_\pm,x) \leq k+1$, and so there are only finitely many of orbits of edges under the $H$-action on $\Theta$. Thus the action is cocompact, as required.

Now let $y \in V(\Theta)$. Then there exists $h \in H$ such that $d_\Theta(y',x) \leq k$ where $y' = y \cdot h$; in particular, if $g \in \Stab_H(y')$ then $d_\Theta(y',x \cdot g) = d_\Theta(y',x) \leq k$, and so $x \cdot g \in B_k(y';\Theta)$. Since $\Theta$ is locally finite, this implies that
\[
|\Stab_H(y)| = |h\Stab_H(y')h^{-1}| = |\Stab_H(y')| \leq |B_k(y';\Theta)| \times |\Stab_H(x)| < \infty,
\]
as required.
\end{cmproof}

We now prove parts \ref{it:qconvex-helly} and \ref{it:qconvex-coarse} of the Theorem.

\begin{enumerate}[label=(\roman*)]

\item By assumption, there exists $x \in V(\Gamma)$ such that the orbit $x \cdot H \subseteq V(\Gamma)$ is $(5,0)$-quasiconvex. Let $k \geq 1$ be a constant such that every $(5,0)$-quasigeodesic with endpoints in $x \cdot H$ is in the $k$-neighbourhood of $x \cdot H$, and let $\Delta$ be the full subgraph of $\Gamma_k$ spanned by $\bigcup_{h \in H} B_1(x \cdot h; \Gamma_k)$.

Let $\Theta = \operatorname{Helly}(\Delta)$ be the Hellyfication of $\Delta$, as defined in \cite[\S 4.2]{ccgho}. By \cite[Theorem 4.4]{ccgho}, $\Theta$ is a Helly graph contained as a subgraph in every Helly graph containing $\Delta$ as an isometrically embedded subgraph. As $\Gamma$ is Helly, it is also pseudo-modular, and so by Lemma~\ref{lem:qconvex=>isometric}, $\Delta$ is isometrically embedded in $\Gamma_k$; moreover, by Lemma~\ref{lem:Gammak-Helly}, $\Gamma_k$ is Helly. It follows that $\Theta$ is (isomorphic to) a subgraph of $\Gamma_k$. But since $\Gamma$ is locally finite, so is $\Gamma_k$ and therefore so is $\Theta$. Thus, $\Theta$ is a locally finite Helly graph, and so it is enough to show that $H$ acts on $\Theta$ geometrically.

Now the $G$-action on $\Gamma$ induces a $G$-action on $\Gamma_k$, with respect to which $\Delta$ is clearly $H$-invariant. Furthermore, the $H$-action on $\Delta$ extends to an $H$-action on $\Theta$: see \cite{ccgho}. By Lemma~\ref{lem:qconvex=>chelly} (applied in this case with $\xi = 0$), $\Delta$ is coarsely Helly. It follows by \cite[Proposition~3.12]{ccgho} that $V(\Theta)$ is Hausdorff distance $\ell < \infty$ away from $V(\Delta)$, and so Hausdorff distance $\leq \ell+1$ away from $x \cdot H$ (in $\Theta$). Moreover, since the action of $G$ on $\Gamma$, and so on $\Gamma_k$, is geometric, we have $|\Stab_H(x)| \leq |\Stab_G(x)| < \infty$. It follows from the Claim that the action of $H$ on $\Theta$ is geometric, as required.

\item Let $\xi \geq 0$ be such that $\Gamma$ is $\xi$-coarsely Helly. By assumption, there exists $x \in V(\Gamma)$ such that the orbit $W := x \cdot H \subseteq V(\Gamma)$ is $(1,2\xi)$-quasiconvex. Let $k \geq 1$ be the constant and $\Delta \subseteq \Gamma_k$ the subgraph given by Lemma~\ref{lem:qconvex=>chelly}. Since $\Gamma$ is locally finite, so is $\Gamma_k$; moreover, the $G$-action on $\Gamma$ induces a $G$-action on $\Gamma_k$. By construction, the subgraph $\Delta \subseteq \Gamma_k$ is $H$-invariant and is Hausdorff distance $\leq 1$ away from the $H$-orbit $W = x \cdot H$. As a subgraph of $\Gamma_k$, $\Delta$ is also locally finite; furthermore, $|\Stab_H(x)| \leq |\Stab_G(x)| < \infty$. It follows from the Claim that the $H$-action on $\Delta$ is geometric. But by Lemma~\ref{lem:qconvex=>chelly}, $\Delta$ is coarsely Helly, as required. \qedhere

\end{enumerate}
\end{proof}

\bibliographystyle{amsalpha}
\bibliography{ref}

\end{document}